\theoremstyle{plain}
\declaretheorem[title=Theorem, parent=section]{theorem}
\declaretheorem[title=Lemma,sibling=theorem]{lemma}
\declaretheorem[title=Proposition,sibling=theorem]{proposition}
\declaretheorem[title=Corollary,sibling=theorem]{corollary}
\theoremstyle{definition}
\declaretheorem[title=Definition,sibling=theorem]{definition}
\declaretheorem[title=Remark,sibling=theorem]{remark}
\declaretheorem[title=Remark, numbered=no]{remark*}
\declaretheorem[title=Assumption, numbered=no]{assumption*}
\numberwithin{equation}{section}
\newcommand{\N}{\mathbb{N}}
\newcommand{\R}{\mathbb{R}}
\newcommand{\cP}{\mathcal{P}}
\newcommand{\cE}{\mathcal{E}}
\newcommand{\cI}{\mathcal{I}}
\newcommand{\eps}{\varepsilon}
\newcommand{\1}{\mathbbm{1}}
\DeclareMathOperator{\dist}{dist}
\DeclareMathOperator{\diam}{diam}
\DeclareMathOperator{\supp}{supp}
\DeclareMathOperator{\dvg}{div}
\DeclareMathOperator*{\osc}{osc}
\DeclareMathOperator{\tail}{Tail}
\renewcommand{\d}{\textnormal{\,d}}
\newcommand{\average}{{\mathchoice {\kern1ex\vcenter{\hrule height.4pt
width 6pt depth0pt} \kern-9.7pt} {\kern1ex\vcenter{\hrule
height.4pt width 4.3pt depth0pt} \kern-7pt} {} {} }}
\newcommand{\dashint}{\average\int}
\begin{document}
\allowdisplaybreaks
\title{Higher regularity in nonlocal free boundary problems}
 
\author{Bego\~na Barrios}
\author{Xavier Ros-Oton}
\author{Marvin Weidner}

	\address{Departamento de An\'{a}lisis Matem\'{a}tico,
		Universidad de La Laguna e IMAULL
		\hfill \break \indent C/. Astrof\'{\i}sico Francisco S\'{a}nchez s/n, 
		38200 -- La Laguna, Spain}
	\email{bbarrios@ull.edu.es}

\address{ICREA, Pg. Llu\'is Companys 23, 08010 Barcelona, Spain \& \newline Universitat de Barcelona, Departament de Matem\`atiques i Inform\`atica, \newline Gran Via de les Corts Catalanes 585, 08007 Barcelona, Spain \& \newline Centre de Recerca Matem\`atica, Barcelona, Spain}
\email{xros@icrea.cat}
\urladdr{www.ub.edu/pde/xros}

\address{Departament de Matem\`atiques i Inform\`atica, Universitat de Barcelona, \newline Gran Via de les Corts Catalanes 585, 08007 Barcelona, Spain}
\email{mweidner@ub.edu}
\urladdr{https://sites.google.com/view/marvinweidner/}

\keywords{nonlocal, free boundary problems, higher regularity, one-phase problem}

\subjclass[2020]{35R35, 47G20, 35B65}

\allowdisplaybreaks

\begin{abstract}
We study the higher regularity in nonlocal free boundary problems posed for general integro-differential operators of order $2s$.
Our main result is for the nonlocal one-phase (Bernoulli) problem, for which we establish that $C^{2,\alpha}$ free boundaries are~$C^\infty$.
This is new even for the fractional Laplacian, as it was only known in case $s=\frac12$.
We also establish a general result for overdetermined problems, showing that if the boundary condition is smooth, then so is $\partial\Omega$.
Our approach is very robust and works as well for the nonlocal obstacle problem, where it yields a new proof of the higher regularity of free boundaries, completely different from the one in \cite{AbRo20}.
In order to prove our results, we need to develop, among other tools, new integration by parts formulas and delicate boundary H\"older estimates for nonlocal equations with (local) Neumann boundary conditions that had not been studied before and are of independent interest.
\end{abstract}

\allowdisplaybreaks

\maketitle

\tableofcontents
\addtocontents{toc}{\protect\setcounter{tocdepth}{1}} 

\section{Introduction}

Free boundary problems appear in several areas of pure and applied mathematics, and have been a central line of research in elliptic PDE since the 1970s. One of the most important questions in this context is to understand the smoothness of free boundaries. Two pioneering works in this direction were those of Caffarelli \cite{Caf77} and Kinderlehrer-Nirenberg \cite{KN77}, which provided the foundations for many new ideas and techniques developed during the last fifty years; see e.g. \cite{AC81,Caf89,Sak91,CKS00,DGPT17,ESV20,DSV21}.

During the last two decades, special focus has been put on understanding \emph{nonlocal} free boundary problems \cite{CSS08,CRS10,CF13,DeSa15,CRS17,BFR18,BFR18b,EKPSS21,FeRo24c,RoWe24b,RTW25}. 
They arise as natural models whenever long-range interactions need to be taken into account e.g. in flame propagation models, probability and finance, or large systems of interacting particles; see \cite{CRS10,FeRo24,DL76} and references therein.
In this paper we study the two main problems in this context: the nonlocal one-phase free boundary problem, and the nonlocal obstacle problem.

\subsection{The one-phase free boundary problem}

Two very classical and motivating questions in the context of free boundary problems are \cite{AC81,Jer90}: 

\vspace{2mm}

\noindent \textbf{Question 1}: {\em Assume that we have a harmonic function in $\Omega\cap B_1$ satisfying both $u=0$ and $\partial_\nu u=1$ on $\partial\Omega\cap B_1$.
Can we conclude that $\partial\Omega\cap B_{1/2}$ is actually $C^\infty$?}

\vspace{2mm}

\noindent \textbf{Question 2}: {\em Assume that we have a domain $\Omega$ such that its Poisson kernel is smooth.\footnote{More precisely, given $z\in \Omega$ we assume the Poisson kernel $P_z$ equals $h|_{\partial\Omega}$ for some $h\in C^\infty(\R^n)$.}
Can we conclude that $\partial\Omega$ is actually $C^\infty$?}

\vspace{2mm}

These questions, and their relation to models in fluid mechanics and in flame propagation, motivated the development of the regularity theory for the so-called one-phase free boundary problem, which is also known as Bernoulli or Alt-Caffarelli free boundary problem; see \cite{AC81,CS05,Vel23} and the references therein.
Solutions to this problem --- i.e. $\Delta u=0$ in $\Omega\cap B_1$, with  $u=0$ and $\partial_\nu u=1$ on $\partial\Omega\cap B_1$ --- are critical points of the energy functional 
\begin{align*}
\mathcal E(u)=\int_{B_1} |\nabla u|^2 dx + \big|\{u>0\}\cap B_1\big|.
\end{align*}
The regularity theory for this problem can be essentially split into four steps:
\begin{itemize}
\item[(i)] If the free boundary is locally flat, then it is $C^{1,\alpha}$ \cite{AC81}.
\item[(ii)] The set of singular points, where the free boundary is not locally flat, is small \cite{AC81,Wei99}.
\item[(iii)] $C^{1,\alpha}$ free boundaries are actually $C^{2,\alpha}$ \cite{DFS19,LZ23}.
\item[(iv)] $C^{2,\alpha}$ free boundaries are actually $C^\infty$ \cite{KN77}.
\end{itemize}
We refer to the book \cite{Vel23} for a detailed account of these classical results.

The regularity theory for the obstacle problem follows a similar structure; see \cite{Caf77,KN77,Wei99,FRS20}.
An important difference for our purposes is that, in case of the obstacle problem, (iii) and (iv) can be done simultaneously; see \cite[Theorem~1]{KN77}. 
However, in case of the one-phase problem one usually has to do steps (iii) and (iv) separately, which requires the solution (and hence the domain) to be $C^2$; see \cite[Theorem~2]{KN77}.

\subsection{Nonlocal one-phase free boundary problems}

The nonlocal version of the one-phase problem was first studied by Caffarelli-Roquejoffre-Sire \cite{CRS10}, tackling the analog of \emph{Question 1} above in case of the fractional Laplacian operator $(-\Delta)^s$.
More generally, one considers general nonlocal operators of order~$2s$ 
\begin{equation}\label{L}
L v(x) := 2\int_{\R^n} \big( v(x) - v(x+h) \big) K(h) \d h,
\end{equation}
where $s\in(0,1)$, and the kernel $K$ satisfies the following ellipticity and homogeneity conditions
\begin{align}
\label{eq:K-comp-intro}
K(h) = |h|^{-n-2s} K(h/|h|), \qquad \lambda \le K(h/|h|) \le \Lambda, \qquad K(h)=K(-h),
\end{align}
for some positive constants $\lambda,\Lambda$.
Then, solutions to the nonlocal one-phase free boundary problem are nonnegative functions satisfying 
\begin{equation}\label{NOP}
\begin{array}{rcll}
L v &=& 0 & \quad  \text{ in } \Omega \cap B_1,\\
v &=&0 &\quad \text{ in } B_1 \setminus \Omega,\\
\displaystyle\frac{v}{d^s_{\Omega}} &=& A(\nu) &\quad \text{ on } \partial \Omega \cap B_1.
\end{array}
\end{equation}
Here, $d^s_{\Omega}$ is the distance to $\partial\Omega$,  and $A$ is given in terms of the Fourier symbol of $L$; see \eqref{eq:A-nu-def}.

The simplest and most canonical example is given by the fractional Laplacian $L=(-\Delta)^s$, corresponding to $K|_{\mathbb S^{n-1}}\equiv {c_{n,s}}$ and $A(\nu) \equiv c_s$.

Solutions to the problem \eqref{NOP} can be seen as nonnegative critical points of the energy functional
\begin{align*}
\cI(v) := \iint\limits_{(B_1^c \times B_1^c)^c} \big|v(x) - v(y)\big|^2 K(x-y) \d y \d x + |\{ v > 0 \} \cap B_1|.
\end{align*}
In the particular case of the square root of the Laplacian, i.e. $L=\sqrt{-\Delta}$, the regularity theory for this problem has been completely developed, thanks to its equivalence to a \emph{thin} one-phase problem in $\R^{n+1}_+$; see \cite{DeRo12,DeSa12,DeSa15,DeSa15b}.

Concerning the fractional Laplacian $(-\Delta)^s$ and more general operators $L$ of the form \eqref{L}-\eqref{eq:K-comp-intro}, the optimal regularity of solutions was established in \cite{CRS10} and in \cite{RoWe24c}, respectively, while the $C^{1,\alpha}$ regularity of flat free boundaries was established in \cite{DeRo12,DSS14} and in \cite{RoWe24b}, respectively.
Moreover, the smallness of the singular set was established for the fractional Laplacian in \cite{EKPSS21}.

As is mentioned in \cite[Remark 1.2]{RoWe24b}, the $C^\infty$ regularity of free boundaries remained an open problem, both for the fractional Laplacian ($s\neq\frac12$) and for more general operators $L$. This is the main question we tackle in this paper. It requires several new ideas, and in particular, the approach for the nonlocal obstacle problem from \cite{AbRo20} based on higher order boundary Harnack inequalities is not applicable here.

\subsection{Main results}

One of our main contributions is to establish the following result for the nonlocal one-phase problem.

\begin{theorem}
\label{thm:main-onephase-intro} 
Let $L$ be an operator of the form \eqref{L}-\eqref{eq:K-comp-intro} with $K|_{\mathbb{S}^{n-1}}\in C^\infty$.
Let $v \in L^\infty(\R^n)$ be any solution of \eqref{NOP} and let $\alpha > 0$.
Then we have 
\[\partial \Omega \cap B_1\in C^{2,\alpha} \qquad \Longrightarrow \qquad \partial\Omega\cap B_{1/2} \in C^\infty.\]
\end{theorem}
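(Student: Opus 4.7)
The plan is to proceed by induction on the regularity of $\partial \Omega$: assuming $\partial\Omega \cap B_1 \in C^{k,\alpha}$ with $k \ge 2$, the goal is to upgrade the regularity by $2s$ orders and to iterate finitely many times to cross each integer and eventually reach $C^\infty$. Starting at $k=2$ is justified by the boundary regularity theory for $Lv=0$ on $C^{2,\alpha}$ domains, which guarantees $v/d_{\Omega}^s \in C^{1,\alpha}$ up to $\partial\Omega$ so that all objects below are well-defined. First, I would flatten the free boundary. Given a $C^{k,\alpha}$ parametrization of $\partial\Omega \cap B_1$, construct a local diffeomorphism $\Phi: B_1 \to B_1$ of class $C^{k,\alpha}$ mapping $\{x_n>0\}\cap B_1$ onto $\Omega\cap B_1$ and $\{x_n=0\}\cap B_1$ onto $\partial\Omega\cap B_1$, with $d_{\Omega}\circ\Phi = x_n\,\rho$ for some positive $C^{k-1,\alpha}$ factor $\rho$. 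Setting $w := v\circ\Phi$, the pullback of \eqref{NOP} is a nonlocal equation $\widetilde L w = 0$ in $\{x_n>0\}\cap B_1$, $w \equiv 0$ in $\{x_n\le 0\}\cap B_1$, where $\widetilde L$ has kernel $\widetilde K(x,h)$ of regularity $C^{k-1,\alpha}$ in $x$, still satisfying \eqref{eq:K-comp-intro}, and where the free boundary condition becomes a purely local Neumann-type condition
\[
\frac{w(x',0^+)}{x_n^s} \;=\; \widetilde A(x') \quad \text{on } \{x_n=0\}\cap B_1,
\]
with $\widetilde A$ depending smoothly on $\nu\circ\Phi \in C^{k-1,\alpha}$.

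Next, I would differentiate tangentially and bootstrap. For any direction $e$ parallel to $\{x_n=0\}$, the tangential derivative $w_e := \partial_e w$ satisfies a nonlocal equation $\widetilde L w_e = g_e$ in $\{x_n>0\}\cap B_{1/2}$, where $g_e$ collects commutator terms arising from differentiating the $x$-dependent kernel (with regularity essentially that of $\partial_e \widetilde K$), and the boundary condition linearizes to $w_e/x_n^s = \partial_e \widetilde A \in C^{k-2,\alpha}$ on $\{x_n=0\}$. Applying the new Schauder-type boundary estimate for nonlocal equations with a local Neumann condition (the main new tool announced in the abstract), one should gain $2s$ orders and obtain $w_e \in C^{k-1+2s,\alpha}$ up to the boundary; combining with the equation $\widetilde L w = 0$ to control the normal direction then upgrades $w$. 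Feeding this improved regularity back into $w/x_n^s = \widetilde A$ improves $\widetilde A$, hence $\nu$, and therefore $\partial\Omega$. Iterating this step finitely many times yields $\partial\Omega\cap B_{1/2}\in C^{k+1,\alpha}$, which closes the induction and produces $C^\infty$.

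The main obstacle, and the reason a completely new approach is needed beyond \cite{AbRo20}, is precisely the Schauder-type boundary estimate for nonlocal equations with a local Neumann condition $w/d^s=g$: the operator $\widetilde L$ tests $w$ against its values in all of $\R^n$, while the boundary datum is purely local on $\partial\Omega$, and making this mismatch compatible requires new integration-by-parts formulas for functions vanishing to order exactly $s$ at the boundary, valid for kernels $\widetilde K(x,h)$ with only $C^{k-1,\alpha}$ regularity in $x$. A secondary difficulty is that tangential differentiation of a nonlocal operator with variable coefficients produces commutator terms of critical order, which must be absorbed on the correct Hölder scale so that the bootstrap does not lose regularity; the homogeneity and symmetry in \eqref{eq:K-comp-intro} together with the smoothness of $K|_{\mathbb{S}^{n-1}}$ are exactly what allow this absorption and the sharp matching of exponents across the iteration.
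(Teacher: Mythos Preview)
There are two concrete gaps that prevent your scheme from closing.

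First, your feedback step ``improved $w/x_n^s = \widetilde A$ $\Rightarrow$ improved $\widetilde A$ $\Rightarrow$ improved $\nu$'' fails because $\widetilde A(x') = A(\nu(\Phi(x',0)))$ is a \emph{scalar} function of $x'$, while $\nu$ is a unit vector; the map $A:\mathbb S^{n-1}\to\R$ is not invertible, so smoothness of $A(\nu)$ does not yield smoothness of $\nu$. The paper avoids this by working not with $v$ (or $\partial_e v$) but with the quotients $\widetilde w_i = \partial_i v/\partial_n v$, whose boundary traces equal $\nu^{(i)}/\nu^{(n)}$ and therefore determine $\nu$ directly. These quotients satisfy a \emph{weighted} nonlocal equation with kernel $\partial_n u(x)\partial_n u(y)K(x-y)\sim d_\Omega^{s-1}(x)d_\Omega^{s-1}(y)K(x-y)$; the singular weight is the heart of the analysis and drives the new integration by parts formulas, the De Giorgi $C^\delta$ estimate, and the Liouville theorem in the paper.

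Second, even granting an object whose boundary trace recovers $\nu$, your linearized boundary datum $\partial_e\widetilde A$ lies only in $C^{k-2,\alpha}$ (since $\widetilde A$ depends on $\nu\in C^{k-1,\alpha}$), and this caps any Schauder-type estimate at $C^{k-1,\alpha}$ for the relevant quotient --- exactly the regularity you start from, so nothing is gained. The paper explains this obstruction explicitly for the one-phase problem (unlike for \eqref{NOP2} with an externally given $h$). The key new idea is that for $\widetilde w = \partial_i v/\partial_n v$ one gets not a normal but an \emph{oblique} derivative condition $\partial_{\theta(x)}\widetilde w = 0$ on $\partial\Omega$, with $\theta(x)$ parallel to the direction $\Theta_{K,\Omega}(x)$ appearing in the integration by parts formula (Lemma~\ref{lemma:ibp-nonflat}); the vanishing of the datum is a nontrivial algebraic identity exploiting the specific form of $A(\nu)$ in \eqref{eq:A-nu-def} (see the computation around \eqref{eq:bdry-condition-theta-1}). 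With zero boundary data the bootstrap is no longer capped, and iterating the $C^\gamma$ boundary estimate (Proposition~\ref{lemma:bdry-reg}) on tangential incremental quotients of $\widetilde w$ yields the improvement.
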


As explained in more detail below, proving this result requires substantial new ideas, since we cannot use any equivalence with a thin one-phase problem.

Thanks to \autoref{thm:main-obstacle-intro}, the only remaining step to complete the regularity theory for the non-local one-phase free boundary problem is to show that $C^{1,\alpha}$ free boundaries are actually $C^{2,\alpha}$.
We will do this in a future paper where, in the spirit of \cite{DeSa12}, we will modify the improvement of flatness in \cite{RoWe24b} to show that flat free boundaries are $C^{2,\alpha}$.

On the other hand, we also want to answer the nonlocal analogue of \emph{Question 2} above.
In case of the fractional Laplacian, it is known that the natural \emph{boundary}-value (not exterior-value) Dirichlet problem is given by $(-\Delta)^s u=0$ in $\Omega$, $u=0$ in $\Omega^c$, and $u/d_{\Omega}^{s-1}=\phi$ on $\partial\Omega$; see \cite{Aba15,Gru15}.
Furthermore, given $z\in \Omega$, the Poisson kernel corresponding to this problem is given by $P_z:=\frac{G_z}{d_{\Omega}^s}|_{\partial\Omega}$, where $G_z$ is the Green function, satisfying $(-\Delta)^s G_z=\delta_z$ in $\Omega$, and $G_z=0$ in $\Omega^c$.

Therefore, after translating and rescaling if necessary, the Green function $v=G_z$ will satisfy locally
\begin{equation}\label{NOP2}
\begin{array}{rcll}
L v &=& 0 & \quad  \text{ in } \Omega \cap B_1,\\
v &=&0 &\quad \text{ in } B_1 \setminus \Omega,\\
\displaystyle \frac{v}{d^s_{\Omega}} &=& h &\quad \text{ on } \partial \Omega \cap B_1,
\end{array}
\end{equation}
where $L=(-\Delta)^s$ and $h|_{\partial\Omega}$ coincides with the Poisson kernel $P_z$.

Having this in mind, we prove the following:

\begin{theorem}
\label{thm:main-onephase-intro2} 
Let $L$ be an operator of the form \eqref{L}-\eqref{eq:K-comp-intro} with $K|_{\mathbb{S}^{n-1}}\in C^\infty$, and let $h\in C^\infty(\R^n)$.
Let $0 \not= v \in L^{1}_{2s}(\R^n)\cap L^\infty(B_1)$ be any nonnegative solution of \eqref{NOP2} and let $\alpha > 0$.
Then we have 
\[\partial \Omega \cap B_1 \in C^{2,\alpha} \qquad \Longrightarrow \qquad \partial\Omega \cap B_{1/2} \in C^\infty.\]
\end{theorem}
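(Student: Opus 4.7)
I would prove \autoref{thm:main-onephase-intro2} by running the same bootstrap scheme as for \autoref{thm:main-onephase-intro}, with only minor modifications to accommodate the prescribed smooth boundary datum $h$ in place of the structural function $A(\nu)$. Concretely, I would show by induction on $k \ge 2$ that
\[
\partial\Omega \cap B_r \in C^{k,\alpha} \quad \Longrightarrow \quad \partial\Omega \cap B_{r/2} \in C^{k+1,\alpha},
\]
starting from the base case $k=2$ which is the hypothesis. Iterating yields $\partial\Omega \cap B_{1/2} \in C^\infty$. Since $v\ge 0$ is nontrivial and $Lv=0$ in $\Omega\cap B_1$, the strong maximum principle together with a nonlocal Hopf-type lemma gives $v/d_\Omega^s > 0$ on $\partial\Omega \cap B_1$; combined with the overdetermined condition this forces $h > 0$ on $\partial\Omega \cap B_1$, so the bootstrap is not obstructed by a degenerate boundary value.

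\textbf{Flattening and boundary regularity of the quotient.} First I would straighten $\partial\Omega$ locally to $\{x_n=0\}$ via a $C^{k,\alpha}$ diffeomorphism $\Phi$. In the new coordinates, $\tilde v := v \circ \Phi^{-1}$ satisfies a nonlocal equation whose kernel has $C^{k-1,\alpha}$ angular dependence, and $d^s_\Omega \circ \Phi^{-1}$ differs from $(x_n)_+^s$ by a $C^{k-1,\alpha}$ positive factor. Using the integration-by-parts formulas announced in the abstract, one checks that $\tilde u := \tilde v / (x_n)_+^s$ solves a nonlocal equation in the upper half-space together with a local Neumann-type condition $\cB \tilde u = g$ on $\{x_n = 0\}$, where $g \in C^{k-1,\alpha}$ inherits its regularity from the transformed coefficients. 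The delicate boundary H\"older estimates for nonlocal equations with local Neumann conditions (also developed earlier in the paper) then upgrade $\tilde u$ to $C^{k-1,\alpha}$ up to $\{x_n=0\}$, so that $v/d_\Omega^s \in C^{k-1,\alpha}(\overline{\Omega \cap B_{3r/4}})$.

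\textbf{Hodograph bootstrap.} Writing $\partial\Omega$ locally as a graph $x_n = \varphi(x')$ with $\varphi \in C^{k,\alpha}$, the overdetermined condition $v/d_\Omega^s = h$ translates into
\[
\tilde u(x',0) = h\big(x', \varphi(x')\big).
\]
Combining this with the equation $L v = 0$ in $\Omega$ and the boundary regularity $\tilde u \in C^{k-1,\alpha}$, one derives a nonlinear nonlocal equation $F[\varphi] = 0$ for the graph parametrisation whose linearisation at $\varphi$ is a uniformly elliptic nonlocal operator with $C^{k-1,\alpha}$ coefficients and a $C^\infty$ source term coming from the composition $h(x', \varphi(x'))$. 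Applying Schauder-type estimates for this linearised equation promotes $\varphi$ from $C^{k,\alpha}$ to $C^{k+1,\alpha}$, closing the induction.

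\textbf{Main obstacle.} The hardest ingredient is already the one needed for \autoref{thm:main-onephase-intro}: the new boundary H\"older estimates for nonlocal problems with local Neumann data, and the algebraic manipulation --- based on the new integration-by-parts identities --- that recasts the equation for $\tilde u$ in genuine Neumann form despite the lower-order error terms produced by flattening $d_\Omega^s$ to $(x_n)_+^s$. Once those tools are in place, adapting the argument to \autoref{thm:main-onephase-intro2} amounts to substituting the datum $h \circ \Phi^{-1} \in C^\infty$ for $A(\nu)$ in the hodograph equation; since $h$ is genuinely smooth rather than only as regular as the unit normal, the final bootstrap step is, if anything, slightly easier than in \autoref{thm:main-onephase-intro}, but relies on exactly the same robust machinery.
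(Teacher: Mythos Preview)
Your overall bootstrap strategy and the recognition that $h>0$ by Hopf are correct, but the core mechanism you propose is not the one the paper uses, and as written it has a real gap.

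The paper does \emph{not} work with $\tilde u = v/d_\Omega^s$. The regularity $v/d_\Omega^s \in C^{k-1,\alpha}(\overline\Omega)$ is already standard boundary regularity (\cite{AbRo20}) once $\partial\Omega\in C^{k,\alpha}$, so your ``Flattening and boundary regularity of the quotient'' step produces no gain. The integration-by-parts formulas in the paper are not designed to yield a Neumann problem for $v/d_\Omega^s$; they are tailored to quotients of two $L$-harmonic (or nearly so) functions. The key object is instead
\[
w_i \;=\; \frac{\partial_i v}{\partial_n v}\,,
\]
which satisfies the weighted equation
\[
\int_\Omega (w(x)-w(y))\,\partial_n v(x)\,\partial_n v(y)\,K(x-y)\,dy \;=\; g(x),\qquad \partial_n v \sim d_\Omega^{s-1},
\]
together with the \emph{computed} local boundary condition
\[
\partial_\nu w \;=\; \frac{\partial_\tau h}{s\,h\,(\nu^{(n)})^2}\quad\text{on }\partial\Omega,
\]
where $\tau = \nu^{(n)}e_i - \nu^{(i)}e_n$ is tangential (\autoref{lemma:PDE-onephase}(iii)). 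Since $h\in C^\infty$, the right-hand side is $C^{k-1,\alpha}$ whenever $\partial\Omega\in C^{k,\alpha}$. One then takes tangential incremental quotients $D_h^{k-1+\alpha}w$ and applies the a~priori $C^\gamma$ estimate (\autoref{lemma:bdry-reg}, case $\beta=s$) to gain regularity of $w$ along $\partial\Omega$. The crucial link back to the free boundary is the identity
\[
w_i\big|_{\partial\Omega} \;=\; \frac{\nu^{(i)}}{\nu^{(n)}}\,,
\]
so improved regularity of $w|_{\partial\Omega}$ is \emph{exactly} improved regularity of the unit normal, hence of $\partial\Omega$ (\autoref{prop:bootstrap-onephase}, \autoref{prop:boundary-improvement2}).

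Your ``hodograph bootstrap'' step, by contrast, never writes down the equation $F[\varphi]=0$. The identity $\tilde u(x',0) = h(x',\varphi(x'))$ is a Dirichlet condition for $\tilde u$, not an equation for $\varphi$; to extract $\varphi$ from it one would need independent control of $\tilde u(x',0)$ to order $C^{k,\alpha}$, which is precisely what is not available a priori. The paper sidesteps this entirely by encoding $\nu$ directly in $\partial_i v/\partial_n v$.
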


This gives for the first time an answer to the nonlocal analogue of Question 2 above. Our theorem is new even for the fractional Laplacian. We also show a more general result in \autoref{prop:boundary-improvement2}, where we only assume that $h \in C^{k,\alpha}$ for some $k \ge 2$.

Another way to see this result is the following.
It is known that if $\partial\Omega$ is $C^\infty$, then $u/d_{\Omega}^s\in C^\infty(\overline\Omega \cap B_1)$; see \cite{Gru15,AbRo20}.
Our new \autoref{thm:main-onephase-intro2} is a ``reverse regularity'' result, in the sense that if $(u/d_{\Omega}^s)|_{\partial\Omega}$ is $C^\infty$, then so is $\partial\Omega$.

Finally, it is important to mention that the methods we develop in this paper are very robust, and allow us to treat at the same time one-phase free boundary problems and obstacle problems.
Actually, the case of the obstacle problem is simpler, and we provide a new proof of the following higher regularity result.

\begin{theorem}
\label{thm:main-obstacle-intro} 
Let $L$ be an operator of the form \eqref{L}-\eqref{eq:K-comp-intro} with $K|_{\mathbb{S}^{n-1}}\in C^\infty$.
Let $v \in L^{\infty}(\R^n)$ be a solution to the nonlocal obstacle problem $\min\{Lv,v-\varphi\}$ in $B_1$, where $\varphi\in C^\infty(\R^n)$, and denote $\Omega=\{v>\varphi\}$.
Assume that all free boundary points in $B_1$ are regular, in the sense that $u/d_{\Omega}^{1+s}\geq c>0$ in $\Omega\cap B_1$.
Let $\alpha > 0$. Then we have 
\[\partial \Omega \cap B_1 \in C^{1,\alpha} \qquad \Longrightarrow \qquad \partial\Omega \cap B_{1/2} \in C^\infty.\]
\end{theorem}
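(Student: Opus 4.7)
Set $u := v - \varphi$. Then $u \geq 0$ satisfies $Lu = -L\varphi =: f$ in $\Omega$ with $f \in C^\infty(B_1)$, $u \equiv 0$ in $B_1 \setminus \Omega$, and $u/d_\Omega^{1+s} \geq c > 0$ near $\partial\Omega \cap B_1$ by the regularity of all free boundary points. I argue by induction: assuming $\partial\Omega \in C^{k,\alpha}$ locally for some integer $k \geq 1$, I aim to upgrade this to $\partial\Omega \in C^{k+1,\alpha'}$ locally for some $\alpha' \in (0,1)$; iterating from the hypothesis $k=1$ then yields $C^\infty$.

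The engine of the induction is the robust strategy sketched in the introduction: higher tangential regularity of $U := u/d_\Omega^{1+s}$ along $\partial\Omega$ gets translated into higher regularity of $\partial\Omega$ itself. From $\partial\Omega \in C^{k,\alpha}$ and the smooth-domain regularity theory for operators \eqref{L}-\eqref{eq:K-comp-intro} (Grubb-type results adapted in the paper), I first obtain $U \in C^{k-1,\alpha}(\overline{\Omega})$ locally, with $U \geq c > 0$ up to the boundary. Next, I pick a frame of $C^{k-1,\alpha}$ vector fields $T_1, \dots, T_{n-1}$ tangent to $\partial\Omega$ in a neighborhood, and set $w_i := T_i u$. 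Using the new integration-by-parts formulas developed later in the paper, $w_i$ solves $L w_i = g_i$ in $\Omega$ with $g_i$ in a controlled H\"older class, and admits the boundary expansion $w_i(x) = (T_i U)(\pi x)\, d_\Omega^{1+s}(x) + o(d_\Omega^{1+s}(x))$, where $\pi x$ denotes the nearest point on $\partial\Omega$. Equivalently, $w_i/d_\Omega^{1+s}$ extends to $\partial\Omega$ as the tangential derivative $T_i U$.

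The crucial analytic step is then to apply the new boundary H\"older estimates of the paper for nonlocal equations with a (local) Neumann-type boundary condition $w/d_\Omega^{1+s}|_{\partial\Omega} = b$. This upgrades $w_i/d_\Omega^{1+s}$ to $C^{k-1,\alpha'}(\overline{\Omega})$, so each $T_i U \in C^{k-1,\alpha'}$ along $\partial\Omega$. Because the $T_i$ span the tangent space at every boundary point, iterating this over all orders of tangential differentiation, and combining with the equation $LU = \tilde f$ to control the normal direction, I obtain $U \in C^{k,\alpha'}$ up to the boundary. Finally, writing $\partial\Omega$ locally as a graph $\{x_n = \phi(x')\}$ and using the non-degeneracy $U \geq c > 0$ together with the refined expansion $u = U\, d_\Omega^{1+s}$, an implicit-function (hodograph-type) argument in which $U$ plays the role of a smooth positive coefficient of the one-sided profile yields $\phi \in C^{k+1,\alpha'}$, closing the induction.

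\textbf{Main obstacle.} The hard ingredient is the boundary H\"older estimate for the Neumann-type problem above: standard nonlocal Schauder theory covers exterior Dirichlet data, while here the prescribed trace is $w/d_\Omega^{1+s}|_{\partial\Omega}$, a Neumann-type quantity that must be interpreted in a limiting sense and handled via a careful barrier/comparison argument on the half-space model. A secondary subtle point is deriving the equation satisfied by $w_i = T_i u$: in the nonlocal setting the commutator $[L, T_i]$ is itself a nonlocal operator, and the new integration-by-parts identities are precisely what allow one to recast it as a bounded perturbation in the relevant H\"older class. These two ingredients --- advertised as new in the introduction --- drive the induction and, being purely local in the geometry of the free boundary, are the same tools that underlie the one-phase result \autoref{thm:main-onephase-intro}.
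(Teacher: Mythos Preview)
Your bootstrap outline has the right spirit, but you have misidentified the key object and the equation it satisfies, and this creates a genuine gap. The paper does \emph{not} work with tangential derivatives $w_i = T_i u$ and the trace $w_i/d_\Omega^{1+s}\big|_{\partial\Omega}$. Instead it takes the quotient $\widetilde{w}_i = \partial_i u/\partial_n u$ and shows (via the algebraic identity \eqref{eq:algebra-quotient-PDE}) that $\widetilde{w}_i$ solves a \emph{weighted} nonlocal equation
\[
\mathcal{L}\big(\partial_n u(x)\,\partial_n u(y)\,K(x-y)\big)(\widetilde{w}_i)=\widetilde{g},
\]
with weight $\partial_n u(x)\partial_n u(y)\sim d_\Omega^s(x)d_\Omega^s(y)$ (i.e.\ $\beta=1+s$ in the paper's notation). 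The boundary H\"older estimate \autoref{lemma:bdry-reg} and the Liouville theorem \autoref{prop:weighted-Liouville} are built precisely for this weighted equation; they do not apply to the unweighted problem $Lw_i=g_i$ with a prescribed ``trace'' $w_i/d_\Omega^{1+s}$ that you describe. Moreover, for $\beta=1+s$ the integration-by-parts formula (\autoref{lemma:ibp-nonflat-beta}) has \emph{no boundary term}, so there is no Neumann-type condition at all in the obstacle case---this is exactly why the obstacle problem is simpler than the one-phase problem, contrary to your framing.

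Two further issues: your boundary expansion $w_i=(T_iU)\,d_\Omega^{1+s}+o(d_\Omega^{1+s})$ is incorrect, since $T_i(Ud_\Omega^{1+s})$ contains an additional term $(1+s)U d_\Omega^s\,T_i d_\Omega$ which is $O(d_\Omega^{1+s})$ and not lower order. And your final hodograph-type step is both vague and unnecessary: in the paper's approach, $\widetilde{w}_i\big|_{\partial\Omega}=\nu^{(i)}/\nu^{(n)}$, so improved regularity of $\widetilde{w}_i$ on $\partial\Omega$ is \emph{directly} improved regularity of the normal $\nu$, hence of $\partial\Omega$---no implicit-function argument through $U$ is needed.
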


This result was first proved for the fractional Laplacian in \cite{KRS19,JN17} using the extension property of $(-\Delta)^s$ and then for general operators $L$ in \cite{AbRo20} by means of a higher-order boundary Harnack principle.
As mentioned before, this last approach does not work for the nonlocal one-phase problem, and new ideas are needed.

\subsection{Ideas of the proof}

As mentioned above, \autoref{thm:main-onephase-intro} is new even for the fractional Laplacian when $s\neq\frac12$, but it is much more general and deep than that. 
Proving this result for general nonlocal operators $L$ requires substantial new insights, since we cannot use any the extension problem for the fractional Laplacian as in \cite{DeSa12,JN17,KRS19}.

Another extra difficulty when treating general operators $L$ is that the boundary condition in the one-phase problem \eqref{NOP} (which comes from the criticality condition for the functional $\cI$) is not constant; see \cite{RoWe24b}.
This leads to \emph{oblique boundary conditions} in the auxiliary problems we consider, and fresh ideas are required in order to treat them.
Indeed, in our proof we need completely new integration by parts formulas for nonlocal equations with singular weights (see  \autoref{lemma:ibp-nonflat-trafo} and \autoref{lemma:ibp}), which lead to local boundary conditions that involve oblique derivatives whenever $L$ is not the fractional Laplacian.
We also prove boundary H\"older estimates and higher order regularity estimates for weak solutions to nonlocal equations with such singular weights that exhibit a local boundary condition (see \autoref{lemma:C-alpha-estimate} and \autoref{lemma:bdry-reg}).

Let us give a brief sketch of the strategy and some of the main ideas of the proofs of \autoref{thm:main-onephase-intro} and \autoref{thm:main-onephase-intro2}. 

We start with a solution $v$ of either \eqref{NOP} or \eqref{NOP2}, and assume $\partial\Omega\cap B_1$ to be $C^{k,\alpha}$, with $k\geq2$.
Moreover, assume that $\partial\Omega$ can be written as a graph in the $e_n$ direction, so that $\nu\cdot e_n>0$ on $\partial\Omega\cap B_1$.
Then, we take (a truncated version of) the quotient
\[ w = \frac{\partial_i v}{\partial_n v},\]
and we prove that it solves a weighted equation of the form
\begin{align}
\label{eq:weighted-eq-intro}
\int_{\Omega} \big(w(x)-w(y)\big)\partial_n v(x)\partial_n v(y)K(x-y)dy = 0 \quad \textrm{for}\quad x\in \Omega,
\end{align}
together with some \emph{local} Neumann-type boundary condition on $\partial\Omega$.
Moreover, it can be proved that $w$ is $C^{k-1,\alpha}$ in $\overline \Omega$, and $\partial_n v\sim d_{\Omega}^{s-1}$, where $d_{\Omega}$ is the distance to the boundary.
Furthermore, the $i$-th component of the normal vector $\nu^{(i)}$ can be written in terms of $w$, and therefore, if we can show that $w$ is more regular than $C^{k-1,\alpha}$, this would improve the regularity of $\partial\Omega$.

For this, the rough idea is to prove an (a priori) $C^\gamma$ estimate for $w$, for some $\gamma>0$, using the equation \eqref{eq:weighted-eq-intro} it satisfies. This regularity result can be found in \autoref{lemma:bdry-reg}.
Then, iterating the $C^\gamma$ estimate by taking incremental quotients in tangential directions, we get more tangential regularity for $w$, say $C^{k-1+\alpha+\gamma}$, which in turn yields that $\partial\Omega$ is $C^{k+\alpha+\gamma}$ (and therefore $C^\infty$).

However, there are many issues when trying to run this argument.
First, taking higher order incremental quotients for nonlocal equations is very delicate and technical.
Second, and more importantly, proving such a $C^\gamma$ estimate up to the boundary turns out to be quite difficult, as we are dealing with a nonlocal equation possessing singular weights near the boundary, which had not been studied before. Moreover, the equation exhibits a local Neumann-type boundary condition.

The aforementioned boundary conditions arise as follows: first, we show that since $\partial_iv/d^{s-1}, \partial_n v/d^{s-1}\in C^1(\overline\Omega\cap B_1)$, it holds
\[\nabla w|_{\partial\Omega}= \frac{s(\frac{v}{d^s})\partial_\tau \nu + \nu \,\partial_\tau (\frac{v}{d^s})}{s(\frac{v}{d^s})(\nu^{(n)})^2},\]
where $\tau(x):=\nu^{(n)}(x)e_i - \nu^{(i)}(x) e_n$ is tangential to $\partial\Omega$.
In particular, since $\nu\cdot \partial_\tau \nu=\partial_\tau(\frac12|\nu|^2)=0$, for problem \eqref{NOP2} it follows that 
\[ \partial_\nu w = \frac{ \partial_\tau h}{sh(\nu^{(n)})^2} \in C^{k-1,\alpha} \quad \textrm{on}\quad \partial\Omega\cap B_1.\]
For the nonlocal one-phase problem \eqref{NOP} this boundary condition is not good enough, since $h(x)=A(\nu(x))$, and so we would only get $\partial_{\nu} w \in C^{k-2,\alpha}$, which is not sufficient to improve the regularity of $w$ beyond $C^{k-1,\alpha}$.
Instead, we prove that
\[\partial_{\theta(x)} w = 0 \quad \textrm{on}\quad \partial\Omega\cap B_1, \]
for a direction $\theta(x)\, ||\, \nabla A(\nu(x))$.
That is, we get an oblique derivative condition (matching perfectly with the integration by parts formula in \autoref{lemma:ibp-nonflat}), which allows us to use our novel regularity result in \autoref{lemma:bdry-reg} to improve the boundary regularity of $w$.

For both problems \eqref{NOP} and \eqref{NOP2} we are able to prove the desired $C^\gamma$ estimate for every $\gamma\in (\max\{\frac12,2s-1\},1)$. As mentioned before, a unified version of this result is contained in \autoref{lemma:bdry-reg} and its proof turns out to require significant new ideas.

First, we need to write the equation \eqref{eq:weighted-eq-intro} in a weak formulation, since otherwise the boundary condition would not pass to the limit in any compactness argument.
For this, we find and prove new integration by parts identities (see \autoref{lemma:ibp-nonflat}), which were not known before, even for the fractional Laplacian.
In the simplest case (see \autoref{lemma:ibp}), this identity reads as
\begin{align*}
\int \hspace{-2mm}\int \left|\frac{u(x)}{(x_n)^{s-1}}-\frac{u(y)}{(y_n)^{s-1}}\right|^2 & \hspace{-2mm}(x_n)_+^{s-1}(y_n)_+^{s-1}\frac{dx\,dy}{|x-y|^{n+2s}} \\
&+ 2\int_{\{x_n>0\}} \hspace{-2mm} u\,(-\Delta)^su = c_{n,s} \int_{\{x_n=0\}} \frac{u}{(x_n)^{s-1}} \partial_\nu \left(\frac{u}{(x_n)^{s-1}}\hspace{-1mm}\right),
\end{align*}
for any $u\sim (x_n)_+^{s-1}$ in $\{x_n>0\}$, with $u=0$ in $\{x_n\leq0\}$.

Such an integration by parts formula can be seen as a generalization of the Green's identity in \cite{Gru20} (see also \cite{Aba15}), in the sense that our formula contains a nonlocal energy term (which did not appear in previous works). Moreover, from our identity we can recover the nonlocal Green's identity
\[ c_{n,s} \int_{\{x_n>0\}} u\,(-\Delta)^sv-v\,(-\Delta)^su = \int_{\{x_n=0\}} \frac{u}{(x_n)^{s-1}} \partial_\nu \left(\frac{v}{(x_n)^{s-1}}\right)-\frac{v}{(x_n)^{s-1}} \partial_\nu \left(\frac{u}{(x_n)^{s-1}}\right),\]
which was proved in \cite{Gru20}. However, for our purposes, such a formula is not enough, and we need the new identity stated above, which involves the nonlocal energy.

Thanks to our novel integration by parts formulas, by using a contradiction and compactness argument, the $C^\gamma$ estimate reduces to showing a $C^\delta$ estimate for a simpler version of equation \eqref{eq:weighted-eq-intro} 
\begin{equation}\label{sfggg}
\begin{array}{rcll}
\int_{\{y_n>0\}} \big(w(x)-w(y)\big)(x_n)^{s-1}(y_n)^{s-1}K(x-y)dy &=& 0 & \quad  \text{ in } \{x_n>0\},\\
\partial_n w &=&0 &\quad \text{ on } \{x_n=0\},
\end{array}
\end{equation}
which in the weak sense reads as 
\[ \int_{\R^n} \int_{\R^n} \big(w(x)-w(y)\big)\big(\eta(x)-\eta(y)\big)(x_n)^{s-1}(y_n)^{s-1}K(x-y)dx\,dy = \int_{\{x_n=0\}} w\, \partial_{\theta_K'}\eta, \]
for every $\eta\in C^\infty_c(\R^n)$, where $\theta_K'\in \R^{n-1}$ is a fixed direction that depends only on the kernel $K$.

To prove a $C^\delta$ estimate for weak solutions of \eqref{sfggg} (see \autoref{lemma:C-alpha-estimate}), we use a De Giorgi iteration technique for nonlocal energies, and special care is needed due to the singular weights near $\{x_n=0\}$.
Such a $C^\delta$ estimate also allows us to establish a Liouville theorem for \eqref{sfggg} (see \autoref{prop:weighted-Liouville}), which is the key point to conclude the proof of the $C^\gamma$ estimate for the more general equation \eqref{eq:weighted-eq-intro} with coefficients in \autoref{lemma:bdry-reg}.

\vspace{2mm}

We actually do \emph{all} the proofs with a general weight of the form $(x_n)^{\beta-1}$, with $\beta \in [s,1+s]$.
This requires only little extra work, however, the most difficult case turns out to be $\beta=s$.
The case $\beta=1+s$ corresponds to the obstacle problem, and this allows us to prove \autoref{thm:main-obstacle-intro}.
Moreover, the case $\beta\in(s,1+s)$ will be useful in the study of the nonlocal Alt-Phillips problem.

\subsection{Outline}
This article is structured as follows.
In Section 2 we give some preliminary results, mainly related with the incremental quotients of the kernel as well as boundary estimates of solutions.
In Section 3 we prove our new integration by parts identities, both in the half-space and for general domains.
In Section 4 we implement the De Giorgi iteration to prove the $C^\delta$ estimate and the Liouville-type theorem needed to establish the H\"older estimates up to the boundary for general equations that are in turn developed in Section 5.
Finally, in Section 6 we deduce the higher regularity of free boundaries for the problems discussed above and prove our main results \autoref{thm:main-onephase-intro}, \autoref{thm:main-onephase-intro2}, and \autoref{thm:main-obstacle-intro}.

\subsection{Acknowledgments}
BB was partially supported by the project {\it An\'alisis de Fourier y Ecuaciones no locales en Derivadas Parciales} Grant PID2023-148028NB-I00 founded by MCIN/ AEI/10.13039/ 501100011033/ FEDER, UE.
XR and MW were supported by the European Research Council under the Grant Agreement No. 101123223 (SSNSD), and by AEI project PID2021-125021NA-I00 (Spain).
Moreover, BB and XR were partially supported by the AEI grant RED2024-153842-T (Spain). 
XR was also supported by the AEI--DFG project PCI2024-155066-2 (Spain--Germany), and by the Spanish State Research Agency through the Mar\'ia de Maeztu Program for Centers and Units of Excellence in R{\&}D (CEX2020-001084-M).

\section{Preliminaries}

In this section we collect several preliminary results and introduce some basic objects that will be relevant for the result of the paper. 
In the following, let $d_{\Omega}$ be a suitable regularized distance function as in \cite{AbRo20}.
Recall that the kernel $K$ satisfies
\begin{align}
\label{eq:K-comp}
K(h) = |h|^{-n-2s} K(h/|h|), \qquad \lambda \le K(h/|h|) = K(-h/|h|) \le \Lambda,
\end{align}
for some $0 < \lambda \le \Lambda < \infty$, and $s \in (0,1)$. 
Moreover, we will often assume that $\Vert K \Vert_{C^{k,\sigma}(\mathbb{S}^{n-1})} \le \Lambda$ for some $k \in \N \cup \{ 0 \}$ and $\sigma \in (0,1)$. In particular, if $k = 0$, this property implies
\begin{align}
\label{eq:K-reg}
|K(x) - K(y)| \le C \min\{ |x|,|y|\}^{-n-2s-\sigma'} |x-y|^{\sigma'}
\end{align}
for any $\sigma' \in (0,\sigma]$, where $C > 0$ depends only on $\sigma,\Lambda$. 

We also use the following notation, given a kernel $I : \R^n \times \R^n \to \R$
\begin{align*}
\mathcal{L}\big( I(x,y)\big)(u)(x) = 2 \int_{\R^n} \big( u(x) - u(y) \big) I(x,y) \d y.
\end{align*}

\subsection{Flattening the boundary}
\label{subsec:flattening}

Let $\Omega \subset \R^n$ be such that $\partial \Omega \in C^{k,\alpha}$ in $B_2$ for some $k \in \N$ and $\alpha \in (0,1]$. We consider a diffeomorphism that flattens the boundary around $0 \in \partial \Omega$ as in \cite[Lemma A.3]{AbRo20} (see also \cite[Section 2.4]{RoWe25} for more details), i.e. let $\Phi : B_2 \to B_2$ be a diffeomorphism such that $\Phi(0) = 0$, $\Phi(B_2 \cap \{ x_n =  0 \}) = B_{2} \cap \partial \Omega$, and $(x_n)_+ = d_{\Omega}(\Phi(x))$, and moreover with $\Phi \in C^{k,\alpha}(B_2) \cap C^{\infty}_{loc}(B_2 \cap \{ x_n > 0 \})$
\begin{align*}
|D^j \Phi| \le C_j d_{\Omega}^{k + \alpha - j} ~~ \text{ in } B_2 \cap \{ x_n > 0 \} \qquad \forall j \in \N: ~~ j > k + \alpha.
\end{align*}
Note that in particular we have
\begin{align*}
\delta_{i,n} = \nabla d_{\Omega}(\Phi(x)) \partial_i \Phi(x), \qquad \nabla d_{\Omega}(\Phi(x)) = \partial_n \Phi(x).
\end{align*}
Moreover, as in \cite{AbRo20} we assume that $\Omega$ is flat outside $B_2$ so that we can extend $\Phi$ by the identity in $B_2^c$. In this way, the regularity norms remains bounded and $\Phi(B_2^c) = B_2^c$.
Note that since $\Phi$ is a diffeomorphism, also $\Phi^{-1}$ is at least Lipschitz continuous. Hence, it always holds (globally after the extension)
\begin{align}
\label{eq:Phi-comp}
C^{-1} |x-y| \le |\Phi(x) - \Phi(y)| \le C |x-y|, \qquad C^{-1} |x| \le |\Phi(x)| \le C |x| \qquad \forall x,y \in \R^n,
\end{align}
where the second property follows since $\Phi(0) = 0$. Moreover, we can assume without loss of generality that $D \Phi(0) = I$. Therefore, it holds $|D \Phi(x) - I| \le C |x|^{\alpha}$, which implies that for $|x| \le \frac{1}{2}C^{-1/\alpha}$
\begin{align}
\label{eq:DPhi-comp}
\frac{1}{2} \le 1 - C |x|^{\alpha} \le |D \Phi(x)| \le 1 + C |x|^{\alpha} \le \frac{3}{2}.
\end{align}

Moreover, we will have that $|\det D \Phi| \ge C$, at least in a small ball centered at $0$. We assume from now on that this property and \eqref{eq:DPhi-comp} hold true in $B_2$ and since the extension of $\Phi$ satisfies it globally, we have it in the full space.

\subsection{Incremental quotients}

Given $h \in \R^n$ with $h \cdot e_n = h_n = 0$, we write $h = (h',h_n) = (h',0)$. Moreover, we set for $k \in \N$:
\begin{align*}
\Delta_h w(x) := \Delta^1_h w(x) := w(x+h) - w(x), \qquad \Delta^k_h w(x) = \Delta^{k-1}_h (\Delta_h w)(x).
\end{align*}
Similarly, for a function $K : \R^n \times \R^n \to \R$, we define 
\begin{align*}
\Delta_h K(x,y) := \Delta_h^1 K(x,y) := K(x+h,y+h) - K(x,y), \qquad \Delta^k_h K(x,y) = \Delta^{k-1}_h (\Delta_h K)(x,y).
\end{align*}
Moreover, given $k \in \N$ and $\alpha \in (0,1]$, we set 
\begin{align*}
D_h^{k+\alpha}w(x) = |h|^{-k-\alpha} \Delta^{k+1}_h w(x), \qquad D_h^{k+\alpha}K(x,y) = |h|^{-k-\alpha}\Delta^{k+1}_h K(x,y).
\end{align*}
Following the previous notation we can prove the next estimates regarding with the incremental quotients of the kernel that will be used often in our arguments.
\begin{lemma}
\label{lemma:K-estimate}
Let $K$ be as in \eqref{eq:K-comp} and assume that $K \in C^{k,\alpha}(\mathbb{S}^{n-1})$ for some $k \in \N$ and $\alpha \in (0,1]$ with $\Vert K \Vert_{C^{k,\alpha}(\mathbb{S}^{n-1})} \le \Lambda$. Moreover, let $\Phi \in C^{k,\alpha}(\R^n)$ be as in Subsection \ref{subsec:flattening}. Then, it holds
\begin{align}
|D_h^{k-1+\alpha} K(\Phi(x) - \Phi(y))| + |K(\Phi(x) - \Phi(y))| &\le C |x-y|^{-n-2s},\label{Pain1}
\end{align}
If in addition we assume that $K \in C^{k+1,\alpha}(\mathbb{S}^{n-1})$, it also holds
\begin{align}
|D_h^{k-1+\alpha} [K(\Phi(x) - \Phi(z))] - D_h^{k-1+\alpha} [K(\Phi(y) - \Phi(z))]| &\le C  \frac{|x-y|}{\min\{ |x-z| , |y-z| \}^{n+2s+1}}.\label{Pain2}
\end{align}
In the previous two estimates $C$ only depends on $n,s,\Lambda$, and the constant in \eqref{eq:Phi-comp}.
\end{lemma}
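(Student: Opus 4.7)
I start with the easy part of \eqref{Pain1}: the pointwise bound $|K(\Phi(x)-\Phi(y))| \le C|x-y|^{-n-2s}$ is immediate from the homogeneity $K(v)=|v|^{-n-2s}K(v/|v|)$, the sphere bound $K|_{\mathbb{S}^{n-1}} \le \Lambda$, and the bilipschitz estimate $|\Phi(x)-\Phi(y)| \sim |x-y|$ coming from \eqref{eq:Phi-comp}. For the incremental-quotient piece of \eqref{Pain1}, I plan to split according to whether $|h|$ is small or large compared with $|x-y|$. When $|h| \ge \tfrac{1}{4}|x-y|$, I apply the triangle inequality directly to $\Delta_h^k$ and use the pointwise bound on each of the $k+1$ evaluations (all at points whose separation is comparable to $|x-y|$, since translating both arguments by $jh$ preserves the separation) to obtain $|\Delta_h^k K(\Phi(x)-\Phi(y))| \le C|x-y|^{-n-2s}$; this suffices after multiplication by $|h|^{1-k-\alpha}$ because $1 \le C(|h|/|x-y|)^{k-1+\alpha}$ in this regime.

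When $|h| \le \tfrac{1}{4}|x-y|$, I parameterize by $\gamma(t) := K(\Phi(x+th)-\Phi(y+th))$, so that Newton's identity writes $\Delta_h^k K(\Phi(x)-\Phi(y))$ as an iterated integral of $\gamma^{(k)}$ over $[0,1]^k$. I then use a Faà di Bruno expansion to decompose $\gamma^{(k)}(t)$ into a finite sum of terms of the form $D^m K\bigl(\Phi(x+th)-\Phi(y+th)\bigr)\cdot\prod_{i=1}^m \bigl[D^{j_i}\Phi(x+th)-D^{j_i}\Phi(y+th)\bigr] h^{j_i}$, where $1 \le m \le k$ and $j_1+\cdots+j_m=k$. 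Each factor is estimated using three inputs: $|D^m K(v)| \le C|v|^{-n-2s-m}$ from the homogeneity plus $K|_{\mathbb{S}^{n-1}} \in C^{k,\alpha}$; $|\Phi(x+th)-\Phi(y+th)| \sim |x-y|$ in this regime; and $|D^{j_i}\Phi(x+th)-D^{j_i}\Phi(y+th)| \le C|x-y|^{\beta_i}$ for any $\beta_i \le \min\{1,k-j_i+\alpha\}$, using $\Phi \in C^{k,\alpha}$. The dominant contribution is the worst-case term $m=1$, $j_1=k$, which is of size $|h|^k|x-y|^{-n-2s-1+\alpha}$; the smallness $|h|\le \tfrac{1}{4}|x-y|$ then converts this into the desired $C|h|^{k-1+\alpha}|x-y|^{-n-2s}$ bound.

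For \eqref{Pain2}, I apply the same dichotomy to $F(x) := \Delta_h^k[K(\Phi(\cdot)-\Phi(z))](x)$ and estimate $F(x)-F(y)$. If $|x-y| \ge c\min(|x-z|,|y-z|)$, the triangle inequality combined with \eqref{Pain1} at each base point handles the estimate. If $|x-y|$ is much smaller, I run a mean-value step in the first variable along the segment between $x$ and $y$; this costs one additional derivative of $K$, producing factors of order $|v|^{-n-2s-m-1} \sim \min(|x-z|,|y-z|)^{-n-2s-m-1}$ and gaining the sought factor $|x-y|$. The extra derivative is affordable precisely because the second estimate is proved under the stronger hypothesis $K|_{\mathbb{S}^{n-1}} \in C^{k+1,\alpha}$. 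The principal obstacle in both parts is the Faà di Bruno bookkeeping: the exponent $k-1+\alpha$ in \eqref{Pain1}, rather than the naive $k+\alpha$, reflects that when all $k$ derivatives collapse onto a single copy of $\Phi$, the resulting difference $D^k\Phi(x+th)-D^k\Phi(y+th)$ is only $C^\alpha$, and balancing this loss against the singularity order of $D^m K$ is the delicate step.
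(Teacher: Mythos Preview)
Your argument for the regime $|h|\ge\tfrac14|x-y|$ in \eqref{Pain1} does not work. From the triangle inequality on $\Delta_h^k$ you correctly obtain $|\Delta_h^k K(\Phi(x)-\Phi(y))|\le C|x-y|^{-n-2s}$, but after dividing by $|h|^{k-1+\alpha}$ this yields only $C|h|^{-(k-1+\alpha)}|x-y|^{-n-2s}$. In this regime the best one can conclude is $|h|^{-(k-1+\alpha)}\le C|x-y|^{-(k-1+\alpha)}$, producing $C|x-y|^{-n-2s-(k-1+\alpha)}$, which is strictly weaker than the target when $|x-y|$ is small; the inequality $1\le C(|h|/|x-y|)^{k-1+\alpha}$ you invoke goes in the wrong direction. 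Nor does your Fa\`a di Bruno integral help here: the dominant term $m=1$, $j_1=k$ contributes $C|h|^{1-\alpha}|x-y|^{-n-2s-1+\alpha}$ after division, which again is unbounded by $C|x-y|^{-n-2s}$ when $|h|\gg|x-y|$. The crude triangle inequality exploits no cancellation among the $k+1$ evaluations, yet cancellation is exactly what is needed in this regime.

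The paper does not split by $|h|$ versus $|x-y|$ at the outset. Instead it converts only $k-1$ of the $k$ increments into derivatives, keeping one discrete $D_h^\alpha$ to which the case $k=1$ is then applied via the product rule. The Fa\`a di Bruno factors now involve $\partial_h^{\ell_i}\Phi$ with $\ell_i\le k-1$, all Lipschitz, so the entire difficulty is isolated in the remaining $D_h^\alpha$. For that base case the paper bounds the double increment $(\Phi(x+h)-\Phi(y+h))-(\Phi(x)-\Phi(y))$ in two ways, once by $C|h|\max(|h|^\alpha,|x-y|^\alpha)$ and once by $C|x-y|\max(|h|^\alpha,|x-y|^\alpha)$; the second estimate supplies a factor $|x-y|\cdot|h|^\alpha$ when $|h|>|x-y|$, which exactly cancels the $|h|^{-\alpha}$ from $D_h^\alpha$. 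This is the mechanism your large-$|h|$ step is missing. Your small-$|h|$ argument and your outline for \eqref{Pain2} are essentially correct and in line with the paper.
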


\begin{proof}
It holds by the two-sided bounds for $\Phi$ in \eqref{eq:Phi-comp}
\begin{align*}
|K(\Phi(x) - \Phi(y))| \le C |\Phi(x) - \Phi(y)|^{-n-2s} \le C [\Phi^{-1}]_{C^{0,1}(\R^n)}^{-1} |x-y|^{-n-2s}.
\end{align*}
Let us first prove \eqref{Pain1}, which in case $k=1$ simply reads:
\begin{align}\label{B3}
|D_h^{\alpha} K(\Phi(x) - \Phi(y))| \le C |x-y|^{-n-2s}.
\end{align}
To see this, recall that if $K \in C^{0,1}(\mathbb{S}^{n-1})$
\begin{align}\label{toPain2}
|K(x) - K(y)| \le C \min\{|x| , |y| \}^{-n-2s-1} |x-y|. 
\end{align}
Therefore, 
\begin{align}\label{kk1}
|D_h^{\alpha} K(\Phi(x) - \Phi(y))| &\le |h|^{-\alpha} |K(\Phi(x+h) - \Phi(y+h)) - K(\Phi(x) - \Phi(y))| \nonumber\\
&\le C |h|^{-\alpha} \min\{|\Phi(x+h) - \Phi(y+h)| , |\Phi(x) - \Phi(y)| \}^{-n-2s-1} \nonumber\\
&\qquad \qquad \qquad \qquad |(\Phi(x+h) - \Phi(y+h)) - (\Phi(x) - \Phi(y))| \\
&\le C [\Phi^{-1}]_{C^{0,1}(\R^n)}^{n+2s+1} |x-y|^{-n-2s-1} |h|^{-\alpha}|(\Phi(x+h) - \Phi(y+h)) - (\Phi(x) - \Phi(y))|\nonumber.
\end{align}
Let us estimate the last factor as follows
\begin{align}
\begin{split}
\label{eq:Pain1-help0}
& |(\Phi(x+h) - \Phi(y+h)) - (\Phi(x) - \Phi(y))| \le |D \Phi(x) - D \Phi(y)||h| \\
&\qquad + |\Phi(x+h) - \Phi(x) - D \Phi(x) \cdot h| + |\Phi(y+h) - \Phi(y) - D \Phi(y) \cdot h| \\
&\le [\Phi]_{C^{1+\alpha}(\R^n)} |h| \max\{|h|^{\alpha} , |x-y|^{\alpha} \}.
\end{split}
\end{align}
Moreover, it holds
\begin{align}
\begin{split}
\label{eq:Pain1-help}
& |(\Phi(x+h) - \Phi(y+h)) - (\Phi(x) - \Phi(y))| \le |D \Phi(y+h) - D \Phi(y)||x-y| \\
&\qquad + |\Phi(x+h) - \Phi(y+h) - D \Phi(y+h) \cdot (x-y)| + |\Phi(x) - \Phi(y) - D \Phi(y) \cdot (x-y)| \\
&\le [\Phi]_{C^{1+\alpha}(\R^n)} |x-y| \max\{|h|^{\alpha} , |x-y|^{\alpha} \}.
\end{split}
\end{align}
Thus, by combination of the previous two estimates, we conclude
\begin{align*}
|D_h^{\alpha} K(\Phi(x) - \Phi(y))| &\le C |x-y|^{-n-2s-1} |h|^{-\alpha} |(\Phi(x+h) - \Phi(y+h)) - (\Phi(x) - \Phi(y))| \\
 &\le C |x-y|^{-n-2s-1} \begin{cases}
|h|^{1-\alpha} |x-y|^{\alpha}, ~~& \text{ if } |h| \le |x-y|, \\
|x-y|, ~~& \text{ if } |x-y| \le |h|
\end{cases} \\
&\le C |x-y|^{-n-2s}.
\end{align*}
This proves the claim in case $k = 1$. To prove it for $k=2$ let us assume that $\Phi,\, K \in C^{2,\alpha}$. Using that 
\begin{align}\label{grad}
|DK(x-y)| \le |x-y|^{-n-2s-1},
\end{align}
and that $|DK(x) - DK(y)| \le C \min\{|x| , |y| \}^{-n-2s-2} |x-y|$, by Taylor and a similar computation as the one done in \eqref{kk1}, we get
\begin{align*}
|D_h^{1+\alpha} K(\Phi(x) - \Phi(y))| &\le {\sup _{t\in [0,1]}|D_h^{\alpha} \left[(\partial_{(h,h)})(K(\Phi(x+th) - \Phi(y+th))\right]|}\\
& \le \sup _{t\in [0,1]} |D_h^{\alpha} [ (DK)(\Phi(x+th) - \Phi(y+th)) (\partial_h \Phi(x+th) - \partial_h \Phi(y+th)) ] | \\
&\le \sup _{t\in [0,1]} |[D_h^{\alpha} (DK)(\Phi(x+th) - \Phi(y+th)) ](\partial_h \Phi(x+th) - \partial_h \Phi(y+th)) ] | \\
&\quad + |(DK)(\Phi(x+th) - \Phi(y+th)) D_h^{\alpha}[(\partial_h \Phi(x+th) - \partial_h \Phi(y+th)) ] |\\
&\le C |x-y|^{-n-2s-1} \left(|x-y| +\sup _{t\in [0,1]} |D_h^{\alpha}[(\partial_h \Phi(x+th) - \partial_h \Phi(y+th)) ] |\right).
\end{align*}
Here, by the notation $\partial_{(h,h)} G(x,y)$ for a function $G(x,y)$ we mean the derivative of $G$ into the direction $(h,h)$.
We conclude the proof for the case $k=2$ by using that, since $\partial_h \Phi\in C^{1,\alpha}$ (see \eqref{eq:Pain1-help0} and \eqref{eq:Pain1-help}), 
\begin{align}
\label{eq:Pain1-help1}
\begin{split}
 D_h^{\alpha}[\partial_h \Phi(x) - \partial_h \Phi(y)] &= |h|^{-\alpha}|(\partial_h\Phi(x+h) - \partial_h\Phi(y+h)) - (\partial_h\Phi(x) - \partial_h\Phi(y))| \\
&\le |{D \partial_h}\Phi(y+h) -{D \partial_h}\Phi(y)||x-y| \\
&\qquad + |\partial_h\Phi(x+h) - \partial_h\Phi(y+h) - D \partial_h\Phi(y+h) \cdot (x-y)| \\
&\quad + |\partial_h\Phi(x) - \partial_h\Phi(y) - D \partial_h\Phi(y) \cdot (x-y)| \\
&\le C \begin{cases}
|h|^{1-\alpha} |x-y|^{\alpha}, ~~& \text{ if } |h| \le |x-y|, \\
|x-y|, ~~& \text{ if } |x-y| \le |h|
\end{cases}\\
&\le C|x-y|.
\end{split}
\end{align} 

We can now prove the result for general $k\in\N$ by proceeding in an analogous way as before. Indeed, if $\Phi,\, K \in C^{k,\alpha}$, we get
\begin{align*}
&|D_h^{k-1+\alpha} K(\Phi(x)- \Phi(y))|\le \sup _{t\in [0,1]}\left|D_h^{\alpha} \left[(\partial^{k-1}_{(h,h)})\big(K(\Phi(x+th) - \Phi(y+th))\big)\right]\right|\\
& \le \sup _{t\in [0,1]}\left|D_h^{\alpha} \left[  \sum_{j=1}^{k-1} C_{k-1,j} D^j K(\Phi(x+th) - \Phi(y+th))  
\sum_{\substack{\ell_1 + \cdots + \ell_j = k-1 \\ \ell_i \geq 1}}
\prod_{i=1}^{j} \left( \partial^{\ell_i}_{h} \Phi(x+th) - \partial^{\ell_i}_{h} \Phi(y+th) \right)
\right] \right| \\
& \le \sup _{t\in [0,1]} \left(\sum_{j=1}^{k-1} C_{k-1,j}\left| D_h^{\alpha} \left[ D^j K(\Phi(x+th) - \Phi(y+th)) \right]\right| 
\sum_{\substack{\ell_1 + \cdots + \ell_j = k-1 \\ \ell_i \geq 1}}
\prod_{i=1}^{j} \left| \partial^{\ell_i}_{h} \Phi(x+th) - \partial^{\ell_i}_{h} \Phi(y+th) \right|\right.  \\
&\quad +\left. \sum_{j=1}^{k-1} C_{k-1,j}\left| D^j K(\Phi(x+th) - \Phi(y+th)) \right|
\sum_{\substack{\ell_1 + \cdots + \ell_j = k-1 \\ \ell_i \geq 1}}\left| D_h^{\alpha} \left[
\prod_{i=1}^{j} \left(\partial^{\ell_i}_{h} \Phi(x+th) - \partial^{\ell_i}_{h} \Phi(y+th) \right)\right] \right|\right)\\
&\leq C|x-y|^{-n-2s}.
\end{align*}
To get the last estimate we have used that, for every $j \in \{ 1,\,\ldots, k-1 \}$,
\begin{align*}
|D^j K(x) - D^j K(y)| \le C \min\{|x| , |y| \}^{-n-2s-(j+1)} |x-y|, \qquad |D^jK(x-y)| \le |x-y|^{-n-2s-j},
\end{align*}
which in particular implies $D_h^{\alpha} \left[ D^j K(\Phi(x) - \Phi(y)) \right] \leq C|x-y|^{-n-2s-j}$ (see \eqref{kk1}). Moreover the regularity of $\Phi \in C^{k,\alpha}$ allows us to deduce
\begin{align*}
D_h^{\alpha} \left[
\prod_{i=1}^{j} \left( \partial^{\ell_i}_{h} \Phi(x+th) - \partial^{\ell_i}_{h} \Phi(y+th) \right) \right]\leq C|x-y|^{j},
\end{align*}
by using the product rule for the incremental quotients, and the fact that
\begin{align*}
|D_h^{\alpha}(\partial^{\ell_i}_{h} \Phi(x+th) - \partial^{\ell_i}_{h} \Phi(y+th))|\leq C|x-y|, \quad \ell_i\leq k-1.
\end{align*}
and similar computations as the ones done in \eqref{eq:Pain1-help1}.

Let us now prove \eqref{Pain2}. 
We have by the mean value formula and the product rule 
\begin{align*}
&|D_h^{k-1+\alpha} [K(\Phi(x) - \Phi(z))] - D_h^{k-1+\alpha} [K(\Phi(y) - \Phi(z))]|\\
&=\left|D_h^{k-1+\alpha} \int_{0}^{1}\frac{d}{dt}K\left(\Phi(tx+(1-t)y)-\Phi(z)\right) \d t \right|\\
&=\left|D_h^{k-1+\alpha} \int_{0}^{1}DK \left(\Phi(tx+(1-t)y)-\Phi(z)\right)  \cdot (D\Phi)(tx+(1-t)y)(x-y) \d t \right|
\end{align*}
Taking into account that $D_{h}^{r}G(x,y):=D_{h}^{r}(x-y)=0$ unless $r=0$, applying the product rule for the incremental quotients we get
\begin{align*} 
&|D_h^{k-1+\alpha} [K(\Phi(x) - \Phi(z))] - D_h^{k-1+\alpha} [K(\Phi(y) - \Phi(z))]|\\
&\le |x-y|\int_0^1  \left|D_h^{k-1+\alpha} \left[DK \left(\Phi(tx+(1-t)y)-\Phi(z)\right)(D\Phi)(tx+(1-t)y)\right]\right| \d t \\
&\le C |x-y| \sum_{l = 1}^{k} \int_0^1  \left|D_h^{l-1+\alpha} \left(DK \left(\Phi(tx+(1-t)y)-\Phi(z)\right)\right)\right| |D_h^{k-l}(D\Phi)(tx+(1-t)y+(l-1)h)| \d t \\
&\quad + C |x-y|\int_{0}^{1}\left|DK \left(\Phi(tx+(1-t)y)-\Phi(z)\right)\right| |D_h^{k-1+\alpha} (D\Phi)(tx+(1-t)y| \d t.
\end{align*}
Using now that $\Phi\in C^{k,\alpha}(\R^n)$ (which in particular implies $|D_h^{k-1+\alpha} (D\Phi)|\leq C$ with $C$ independent of $h$) and that $DK\in C^{k,\alpha}(\mathbb{S}^{n-1})$, by \eqref{grad} and \eqref{Pain1} applied to a derivative of $K$, we conclude
\begin{align*}
&|D_h^{k-1+\alpha} [K(\Phi(x) - \Phi(z))] - D_h^{k-1+\alpha} [K(\Phi(y) - \Phi(z))]|\\
&\qquad \leq C|x-y|\left(\int_{0}^{1} |(x-y)t+(y-z)|^{-n-2s-1} \, dt\right) \leq \frac{C|x-y|}{\min\{|x-z| , |y-z|  \}^{n+2s+1}}.
\end{align*}
\end{proof}

\subsection{Fine boundary asymptotic for solutions to nonlocal equations}

We introduce the following H\"older spaces with respect to a domain $\Omega \subset \R^n$ for which boundary decay estimates hold true localized in a ball $B \subset \R^n$. That is, given $k,l \in \N$, $\alpha,\beta \in (0,1]$ with $l + \beta \le k + \alpha$, we define
\begin{align*}
C^{k+\alpha}_{l+\beta}(\Omega | B) = \Big\{ v \in C^{k+\alpha}_{loc}(\Omega) \cap C^{l+\beta}(\overline{\Omega}) ~:~ & \Vert v \Vert_{C^{k+\alpha}_{l+\beta}(\Omega | B)} < \infty \Big\},
\end{align*}
where
\begin{align*}
\Vert v \Vert_{C^{k+\alpha}_{l+\beta}(\Omega | B)} &:= \Vert v \Vert_{C^{l+\beta}(\overline{\Omega} \cap B)} + [v]_{C^{k+\alpha}_{l+\beta}(\Omega | B)},\\
[v]_{C^{k+\alpha}_{l+\beta}(\Omega | B)} &:= \sup \Big\{r^{-(l+\beta) + (m+\gamma)} [v]_{C^{m+\gamma}(B_{r/2}(x_0))} :  m+\gamma \in [l+\beta,k+\alpha],\\
& \qquad \qquad \qquad\qquad \qquad \qquad \qquad \qquad \qquad~~ x_0 \in \Omega, ~ r = d_{\Omega}(x_0),\, B_r(x_0) \subset \Omega \cap B \Big\}.
\end{align*}

These function spaces occur frequently in our analysis. They take into account the discrepancy between interior and boundary regularity, but keep track of the correct scaling of the interior estimates at the boundary. Note that $C^{k+\alpha}_{k+\alpha}(\Omega | B) = C^{k+\alpha}(\overline{\Omega} \cap B)$.

The following lemma generalizes the boundary regularity results from \cite{AbRo20} and \cite{RoWe24a}. Its potential suboptimality with respect to the regularity of $K$ is due to \cite[Theorem 2.2]{AbRo20}.

\begin{lemma}
\label{lemma:boundary-reg}
Let $\Omega \subset \R^n$ be a bounded domain with $\partial \Omega \in C^{k,\alpha}$ for some $k \in \N$ and $\alpha \in (0,1)$. Let $K$ be as in \eqref{eq:K-comp} with $K\in C^{2(k+\alpha) + 1}(\mathbb{S}^{n-1})$, $\Vert K \Vert_{C^{2(k+\alpha) + 1}(\mathbb{S}^{n-1})} \le \Lambda$, and  $\alpha \not \in\{s,1-s\}$, and let $u \in L^{\infty}(\R^n)$ be a solution to
\begin{align*}
\begin{cases}
L u &= f ~~ \text{ in } \Omega \cap B_1,\\
u &= 0 ~~ \text{ in } B_1 \setminus \Omega.
\end{cases}
\end{align*}
If $f \in C^{k+\alpha}(\overline{\Omega} \cap B_1)$, then, it holds $u/d_{\Omega}^s \in C^{k+\alpha+2s}_{k-1,\alpha}( \Omega | B_{1/2})$ and
\begin{align*}
\left\Vert \frac{u}{d_{\Omega}^s} \right\Vert_{C^{k+\alpha+2s}_{k-1,\alpha}(\Omega | B_{1/2})} \le C \left(  \Vert u \Vert_{L^{\infty}(\R^n)} + \Vert f \Vert_{C^{k+\alpha}(\overline{\Omega} \cap B_1)} \right).
\end{align*}
Moreover, if $f \in C^{k+\alpha+1}(\Omega \cap B_1)$, then it holds $\nabla u/d_{\Omega}^{s-1} \in C^{k+\alpha+2s}_{k-1,\alpha}(\overline{\Omega} | B_{1/2})$ and
\begin{align*}
\left\Vert \frac{\nabla u}{d_{\Omega}^{s-1}} \right\Vert_{C^{k+\alpha+2s}_{k-1,\alpha}(\Omega | B_{1/2})} \le C \left(  \Vert u \Vert_{L^{\infty}(\R^n)} + \Vert f \Vert_{C^{k+\alpha+1}(\overline{\Omega} \cap B_1)} \right).
\end{align*}
The constant $C$ depends only on $n,s,\lambda,\Lambda,k,\alpha$, and the $C^{k,\alpha}$ radius of $\Omega$.
\end{lemma}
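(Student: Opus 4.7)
The plan is to combine the boundary regularity of $u/d_\Omega^s$ (and of $\nabla u/d_\Omega^{s-1}$) provided by \cite[Theorem 2.2]{AbRo20} with interior Schauder estimates for $L$ of the type in \cite{RoWe24a}. The former delivers the $C^{k-1,\alpha}$-part of the target weighted H\"older norm, while the latter produces the higher interior regularity with the correct distance-weighted scaling. The hypotheses $K\in C^{2(k+\alpha)+1}(\mathbb{S}^{n-1})$ and $\alpha\notin\{s,1-s\}$ appear precisely because they are those of \cite[Theorem 2.2]{AbRo20}, which the authors already flag as the source of the (possibly suboptimal) regularity requirement on $K$.

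First I would invoke \cite[Theorem 2.2]{AbRo20} on $\Omega\cap B_{3/4}$ to obtain
\[\bigl\Vert u/d_\Omega^s\bigr\Vert_{C^{k-1,\alpha}(\overline{\Omega}\cap B_{3/4})}\le C\bigl(\Vert u\Vert_{L^\infty(\R^n)}+\Vert f\Vert_{C^{k+\alpha}(\overline{\Omega}\cap B_1)}\bigr),\]
together with the analogous estimate for $\nabla u/d_\Omega^{s-1}$ under the stronger assumption $f\in C^{k+\alpha+1}$. This already furnishes the boundary seminorm $\Vert\cdot\Vert_{C^{k-1,\alpha}(\overline{\Omega}\cap B_{1/2})}$ of the target norm, and it also produces the pointwise bound $u(x)\le C d_\Omega(x)^s$ that would be used to control the tails in the subsequent interior step.

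For the interior seminorm I would fix $x_0\in\Omega\cap B_{1/2}$, set $r:=d_\Omega(x_0)$, and consider the reduced, rescaled function
\[v(y):=(u/d_\Omega^s)(x_0+ry)-P_{x_0}(x_0+ry),\]
where $P_{x_0}$ is the Taylor polynomial of $u/d_\Omega^s$ at $x_0$ of degree $k-1$. By the first step, $\Vert v\Vert_{L^\infty(B_{1/2})}\le C r^{k-1+\alpha}$ uniformly in $r$. Using the smoothness of the regularized distance on $B_r(x_0)$ and the scaling \eqref{eq:K-comp} of $K$, one would verify that $v$ satisfies a nonlocal equation $\widetilde L_r v=F_r$ on $B_{1/2}$ with $C^{k+\alpha}$ variable kernel and right-hand side satisfying $\Vert F_r\Vert_{C^{k+\alpha}(B_{1/2})}\le C r^{k-1+\alpha}$, while the tail $\Vert v\Vert_{L^1_{2s}(\R^n)}$ is controlled using $u\le C d_\Omega^s$. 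An interior Schauder estimate then gives $[v]_{C^{k+\alpha+2s}(B_{1/4})}\le C r^{k-1+\alpha}$, and unrescaling (noting that $P_{x_0}$ has vanishing $C^{k+\alpha+2s}$-seminorm) yields
\[[u/d_\Omega^s]_{C^{k+\alpha+2s}(B_{r/4}(x_0))}\le C r^{-(1+2s)}\bigl(\Vert u\Vert_{L^\infty(\R^n)}+\Vert f\Vert_{C^{k+\alpha}(\overline{\Omega}\cap B_1)}\bigr),\]
which is precisely the scaling $r^{(l+\beta)-(m+\gamma)}$ with $l+\beta=k-1+\alpha$, $m+\gamma=k+\alpha+2s$ demanded by the definition of $C^{k+\alpha+2s}_{k-1,\alpha}(\Omega|B_{1/2})$. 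Combined with the first step this finishes the claim for $u/d_\Omega^s$, and the statement for $\nabla u/d_\Omega^{s-1}$ follows by the same argument.

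The technical step I expect to be the main obstacle is the derivation of the equation satisfied by $v$ and the uniform $r$-bound on its right-hand side $F_r$: without the Taylor subtraction, a direct application of interior Schauder to $u/d_\Omega^s$ would only yield the rate $r^{-(k+\alpha+2s)}$, which is too weak, because the $L^\infty$-size of $u/d_\Omega^s$ does not by itself encode the boundary $C^{k-1,\alpha}$-regularity. The subtraction is what couples the interior and boundary sides of the weighted norm, but writing down the equation for $v$ in closed form requires carefully commuting $L$ with multiplication by $d_\Omega^s$ and with the translation by the polynomial $P_{x_0}$, a routine but delicate calculation resting on the interior smoothness of $d_\Omega$ and the $C^{k+\alpha}$-regularity of $K$.
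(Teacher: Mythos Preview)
Your overall structure --- combine the $C^{k-1,\alpha}$ boundary regularity of $u/d^s_\Omega$ from \cite{AbRo20} with interior Schauder after a polynomial subtraction --- matches the paper's. But your specific subtraction creates a difficulty that you underestimate as ``routine''.

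You subtract the Taylor polynomial $P_{x_0}$ of $u/d^s_\Omega$ at the \emph{interior} point $x_0$ and try to apply Schauder to $v=u/d^s_\Omega-P_{x_0}$. The problem is that $u/d^s_\Omega$ satisfies no clean $L$-equation: to obtain one you must commute $L$ with multiplication by $d^s_\Omega$, and the commutator is an integral over all of $\R^n$ involving $d^s_\Omega(x)-d^s_\Omega(y)$, where $d^s_\Omega$ is globally only $C^s$. Getting $C^{k+\alpha}$ control on this term with the correct $r$-scaling is precisely the hard part. There is also a tail issue: $v$ is naturally defined only in $\Omega$, and your proposed tail control via $u\le C d^s_\Omega$ confuses $u$ with $u/d^s_\Omega$; moreover $P_{x_0}$ grows like $|y|^{k-1}$ after rescaling.

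The paper sidesteps both issues by working at the level of $u$ rather than $u/d^s_\Omega$. It uses the boundary expansion delivered by \cite{AbRo20}: for each $z\in\partial\Omega$ there is $Q_z\in\mathcal P_{k-1}$ with $|u(x)-Q_z(x)\,d^s_\Omega(x)|\le C|x-z|^{k-1+\alpha+s}$. Taking $z$ to be the boundary projection of $x_0$, the function $u-Q_z d^s_\Omega$ is globally defined, controlled by the expansion, and satisfies the clean equation $L(u-Q_z d^s_\Omega)=f-L(Q_z d^s_\Omega)$, whose right-hand side is bounded in $C^{k+\alpha}$ by \cite[Corollary 2.3]{AbRo20}. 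Interior Schauder then gives $[u-Q_z d^s_\Omega]_{C^{k+\alpha+2s}(B_{r/2}(x_0))}\le Cr^{-s-1}$. The conversion to $u/d^s_\Omega$ uses only the identity $u\,d^{-s}_\Omega-Q_z=(u-Q_z d^s_\Omega)\,d^{-s}_\Omega$ and the local product rule with $\Vert\partial^\xi d^{-s}_\Omega\Vert_{L^\infty(B_{r/2}(x_0))}\le Cr^{-s-|\xi|}$: the ``commutation'' is done a posteriori on interior balls where $d^{-s}_\Omega$ is smooth, not at the level of the nonlocal operator.
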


\begin{proof}
After a normalization we can assume that 
\begin{align*}
\Vert u \Vert_{L^{\infty}(\R^n)} + \Vert f \Vert_{C^{k-1+\alpha-s}(\Omega \cap B_1)} = 1.
\end{align*}
Let us start by proving the first claim.
Note that $u/d_{\Omega}^s \in C^{k-1,\alpha}(\overline{\Omega} \cap B_{1/2})$ together with the corresponding estimate follows from \cite[Theorem 1.4]{AbRo20} in case $k > 1$ since $\alpha+1\notin\mathbb{N}$, $\alpha+1 \mp s\notin\mathbb{N}$, and from \cite{RoSe17,FeRo24} in case $k = 1$. A crucial step in these articles is the following estimate:  for any $z \in \partial \Omega \cap B_{1/2}$ there exists a polynomial $Q_z \in \cP_{k-1}$ such that for any $x \in \Omega \cap B_{1/2}(z)$ it holds
\begin{align*}
|u(x) - Q_z(x) d_{\Omega}^s(x)| \le C |x-z|^{k-1+\alpha+s} \le C |x-z|^{k+\alpha+2s} d_{\Omega}^{-1-s}(x).
\end{align*}
In particular, we get that for any $x_0 \in \Omega \cap B_{1/2}(z)$, denoting $r = d_{\Omega}(x_0)$,
\begin{align}
\label{eq:aux-reg-est-1}
\Vert u - Q_z d^s_{\Omega} \Vert_{L^{\infty}(B_{r/2}(x_0))} \le c r^{k+\alpha + s - 1}, \qquad [ u - Q_z d_{\Omega}^s]_{C^{k+\alpha+2s}(B_{r/2}(x_0))} \le c r^{-s-1}.
\end{align}
Indeed, while the first estimate is immediate from the expansion, the second result follows by denoting
\begin{align*}
v_r(x) = r^{-k-\alpha-2s} \big( u(x_0 + rx) - Q_z(x_0 + rx) d^s_{\Omega}(x_0 + rx) \big),
\end{align*}
which is a solution to
\begin{align*}
L v_r = \widetilde{f} := r^{-k-\alpha} \big( f(x_0 + r \cdot) - L (Q_z d^s_{\Omega}(x_0 + r \cdot) \big),
\end{align*}
and observing that by the first estimate, and the assumptions on $f$ we have
\begin{align*}
\Vert v_r \Vert_{L^{\infty}(B_R)} &\le C(1 + r^{-1-s}) ~~ \forall R > 0, \\
[\widetilde{f}]_{C^{k+\alpha}(B_1)} &\le [f]_{C^{k+\alpha}(\overline{\Omega} \cap B_1)} + [L (Q_z d_{\Omega}^s)]_{C^{k+\alpha}(\overline{\Omega} \cap B_1)} \le C r^{-1-s},
\end{align*}
where we used \cite[Corollary 2.3]{AbRo20} (or rather \cite[Corollary 3.9]{Kuk21}) to deduce the estimate for $L (Q_z d_{\Omega}^s)$ (see also \cite[Proof of Lemma 2.4]{RoWe24a}). By applying interior Schauder estimates of order $C^{k+\alpha+2s}$ to $v_r$, and using the two estimates from above, we deduce the second estimate in \eqref{eq:aux-reg-est-1}, exactly as in the proof of \cite[Theorem 1.4]{AbRo20}.

Hence, by H\"older interpolation we deduce that for any $\delta \in (0,k+\alpha+2s]$
\begin{align}
\label{eq:expansion-Holder-interpol}
[ u - Q_z d_{\Omega}^s]_{C^{\delta}(B_{r/2}(x_0))} \le c r^{k +\alpha + s-1 - \delta}.
\end{align}

To establish the first claim, we now follow the proof of \cite[Proof of Theorem 1.4]{AbRo20}. Indeed, given now any $j \in \N \cup \{ 0 \}$ and $\beta \in (0,1]$ with $k-1+\alpha \le j + \beta \le k+\alpha+2s$, we deduce from the fact that for any multi-index $\xi \in (\N \cup \{0\})^n$
\begin{align*}
\Vert \partial^{\xi} d_{\Omega}^{-s}\Vert_{L^{\infty}(B_{r/2}(x_0))} \le C r^{-s - |\xi|} , \quad [ \partial^{\xi} d_{\Omega}^{-s}]_{C^{\beta}(B_{r/2}(x_0))} \le C r^{-s - |\xi| - \beta}, \quad \Vert d^{s}_{\Omega} \Vert_{L^{\infty}(B_{r/2}(x_0))} \le C r^s,
\end{align*}
which follows from \cite[Lemma A.2]{AbRo20}, by \eqref{eq:expansion-Holder-interpol} and by the product rule:
\begin{align*}
[ud^{-s}_{\Omega}]_{C^{j+\beta}(B_{r/2}(x_0))} &= [ud^{-s}_{\Omega} - Q_z]_{C^{j+\beta}(B_{r/2}(x_0))} \\
&= \big[D^j \big( d^{-s}_{\Omega} (u - Q_z d^s_{\Omega}) \big) \big]_{C^{\beta}(B_{r/2}(x_0))} \\
&\le \sum_{|\gamma| = j} \sum_{|\xi| \le \gamma} \left[ (\partial^{\xi} d_{\Omega}^{-s}) \big( \partial^{\gamma - |\xi|} (u - Q_z d^s_{\Omega}) \big) \right]_{C^{\beta}(B_{r/2}(x_0))} \\
&\le C\sum_{|\gamma| = j} \sum_{|\xi| \le \gamma}\Bigg(  \Vert \partial^{\xi} d^{-s}_{\Omega}\Vert_{L^{\infty}(B_{r/2}(x_0))} \left[ \partial^{\gamma - |\xi|} (u - Q_z d^s_{\Omega}) \right]_{C^{\beta}(B_{r/2}(x_0))} \\
&\qquad\qquad\qquad + [ \partial^{\xi} d^{-s}_{\Omega}]_{C^{\beta}(B_{r/2}(x_0))} \left\Vert \partial^{\gamma - |\xi|} (u - Q_z d^s_{\Omega}) \right\Vert_{L^{\infty}(B_{r/2}(x_0))} \bigg) \\
&\le C \sum_{|\gamma| = j} \sum_{|\xi| \le \gamma} r^{-s-|\xi|} r^{k+\alpha+s-1 - |\gamma| + |\xi| - \beta} + r^{-s-|\xi|-\beta} r^{k+\alpha+s-1-|\gamma|+|\xi|} \\
&\le C r^{k+\alpha-1 - (j + \beta)},
\end{align*}
where we also used that $Q_z \in \cP_{k-1}$. Since by the first estimate in \eqref{eq:aux-reg-est-1} we have in particular
\begin{align}
\label{eq:aux-reg-est-2}
\begin{split}
\Vert ud^{-s}_{\Omega} \Vert_{L^{\infty}(B_{r/2}(x_0))} &\le r^{-s} \Vert u \Vert_{L^{\infty}(B_{r/2}(x_0))} \\
&\le r^{-s} \Vert u - Q_z d^{s}_{\Omega} \Vert_{L^{\infty}(B_{r/2}(x_0))} + C \Vert Q_z \Vert_{L^{\infty}(B_{r/2}(x_0))} \le C,
\end{split}
\end{align}
we can immediately deduce the first claim by H\"older interpolation.

The proof of the second claim follows by a very similar argument, resembling the one in the proof of \cite[Theorem 1.4]{RoWe24a}. 
First, note that by the slightly stronger assumptions on $f$, we now have that \eqref{eq:expansion-Holder-interpol} holds true for any $k-1+\alpha \le j+\beta \le k + \alpha + 1 + 2s$. Moreover, since for any multi-index $\xi \in (\N \cup \{0\})^n$,
\begin{align*}
\Vert \partial^{\xi} d^{1-s}_{\Omega}\Vert_{L^{\infty}(B_{r/2}(x_0))} \le C r^{1-s - |\xi|} , \quad [ \partial^{\xi} d^{1-s}_{\Omega}]_{C^{\beta}(B_{r/2}(x_0))} \le C r^{1-s - |\xi| - \beta}, \quad \Vert d^{1-s}_{\Omega} \Vert_{L^{\infty}(B_{r/2}(x_0))} \le C r^{1-s},
\end{align*}
which follows by the properties of $d_{\Omega}$ from \cite[Lemma A.2]{AbRo20} and H\"older interpolation, we deduce by the product rule and \eqref{eq:expansion-Holder-interpol} that for any $k-1+\alpha \le j+\beta \le k + \alpha + 1 + 2s$:
\begin{align*}
[ud^{1-s}_{\Omega} - Q_z d_{\Omega}]_{C^{j+\beta}(B_{r/2}(x_0))}  &= \big[D^j \big( d^{1-s}_{\Omega} (u - Q_z d^s_{\Omega}) \big) \big]_{C^{\beta}(B_{r/2}(x_0))} \\
&\le \sum_{|\gamma| = j} \sum_{|\xi| \le \gamma} \left[ (\partial^{\xi} d^{1-s}_{\Omega}) \big( \partial^{\gamma - \xi} (u - Q_z d^s_{\Omega}) \big) \right]_{C^{\beta}(B_{r/2}(x_0))} \le C r^{k+\alpha - (j + \beta)}.
\end{align*}
Since $Q_z \in \cP_{k-1}$ we have by the product rule and \cite[Lemma A.2]{AbRo20},
\begin{align*}
[Q_z d_{\Omega}]_{C^{j+\beta}(B_{r/2}(x_0))} = \Vert Q_z \Vert_{L^{\infty}(B_{r/2}(x_0))} [d_{\Omega}]_{C^{j+\beta}(B_{r/2}(x_0))} \le C r^{k+\alpha - (j+\beta)},
\end{align*}
and therefore for any $k-1+\alpha \le j+\beta \le k + \alpha + 2s$
\begin{align*}
[\nabla(ud^{1-s}_{\Omega})]_{C^{j+\beta}(B_{r/2}(x_0))} = [ud_{\Omega}^{1-s}]_{C^{j+\beta+1}(B_{r/2}(x_0))} \le C r^{k+\alpha - 1 - (j + \beta)}.
\end{align*}
Since \eqref{eq:aux-reg-est-2} and the first claim in particular imply (using H\"older interpolation again),
\begin{align*}
\Vert \nabla(ud^{1-s}_{\Omega}) \Vert_{L^{\infty}(B_{r/2}(x_0))} &= [u d^{1-s}_{\Omega}]_{C^{0,1}(B_{r/2}(x_0))} \\
&\le \Vert d_{\Omega} \Vert_{L^{\infty}(B_{r/2}(x_0))} [ud^{-s}_{\Omega}]_{C^{0,1}(B_{r/2}(x_0))} \\
&\quad + \Vert ud^{-s}_{\Omega} \Vert_{L^{\infty}(B_{r/2}(x_0))} [d_{\Omega}]_{C^{0,1}(B_{r/2}(x_0))} \le C ,
\end{align*}
we can also deduce the second claim immediately by H\"older interpolation. The proof is complete.
\end{proof}

Moreover, we need the following generalization of the Hopf lemma.

\begin{lemma}
\label{lemma:Hopf}
Let $\Omega \subset \R^n$ be a bounded domain with $\partial \Omega \in C^{1,\alpha}$ for some $\alpha \in (0,1)$ with $\alpha \not\in\{s,1-s\}$. Let $K$ be as in \eqref{eq:K-comp} with $K \in C^{3 + 2\alpha}(\mathbb{S}^{n-1})$, and $u \in L^{\infty}(\R^n)$ be a solution to
\begin{align*}
\begin{cases}
L u &= f \ge 0 ~~ \text{ in } \Omega \cap B_1,\\
u &\ge 0 ~~ \text{ in }  \R^n \setminus (\Omega \cap B_1).
\end{cases}
\end{align*}
If $f \in L^{\infty}(\Omega)$ and $u \not= 0$, then,
\begin{align*}
u \ge c_0 d^{s}_{\Omega} ~~ \text{ in } \Omega \cap B_{1/2}.
\end{align*}
Moreover, if $\partial_n (d_{\Omega}) \ge \delta$ in $\Omega \cap B_1$ for some $\delta > 0$, $K\in C^{3+2\alpha}(\mathbb{S}^{n-1})$, and $f \in C^{1+\alpha}(\overline{\Omega} \cap B_1)$ , then there exists $\eps > 0$ such that
\begin{align*}
\partial_n u \ge c_1 d^{s-1}_{\Omega} ~~ \text{ in } \Omega \cap B_{\eps}.
\end{align*}
Finally, if in addition we have $\partial_n u \ge \delta$ in $\Omega \cap B_1$, then it holds
\begin{align*}
\partial_n u \ge c_2 d^{s-1}_{\Omega} ~~ \text{ in } \Omega \cap B_{1/2}.
\end{align*}
\end{lemma}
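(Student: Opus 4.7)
We treat the three assertions in order.

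\emph{Step 1.} The first claim is the nonlocal Hopf lemma. The weak maximum principle together with the hypothesis $u \ge 0$ outside $\Omega \cap B_1$ gives $u \ge 0$ globally, and the strong maximum principle (using $u \not\equiv 0$) upgrades this to $u > 0$ in $\Omega \cap B_1$. By interior continuity there is a ball $B_\rho(x_0)$ at positive distance from $\partial\Omega \cup \partial B_1$ on which $u \ge c' > 0$. The conclusion then follows by comparing $u$ from below with a barrier proportional to $d_\Omega^s$: since $\partial\Omega \in C^{1,\alpha}$, one has $|L(d_\Omega^s)| \le C$ in a neighbourhood of $\partial\Omega$ (see e.g.\ \cite[Corollary 2.3]{AbRo20}), so that after truncation $d_\Omega^s$ is an admissible subsolution, and the comparison principle yields $u \ge c_0 d_\Omega^s$ in $\Omega \cap B_{1/2}$.

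\emph{Step 2.} Under the second set of hypotheses, \autoref{lemma:boundary-reg} with $k = 1$ gives $u/d_\Omega^s \in C^{1+\alpha+2s}_{0,\alpha}(\Omega | B_{1/2})$. In particular $u/d_\Omega^s$ is H\"older continuous on $\overline{\Omega} \cap B_{1/2}$, and by Step 1 its boundary trace is $\ge c_0$, so $u/d_\Omega^s \ge c_0/2$ in a neighbourhood of $\partial\Omega \cap B_{1/2}$. The interior component of the weighted norm yields the scaling estimate
\[ |\nabla(u/d_\Omega^s)(x)| \le C\, d_\Omega(x)^{\alpha - 1}, \qquad x \in \Omega \cap B_{1/2}. \]
Differentiating the identity $u = (u/d_\Omega^s)\, d_\Omega^s$ in the $e_n$ direction gives
\[ \partial_n u = s\, (u/d_\Omega^s)\, d_\Omega^{s-1}\, \partial_n d_\Omega + \partial_n(u/d_\Omega^s)\, d_\Omega^s. \]
The first term is bounded below by $(c_0 s \delta/2)\, d_\Omega^{s-1}$ thanks to the assumption $\partial_n d_\Omega \ge \delta$, while the absolute value of the second is at most $C\, d_\Omega^{s+\alpha-1}$. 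Since $\alpha > 0$, choosing $\eps > 0$ so small that $C \eps^{\alpha} \le c_0 s \delta/4$ yields $\partial_n u \ge (c_0 s \delta/4)\, d_\Omega^{s-1}$ on $\{d_\Omega < \eps\} \cap \Omega$; because $0 \in \partial\Omega$, the ball $B_\eps$ is contained in this set.

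\emph{Step 3.} Every ingredient in Step 2 (the lower bound from Step 1, the estimates from \autoref{lemma:boundary-reg}, and $\partial_n d_\Omega \ge \delta$) is uniform on $\Omega \cap B_1$, so the argument can be centred at any boundary point $z \in \partial\Omega \cap B_{1/2}$. This produces a single $\eps > 0$ and $c_1 > 0$ such that $\partial_n u \ge c_1 d_\Omega^{s-1}$ on the tubular neighbourhood $\{x \in \Omega \cap B_{1/2} : d_\Omega(x) < \eps\}$. On the complementary set $\{d_\Omega \ge \eps\} \cap \Omega \cap B_{1/2}$, the extra assumption $\partial_n u \ge \delta$ and the inequality $d_\Omega^{s-1} \le \eps^{s-1}$ (which holds because $s < 1$) give $\partial_n u \ge \delta\, \eps^{1-s}\, d_\Omega^{s-1}$. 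Setting $c_2 = \min\{c_1,\, \delta \eps^{1-s}\}$ finishes the proof.

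\emph{Main obstacle.} The proof pivots on Step 2: the interior gradient estimate on $u/d_\Omega^s$ must gain a factor $d_\Omega^{\alpha}$ relative to the singular factor $d_\Omega^{s-1}$ produced by differentiating $d_\Omega^s$. This gain is precisely what the hypothesis $\alpha > 0$ together with the higher regularity of $K$ and $f$ provides through \autoref{lemma:boundary-reg}; once it is available, Steps 1 and 3 are essentially formal.
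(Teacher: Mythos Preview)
Your proof is correct and follows essentially the same approach as the paper: cite the nonlocal Hopf lemma for the first claim, then for the second claim apply \autoref{lemma:boundary-reg} with $k=1$ to obtain $u/d_\Omega^s \in C^{1+\alpha+2s}_{0,\alpha}(\Omega|B_{1/2})$, differentiate the product $u=(u/d_\Omega^s)d_\Omega^s$, and use the scaling bound $|\nabla(u/d_\Omega^s)|\le C d_\Omega^{\alpha-1}$ together with $\partial_n d_\Omega\ge\delta$ and the first claim to absorb the error term near the boundary; the third claim is then the same near--far splitting with $c_2=\min\{c_1,\delta\eps^{1-s}\}$. The only cosmetic difference is that the paper obtains the tubular estimate on $\Omega\cap B_{1/2}\cap\{d_\Omega\le\eps\}$ directly from the computation in Step~2, so your re-centering remark in Step~3 is unnecessary (though harmless).
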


\begin{proof}
The first claim is an immediate consequence of the nonlocal Hopf lemma (see \cite[Proposition 2.6.6]{FeRo24}). To prove the second claim, we observe that $u \in C^{1+\alpha+2s}_{loc}(\Omega)$ by interior estimates, and $u/d_{\Omega}^s \in C^{1+\alpha+2s}_{0,\alpha}(\Omega | B_{1/2})$ by \autoref{lemma:boundary-reg}. Therefore we can write
\begin{align*}
\frac{\partial_n u}{d^{s-1}_{\Omega}} = \partial_n \left( \frac{u}{d_{\Omega}^s} d_{\Omega}^s \right) d_{\Omega}^{1-s} = \partial_n \left( \frac{u}{d_{\Omega}^s} \right) d_{\Omega} + s \left(\frac{u}{d_{\Omega}^{s}}\right) \partial_n (d_{\Omega}) ~~ \text{ in } \Omega \cap B_{1/2}.
\end{align*}
Note that since $\partial_n (d_{\Omega}) \ge \delta$ in $\Omega \cap B_{1}$ by assumption, we can estimate for any $\alpha' < \alpha$, using also the first claim:
\begin{align*}
\frac{\partial_n u}{d^{s-1}_{\Omega}}  \ge - \left| \partial_n \left( \frac{u}{d_{\Omega}^s} \right) \right| d_{\Omega} + s \left( \frac{u}{d_{\Omega}^{s}} \right) \partial_n (d_{\Omega}) \ge - C d_{\Omega}^{\alpha'} + \delta s c_0 ~~ \text{ in  } \Omega \cap B_{1/2}.
\end{align*}
Hence, we can find $\eps \in (0,\delta)$ small enough such that
\begin{align*}
\frac{\partial_n u}{d_{\Omega}^{s-1}} \ge \frac{\delta s c_0}{2} ~~ \text{ in } \Omega \cap B_{1/2} \cap \{d_{\Omega} \le \eps \},
\end{align*}
which proves the second claim with $c_1 = \frac{\delta s c_0}{2}$. \\
To prove the third claim, we observe that since now it also holds $\partial_n u \ge \delta$ in $\Omega \cap B_{1/2}$ by assumption, we immediately have
\begin{align*}
\frac{\partial_n u}{d_{\Omega}^{s-1}} \ge \delta d_{\Omega}^{1-s} \ge \delta \eps^{1-s} ~~ \text{ in } \Omega \cap B_{1/2} \cap \{ d_{\Omega} \ge \eps \}.
\end{align*}
Altogether, we conclude the proof of the third claim by setting $c_2 = \min \{ c_1 , \delta \eps^{s-1} \}$.
\end{proof}

\subsection{Previous results on the nonlocal one-phase problem}

Given an open, bounded domain $\Omega \subset \R^n$, we introduce the following function spaces,
\begin{align*}
V^s(\Omega | \Omega') &:= \left \{ u \hspace{-0.1cm}\mid_{\Omega} \hspace{0.1cm} \in L^2(\Omega) : [u]^2_{V^s(\Omega | \Omega')} := \int_{\Omega} \int_{\Omega'} \frac{(u(x) - u(y))^2}{|x-y|^{n+2s}} \d y \d x < \infty \right \}, ~~ \Omega \Subset \Omega',\\
L^1_{2s}(\R^n) &:= \left\{ u : \R^n \to \R : \Vert u \Vert_{L^1_{2s}(\R^n)} := \int_{\R^n} |u(y)| (1 + |y|)^{-n-2s} \d y < \infty \right\}.
\end{align*}
The space $V^s(\Omega | \Omega')$ is equipped with the following norm:
\begin{align*}
\Vert u \Vert_{V^s(\Omega | \Omega')} := \Vert u \Vert_{L^2(\Omega)} + [u]_{V^s(\Omega | \Omega')}.
\end{align*}

Moreover, given $K$ satisfying \eqref{eq:K-comp} and $\Omega \subset \R^n$, we denote
\begin{align}
\label{eq:one-phase}
\cI_{\Omega}(u) := \cE_{(\Omega^c \times \Omega^c)^c}(u,u) + |\{ u > 0 \} \cap \Omega|,
\end{align}
whenever this expression is finite.

We recall the following definition of minimizers to the nonlocal one-phase problem \eqref{eq:one-phase} from \cite{RoWe24b}.

\begin{definition}[minimizers]
\label{def:minimizer}
Let $K$ satisfy \eqref{eq:K-comp}. Let $\Omega \Subset \Omega' \subset \R^n$ be an open, bounded domain. We say that $u \in V^s(\Omega | \Omega') \cap L^1_{2s}(\R^n)$ with $u \ge 0$ in $\R^n$ is a (local) minimizer of $\cI_{\Omega}$ (in $\Omega$) if for any $v \in V^s(\Omega | \Omega') \cap L^1_{2s}(\R^n)$ with $u = v$ in $\R^n \setminus \Omega$, it holds
\begin{align*}
\iint\limits_{(\Omega^c \times \Omega^c)^c} \Big[(u(x) - u(y))^2 - (v(x) - v(y))^2 \Big] K(x-y) \d y \d x + \Big[ | \{u > 0 \} \cap \Omega | - | \{v > 0 \} \cap \Omega | \Big] \le 0.
\end{align*}
\end{definition}

Let us summarize some of the basic results from \cite{RoWe24a,RoWe24b}, which will be needed later.

\begin{lemma}
\label{lemma:one-phase-collection}
Let $K$ satisfy \eqref{eq:K-comp} and $u$ be a minimizer of $\cI_{\Omega}$. Then, the following hold true:
\begin{itemize}
\item[(i)] $u \ge 0$ in $\Omega$,
\item[(ii)] $Lu = 0$ in $\Omega \cap \{ u > 0 \}$ in the weak sense.
\item[(iii)] $u \in C^s_{loc}(\Omega)$ and if $B_2 \subset \Omega$,
\begin{align*}
\Vert u \Vert_{C^s(B_R)} \le C R^{-s} (1 + R^{-n} \Vert u \Vert_{L^1(B_{2R})}) ~~ \forall R \in (0,1], 
\end{align*}
\item[(iv)] If $B_2 \subset \Omega$, then for some $C = C(n,s,\lambda,\Lambda) > 0$,
\begin{align*}
u(x) \ge c \dist(x , \partial\{ u > 0 \})^s ~~ \forall x \in B_1,
\end{align*} 
\item[(v)] If $\partial \{ u > 0 \}$ is in $C^{1,\alpha}$ in $B_1$ for some $\alpha \in (0,1)$, then $c = c(n,s,\lambda,\Lambda) > 0$,
\begin{align*}
\frac{u(x)}{\dist^s(x, \partial\{ u > 0 \})} = A(\nu_x) ~~ \forall x \in \partial \{ u > 0 \} \cap B_1,
\end{align*}
where for $\nu \in \mathbb{S}^{n-1}$, the value $A(\nu)$ is given by
\begin{align}
\label{eq:A-nu-def}
A(\nu):=c_{n,s}\left(\int_{\mathbb{S}^{n-1}}K(\theta)|\theta\cdot\nu|^{2s}\, d\theta\right)^{-1/2},\qquad \text{ where } c_{n,s}>0.
\end{align}
\end{itemize}
\end{lemma}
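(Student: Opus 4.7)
The plan is to establish each of the five properties by reducing to known results from \cite{RoWe24a,RoWe24b}, but organized in the order (i)--(v) because each step is used in the argument for the next.

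For (i), I would argue by contradiction: if $u < 0$ on a set of positive measure in $\Omega$, then the competitor $v := u_+$ agrees with $u$ outside $\{u < 0\} \subset \Omega$, has $\{v > 0\} \cap \Omega \subset \{u > 0\} \cap \Omega$, and strictly decreases the Dirichlet energy because for $x \in \{u < 0\}$ and $y \in \{u \ge 0\}$ one has $(u_+(x) - u_+(y))^2 = u(y)^2 < (u(x) - u(y))^2$. This contradicts minimality. For (ii), I would use that admissible perturbations $u + \epsilon \varphi$ with $\varphi \in C^\infty_c(\Omega \cap \{u > 0\})$ keep the positivity set unchanged for small $\epsilon$, so the measure term drops out and the first variation of $\cE_{(\Omega^c \times \Omega^c)^c}$ vanishes. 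This is precisely the weak formulation of $Lu = 0$ in $\Omega \cap \{u > 0\}$.

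The optimal regularity (iii) is the core result of \cite{RoWe24c}: once $u \ge 0$ and $Lu = 0$ in the positivity set are known, one combines interior Hölder estimates with a careful boundary scaling argument to get $C^s$-regularity up to the free boundary, with the stated dependence on $R$ coming from a standard rescaling $u_R(x) = u(Rx)$ and the linearity/homogeneity of $L$. The nondegeneracy (iv) is a standard barrier argument: for any free boundary point $x_0 \in \partial \{u > 0\} \cap B_1$, one compares $u$ on a ball $B_r(z) \subset \{u > 0\}$ with an explicit subsolution of the form $c\, \dist(\cdot, \partial\{u > 0\})^s$ built from the fundamental solution $(x \cdot \nu)_+^s$, and uses the minimality of $u$ against truncations to rule out decay faster than $d^s$.

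The main genuinely delicate step is (v), the free boundary condition. The plan is to use the domain variation: for a smooth vector field $\xi \in C^\infty_c(B_1, \R^n)$, consider $u_t(x) = u(x + t \xi(x))$ (or rather, the correct pullback that keeps $u_t$ a competitor), compute $\frac{d}{dt}\big|_{t=0} \cI_\Omega(u_t) = 0$, and identify the boundary term. Since $\partial \{u > 0\}$ is $C^{1,\alpha}$, the boundary asymptotic of \autoref{lemma:boundary-reg} guarantees $u(x) = U(\nu_{x_0}) d^s_{\Omega'}(x) + o(d^s)$ near any $x_0 \in \partial\{u > 0\} \cap B_1$, with $U(\nu_{x_0}) := u/d^s(x_0)$. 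Substituting the blow-up profile $(x \cdot \nu)_+^s$ into the first variation and using that the measure term contributes $\xi \cdot \nu$, while the nonlocal energy contributes $U(\nu)^2 \xi \cdot \nu$ times the one-dimensional nonlocal ``kinetic energy''
\[
\tfrac{1}{2}\int_{\mathbb{S}^{n-1}} K(\theta)|\theta \cdot \nu|^{2s} \d \theta / c_{n,s}^{2},
\]
yields the Euler--Lagrange identity $U(\nu)^2 = A(\nu)^2$ with $A(\nu)$ as in \eqref{eq:A-nu-def}. Since $u \ge 0$, we get $u/d^s = A(\nu)$ on $\partial \{u > 0\} \cap B_1$. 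The hard part is computing the nonlocal boundary contribution of the domain variation correctly; this calculation is carried out in \cite{RoWe24b}, and I would simply cite it once the set-up is in place.
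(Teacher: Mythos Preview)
Your proposal is correct and takes essentially the same approach as the paper: the paper's proof is simply a list of citations to \cite{RoWe24a,RoWe24b}, and you do the same while additionally sketching the underlying arguments. One minor remark: property (i) is actually built into \autoref{def:minimizer} (minimizers are assumed nonnegative in $\R^n$), so your competitor argument, while correct, is not needed here.
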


\begin{proof}
Properties (i) and (ii) where established in \cite[Lemma 4.1]{RoWe24a}. Properties (iii) and (iv) are shown in \cite[Lemma 2.3 and Lemma 2.4]{RoWe24b}, while property (v) follows from \cite[Proposition 3.1]{RoWe24b}.
\end{proof}

\subsection{Previous results on the nonlocal obstacle problem}

The nonlocal obstacle problem is given as follows
\begin{align*}
\min\{ Lu , u - \varphi \} = 0 ~~ \text{ in } \Omega \subset \R^n,
\end{align*}
where $\Omega \subset \R^n$, and $\varphi : \Omega \to \R$ is the obstacle. We will assume throughout this paper that $\varphi \in C^{1+2s + \eps}(\Omega)$ for some $\eps > 0$, since then, we can consider $v := u - \varphi$, which solves
\begin{align}
\label{eq:obstacle}
\begin{cases}
L v &= f ~~ \text{ in } \Omega \cap \{ v > 0 \},\\
v &= 0 ~~ \text{ in } \Omega \setminus \{ v  > 0 \},
\end{cases}
\end{align}
where $f = - L \varphi \in C^{1+\eps}(\Omega)$.

We recall the following results from \cite{CRS17,RTW25}:

\begin{lemma}
\label{lemma:obstacle-collection}
Let $v$ be a solution to the obstacle problem \eqref{eq:obstacle} with $\varphi \in C^{1+2s+\eps}(\Omega)$ for some $\eps > 0$ and $K \in C^1(\mathbb{S}^{n-1})$. Then, the following hold true if $B_2 \subset \Omega$:
\begin{itemize}
\item $v \in C^{1+s}_{loc}(\Omega)$, and
\begin{align*}
\Vert v \Vert_{C^{1+s}(B_1)} \le C \left( \Vert \varphi \Vert_{C^{1+2s+\eps}(B_2)} + \Vert v \Vert_{L^{\infty}(\R^n)} \right).
\end{align*}
\item For any free boundary point $x_0 \in \partial \{ v> 0 \} \cap B_1$ there is $c_0 \ge 0$ and $e \in \mathbb{S}^{n-1}$ such that for any $x \in B_1(x_0)$:
\begin{align*}
|v(x) - c_0 ((x-x_0) \cdot e)_+^{1+s}| \le C \left( \Vert \varphi \Vert_{C^{1+2s+\eps}(B_2)} + \Vert v \Vert_{L^{\infty}(\R^n)} \right) |x-x_0|^{1+2s+\alpha}
\end{align*}
for any $\alpha \in (0,s) \cap (0,1-s)$. Moreover, if $c_0 > 0$, then $x_0$ is called a regular free boundary point and for any $\gamma \in (0,s)$ the free boundary is $C^{1,\gamma}$ in $B_{\rho}(x_0)$ for some $\rho > 0$.
\end{itemize}
The constant $C> 0$ depends only on $n,s,\lambda,\Lambda,\alpha,\eps$.
\end{lemma}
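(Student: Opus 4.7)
The plan is to deduce both bullets from the theory of nonlocal obstacle problems with $C^1$-kernel operators as developed in \cite{CRS17,RTW25}, after matching the hypotheses. Since $\varphi \in C^{1+2s+\eps}$, the right-hand side $f=-L\varphi$ in \eqref{eq:obstacle} satisfies $f \in C^{1+\eps}$, which places us precisely in the regime where the expected optimal regularity is $v \in C^{1+s}$ and the model blow-up profile at a regular free boundary point is $c_0((x-x_0)\cdot e)_+^{1+s}$.

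For the first bullet my plan is the three-step scheme of Caffarelli adapted to the nonlocal setting. First, tangential second differences of the equation, combined with $v \geq 0$ and $Lv \geq f$ in the viscosity sense on $B_2$, yield a one-sided Hessian bound $D^2 v \geq -C$ with $C$ depending only on $\Vert f \Vert_{C^1(B_2)}$ and $\Vert v\Vert_{L^\infty(\R^n)}$. Second, interior Schauder for $Lv = f \in C^{1+\eps}$ inside the positivity set, together with this semiconvexity, gives $v \in C^{1,\alpha}_{\loc}$ for every $\alpha < s$. Third, to reach the sharp exponent $1+s$ one runs a contradiction and compactness argument: were the growth estimate $|v(x)| \leq C |x-x_0|^{1+s}$ to fail at some free boundary point $x_0$, an appropriate dyadic rescaling would extract a nontrivial, nonnegative, $(1+s)$-homogeneous global solution of the homogeneous obstacle problem for $L$, contradicting the Liouville classification of \cite{CRS17,RTW25}.

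For the second bullet the strategy is the same blow-up analysis made quantitative. At $x_0 \in \partial \{ v > 0 \} \cap B_1$, the rescalings $v_r(x) := r^{-(1+s)} v(x_0 + rx)$ are precompact in $C^{1+s}_{\loc}(\R^n)$ by the first bullet, and by the Liouville theorem any subsequential limit is either $0$ or of the form $c_0 (x\cdot e)_+^{1+s}$ with $c_0 \geq 0$ and $e \in \mathbb{S}^{n-1}$. A dyadic iteration comparing $v$ at each scale with the best approximating profile $c_0((x-x_0)\cdot e)_+^{1+s}$, absorbing the error using the interior and boundary estimates for $Lv = f$ with $f \in C^{1+\eps}$, upgrades the blow-up convergence into the pointwise expansion with exponent $1+2s+\alpha$; the range $\alpha \in (0,s)\cap(0,1-s)$ is the natural window in which both the Liouville classification and the iteration are clean. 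At a regular point $c_0 > 0$, the non-degeneracy coming from this expansion together with the flatness-implies-$C^{1,\gamma}$ result of \cite{CRS17,RTW25} yields $\partial \{ v > 0 \} \in C^{1,\gamma}$ in a neighborhood of $x_0$.

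The main obstacle compared to the fractional Laplacian case treated in \cite{CRS17} is the Liouville classification of global $(1+s)$-homogeneous solutions for general anisotropic operators $L$ with $K \in C^1(\mathbb{S}^{n-1})$: one cannot invoke the Caffarelli extension, and the monotonicity formulas used in \cite{CRS17} must be replaced by the compactness and viscosity-based arguments of the kind developed in \cite{RTW25}. Once this Liouville statement is available as a black box, the rest of the proof is a routine adaptation of the classical obstacle-problem scheme.
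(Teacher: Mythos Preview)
Your proposal is essentially correct: this lemma is a compilation of known results, and the paper's own proof is just a one-line citation of \cite{CRS17,RTW25}. What you wrote is a reasonable sketch of how those cited proofs actually run, so in content you agree with the paper, only at a finer level of detail than the paper bothers to give.

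One point worth sharpening. For the last claim, you attribute the $C^{1,\gamma}$ regularity for every $\gamma\in(0,s)$ to a ``flatness-implies-$C^{1,\gamma}$'' statement in \cite{CRS17,RTW25}. The paper is more specific: \cite{RTW25} only yields $C^{1,\alpha}$ for some small $\alpha$, and the upgrade to arbitrary $\gamma\in(0,s)$ is obtained by applying the boundary Harnack inequality of \cite{RoSe17} to the quotients $\partial_i v/\partial_e v$, exactly as in \cite[Theorem~1.1]{CRS17}. This is not the same mechanism as an improvement-of-flatness iteration, and it is the step the paper singles out as needing an extra reference. Your non-degeneracy plus flatness route would also work in principle, but the boundary Harnack shortcut is what both \cite{CRS17} and the present paper actually use.
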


\begin{proof}
The proofs follow directly from the main results in \cite{CRS17,RTW25}. Note that the $C^{1,\alpha}$ regularity of the free boundary that is proved in \cite{RTW25} can be upgraded to $C^{1,\gamma}$ regularity for any $\gamma \in (0,s)$ by applying the boundary Harnack inequality from \cite{RoSe17} in the same way as in \cite[Theorem 1.1]{CRS17}.
\end{proof}

\section{Local boundary conditions and integration by parts formulas}

The goal of this section is to establish nonlocal integration by parts formulas for energies with weights of the form $d_{\Omega}^{\beta-1}(x)d_{\Omega}^{\beta-1}(y)$ that either degenerate or explode at $\partial\Omega$, depending on $\beta \in [s,1+s]$. These formulas will be crucial to our analysis since they allow us to analyze the equations that are satisfied for $w = {\partial_i u}/{\partial_n u}$, where $u$ is a solution to the nonlocal one-phase problem ($\beta = s$) or the nonlocal obstacle problem ($\beta = 1+s$). 

The case $\beta = s$ is arguably the most complicated one, since it produces integrals supported on $\partial \Omega$ which involve classical derivatives of first order.

Note that integration by parts formulas for similar types of nonlocal energies with weights have already been established in \cite{Aba15,Gru20}, however their formulas do not involve nonlocal energies. Moreover, we refer to \cite{ScDu24,ScDu25}, where related formulas for nonlocal energies with additive instead of multiplicative weights are considered.

\subsection{Integration by parts formula in the half-space}

Let $K$ be a translation invariant kernel satisfying \eqref{eq:K-comp}. We define
\begin{align}
\label{eq:theta-K-def}
\theta_K := 2c_s \int_{\R^{n-1}} (h',1) K((h',1)) \d h' \in \R^n, \qquad c_s :=  \int_0^{1} r^{s-1} \int_{1}^{\infty} t^{s-1} |r-t|^{-2s} \d t \d r < \infty.
\end{align}
where, $c_s < \infty$ by \autoref{lemma:int-finite}. We have the following integration by parts formula in the half-space

\begin{lemma}
\label{lemma:ibp}
Let $K$ satisfy \eqref{eq:K-comp} and assume that $K \in C^{5+2\alpha}(\mathbb{S}^{n-1})$ for some $\alpha \in (0,s)$. Let $v \in C^{\gamma}(\R^n)$ for some $\gamma \in (\max\{0,2s-1\}, s+1)$ be such that $|v(x)| \le C (1 + |x|^{\gamma})$. Then, for every $\eta \in C^{\infty}_c(\R^n)$,
\begin{align}
\label{eq:ibp-energy}
\begin{split}
\int_{\R^n} \int_{\R^n} & (v(x) - v(y)) (\eta(x) - \eta(y)) (x_n)_+^{s-1} (y_n)_+^{s-1} K(x-y) \d y \d x \\
&= \int_{\{ x_n > 0 \}} v (x_n)_+^{s-1} L((x_n)_+^{s-1}\eta) \d x -  \int_{\{ x_n = 0 \}}{(\theta_K \cdot \nabla \eta)} v \d x'.
\end{split}
\end{align}
\end{lemma}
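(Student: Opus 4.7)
The identity is an integration by parts formula for a weighted nonlocal energy with the boundary-singular weight $w(x) := (x_n)_+^{s-1}$. My plan is to symmetrize the LHS, apply an algebraic decomposition to isolate the bulk term $v w L(w\eta)$, and then extract the boundary contribution through a careful regularization near $\{x_n = 0\}$.

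\textbf{Symmetrization and algebraic identity.} Using $K(x-y) = K(y-x)$ and the symmetry of the weight product $w(x)w(y)$ under $x \leftrightarrow y$, expanding $(v(x)-v(y))(\eta(x)-\eta(y))$ and relabeling dummy variables in half of the resulting terms yields
\[
\text{LHS} \;=\; 2 \int_{\{x_n > 0\}} v(x) w(x) \int_{\{y_n > 0\}} (\eta(x) - \eta(y)) w(y) K(x-y) \, dy \, dx,
\]
with the inner integral meant in the principal value sense near $y = x$; absolute convergence of the LHS is ensured by $\gamma > \max\{0,2s-1\}$. The algebraic identity $(w\eta)(x) - (w\eta)(y) = \eta(x)(w(x)-w(y)) + w(y)(\eta(x)-\eta(y))$ reorganizes the inner integral as $\tfrac{1}{2}[L(w\eta)(x) - \eta(x) L(w)(x)]$, producing the formal decomposition
\[
\text{LHS} \;=\; \int_{\{x_n>0\}} v w L(w\eta) \, dx \;-\; \int_{\{x_n>0\}} v \eta w L(w) \, dx.
\]
A rescaling $h = x_n \tilde h$ in the defining integral for $L(w)$ shows $L(w)(x) = c_K x_n^{-s-1}$ and hence $w(x) L(w)(x) \sim x_n^{-2}$ near $\{x_n = 0\}$; both bulk integrals are pointwise divergent, and only their difference is finite.

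\textbf{Regularization and boundary extraction.} To rigorize the above, I would truncate the weight as $w_\eps(x) := (x_n)_+^{s-1} \chi_{\{x_n > \eps\}}$. For $w_\eps$ every quantity is pointwise well-defined and the symmetrization and algebraic manipulation apply literally, producing the identity without any boundary term. Passing $\eps \to 0$, the LHS converges to the actual LHS by dominated convergence (using $\gamma < s+1$ near $y = x$ and $|v(x)| \le C(1+|x|^\gamma)$ at infinity). On the RHS, both bulk integrals diverge but their difference remains finite, and the discrepancy with the naively expected limiting identity concentrates in a thin layer $\{\eps < x_n < 2\eps\}$ where the truncation of $w$ creates the spurious singular part of $L(w_\eps)$. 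To extract the precise boundary term, I would localize near a boundary point $(x',0)$, rescale $(x_n, y_n) = \eps(r,t)$ with $r \in (0,1)$ and $t > 1$, and write $y - x = \eps(h', t-r)$. The homogeneity $K(\eps h) = \eps^{-n-2s} K(h)$ together with the Taylor expansion
\[
\eta(y) \;=\; \eta(x',0) + \eps \big( h' \cdot \nabla_{x'}\eta(x',0) + t\, \partial_n \eta(x',0) \big) + O(\eps^{1+\alpha})
\]
conspires with the pairing of the weights $r^{s-1} t^{s-1}$ across the slice $\{x_n < \eps < y_n\}$ to produce the inner product $(h',1) \cdot \nabla \eta$ against the slice kernel $K((h',1))$. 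The constant $c_s = \int_0^1 r^{s-1}\int_1^\infty t^{s-1} |r-t|^{-2s}\,dt\,dr$ emerges from the one-dimensional transversal pairing and is finite by \autoref{lemma:int-finite}. Collecting terms gives the boundary contribution $\int_{\{x_n = 0\}} (\theta_K \cdot \nabla \eta) v \, dx'$ with $\theta_K = 2 c_s \int_{\R^{n-1}} (h',1) K((h',1)) \, dh'$.

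\textbf{Main obstacle.} The principal difficulty is the simultaneous divergence of both bulk integrals as $\eps \to 0$ and the precise tracking of their cancellation to isolate the finite boundary residual. The scaling analysis must carefully separate the tangential ($h'$) and normal ($t$) components of the displacement so as to produce exactly a directional derivative of $\eta$ in the oblique direction $\theta_K$, rather than an undifferentiated $\eta$ term. The smoothness $K \in C^{5+2\alpha}(\mathbb{S}^{n-1})$ is used to control incremental quotients of the kernel along slices (cf.\ \autoref{lemma:K-estimate}), which is essential for estimating remainders in the Taylor expansion of $K$; the regularity $v \in C^\gamma$ guarantees that the Taylor expansion of $v$ at boundary points yields errors that vanish in the limit $\eps \to 0$.
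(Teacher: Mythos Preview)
Your overall strategy (symmetrize, use the algebraic identity, regularize near $\{x_n=0\}$, extract the boundary term by a rescaling in the normal variable) matches the paper's, and your computation of $\theta_K$ and $c_s$ is correct. However, there is a genuine error in your understanding of the bulk decomposition.

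You claim that the scaling $L(w)(x) = c_K x_n^{-s-1}$ holds with a nonzero $c_K$, and hence that both bulk terms $\int v w L(w\eta)$ and $\int v\eta w L(w)$ diverge near $\{x_n=0\}$, with the boundary integral emerging from their cancellation. This is not what happens: the constant is \emph{zero}, i.e.\ $L\big((x_n)_+^{s-1}\big) = 0$ in $\{x_n>0\}$. This is the classical fact that $(x_n)_+^{s-1}$ is one of the two homogeneous $s$-harmonic functions in the half-space (the other being $(x_n)_+^s$); your scaling argument only shows $L(w)(x) = c\, x_n^{-s-1}$ for \emph{some} constant, and that constant vanishes. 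Consequently the second bulk integral is identically zero, and the first one is actually absolutely convergent: the paper uses \cite[Corollary~2.5]{RoWe24a} (this is where the regularity $K\in C^{5+2\alpha}(\mathbb{S}^{n-1})$ enters, not for incremental quotients of $K$) to bound $|L(\eta(x_n)_+^{s-1})| \le C(x_n)_+^{\alpha-s}$ in a neighborhood of the support of $\eta$, whence $vwL(w\eta)$ behaves like $(x_n)^{\alpha-1}$ and is integrable.

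The boundary term therefore does not arise from a cancellation of divergent bulk integrals. In the paper's argument (which truncates the \emph{integration domain} to $\{x_n>\eps\}\times\{y_n>\eps\}$ rather than the weight), the decomposition produces three pieces: the first converges to $\int vwL(w\eta)$, the second vanishes identically by $L(w)=0$, and the third is the cross-slice term
\[
2\int_{\{0<x_n<\eps\}} v(x)w(x)\int_{\{y_n>\eps\}} (\eta(x)-\eta(y))\,w(y)K(x-y)\,dy\,dx,
\]
which is where the boundary contribution $-\int_{\{x_n=0\}}(\theta_K\cdot\nabla\eta)v\,dx'$ is extracted via exactly the one-dimensional rescaling you describe. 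Your boundary extraction is thus applied to the right object, but your narrative about why it appears is wrong and would not survive a rigorous write-up.
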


\begin{remark}
Note that in case $K(h) = |h|^{-n-2s}$, i.e. when $K$ is the kernel of the fractional Laplacian, we have a cancellation in the integral $\int_{\R^{n-1}} h'K((h',1)) \d h' = 0$ and therefore
\begin{align*}
\theta = C_s e_n, ~~ \text{ for some } C_s > 0.
\end{align*}
Hence, the boundary integral does not contain an oblique derivative the normal derivative $\partial_n v$.
\end{remark}

\begin{proof}[Proof of \autoref{lemma:ibp}]
Let $\eta \in C^{\infty}_c(\R^n)$ with $\supp(\eta) \subset B$ for some ball $B \subset \R^n$. First, we claim that
\begin{align*}
\int_{\{ x_n < \eps \} \cap B} \int_{\R^n} (v(x) - v(y)) (\eta(x) - \eta(y)) (x_n)_+^{s-1} (y_n)_+^{s-1} K(x-y) \d y \d x \to 0
\end{align*}
as $\eps \to 0$.
This follows, since $v \in C^{\gamma}(\R^n)$ and hence by \autoref{lemma:bd-int} and since $\gamma > 2s - 1$
\begin{align*}
\int_{\{ x_n < \eps \} \cap B} & \int_{2B} (v(x) - v(y)) (\eta(x) - \eta(y)) (x_n)_+^{s-1} (y_n)_+^{s-1} K(x-y) \d y \d x \\
&\le C [v]_{C^{\gamma}(\R^n)} \int_{\{ x_n < \eps \} \cap B} (x_n)_+^{s-1} \left( \int_{2B} (y_n)_+^{s-1} |x-y|^{-n-2s + 1 + \gamma} \d y \right) \d x \\
&\le C \int_{\{ x_n < \eps \} \cap B} (x_n)_+^{s-1} + (x_n)_+^{\gamma - 1} \d x \le C \eps^{s} \to 0,
\end{align*}
where $C$ depends on $\Vert \eta \Vert_{C^{0,1}(\R^n)}$ and $[v]_{C^{\gamma}(\R^n)}$.
Moreover, since for $x \in B$ and $y \in (2B)^c$ it holds $|x-y| \ge C |y|$, we deduce by the growth assumption on $v$ and by \autoref{lemma:int-polar-coord},
\begin{align*}
\int_{\{ x_n < \eps \} \cap B} & \int_{(2B)^c} (v(x) - v(y)) (\eta(x) - \eta(y)) (x_n)_+^{s-1} (y_n)_+^{s-1} K(x-y) \d y \d x \\
&\le C \int_{\{ x_n < \eps \} \cap B} (x_n)_+^{s-1} \left( \int_{(2B)^c} (y_n)_+^{s-1} (|x|^{\gamma} + |y|^{\gamma}) |y|^{-n-2s} \d y \right) \d x \\
&\le C \int_{\{ x_n < \eps \} \cap B} (x_n)_+^{s-1} \d x \le C \eps^{s} \to 0,
\end{align*}
where $C$ depends on $\Vert \eta \Vert_{C^{0,1}(\R^n)}$ and $[v]_{C^{\gamma}(\R^n)}$.
Hence, it remains to prove that as $\eps \searrow 0$
\begin{align*}
\iint_{(\{ x_n < \eps \} \times \{ y_n < \eps \})^c} & (v(x) - v(y)) (\eta(x) - \eta(y)) (x_n)_+^{s-1} (y_n)_+^{s-1} K(x-y) \d y \d x \\
&\to \int_{\{ x_n > 0 \}} (x_n)_+^{s-1} v L((x_n)_+^{s-1} \eta) \d x - \int_{\{ x_n = 0 \}} (\theta_K \cdot \nabla \eta) v \d x'.
\end{align*}

By the algebraic identity \eqref{eq:algebra-quotient-PDE} and the symmetry of $K$, we get
\begin{align*}
& \iint_{(\{ x_n < \eps \} \times \{ y_n < \eps \})^c} (v(x) - v(y)) (\eta(x) - \eta(y)) (x_n)_+^{s-1} (y_n)_+^{s-1} K(x-y) \d y \d x \\
&\quad = \iint_{(\{ x_n < \eps \} \times \{ y_n < \eps \})^c} \Big[ (v(x)(x_n)_+^{s-1} - v(y)(y_n)_+^{s-1}) (\eta(x)(x_n)_+^{s-1} - \eta(y)(y_n)_+^{s-1}) \\
&\quad \quad - ((x_n)_+^{s-1} - (y_n)_+^{s-1}) (v(x)\eta(x)(x_n)_+^{s-1} - v(y)\eta(y)(y_n)_+^{s-1}) \Big] K(x-y) \d y \d x \\
&\quad = 2 \iint_{(\{ x_n < \eps \} \times \{ y_n < \eps \})^c} \Big[ v(x)(x_n)_+^{s-1} (\eta(x)(x_n)_+^{s-1} - \eta(y)(y_n)_+^{s-1}) \\
&\quad \quad - ((x_n)_+^{s-1} - (y_n)_+^{s-1}) v(x)\eta(x)(x_n)_+^{s-1} \Big] K(x-y) \d y \d x \\
&\quad =  \int_{ \{ x_n > \eps \} } v(x)(x_n)_+^{s-1} L[\eta(x_n)_+^{s-1}](x) \d x \\
&\quad \quad -  \int_{ \{ x_n > \eps \} } \eta(x) v(x)(x_n)_+^{s-1} L[(x_n)_+^{s-1}](x) \d x \\
&\quad \quad + 2\int_{\{ x_n < \eps \}}  v(x) (x_n)_+^{s-1} \left(\int_{\{ y_n > \eps \}} (\eta(x) - \eta(y)) (y_n)_+^{s-1} K(x-y) \d y \right) \d x \\
&=: I_{\eps}^{(1)} + I_{\eps}^{(2)} + I_{\eps}^{(3)}.
\end{align*}
We treat the terms $I_{\eps}^{(1)},I_{\eps}^{(2)},I_{\eps}^{(3)}$ separately. For $I_1^{(\eps)}$, we observe that
\begin{align*}
|L[\eta(x_n)_+^{s-1}]| \le 
\begin{cases} 
C (x_n)_+^{\alpha-s},\quad &x \in 2B,\\
C |x|^{-n-2s},\quad &x \in (2B)^c.
\end{cases}
\end{align*}
The first estimate was proved in \cite[Corollary 2.5]{RoWe24a}, using that $K \in C^{5+2\alpha}(\mathbb{S}^{n-1})$, and the second one follows from a straightforward computation.
Therefore, we get by the dominated convergence theorem, using also that $\gamma < 1+s$,
\begin{align*}
I_{\eps}^{(1)} \to  \int_{\{ x_n > 0 \}} v(x)(x_n)_+^{s-1} L( \eta (x_n)_+^{s-1}) \d x.
\end{align*}
For $I_{\eps}^{(2)}$, we recall that $L(x_n)_+^{s-1} = 0$ in $\{ x_n > 0 \}$ and therefore, it holds $I_{\eps}^{(2)} = 0$.
To estimate $I_{\eps}^{(3)}$ we split the integral as follows.
\begin{align*}
I_{\eps}^{(3)} &= 2\int_{\{ x_n < \eps \} \cap 2B}  v(x) (x_n)_+^{s-1} \nabla \eta(x)\left(\int_{\{ y_n > \eps \}}  (x-y) (y_n)_+^{s-1} K(x-y) \d y \right) \d x + J_{\eps}.
\end{align*}
Since $\eta \in C^{1,\delta}_c(\R^n)$ with $\delta \in (0,s)$, and $|x-y| \ge C |x|$ for $x \in (2B)^c$, $y \in B$, by \autoref{lemma:n-1-dim-integral} we get
\begin{align*}
|J_{\eps}| &\le C \int_{\{ x_n < \eps \} \cap 2B} (x_n)_+^{s-1} \left(\int_{\{ y_n > \eps \}} (y_n)_+^{s-1} |x-y|^{-n-2s+1+\delta} \d y \right) \d x \\
&\quad + C \int_{\{ x_n < \eps \} \setminus 2B} (x_n)_+^{s-1} |x|^{\gamma} \left(\int_{\{ y_n > \eps \} \cap B} (y_n)_+^{s-1} |x-y|^{-n-2s} \d y \right) \d x\\
&\le C \int_{0}^{\eps} (x_n)_+^{s-1} \int_{\eps}^{\infty} (y_n)_+^{s-1}  |x_n - y_n|^{-2s+\delta} \d y_n \d x_n + C \int_{\{ x_n < \eps \} \setminus 2B} (x_n)_+^{s-1} |x|^{-n-2s+\gamma} \d x\\
&\le C\eps^{\delta} \int_0^1 r^{s-1} \int_1^{\infty}  t^{s-1} |t-r|^{-2s+\delta} \d t \d r + C \int_{\{ x_n < \eps \} \setminus 2B} (x_n)_+^{s-1} |x|^{-n-2s+\gamma} \d x.
\end{align*}
By \autoref{lemma:int-finite}, \autoref{lemma:int-polar-coord} and the dominated convergence theorem, we obtain 
\begin{align*}
|J_{\eps}| \to 0 ~~ \text{ as } \eps \searrow 0.
\end{align*}
To estimate the first summand of $I_{\eps}^{(3)}$, we observe that by a modification of the computation in \cite[Lemma 5.4]{RoWe24c}  for any $h_n := x_n - y_n < 0$, by \eqref{eq:K-comp} it holds
\begin{align}
\label{eq:integrate-out-kernel}
\begin{split}
\int_{\R^{n-1}} & (h',h_n) K(h',h_n) \d h' = \int_{\R^{n-1}} (h',h_n) \big( |h'|^2 + |h_n|^2 \big)^{-\frac{n+2s}{2}} K \left( \frac{(h',h_n)}{|(h',h_n)|} \right) \d h' \\
&= |h_n|^{-n-2s+1} \int_{\R^{n-1}} \left(\frac{h'}{|h_n|} , -1 \right) \left( \frac{|h'|^2}{|h_n|^2} + 1 \right)^{-\frac{n+2s}{2}} K \left( \frac{(h'/|h_n|,-1)}{[|h'|^2/|h_n|^2 + 1]^{1/2}} \right) \d h' \\
&= |h_n|^{-2s} \int_{\R^{n-1}} (h',-1) (|h'|^2 + 1)^{-\frac{n+2s}{2}} K\left( \frac{(h',-1)}{|(h',-1)|} \right) \d h' \\
&= |h_n|^{-2s} \int_{\R^{n-1}} (h',-1) K((h',-1)) \d h' = - \frac{c_s^{-1}}{2} \theta_K |h_n|^{-2s},
\end{split}
\end{align}
where $\theta_K \in \R^n$ is defined as in \eqref{eq:theta-K-def}. 
We deduce
\begin{align}
\label{eq:extra}
\begin{split}
2 &\int_{\{ x_n < \eps \} \cap 2B} v(x) (x_n)_+^{s-1}  \nabla \eta(x)\left(\int_{\{ y_n > \eps \}}  (x-y) (y_n)_+^{s-1} K(x-y) \d y \right) \d x \\
&= -c_s^{-1} \int_{\{ x_n < \eps \} \cap 2B} [v(x) (\theta_K \cdot \nabla \eta)(x) - v(x',0) (\theta_K \cdot \nabla \eta)(x',0)](x_n)_+^{s-1}  \int_{\eps}^{\infty} (y_n)_+^{s-1} |x_n-y_n|^{-2s} \d y_n \d x \\
&\quad -c_s^{-1} \int_{\{ x_n < \eps \} \cap 2B} v(x',0) (\theta_K \cdot \nabla \eta)(x',0)(x_n)_+^{s-1} \int_{\eps}^{\infty} (y_n)_+^{s-1} |x_n-y_n|^{-2s} \d y_n \d x .
\end{split}
\end{align}
We notice that, since $v \nabla \eta \in C^{\gamma}(\R^n)$ and $0<\gamma\leq 1+s$, by \autoref{lemma:int-finite}
\begin{align}
\label{eq:extra1}
\begin{split}
\Big| \int_{\{ x_n < \eps \} } & [v(x) (\theta_K \cdot \nabla \eta)(x) - v(x',0) (\theta_K \cdot \nabla \eta) (x',0)](x_n)_+^{s-1} \int_{\eps}^{\infty} (y_n)_+^{s-1} |x_n-y_n|^{-2s} \d y_n \d x \Big| \\
&\le C\int_0^{\eps} r^{s-1+\gamma} \int_{\eps}^{\infty} t^{s-1} (t-r)^{-2s} \d t \d r \le C \eps^{\gamma} \to 0.
\end{split}
\end{align}
Thus by using
\begin{align}
\label{eq:theta-constant}
\lim_{\eps \to 0} \int_0^{\eps} r^{s-1} \int_{\eps}^{\infty} t^{s-1} (t - r)^{-2s} \d t \d r = \int_0^1 \int_1^{\infty} r^{s-1} t^{s-1} (t - r)^{-2s} \d t \d r = c_s > 0,
\end{align}
from \eqref{eq:extra} and \eqref{eq:extra1}, we get 
\begin{align*}
2 &\int_{\{ x_n < \eps \} \cap 2B} v(x) (x_n)_+^{s-1}  \nabla \eta(x)\left(\int_{\{ y_n > \eps \}}  (x-y) (y_n)_+^{s-1} K(x-y) \d y \right) \d x\\
&\to - \int_{\R^{n-1}} v(x',0) (\theta_K \cdot \nabla \eta)(x',0) \d x',
\end{align*}
as desired.
\end{proof}

\begin{remark}
We highlight that, since 
\begin{align}\label{equal}
(x_{n})_{+}^{\beta-1}L((x_n)_+^{\beta-1} \eta) - (x_n)_+^{\beta-1} \eta L((x_n)_+^{\beta-1}) =\mathcal{L}\big((x_n)_+^{\beta-1}(y_n)_+^{\beta-1}K(x-y) \big) (\eta),
\end{align}
the integration by parts formula given in the previous lemma can be written as
\begin{align*}
\begin{split}
\int_{\R^n} \int_{\R^n} & (v(x) - v(y)) (\eta(x) - \eta(y)) (x_n)_+^{s-1} (y_n)_+^{s-1} K(x-y) \d y \d x \\
&= \int_{\{ x_n > 0 \}} v \mathcal{L}\big((x_n)_+^{s-1}(y_n)_+^{s-1}K(x-y) \big)(\eta) -  \int_{\{ x_n = 0 \}}{(\theta_K \cdot \nabla \eta)} v \d x',
\end{split}
\end{align*}
where we have used that $L((x_n)_+^{s-1})=0$. 
\end{remark}

We point out that the integration by parts formula \autoref{lemma:ibp} yields boundary terms only in case $\beta = s$. Indeed, we have the following integration by parts formula for general $\beta \in (s,1+s]$:

\begin{lemma}
\label{lemma:ibp-beta}
Let $K$ satisfy \eqref{eq:K-comp}. Let $v \in C^{\gamma}(\R^n)$ for some $\gamma \in (\max\{0,2s-1\}, 1+2s-\beta)$ be such that $|v(x)| \le C (1 + |x|^{\gamma})$. Let $\beta \in (s,1+s]$. Then, it holds for any $\eta \in C^{\infty}_c(\R^n)$
\begin{align}
\label{eq:ibp-energy-beta}
\begin{split}
\int_{\R^n} \int_{\R^n} & (v(x) - v(y)) (\eta(x) - \eta(y)) (x_n)_+^{\beta-1} (y_n)_+^{\beta-1} K(x-y) \d y \d x \\
&= \int_{\{ x_n > 0 \}} v \mathcal{L}\big((x_n)_+^{\beta-1}(y_n)_+^{\beta-1}K(x-y)\big) (\eta) \d x,
\end{split}
\end{align}
if $v\mathcal{L}\big((x_n)_+^{\beta-1}(y_n)_+^{\beta-1}K(x-y)\big)(\eta) \in L^1(\{ x_n > 0 \})$.
Moreover in case $\beta = 1+s$, we have for any $\eta \in C^{\infty}_c(\{  x_n > 0 \})$,
\begin{align}
\label{eq:ibp-energy-obstacle}
\begin{split}
\int_{\R^n} \int_{\R^n} & (v(x) - v(y)) (\eta(x) - \eta(y)) (x_n)_+^{s} (y_n)_+^{s} K(x-y) \d y \d x \\
&= \int_{\{ x_n > 0 \}} (x_n)_+^{s} v L((x_n)_+^{s} \eta) \d x.
\end{split}
\end{align}
Finally, for $\beta \in (s,1+s)$ it holds that for any $\eta \in C_c^{\infty}(\{ x_n > 0 \})$ and some constant $C_{\beta,K} \in \R$
\begin{align}
\label{eq:ibp-general-beta}
\begin{split}
\int_{\R^n} \int_{\R^n} & (v(x) - v(y)) (\eta(x) - \eta(y)) (x_n)_+^{\beta-1} (y_n)_+^{\beta-1} K(x-y) \d y \d x \\
&=  \int_{\{ x_n > 0 \}} (x_n)_+^{\beta-1} v L((x_n)_+^{\beta-1} \eta) \d x - C_{\beta,K} \int_{\{ x_n > 0 \}} \eta v (x_n)_+^{2\beta-2-2s}  \d x.
\end{split}
\end{align}
\end{lemma}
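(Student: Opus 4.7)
My plan is to follow the argument from the proof of \autoref{lemma:ibp} with the key simplification that, since $\beta>s$, the weight $(x_n)_+^{\beta-1}$ is strictly less singular at $\{x_n=0\}$ than $(x_n)_+^{s-1}$, so no local boundary term on $\{x_n=0\}$ is produced in the limit. Concretely, I would cut off the region $\{x_n<\eps\}\cup\{y_n<\eps\}$, apply the same algebraic identity used to establish \eqref{eq:ibp-energy}, and send $\eps\to 0$.

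To prove \eqref{eq:ibp-energy-beta}, I would first symmetrize in $x\leftrightarrow y$ as at the start of the proof of \autoref{lemma:ibp} to see that the left-hand side formally equals $\int_{\{x_n>0\}} v\,\mathcal{L}\bigl((x_n)_+^{\beta-1}(y_n)_+^{\beta-1}K(x-y)\bigr)(\eta)\,\d x$. After truncation the only term that is not immediately of the desired form is the analogue of $I_\eps^{(3)}$ from \autoref{lemma:ibp}; by a Taylor expansion of $\eta$, the computation \eqref{eq:integrate-out-kernel}, and \autoref{lemma:int-finite}, this term is controlled by a constant times
$$\int_0^\eps r^{\beta-1}\int_\eps^{R} t^{\beta-1}(t-r)^{-2s}\,\d t\,\d r,$$
where $R$ depends on the support of $\eta$; a scaling argument shows this is $O(\eps^{2(\beta-s)})$, which tends to $0$ precisely because $\beta>s$. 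The remaining pieces converge by dominated convergence, using the growth hypothesis on $v$, \autoref{lemma:bd-int}, and \autoref{lemma:int-polar-coord}, together with the standing $L^1$ hypothesis on $v\,\mathcal{L}(\ldots)(\eta)$. The restriction $\gamma<1+2s-\beta$ is exactly what provides enough decay at infinity for these arguments to close.

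The identities \eqref{eq:ibp-energy-obstacle} and \eqref{eq:ibp-general-beta} both follow from \eqref{eq:ibp-energy-beta} via the pointwise identity \eqref{equal} and an explicit computation of $L((x_n)_+^{\beta-1})$ in $\{x_n>0\}$. Integrating out the tangential variables as in \eqref{eq:integrate-out-kernel} reduces the action of $L$ on functions of $x_n$ alone to a positive constant multiple of the one-dimensional fractional Laplacian, for which a direct calculation yields
$$L\bigl((x_n)_+^{\beta-1}\bigr)(x) = C_{\beta,K}\,(x_n)^{\beta-1-2s}\quad \text{in }\{x_n>0\},$$
with $C_{\beta,K}$ vanishing \emph{exactly} when $\beta=1+s$ (since $(-\Delta)^{s}_{1D}$ annihilates $t_+^{s}$). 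Substituting this into \eqref{equal}, multiplying by $v$, and using that $(x_n)_+^{\beta-1}=0$ outside $\{x_n>0\}$ gives \eqref{eq:ibp-general-beta}; the case $\beta=1+s$ is then the $C_{\beta,K}=0$ specialization, which is \eqref{eq:ibp-energy-obstacle}. Because $\eta\in C_c^\infty(\{x_n>0\})$ in both of these statements, $(x_n)_+^{\beta-1}\eta$ is smooth and compactly supported away from $\{x_n=0\}$, so all integrability questions for the resulting integrals are trivial.

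The main technical obstacle is keeping track of integrability throughout the approximation. The hypothesis $\gamma<1+2s-\beta$ is what secures the far-field decay needed to pass to the limit (analogously to its role in \autoref{lemma:ibp}), while $\gamma>\max\{0,2s-1\}$ guarantees integrability of the nonlocal energy near the diagonal. The only genuinely new quantitative input beyond \autoref{lemma:ibp} is the favorable $\eps^{2(\beta-s)}$ scaling of the would-be boundary term, which is where the hypothesis $\beta>s$ enters in an essential way.
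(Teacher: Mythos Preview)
Your overall strategy is exactly the paper's: follow the proof of \autoref{lemma:ibp}, show that the analogue of $I_\eps^{(3)}$ vanishes in the limit because $\beta>s$, and then read off \eqref{eq:ibp-energy-obstacle} and \eqref{eq:ibp-general-beta} from \eqref{equal} together with $L((x_n)_+^{\beta-1})=C_{\beta,K}(x_n)_+^{\beta-1-2s}$ (with $C_{1+s,K}=0$).

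The one technical point where you diverge from the paper is your treatment of $I_\eps^{(3)}$. You use a first-order Taylor (equivalently, the Lipschitz bound $|\eta(x)-\eta(y)|\le C|x-y|$), which after integrating out the tangential variables leaves the one-dimensional kernel $(t-r)^{-2s}$. Your appeal to \autoref{lemma:int-finite} then requires $0\in(2s-2,\,2s-\beta)$, i.e.\ $\beta<2s$; for $\beta\ge 2s$ (in particular for $\beta=1+s$, since $s<1$) the integral $\int_1^\infty t^{\beta-1}(t-r)^{-2s}\,dt$ diverges, so the lemma does not apply and your stated rate $O(\eps^{2(\beta-s)})$ is not what one gets from the truncated integral $\int_\eps^R$ either --- after scaling, the dimensionless integral grows like $(R/\eps)^{\beta-2s}$, and the actual bound is $O(\eps^{\beta})$ (still tending to~$0$). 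The paper avoids this case distinction by not Taylor-expanding at all: it uses $|\eta(x)-\eta(y)|\le C|x-y|^{\delta}$ with $\delta$ chosen in the nonempty interval $(\max\{2s-1,\,2s-2\beta+1\},\,\min\{1,\,2s-\beta+1\})$, which simultaneously makes \autoref{lemma:n-1-dim-integral} and \autoref{lemma:int-finite} applicable and yields $I_\eps^{(3)}=O(\eps^{2(\beta-s)-1+\delta})\to 0$ for every $\beta\in(s,1+s]$. Your route can be made to work with the extra truncation you indicate, but the paper's choice of $\delta<1$ is the cleaner way to close the estimate uniformly in $\beta$.
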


\begin{remark}
In case $\beta = 1+s$, the assumption $v\mathcal{L}\big((x_n)_+^{\beta-1}(y_n)_+^{\beta-1}K(x-y)\big)(\eta) \in L^1(\{ x_n > 0 \})$ is always satisfied. Indeed, if $\supp(\eta) \subset B$, then
\begin{align}
\label{eq:L-beta-finiteB}
\begin{split}
|\mathcal{L} \big((x_n)_+^{s} (y_n)_+^{s} K(x-y)\big)(\eta)(x)| &\le C (x_n)_+^{s} | L ((x_n)_+^s \eta)(x) | + C (x_n)_+^{s} | L((x_n)_+^s) (x) |\\
&\le C
\begin{cases} 
 (x_n)_+^{s}+(x_n)_+^{\alpha},\quad &x \in 2B,\\
 |x|^{-n-s},\quad &x \in (2B)^c.
\end{cases}
\end{split}
\end{align}
Here, we have used that $L((x_n)_+^s)=0$. Then, the estimate for $x \in 2B$ follows from \autoref{lemma:Ldist-est} in the flat case. The estimate for $x \in (2B)^c$ is straightforward.\\
For $\beta \in (s+\frac{1}{2},s+1)$ the assumption can be verified by a similar argument. For general $\beta \in (s,s + \frac{1}{2}]$, some additional assumptions on the boundary behavior of $\eta$ seem to be unavoidable.
\end{remark}

\begin{proof}
The proof goes exactly as in \autoref{lemma:ibp}. However, since $\beta \in (s,1+s]$, we have
\begin{align*}
I_{\eps}^{(3)} := \int_{\{ x_n < \eps \}}  v(x) (x_n)_+^{\beta-1} \left(\int_{\{ y_n > \eps \}} (\eta(x) - \eta(y)) (y_n)_+^{\beta-1} K(x-y) \d y \right) \d x \to 0,\quad \eps \searrow 0,
\end{align*}
To see this, let us take $\delta \in (\max\{2s-1 , 2s - 2\beta +1 \} ,\min\{1, 2s-\beta+1\} )$.\\
It is crucial to observe that this condition is always non-empty since $\beta \in (s,1+s]$. Note that, if $\beta = s$, then we cannot satisfy at the same time $\delta > 2s-2\beta + 1 = 1$ and $\delta \le 1$. This is precisely the reason, why we obtain a boundary integral in case $\beta = s$.\\
Then, since $\eta \in C^{0,\delta}(\R^n)$, and $|x-y| \ge C |x|$ for $x \in (2B)^c$ and $y \in B$, we have
\begin{align*}
|I_{\eps}^{(3)}| &\le C \int_{\{ x_n < \eps \} \cap 2B} (x_n)_+^{\beta-1} \left(\int_{\{ y_n > \eps \}} (y_n)_+^{\beta-1} |x-y|^{-n-2s+\delta} \d y \right) \d x \\
&\quad + C \int_{\{ x_n < \eps \} \setminus 2B} (x_n)_+^{\beta-1} |x|^{\gamma} \left(\int_{\{ y_n > \eps \} \cap B} (y_n)_+^{\beta-1} |x-y|^{-n-2s} \d y \right) \d x\\
&\le C \int_{0}^{\eps} (x_n)_+^{\beta-1} \int_{\eps}^{\infty} (y_n)_+^{\beta-1}  |x_n - y_n|^{-1-2s + \delta} \d y_n \d x_n  + C \int_{\{ x_n < \eps \} \setminus 2B} (x_n)_+^{\beta-1} |x|^{-n-2s+\gamma} \d x \to 0.
\end{align*}
Here, for the first integral, we used that $|\eta(x) - \eta(y)| \le C |x-y|^{\delta}$, since $\delta \le 1$, and applied \autoref{lemma:n-1-dim-integral}, which is possible since $\delta < 2s+1 - \beta < 2s+1$. Moreover by \autoref{lemma:int-finite}, which is applicable since $\delta \in (2s-1,2s+1-\beta)$ and therefore $\delta-1 \in (2s-2,2s-\beta)$,
\begin{align*}
\int_{0}^{\eps}  & (x_n)_+^{\beta-1} \int_{\eps}^{\infty} (y_n)_+^{\beta-1}  |x_n - y_n|^{-2s -1 + \delta} \d y_n \d x_n  \\
&= \eps^{2(\beta -s) -1 + \delta } \int_{0}^{1} r^{\beta-1} \int_{1}^{\infty} t^{\beta-1}  |r-t|^{-2s - 1 + \delta} \d r \d t \le C  \eps^{2(\beta -s) -1 + \delta } \to 0,\quad \eps \searrow 0.
\end{align*}
The convergence to zero holds true since we also assumed $\delta > 2s-2\beta + 1$.
For the second integral in the estimate for $I_{\eps}^{(3)}$ we used \autoref{lemma:int-polar-coord}, which is applicable since $\gamma < 2s -\beta + 1$, and dominated convergence.

By using \eqref{equal}, the second claim follows immediately because $L((x_n)_+^{s}) = 0$, and the third claim follows since $L((x_n)_+^{\beta-1}) = C_{\beta,K}(x_n)_+^{\beta-1-2s}$ and
\begin{align*}
\mathcal{L}\big((x_n)_+^{\beta-1}(y_n)_+^{\beta-1}K(x-y)\big) (\eta) = (x_n)_+^{\beta-1} L((x_n)_+^{\beta-1} \eta) - \eta (x_n)_+^{\beta-1} L((x_n)_+^{\beta-1}).
\end{align*}
\end{proof}

\subsection{Integration by parts in general domains}

We need to integrate by parts in general domains in order to show that the solution to the one-phase problem satisfies the Neumann boundary condition.

\begin{lemma}
\label{lemma:ibp-nonflat}
Assume that $K$ satisfies \eqref{eq:K-comp} and $K \in C^{5+2\alpha}(\mathbb{S}^{n-1})$ for some  $\alpha \in (0,s)$. Let $w \in C^{1+\alpha+2s}_{1+\alpha}(\Omega | B_2)\cap L^{\infty}(\R^n)$, $d_{\Omega}^{1-s} \nabla u \in C^{1,\delta}_c(\overline{\Omega \cap B_2})$ for some $\delta \in (\max\{0,2s-1\},s)$. Then, for every $\eta \in C^{\infty}_c(B_1)$ it holds
\begin{align}
\label{eq:ibp-nonflat}
\begin{split}
\int_{\Omega} & \int_{\Omega} (w(x) - w(y)) (\eta(x) - \eta(y)) \partial_n u(x) \partial_n u(y) K(x-y) \d y \d x \\
&=  \int_{\Omega} \eta(x) \mathcal{L}\big(\partial_n u(x) \partial_n u(y) K(x-y) \big)(w)(x) \d x - \int_{\partial \Omega \cap B_1} \eta(x) \Theta_{K,\Omega}(x) \cdot \nabla w(x) \d x,
\end{split}
\end{align}
where, for any $x\in \partial \Omega \cap B_1$, $\Theta_{K,\Omega}$ is given by
\begin{align}
\label{eq:Theta-formula-domains}
\Theta_{K,\Omega}(x) = 2c_s \left(\frac{\partial_n u}{d_{\Omega}^{s-1}}\right)^2 \int_{\mathbb{S}^{n-1} \cap \{ \nu(x) \cdot \omega > 0 \}} (\nu(x) \cdot \omega)^{2s-1} \omega K(\omega) \d \omega.
\end{align}
\end{lemma}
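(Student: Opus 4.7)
My plan is to adapt the argument of \autoref{lemma:ibp} to the curved setting, with the weight $(x_n)_+^{s-1}$ replaced by $\partial_n u(x)$. The hypothesis $d_\Omega^{1-s}\nabla u \in C^{1,\delta}_c$ with $\delta > \max\{0, 2s-1\}$ plays the role of the assumption $v \in C^\gamma$ with $\gamma > 2s-1$ in the flat case. First, I would truncate using $\Omega_\eps := \{d_\Omega > \eps\}$: the regularity $w \in C^{1+\alpha}(\overline{\Omega} \cap B_2)$ and $|\partial_n u| \lesssim d_\Omega^{s-1}$ ensure absolute integrability of the LHS integrand on $\Omega \times \Omega$, so that $\mathrm{LHS}_\eps := \iint_{\Omega_\eps \times \Omega_\eps}(\dots)$ converges to the true LHS as $\eps \to 0$ by dominated convergence.

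For each fixed $\eps > 0$, Fubini applies on $\Omega_\eps \times \Omega_\eps$, and a direct symmetrization (analogous to the one in \autoref{lemma:ibp}, but choosing $w$ rather than $\eta$ as the function whose Taylor expansion will produce the boundary gradient) yields
\[
\mathrm{LHS}_\eps = 2 \int_{\Omega_\eps} \eta(x) \partial_n u(x) \int_{\Omega_\eps} (w(x) - w(y)) \partial_n u(y) K(x-y) \d y \d x.
\]
Writing $\int_{\Omega_\eps} = \int_\Omega - \int_{\Omega \setminus \Omega_\eps}$ in the inner integral and extending $\partial_n u$ by zero outside $\Omega$, I would split $\mathrm{LHS}_\eps = T_\eps^{\mathrm{bulk}} + T_\eps^{\mathrm{bdry}}$ with
\[
T_\eps^{\mathrm{bulk}} \longrightarrow \int_\Omega \eta(x) \mathcal{L}\bigl(\partial_n u(x)\partial_n u(y) K(x-y)\bigr)(w)(x) \d x \qquad \text{as } \eps \to 0.
\]
The principal-value integral defining the action of $\mathcal{L}$ on $w$ converges thanks to $w \in C^{1+\alpha}$ and the symmetry $K(h) = K(-h)$, which cancels the leading odd contribution $\nabla w(x) \cdot (y-x)$.

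The heart of the proof, and its main obstacle, is to identify $\lim_{\eps \to 0} T_\eps^{\mathrm{bdry}}$ with the claimed boundary integral. Taylor-expanding $w(y) - w(x) = \nabla w(x) \cdot (y-x) + O(|y-x|^{1+\alpha})$, the higher-order remainder produces a vanishing contribution, by arguments analogous to those bounding the term $J_\eps$ in the proof of \autoref{lemma:ibp} using $\delta > 2s-1$ and \autoref{lemma:K-estimate}. The leading contribution concentrates in the strip $\{d_\Omega \sim \eps\}$, and localizing near each boundary point $x_0 \in \partial \Omega \cap B_1$ via a partition of unity and using the flattening diffeomorphism $\Phi$ from \autoref{subsec:flattening}, together with $\partial_n u(\Phi(\tilde{x})) = (\partial_n u / d_\Omega^{s-1})(\Phi(\tilde{x}))\tilde{x}_n^{s-1}$ and $\Phi = \mathrm{id} + O(|\tilde{x}|^{1+\alpha})$, reduces the computation to a half-space integration mirroring \eqref{eq:integrate-out-kernel} and \eqref{eq:theta-constant}. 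The prefactor $2c_s$ and the square $(\partial_n u / d_\Omega^{s-1})^2$ in \eqref{eq:Theta-formula-domains} arise from \eqref{eq:theta-constant} and from $\partial_n u$ appearing in both $x$ and $y$ variables, while the half-sphere integrand $(\nu \cdot \omega)^{2s-1} \omega K(\omega)$ is obtained from $\int_{\R^{n-1}}(h',1) K((h',1)) \d h'$ by reparametrizing the upper half-sphere as $\omega = (h',1)/\sqrt{1+|h'|^2}$, using the homogeneity of $K$, and rotating so that $\nu(x_0)$ plays the role of $e_n$. Geometric corrections from $\Phi$ produce only lower-order terms vanishing in the limit, thanks to $\delta > \max\{0, 2s-1\}$.
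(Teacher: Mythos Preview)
Your proposal is correct and shares the same essential ingredients as the paper's proof: truncation at distance $\eps$ from $\partial\Omega$, symmetrization to isolate a bulk term and a boundary term, Taylor expansion of $w$ near the boundary, and reduction of the boundary contribution to the half-space computation \eqref{eq:integrate-out-kernel}--\eqref{eq:theta-constant}, together with the reparametrization $\omega = (h',1)/\sqrt{1+|h'|^2}$ of the half-sphere to obtain the formula \eqref{eq:Theta-formula-domains}. The paper organizes these steps differently: rather than working directly in $\Omega$ and invoking a partition of unity plus local flattening only at the boundary stage, it first performs a \emph{global} change of variables $x\mapsto\Phi(x)$ and proves a separate integration-by-parts lemma (\autoref{lemma:ibp-nonflat-trafo}) in the half-space with non-translation-invariant kernel $K(\Phi(x)-\Phi(y))$ and weight $J(x)J(y)$; \autoref{lemma:ibp-nonflat} then follows by transforming back and identifying $\Theta_{K,\Omega}$ from $\theta_{K,J,\Phi}$ via the spherical change of variables. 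This buys modularity---\autoref{lemma:ibp-nonflat-trafo} is reused later in the paper---and avoids the partition of unity entirely, since $\Phi$ flattens $\partial\Omega\cap B_2$ in one go. A minor further difference: in the paper's splitting the boundary term $J_\eps^{(2)}$ has $x$ \emph{inside} the $\eps$-strip (so $\eta(x)$ and $\nabla w(x)$ converge to their boundary values directly), whereas in your $T_\eps^{\mathrm{bdry}}$ it is $y$ that lies in the strip; both yield the same limit, but the paper's choice makes the extraction of boundary values slightly more transparent.
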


To prove \autoref{lemma:ibp-nonflat} we flatten the boundary and we prove an analog of \autoref{lemma:ibp} for non-translation invariant kernels (and under slightly different assumptions on the functions involved). 

\begin{lemma}
\label{lemma:ibp-nonflat-trafo}
Assume that $K$ satisfies \eqref{eq:K-comp} and $K \in C^{5+2\alpha}(\mathbb{S}^{n-1})$ for some  $\alpha \in (0,s)$.
Let  $w \in C^{1+\alpha+2s}_{1+\alpha}(\{ x_n > 0 \} | B_2) \cap L^{\infty}(\R^n)$.
Moreover, assume that $\Phi\in C^{1,\delta}(\R^n)$ is a diffeomorphism as in Subsection \ref{subsec:flattening} and that $J \in C^{1,\delta}_c(\overline{\{ x_n > 0 \} \cap B_2})$ for some $\delta \in ( \max\{0,2s-1\} , s)$. Then, for every $\eta \in C^{\infty}_c(B_1)$, it holds
\begin{align}
\label{eq:ibp-nonflat-cov}
\begin{split}
\int_{\R^n} &\int_{\R^n} (w(x) - w(y)) (\eta(x) - \eta(y)) (x_n)_+^{s-1}(y_n)_+^{s-1} J(x) J(y) K(\Phi(x) - \Phi(y)) \d y \d x \\
&= \int_{\{ x_n > 0\} \cap B_1} \eta(x) \mathcal{L}\big((x_n)_+^{s-1}(y_n)_+^{s-1} J(x) J(y) K(\Phi(x) - \Phi(y)) \big)(w)(x) \d x \\
&\quad - \int_{\{ x_n = 0 \} \cap B_1} \eta(x',0) (\theta_{K,J,\Phi}(x') \cdot \nabla w)(x',0) \d x',
\end{split}
\end{align}
with
\begin{align}
\label{eq:theta-K-def-flat}
\theta_{K,J,\Phi}(x')=\theta_{K,J,\Phi}(x',0) = 2c_s J^2(x',0) \int_{\R^{n-1}} (h',1) K(D\Phi(x',0)(h',1)) \d h',
\end{align}
where $c_s > 0$ is the constant from \eqref{eq:theta-constant}.
\end{lemma}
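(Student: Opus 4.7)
The proof follows the strategy of \autoref{lemma:ibp}, with modifications to accommodate the variable coefficient $J(x)J(y)$ and the non-translation-invariant kernel $K(\Phi(x)-\Phi(y))$. The plan is to carry out three stages: (i) an $\eps$-regularization to remove the boundary strip $\{x_n < \eps\} \times \{y_n < \eps\}$, (ii) an algebraic splitting via the identity used in the proof of \autoref{lemma:ibp} together with symmetrization (choosing $\eta$ as the ``outer'' function, so that $w$ gets differentiated), and (iii) the identification of the bulk and boundary limits as $\eps \to 0$.

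\textbf{Regularization.} Denote by $I_\eps$ the integral on the LHS of \eqref{eq:ibp-nonflat-cov} restricted to the complement of $\{x_n < \eps\} \times \{y_n < \eps\}$. Using the bound $K(\Phi(x)-\Phi(y)) \le C|x-y|^{-n-2s}$ from \autoref{lemma:K-estimate}, the Lipschitz continuity up to the boundary of $w$ and $J$, together with \autoref{lemma:bd-int}, \autoref{lemma:int-finite}, and \autoref{lemma:int-polar-coord}, the contribution of the corner is $O(\eps^s)$, so $I_\eps \to \text{LHS}$ of \eqref{eq:ibp-nonflat-cov}.

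\textbf{Algebraic splitting and bulk limit.} On the cutoff region apply the identity
\begin{align*}
(w(x){-}w(y))(\eta(x){-}\eta(y))\tilde{A}(x)\tilde{B}(y) &= \bigl(w(x)\tilde{A}(x) - w(y)\tilde{B}(y)\bigr)\bigl(\eta(x)\tilde{A}(x) - \eta(y)\tilde{B}(y)\bigr) \\
&\quad - \bigl(\tilde{A}(x) - \tilde{B}(y)\bigr)\bigl(w(x)\eta(x)\tilde{A}(x) - w(y)\eta(y)\tilde{B}(y)\bigr),
\end{align*}
with $\tilde{A}(x) = (x_n)_+^{s-1}J(x)$, $\tilde{B}(y) = (y_n)_+^{s-1}J(y)$. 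Symmetrizing in $x\leftrightarrow y$ (permissible on the cutoff region), this yields $I_\eps = (I_\eps^{(1)}+I_\eps^{(2)}) + I_\eps^{(3)}$, where $I_\eps^{(1)} + I_\eps^{(2)} = \int_{\{x_n>\eps\}} \eta(x)\,\mathcal{L}^\eps\bigl(\tilde{A}\tilde{B}K(\Phi)\bigr)(w)(x)\,dx$ is the bulk piece, and $I_\eps^{(3)} = 2\int_{\{x_n<\eps\}} \eta(x)\tilde{A}(x) \int_{\{y_n>\eps\}}(w(x)-w(y))\tilde{B}(y)K(\Phi(x)-\Phi(y))\,dy\,dx$ is the boundary residue. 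The bulk converges to the first term on the RHS of \eqref{eq:ibp-nonflat-cov} by dominated convergence, using pointwise bounds of the type $|\mathcal{L}(\tilde{A}\tilde{B}K(\Phi))(w)(x)| \le C(x_n)^{\alpha-s}$ in $B_2$ (a variant of \cite[Corollary 2.5]{RoWe24a}, obtainable from \autoref{lemma:K-estimate} and the regularity of $w$) together with $|x|^{-n-2s}$ decay outside.

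\textbf{Boundary limit.} To analyze $I_\eps^{(3)}$, I Taylor-expand $w(y)-w(x) = \nabla w(x)\cdot(y-x) + O(|y-x|^{1+\alpha})$, replace $\Phi(x)-\Phi(y)$ by its linearization $D\Phi(x',0)(x-y)$ at the base-point on $\{x_n=0\}$ with error $O(|x-y|^{1+\delta})$, and replace $J(x),J(y)$ by $J(x',0)$. The leading term is
\begin{align*}
-2\int_{\{x_n<\eps\}} \eta(x)(x_n)^{s-1} J^2(x',0)\, \nabla w(x)\cdot \int_{\{y_n>\eps\}}(y-x)(y_n)^{s-1} K(D\Phi(x',0)(x-y))\,dy\,dx.
\end{align*}
The inner $(n-1)$-dimensional integral is evaluated by the scaling argument of \eqref{eq:integrate-out-kernel} (the homogeneity of $K$ of degree $-n-2s$ works verbatim when $K$ is composed with the linear map $D\Phi(x',0)$), yielding $h_n^{-2s}\,\theta_{K,J,\Phi}(x')/(2c_s J^2(x',0))$ where $h_n = y_n-x_n$. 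Rescaling $x_n=\eps r$, $y_n=\eps t$ makes all $\eps$ exponents cancel and reduces the remaining one-dimensional integral to $c_s = \int_0^1 r^{s-1}\int_1^\infty t^{s-1}(t-r)^{-2s}\,dt\,dr$, while $\eta(x)$ and $\nabla w(x)$ are replaced by their boundary traces up to $o(1)$ using Lipschitz regularity. This produces exactly $-\int_{\{x_n=0\}\cap B_1} \eta(x',0)\,(\theta_{K,J,\Phi}(x')\cdot \nabla w)(x',0)\,dx'$.

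\textbf{Main obstacle.} The principal new difficulty compared to \autoref{lemma:ibp} is the control of the Taylor linearization of $\Phi$ inside the singular kernel $K$. Each replacement $\Phi(x)-\Phi(y) \mapsto D\Phi(x',0)(x-y)$ introduces an error of order $|x-y|^{1+\delta}$, which, when paired with the singular weight $(x_n y_n)^{s-1}$ and the $|x-y|^{-n-2s}$ decay of $K$, is integrable and vanishes in the limit $\eps \to 0$ \emph{precisely} under the hypothesis $\delta > 2s-1$; the estimate \eqref{Pain2} of \autoref{lemma:K-estimate} quantifies exactly this kind of error. Analogous care is needed for the replacements involving $J$, which are permitted by $J\in C^{1,\delta}$.
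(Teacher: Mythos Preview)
Your proposal is correct and follows essentially the same approach as the paper's proof. A few minor remarks: the paper skips the algebraic identity \eqref{eq:algebra-quotient-PDE} entirely and obtains the decomposition $J_\eps^{(1)}+J_\eps^{(2)}$ by direct symmetrization (the weight $(x_n)_+^{s-1}(y_n)_+^{s-1}J(x)J(y)K(\Phi(x)-\Phi(y))$ is already symmetric, so $(w(x)-w(y))(\eta(x)-\eta(y))$ integrated against it equals $2\int\eta(x)(w(x)-w(y))\cdot(\text{weight})$); your route through the identity reaches the same place but is an unnecessary detour. For the bulk bound, the paper obtains $|\mathcal{L}(\cdots)(w)(x)|\le C(x_n)_+^{s-1}+C(x_n)_+^{\min\{\alpha,\delta\}-1}$ (see \eqref{eq:L-estimate-coeff}) rather than $C(x_n)^{\alpha-s}$, with the key ingredient being that the frozen homogeneous kernel annihilates $(x_n)_+^{s-1}$; either bound is locally integrable so dominated convergence works. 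Finally, the precise kernel-linearization estimate you need is \eqref{eq:K-difference-estimate-ibp} rather than \eqref{Pain2}, but these come from the same circle of ideas.
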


\begin{remark}
Note that by the construction of $\Phi$ it holds $\partial_n \Phi(x',0) = e_n$ and $\partial_i \Phi(x',0) \perp e_n$. Hence, we can write $D \Phi(x',0)(h',1) = (D(x')h',1)$ where $D(x') \in \R^{n-1,n-1}$ for any $x' \in \R^{n-1}$. \\
Hence, if in addition $K(h) = c|h|^{-n-2s}$, i.e. if $K$ is the kernel of the fractional Laplacian, we have
\begin{align*}
\theta_{K,J,\Phi} (x',0)&= c_s J^2(x',0) \int_{\R^{n-1}} (h',1) K((D(x')h',1)) \d h' \\
&= c_s J^2(x',0) \int_{\R^{n-1}} (h',1) (|D(x')h'|^2 + 1)^{-\frac{n+2s}{2}} \d h' = C_s J^2(x',0) e_n
\end{align*}
for some $C_s > 0$, where we used that $|D(x')h'| = |D(x')(-h')|$.
Hence, the boundary integral does not contain an oblique derivative in this case.
\end{remark}

\begin{proof}[Proof of \autoref{lemma:ibp-nonflat-trafo}]
The idea of the proof is very similar to the one of \autoref{lemma:ibp}. Let us recall that by \autoref{lemma:K-estimate} we have
\begin{align}
\label{eq:B2-repeat}
|K(\Phi(x) - \Phi(y))| \le C |x-y|^{-n-2s}.
\end{align}
Moreover, since $K \in C^{0,1}(\mathbb{S}^{n-1})$, by \eqref{eq:K-reg} we have 
\begin{align}
\label{eq:K-difference-estimate-ibp}
\begin{split}
|K(\Phi(x) - \Phi(y)) & - K(D \Phi(x) (x-y))| \\
&\le C \min\{|D \Phi(x) (x-y)| , |x-y| \}^{-n-2s-1} |D \Phi(x) (x-y) - \Phi(x) + \Phi(y)| \\
&\le C |x-y|^{-n-2s + \delta},
\end{split}
\end{align}
where we also used that $|D \Phi(x) (x-y)| \ge C [\Phi^{-1}]_{C^{0,1}(\R^n)}^{-1}|x-y|$ and that $\Phi \in C^{1,\delta}(\R^n)$.

Next, we observe that 
\begin{align*}
\int_{\{ x_n < \eps \} \cap B_1} \int_{\R^n} (w(x) - w(y)) (\eta(x) - \eta(y)) (x_n)_+^{s-1} (y_n)_+^{s-1} J(x) J(y) K(\Phi(x) - \Phi(y)) \d y \d x \to 0.
\end{align*}
This follows as in the proof of \autoref{lemma:ibp} by \autoref{lemma:bd-int}, but the argument is even slightly shorter, since $\supp(J) \subset B_2$, $w \in C^{1+\alpha}(B_2)$, and by \eqref{eq:B2-repeat}:
\begin{align*}
\Big| \int_{\{ x_n < \eps \} \cap B_1} & \int_{\R^n} (w(x) - w(y)) (\eta(x) - \eta(y)) (x_n)_+^{s-1} (y_n)_+^{s-1} J(x) J(y) K(\Phi(x) - \Phi(y)) \d y \d x \Big| \\
&\le C \int_{\{ x_n < \eps \} \cap B_1} (x_n)_+^{s-1} \left( \int_{B_2} (y_n)_+^{s-1} |x-y|^{-n-2s+2} \right) \d x \\
&\le C \int_{\{ x_n < \eps \} \cap B_1} (1 + (x_n)_+^{s-1}) \d x \le C (\eps + \eps^s) \to 0.
\end{align*}

Hence, it remains to prove that as $\eps \searrow 0$:
\begin{align*}
\iint_{(\{ x_n < \eps \} \times \{ y_n < \eps \})^c} & (w(x) - w(y)) (\eta(x) - \eta(y)) (x_n)_+^{s-1} (y_n)_+^{s-1} J(x) J(y) K(\Phi(x) - \Phi(y)) \d y \d x \\
&\to \int_{\{ x_n > 0 \}} \eta(x) \mathcal{L}\big((x_n)_+^{s-1} (y_n)_+^{s-1} J(x)J(y) K(x-y) \big)(w)(x) \d x \\
&\quad - \int_{\{ x_n = 0 \} \cap B_1} \eta(x',0) \theta_{K,J,\Phi}(x') \cdot \nabla w(x',0) \d x'.
\end{align*}
By the symmetry of $K$, we have
\begin{align*}
& \iint_{(\{ x_n < \eps \} \times \{ y_n < \eps \})^c} (w(x) - w(y)) (\eta(x) - \eta(y)) (x_n)_+^{s-1} (y_n)_+^{s-1} J(x) J(y) K(\Phi(x) - \Phi(y)) \d y \d x \\
&\qquad = 2 \iint_{(\{ x_n < \eps \} \times \{ y_n < \eps \})^c} \eta(x) (w(x) - w(y)) (x_n)_+^{s-1} (y_n)_+^{s-1} J(x) J(y) K(\Phi(x) - \Phi(y)) \d y \d x  \\
&\qquad =  \int_{\{ x_n > \eps \}} \eta(x) \mathcal{L} \big((x_n)_+^{s-1} (y_n)_+^{s-1} J(x) J(y) K(\Phi(x) - \Phi(y)) \big)(w)(x) \d x \\
&\qquad \quad + 2 \int_{\{ x_n < \eps \}} \eta(x) (x_n)_+^{s-1} J(x) \left( \int_{\{ y_n > \eps \}} (w(x) - w(y)) (y_n)_+^{s-1} J(y) K(\Phi(x) - \Phi(y)) \d y \right) \d x \\
&= J_{\eps}^{(1)} + J_{\eps}^{(2)}.
\end{align*}
On one hand, since $J \in C_{c}^{1,\delta}(\overline{\{ x_n > 0 \} \cap B_2})$, $\Phi \in C^{1,\delta}(\R^n)$, and $w \in C^{1+\alpha+2s}_{1+\alpha}( \{ x_n > 0 \} | B_2)$, by \eqref{eq:K-difference-estimate-ibp}, we notice that for any $x \in \{ x_n > 0 \}$,
\begin{align}
\label{eq:L-estimate-coeff2}
\begin{split}
|\mathcal{L} & \big((x_n)_+^{s-1} (y_n)_+^{s-1} J(x) J(y) K(\Phi(x) - \Phi(y)) \big)(w)(x)| \\
&\le \mathcal{L} \big((x_n)_+^{s-1} (y_n)_+^{s-1} J(x) J(y) K(D \Phi(x) (x-y)) \big)(w)(x)| \\
&\quad + C (x_n)_+^{s-1} \int_{\{ x_n > 0 \} \cap B_2} (y_n)_+^{s-1}  |w(x)- w(y)| |x-y|^{-n-2s+\delta} \d y \\
&\le C (x_n)_+^{s-1} | \mathcal{L}\big(K(D \Phi(x) (x-y)) \big) ((x_n)_+^{s-1} J w)(x) | \\
&\quad + C (x_n)_+^{s-1} | \mathcal{L}\big(K(D \Phi(x) (x-y)) \big) ((x_n)_+^{s-1})(x) | \\
&\quad + C (x_n)_+^{s-1} \int_{\{ x_n > 0 \} \cap B_2} (y_n)_+^{s-1} |x-y|^{-n-2s+\delta + 1} \d y.
\end{split}
\end{align}
Note that since $K$ is homogeneous, also $h \mapsto K(D \Phi(x) (h))$ is homogeneous for any $x$, and therefore,
\begin{align*}
\mathcal{L}\big(K(D \Phi(x) (x-y))\big) ((x_n)_+^{s-1}) = 0.
\end{align*}
Moreover, since $J \in C^{1,\delta}(\overline{\{ x_n > 0 \} \cap B_2})$  and $w \in C^{1,\alpha}(\overline{\{ x_n > 0 \} \cap B_2})$, by the regularity hypothesis on the kernel, we can apply \cite[Corollary 2.5]{RoWe24a} to get
\begin{align*}
|\mathcal{L}\big(K(D \Phi(x) (x-y))\big) ((x_n)_+^{s-1}Jw)| \le C (x_n)_+^{\min\{\alpha,\delta\} - s}.
\end{align*}
Then, from \eqref{eq:L-estimate-coeff2} it follows that
\begin{align}
\label{eq:L-estimate-coeff}
|\mathcal{L}  \big((x_n)_+^{s-1} (y_n)_+^{s-1} J(x) J(y) K(\Phi(x) - \Phi(y)) \big)(w)(x)| \le C (x_n)_+^{s-1} + C (x_n)_+^{\min\{\alpha,\delta\} - 1},
\end{align}
where we have also applied \autoref{lemma:bd-int}, using $\delta > \max\{0,2s-1\}$. Hence, by dominated convergence, 
\begin{align*}
J_{\eps}^{(1)} \to \int_{\{ x_n > 0 \} \cap B_1} \eta(x) \mathcal{L}\big((x_n)_+^{s-1} (y_n)_+^{s-1} J(x) J(y) K(\Phi(x) - \Phi(y))\big)(w)(x) \d x.
\end{align*}

On the other hand we observe that $J_{\eps}^{(2)}$ is of the same type as the term $I_{\eps}^{(3)}$ in the proof of \autoref{lemma:ibp}. Thus, we proceed in the same way as before by adding and subtracting $\nabla w(x)$, and observing that since $w \in C^{1,\delta}(B_2)$ for some $\delta \in (0,s)$, it holds
\begin{align*}
J_{\eps}^{(2)} = 2 & \int_{\{ x_n < \eps \} \cap B_1} \eta(x) (x_n)_+^{s-1} J(x) \nabla w(x)\left(\int_{\{ y_n > \eps \}}  (x-y) J(y)(y_n)_+^{s-1} K(\Phi(x) - \Phi(y)) \d y \right) \d x + J_{\eps},
\end{align*}
where by the same arguments as in the proof of \autoref{lemma:ibp} by \autoref{lemma:int-finite} we get,
\begin{align*}
|J_{\eps}| &\le C\int_{0}^{\eps} (x_n)_+^{s-1} \int_{\eps}^{\infty} (y_n)_+^{s-1}  |x_n - y_n|^{-2s+\delta} \d y_n \d x_n \le C \eps^{\delta} \to 0.
\end{align*}
Hence, it remains to prove that
\begin{align}
\label{eq:ibp-nonflat-main}
2 & \int_{\{ x_n < \eps \} \cap B_1} \eta(x) (x_n)_+^{s-1} J(x) \nabla w(x)M_{x'}(\eps,x_n) \ dx\to -\int_{\{ x_n = 0\}}  \eta(x',0) \theta_{K,J,\Phi}(x') \cdot\nabla w(x',0) \d x'.
\end{align}
where
\begin{align}
\label{eq:M-asymp}
M_{x'}(\eps,x_n) &= \int_{\eps}^{\infty} (y_n)_+^{s-1} k_{x'}(x_n,y_n) \d y_n, \\
\label{eq:k-asymp}
k_{x'}(x_n,y_n) &= \int_{\R^{n-1}} (x-y) J(y',y_n) K(\Phi(x',x_n) - \Phi(y',y_n)) \d y' .
\end{align}

To prove \eqref{eq:ibp-nonflat-main} let us first decompose $k_{x'}$ as follows
\begin{align*}
k_{x'}(x_n,y_n) &= \int_{\R^{n-1}} J(y) (x-y) K(\Phi(x) - \Phi(y)) \d y' \\
&= J(x)  \int_{\R^{n-1}} (x-y) K(D\Phi(x)(x-y)) \d y' \\
&\quad + \int_{\R^{n-1}} (J(y) - J(x)) (x-y) K(D\Phi(x)(x-y)) \d y' \\
&\quad + \int_{\R^{n-1}} J(y) (x-y) [K(\Phi(x) - \Phi(y)) - K(D\Phi(x)(x-y))] \d y'.
\end{align*}
For the second summand we have by the $C^{\delta}$ regularity of $J$, the boundedness of $|D \Phi(x)|$ in $B_2$, and \autoref{lemma:n-1-dim-integral}, using that $\delta < 2s$
\begin{align*}
\Big| \int_{\R^{n-1}} (J(x) - J(y))(x-y) K(D\Phi(x)(x-y)) \d y' \Big| &\le C \int_{\R^{n-1}} |x-y|^{-n-2s + \delta + 1} \d y' \le C |x_n - y_n|^{-2s+\delta}.
\end{align*}

For the third summand we estimate, using the boundedness of $J$, as well as \eqref{eq:K-difference-estimate-ibp} and \autoref{lemma:n-1-dim-integral}, which is applicable after choosing $\delta < 2s$:
\begin{align*}
\Big| \int_{\R^{n-1}} & J(y) (x-y) [K(\Phi(x) - \Phi(y)) - K(D\Phi(x)(x-y))] \d y' \Big| \\
&\le C \int_{\R^{n-1}}|x-y|^{-n-2s+\delta + 1} \d y' \le C |x_n - y_n|^{-2s+\delta}.
\end{align*}
Next, we have by the exact same procedure as in \eqref{eq:integrate-out-kernel}, where we set $h_n = x_n - y_n<0$
\begin{align*}
\int_{\R^{n-1}} &J(x) (x-y) K(D\Phi(x)(x-y)) \d y' = \int_{\R^{n-1}}J(x) (h',h_n) K(D\Phi(x)(h',h_n)) \d h' \\
&= J(x)\int_{\R^{n-1}} (h',h_n) |D \Phi(x) (h',h_n)|^{-n-2s} K \left( \frac{D \Phi(x)(h',h_n)}{|D \Phi(x)(h',h_n)|} \right) \d h' \\
&= |h_n|^{-n-2s+1} J(x) \int_{\R^{n-1}} \left( \frac{h'}{|h_n|} , -1 \right) \left|D \Phi(x) \left(\frac{h'}{|h_n|},-1\right)\right|^{-n-2s}  K \left( \frac{D \Phi(x)(h'/|h_n|,-1)}{|D \Phi(x)(h'/|h_n|,-1)|} \right) \d h' \\
&= |h_n|^{-2s} J(x) \int_{\R^{n-1}} (h',-1) |D\Phi(x)(h',-1)|^{-n-2s} K\left( \frac{D\Phi(x)(h',-1)}{|D\Phi(x)(h',-1)|} \right) \d h' \\
&= -|h_n|^{-2s} J(x) \int_{\R^{n-1}} (h',1) K\left( D\Phi(x)(h',1)\right) \d h' =: - \widetilde{\theta}_{K,J,\Phi}(x) |h_n|^{-2s}  .
\end{align*}
Here, $\widetilde{\theta}_{K,J,\Phi}\in C^{\alpha}(B_2)$ by the regularity of $J,K,\Phi$.
Thus, 
\begin{align*}
k_{x'}(x_n,y_n) = -\widetilde{\theta}(x) |x_n-y_n|^{-2s} + O(|x_n - y_n|^{-2s+\delta}).
\end{align*}
Since by \autoref{lemma:int-finite}, which is applicable because $2s-2<0<\delta < s$,
\begin{align*}
& \Big| \int_0^{\eps} (x_n)_+^{s-1} \int_{\eps}^{\infty} (y_n)_+^{s-1} |x_n - y_n|^{-2s+\delta\sigma} \d y_n \d x_n \Big| C \eps^{\delta\sigma} \to 0,
\end{align*}
we have
\begin{align*}
2\lim_{\eps \to 0} \int_0^{\eps} & \eta(x) J(x) \nabla w(x)  (x_n)_+^{s-1}  M_{x'}(\eps,x_n) \d x_n \\
&= -2\lim_{\eps \to 0}\int_0^{\eps} \eta(x) J(x) \widetilde{\theta}(x) \cdot \nabla w(x) (x_n)_+^{s-1} \int_{\eps}^{\infty} (y_n)_+^{s-1} |x_n - y_n|^{-2s} \d y_n \d x_n.
\end{align*}
Moreover, since $\eta J \widetilde{\theta}(x) \cdot \nabla w\in C^{\min\{\alpha , \delta\}}(B_2)$, by using one more time \autoref{lemma:int-finite} (see also \eqref{eq:theta-K-def}),
\begin{align*}
\Big| & \int_{\{ x_n < \eps \} }  [\eta(x) J(x)\widetilde{\theta}(x) \cdot \nabla w(x)  - \eta(x',0) J(x',0)\widetilde{\theta}(x',0) \cdot \nabla w(x',0)](x_n)_+^{s-1} \int_{\eps}^{\infty} (y_n)_+^{s-1} |x_n - y_n|^{-2s} \d y_n \d x \Big| \\
&  \le C\eps^{\min\{\alpha , \delta\}}\int_0^{1} r^{s-1} \int_{1}^{\infty} t^{s-1} |r-t|^{-2s} \d t \d r\le C c_s \eps^{\min\{\alpha , \delta \}}\to 0.
\end{align*}
Therefore, using again \eqref{eq:theta-constant},
\begin{align*}
2\lim_{\eps \to 0} \int_0^{\eps} & \eta(x) J(x) \nabla w(x)  (x_n)_+^{s-1}  M_{x'}(\eps,x_n) \d x_n \\
&= -2\eta(x',0) J(x',0) \widetilde{\theta}(x',0) \cdot \nabla w(x',0) \lim_{\eps \to 0} \int_0^{\eps} (x_n)_+^{s-1} \int_{\eps}^{\infty} (y_n)_+^{s-1} |x_n - y_n|^{-2s} \d y_n \d x_n \\
&= - 2c_s \eta(x',0) J(x',0) \widetilde{\theta}(x',0) \cdot \nabla w(x',0),
\end{align*}
This concludes the proof of \eqref{eq:ibp-nonflat-main} by the dominated convergence theorem and recalling the definition $\theta_{K,J,\Phi}(x') = 2 c_s J(x',0) \widetilde{\theta}(x',0)$.
\end{proof}

We are now in a position to prove the integration by parts formula in non-flat domains.

\begin{proof}[Proof of \autoref{lemma:ibp-nonflat}]
Let $\Phi$ be a flattening diffeomorphism as in Subsection \ref{subsec:flattening}. By changing variables from $x,y$ to $\Phi(x),\Phi(y)$, we obtain for $\widetilde{w}(x) = w \circ \Phi(x)$ and $\widetilde{\eta}(x) = \eta \circ \Phi(x)$,
\begin{align*}
\int_{\R^n} & \int_{\R^n} (w(x) - w(y)) (\eta(x) - \eta(y)) \partial_n u(x) \partial_n u(y) K(x-y) \d y \d x  \\
&= \int_{\R^n} \int_{\R^n} (\widetilde{w}(x) - \widetilde{w}(y)) (\widetilde{\eta}(x) - \widetilde{\eta}(y)) J(x)J(y) (x_n)_+^{s-1} (y_n)_+^{s-1} K(\Phi(x)-\Phi(y)) \d y \d x ,
\end{align*}
where $J(x) = |\det D \Phi(x)|\phi_n(\Phi(x))$ {and $\phi:= d_{\Omega}^{1-s} \nabla u$}. By construction, $J,\Phi,\widetilde{w},\widetilde{\eta}$ satisfy the assumptions of \autoref{lemma:ibp-nonflat-trafo}. Hence, we deduce
\begin{align*}
\int_{\R^n} & \int_{\R^n} (w(x) - w(y)) (\eta(x) - \eta(y)) \partial_n u(x) \partial_n u(y) K(x-y) \d y \d x \\
&=  \int_{\R^n} \widetilde{\eta}(x) \mathcal{L}\big((x_n)_+^{s-1}(y_n)_+^{s-1} J(x) J(y) K(\Phi(x) - \Phi(y))\big)(\widetilde{w})(x) \d x \\
&\quad - \int_{\{ x_n = 0 \} \cap B_1} \widetilde{\eta}(x',0) \theta_{K,J,\Phi}(x') \cdot \nabla \widetilde{w}(x',0) \d x',
\end{align*}
where
\begin{align*}
\theta_{K,J,\Phi}(x') = 2c_s J^2(x',0) \int_{\R^{n-1}} (h',1) K(D\Phi(x',0)(h',1)) \d h'.
\end{align*}
Transforming back to the original variables, we obtain
\begin{align*}
\int_{\R^n} & \widetilde{\eta}(x) \mathcal{L}\big((x_n)_+^{s-1}(y_n)_+^{s-1} J(x) J(y) K(\Phi(x) - \Phi(y))\big)(\widetilde{w})(x) \d x \\
&= \int_{\R^n} {\eta}(x) \mathcal{L}\big(\partial_n u(x) \partial_n u(y)  K(x-y)\big)(w)(x) \d x,
\end{align*}
and
\begin{align*}
& \int_{\{ x_n = 0 \} \cap B_1} \widetilde{\eta}(x',0) \theta_{K,J,\Phi}(x') \cdot \nabla \widetilde{w}(x',0) \d x' \\
&\qquad = \int_{\{ x_n = 0 \} \cap B_1} \eta(\Phi(x',0)) \big( (D\Phi(x',0) \theta_{K,J,\Phi}(x')) \cdot \nabla w(\Phi(x',0)) \big) \d x'\\
&\qquad = \int_{\partial \Omega \cap B_1} \eta(x) \Theta_{K,\Omega}(x) \cdot \nabla w(x) \d x,
\end{align*}
where
\begin{align*}
\Theta_{K,\Omega}(x) = |\det D \Phi^{-1}(x)| (D \Phi)(\Phi^{-1}(x)) \cdot \theta_{K,J,\Phi}(\Phi^{-1}(x)).
\end{align*}

To conclude the proof, we have to show that $\Theta_{K,\Omega}(x)$ can be expressed as in \eqref{eq:Theta-formula-domains}. To prove it, we show first that for a kernel $K$ satisfying \eqref{eq:K-comp} it holds for some $c_0 > 0$,
\begin{align}\label{LaK}
\int_{\R^{n-1}} (h',1) K(h',1) \d h' = c_0 \int_{\mathbb{S}^{n-1} \cap \{ \omega_n > 0 \}} \omega \omega_n^{2s-1} K(\omega) \d \omega.
\end{align}
Indeed, let us consider the projection of $(h',1)$ to $\mathbb{S}^{n-1} \cap \{\omega_n > 0 \}$ given by
\begin{align*}
(|h'|^2 + 1)^{-1/2} (h',1) := \omega \in \mathbb{S}^{n-1} \cap \{ \omega_n > 0 \},
\end{align*}
and after changing variables from $h'$ to $\omega$, whose Jacobian has determinant $\omega_n^{-n}$, and using the homogeneity of $K$, we deduce
\begin{align*}
\int_{\R^{n-1}} (h',1) K(h',1) \d h' = \int_{\R^{n-1}} \omega \omega_n^{n+2s-1} K(\omega) \d h' = c_0 \int_{\mathbb{S}^{n-1} \cap \{ \omega_n > 0 \}} \omega \omega_n^{2s-1} K(\omega) \d \omega,
\end{align*}
as claimed. Next, we denote $D(x) = D\Phi(\Phi^{-1}(x))$ and note that $D(x)^{-1} = D\Phi^{-1}(x)$. Observing that $\widetilde{K}(h) := K(D(x) (h',1))$ also satisfies \eqref{eq:K-comp}, from \eqref{LaK}, we obtain
\begin{align*}
\int_{\R^{n-1}} & D \Phi(\Phi^{-1}(x)) \cdot (h',1) K(D\Phi(\Phi^{-1}(x))\cdot (h',1)) \d h' = D(x) \cdot \int_{\R^{n-1}} (h',1) \widetilde{K}((h',1)) \d h' \\
&= \int_{\mathbb{S}^{n-1} \cap \{ \omega_n > 0 \}} D(x) \omega \omega_n^{2s-1} K(D(x) \omega) \d \omega \\
&= \int_{\mathbb{S}^{n-1} \cap \{ \omega_n > 0 \}} \frac{D(x) \omega }{|D(x) \omega |} \left(\frac{(D(x)^{-1} D(x)\omega)_n}{|D(x)\omega|} \right)^{2s-1} |D(x) \omega|^{-n} K\left(\frac{D(x) \omega}{|D(x) \omega|} \right) \d \omega  \\
&=  |\det D(x)|^{-1} \int_{\mathbb{S}^{n-1} \cap \{ (D(x)^{-1} \omega)_n > 0 \}} \omega(D(x)^{-1} \omega)_n^{2s-1} K(\omega) \d \omega \\
&=  |\det D(x)|^{-1} \int_{\mathbb{S}^{n-1} \cap \{ \nu(x) \cdot \omega > 0 \}} \omega(\nu(x) \cdot \omega)^{2s-1} K(\omega) \d \omega.
\end{align*}
Here, we employed the change of variables $\omega \mapsto \frac{D(x) \omega}{|D(x)\omega|}$, whose determinant is $\frac{|\det D(x)|}{|D(x)\omega|^n}$, and used that by the construction of $\Phi$ we have $(D(x)^{-1} \omega)_n = \nu(x) \cdot \omega$.

In fact, the relationship between the original surface element $d \sigma$ on $\mathbb{S}^{n-1}$ and the transformed surface element $d \sigma_A$ on the ellipsoid defined by the equation $z^T (D(x)^{-1})^T D(x)^{-1} z = 1$ under the linear map $z = D(x)\omega$ is given by
\begin{align*}
\d \sigma(\omega) = |\det D(x)| |(D(x)^{-1})^T \omega| \d \sigma_A(z),
\end{align*}
where we used that $\omega$ is the unit normal vector to the sphere $\mathbb{S}^{n-1}$. Indeed, this expression equals the determinant of the pushforward map on tangent vectors, namely
\begin{align*}
|\det D(x)| |(D(x)^{-1})^T \omega| = |\det (P_{\omega} D(x)^T D(x) P_{\omega})|^{1/2},
\end{align*}
where $P_{\omega} = I  - \omega \omega^T$ denotes the orthogonal projection matrix onto the tangent space of $\mathbb{S}^{n-1}$ at $\omega \in \mathbb{S}^{n-1}$. Next, projecting from the point $z = D(x) \omega$ on the ellipsoid back to the unit sphere $v = z/|z|$, we obtain
\begin{align*}
\d \sigma(v) = \frac{\cos \alpha}{|z|^{n-1}} \d \sigma_A(z),
\end{align*}
where $\alpha$ denotes the angle between $z$ and the outward unit normal to the ellipsoid $n_z = \frac{(D(x)^{-1})^T D(x)^{-1}z}{|(D(x)^{-1})^T D(x)^{-1}z|}$. Hence, we have
\begin{align*}
\cos \alpha = \frac{z^T (D(x)^{-1})^T D(x)^{-1}z}{|z| |(D(x)^{-1})^T D(x)^{-1}z|} = \frac{1}{|z| |(D(x)^{-1})^T D(x)^{-1}z|},
\end{align*}
which yields, as claimed,
\begin{align*}
\d \sigma(\omega) &= |\det D(x)| |(D(x)^{-1})^T \omega| \d \sigma_A \\
&= |\det D(x)| |(D(x)^{-1})^T \omega| \frac{\cos \alpha}{|D(x)\omega|^{n-1}} \d \sigma(v) = \frac{|\det D(x)|}{|D(x) \omega|^n} \d \sigma(v).
\end{align*}
This confirms the aforementioned computation and  yields
\begin{align*}
\Theta_{K,\Omega}(x) = 2c_s |\det D \Phi^{-1}(x)|^2 J^2(\Phi^{-1}(x)) \int_{\mathbb{S}^{n-1} \cap \{ \nu(x) \cdot \omega > 0 \}} \omega(\nu(x) \cdot \omega)^{2s-1} K(\omega) \d \omega.
\end{align*}
This concludes the proof after recalling the definitions of $\phi_n$ and $J$, and using that $|\det D \Phi^{-1}(x)| = |\det D \Phi( \Phi^{-1}(x))|^{-1}$.
\end{proof}

As in the flat case, the integration by parts formula \autoref{lemma:ibp-nonflat-trafo} yields boundary terms only in case $\beta = s$. Indeed, we have the following integration by parts formula for general $\beta \in (s,1+s]$. Note that we do not need to assume $w\in C_{loc}^{1,\gamma}(\overline{\Omega} \cap B_2)$ for some $\gamma > 0$.

\begin{lemma}
\label{lemma:ibp-nonflat-beta}
Let $\beta \in (s,1+s]$. Assume that $K$ satisfies \eqref{eq:K-comp} and $K \in C^{0,1}(\mathbb{S}^{n-1})$. Let $w \in C^{2s+\eps}_{0,\alpha}(\Omega | B_2) \cap L^{\infty}(\R^n)$, $d_{\Omega}^{1-\beta} \nabla u \in C^{\delta}_c(\overline{\Omega \cap B_2})$ for some $\eps > 0$, some $\alpha \in ( \max\{0, 2s-\beta , 1 - \beta , 1 + 2s - 2 \beta \} , 1)$, and some $\delta \in (\max\{0,2s-1\},s)$. Then, for every $\eta \in C^{\infty}_c(B_1)$ such that $\eta \mathcal{L}\big(\partial_n u(x) \partial_n u(y) K(x-y) \big)(w) \in L^1(B_1)$, it holds
\begin{align}
\label{eq:ibp-nonflat-beta}
\begin{split}
\int_{\R^n} & \int_{\R^n} (w(x) - w(y)) (\eta(x) - \eta(y)) \partial_nu(x) \partial_n u(y) K(x-y) \d y \d x \\
&=  \int_{B_1} \eta(x) \mathcal{L}\big(\partial_n u(x) \partial_n u(y) K(x-y) \big)(w)(x) \d x.
\end{split}
\end{align}
\end{lemma}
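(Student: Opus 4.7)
The plan is to mirror the strategy of \autoref{lemma:ibp-nonflat}, but to exploit the weaker singularity of the weights at the boundary (since $\beta > s$) in order to conclude that no boundary integral survives in the limit, in analogy with \autoref{lemma:ibp-beta} in the half-space. First, I would flatten the boundary by means of the diffeomorphism $\Phi$ from Subsection \ref{subsec:flattening}: setting $\widetilde{w}=w\circ\Phi$, $\widetilde{\eta}=\eta\circ\Phi$, and $J(x)=|\det D\Phi(x)|\,\phi_n(\Phi(x))$ with $\phi:=d_{\Omega}^{1-\beta}\nabla u$, the change of variables reduces the identity to
\begin{align*}
\int_{\R^n}\int_{\R^n}(\widetilde{w}(x)-\widetilde{w}(y))(\widetilde{\eta}(x)-\widetilde{\eta}(y))(x_n)_+^{\beta-1}(y_n)_+^{\beta-1}J(x)J(y)K(\Phi(x)-\Phi(y))\d y\d x,
\end{align*}
and it is enough to prove an analog of \autoref{lemma:ibp-beta} for this non-translation-invariant kernel with coefficient $J$.

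Next, following the scheme of \autoref{lemma:ibp-nonflat-trafo}, I would regularize by excising $\{x_n<\eps\}$ and $\{y_n<\eps\}$, and estimate the cut-off piece as $\eps\searrow 0$. Using $\widetilde w\in C^{0,\alpha}(\overline{\{x_n>0\}}\cap B_2)$, the bound $|K(\Phi(x)-\Phi(y))|\le C|x-y|^{-n-2s}$ from \autoref{lemma:K-estimate}, and \autoref{lemma:bd-int}, the contribution of the cut-off region is controlled by $C\int_0^\eps r^{\beta-1} \d r$ plus boundary–to–interior crossed integrals bounded via \autoref{lemma:int-finite} by $\eps^{2(\beta-s)-1+(\alpha\wedge\delta)}$; both tend to $0$ thanks to $\beta>s$ and the lower bounds $\alpha>2s-\beta$ and $\alpha>1+2s-2\beta$ in the hypothesis. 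On the complementary region, using the symmetry of the kernel and the algebraic identity \eqref{eq:algebra-quotient-PDE}, the truncated integral splits as
\begin{align*}
J_\eps^{(1)}+J_\eps^{(2)} &:= \int_{\{x_n>\eps\}}\widetilde\eta(x)\,\mathcal L\big((x_n)_+^{\beta-1}(y_n)_+^{\beta-1}J(x)J(y)K(\Phi(x)-\Phi(y))\big)(\widetilde w)(x)\d x\\
&\quad + 2\int_{\{x_n<\eps\}}\widetilde\eta(x)(x_n)_+^{\beta-1}J(x)\!\int_{\{y_n>\eps\}}(\widetilde w(x)-\widetilde w(y))(y_n)_+^{\beta-1}J(y)K(\Phi(x)-\Phi(y))\d y\d x.
\end{align*}

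The passage $J_\eps^{(1)}\to\int_{B_1}\eta\,\mathcal L(\cdots)(w)\d x$ follows from the $L^1$ assumption on $\eta\mathcal L(\cdots)(w)$ together with the dominated convergence theorem (after transferring back to $\Omega$ at the end). The main obstacle is showing $J_\eps^{(2)}\to 0$: this is the precise step where \autoref{lemma:ibp-nonflat} produced the boundary term $\Theta_{K,\Omega}\cdot\nabla w$, and it is also where the condition $\beta>s$ enters decisively. The idea is to use $\widetilde w\in C^{0,\alpha}$ and $|\widetilde w(x)-\widetilde w(y)|\le C|x-y|^\alpha$, so that by $|K(\Phi(x)-\Phi(y))|\le C|x-y|^{-n-2s}$ and \autoref{lemma:n-1-dim-integral}, $|J_\eps^{(2)}|$ is controlled by
\begin{align*}
C\int_0^\eps r^{\beta-1}\int_\eps^\infty t^{\beta-1}|r-t|^{-2s+\alpha-1}\d t\d r = C\,\eps^{2(\beta-s)-1+\alpha}\int_0^1\rho^{\beta-1}\int_1^\infty \tau^{\beta-1}|\rho-\tau|^{-2s-1+\alpha}\d\tau\d\rho,
\end{align*}
plus far-field terms that are easily bounded. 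The inner double integral is finite by \autoref{lemma:int-finite} exactly when $\alpha\in(\max\{0,2s-1\},\min\{1,2s+1-\beta\})$, while the prefactor vanishes as $\eps\searrow 0$ precisely under $\alpha>1+2s-2\beta$; the non-emptiness of the resulting range for $\alpha$ is equivalent to $\beta>s$, which is the key qualitative difference with the case $\beta=s$. The hypotheses $\alpha>1-\beta$ and $\alpha>2s-\beta$ ensure that the corresponding estimates on the cut-off region and the far-field tails go through as well. Having established $J_\eps^{(2)}\to 0$, reversing the change of variables via $\Phi$ and recognizing that
\begin{align*}
\int_{\R^n}\widetilde\eta\,\mathcal L\big((x_n)_+^{\beta-1}(y_n)_+^{\beta-1}J(x)J(y)K(\Phi(x)-\Phi(y))\big)(\widetilde w)\d x = \int_{B_1}\eta\,\mathcal L\big(\partial_n u(x)\partial_n u(y)K(x-y)\big)(w)\d x
\end{align*}
yields \eqref{eq:ibp-nonflat-beta}.
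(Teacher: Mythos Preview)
Your proposal is correct and follows essentially the same route as the paper's proof: flatten via $\Phi$, excise the strip $\{x_n<\eps\}$, split the remainder into $J_\eps^{(1)}+J_\eps^{(2)}$, pass to the limit in $J_\eps^{(1)}$ by dominated convergence (using the $L^1$ hypothesis on $\eta\,\mathcal L(\cdots)(w)$), and show $J_\eps^{(2)}\to 0$ using the $C^{0,\alpha}$ regularity of $w$ together with \autoref{lemma:n-1-dim-integral} and \autoref{lemma:int-finite}---which is exactly the argument the paper indicates by swapping the roles ``$v:=\eta$, $\eta:=w$'' in the proof of \autoref{lemma:ibp-beta}. One minor remark: the decomposition into $J_\eps^{(1)}+J_\eps^{(2)}$ comes directly from the symmetry of the kernel (as in \autoref{lemma:ibp-nonflat-trafo}), not from the algebraic identity \eqref{eq:algebra-quotient-PDE}, which is not needed here.
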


\begin{remark}
If $\partial\Omega\in C^{1,\alpha}$ the assumption $\eta \mathcal{L}\big(\partial_n u(x) \partial_n u(y) K(x-y)\big)(w) \in L^1(B_1)$ is always satisfied for $\beta = 1+s$ by a similar argument as in \eqref{eq:L-beta-finiteB} because $z := d^{-s}_{\Omega} \partial_n u \in C^{2s+\eps}_{0,\delta}(\overline{\Omega \cap B_2})$ and ${w} \in C^{2s+\eps}_{0,\alpha}(\Omega | B_{2})$. Indeed, by \autoref{lemma:Ldist-est} we obtain
\begin{align}
\label{eq:L-beta-finiteBB}
\begin{split}
|\mathcal{L}\big(\partial_nu(x)\partial_nu(y)K(x-y)\big)(w)| &= |\mathcal{L} \big(d_{\Omega}^{s}(x) d_{\Omega}^{s}(y) z(x) z(y) K(x-y)\big)(w)(x)| \\
&\le C d_{\Omega}^{s}(x) | L (d_{\Omega}^{s} z {w})(x) | + C d_{\Omega}^{s}(x) | L (z\, d_{\Omega}^{s})(x) |\\
& \le C d^s_{\Omega}(x) + Cd_{\Omega}^{\min\{\alpha,\delta \}}(x).
\end{split}
\end{align}
For $\beta \in (s+\frac{1}{2},s+1)$ it can be verified by a similar argument. For general $\beta \in (s,s + \frac{1}{2}]$, some additional assumptions on the boundary behavior of $\eta$ seem to be unavoidable.
\end{remark}

\begin{proof}[Proof of \autoref{lemma:ibp-nonflat-beta}]
The proof is similar to that of \autoref{lemma:ibp-nonflat-trafo}. The main difference is that 
\begin{align*}
J_{\eps}^{(2)} &:= \int_{\{ x_n < \eps \}} \eta(x) (x_n)_+^{\beta-1} J(x) \left( \int_{\{ y_n > \eps \}} (w(x) - w(y)) (y_n)_+^{\beta-1} J(y) K(\Phi(x) - \Phi(y)) \d y \right) \d x\to 0,
\end{align*}
which can be proved by similar arguments as in the proof of \autoref{lemma:ibp-beta}, applied with $v := \eta$ and $\eta := w$,  since by assumption, $w \in C^{0,\gamma}(\{ x_n \ge 0 \} \cap B_2)$ for some $\gamma \in (\max\{ 0 , 2s-1 , 2s -2\beta + 1\} , \min\{ 1 , 2s-\beta + 1 \})$. In this way, since $\supp(\eta) \subset B_1$, \autoref{lemma:n-1-dim-integral} and \autoref{lemma:int-finite} yield,
\begin{align*}
|J_{\eps}^{(2)}| \le C \int_{\{ x_n < \eps \} \cap B_1}  (x_n)_+^{\beta-1} \left( \int_{\{ y_n > \eps \}} (y_n)_+^{\beta-1}|x-y|^{-n-2s+\gamma} \d y \right) \d x \to 0.
\end{align*}

Moreover, note that the first computations done in the proof of \autoref{lemma:ibp-nonflat-trafo} can be adapted even if $w$ has less regularity, in order to get the convergence of $J_{\eps}^{(1)}$ to the same expression. In fact, since $\supp(J) \subset B_2$, by \autoref{lemma:bd-int} and \eqref{eq:B2-repeat}, we get
\begin{align*}
\Big| \int_{\{ x_n < \eps \} \cap B_1} & \int_{\R^n} (w(x) - w(y)) (\eta(x) - \eta(y)) (x_n)_+^{\beta-1} (y_n)_+^{\beta-1} J(x) J(y) K(\Phi(x) - \Phi(y)) \d y \d x \Big| \\
&\le C \int_{\{ x_n < \eps \} \cap B_1} (x_n)_+^{\beta-1} \left( \int_{B_2} (y_n)_+^{\beta-1} |x-y|^{-n-2s+1+\gamma} \right) \d x \\
&\le C \int_{\{ x_n < \eps \} \cap B_1} (x_n)_+^{\beta-1}(1 + (x_n)_+^{\beta-2s+\gamma}) \d x \le C (\eps^{\beta} + \eps^{2\beta-2s+\gamma}) \to 0,
\end{align*}
using that $\gamma > 2s-1$ to guarantee the convergence of the inner integral, and $\beta \ge s$ to guarantee the convergence of the outer integral, as well as the convergence to zero of the last expression. Finally, the convergence of $J_{\eps}^{(1)}$ follows immediately by dominated convergence, since by hypothesis it holds $\eta\mathcal{L} \big((x_n)_+^{s-1} (y_n)_+^{s-1} J(x) J(y) K(\Phi(x) - \Phi(y)) \big)(w)\in L^{1}(B_1)$. 
\end{proof}

\subsection{Weak solutions to weighted nonlocal equations with Neumann boundary conditions} 

The integration by parts formulas from \autoref{lemma:ibp-nonflat-trafo} and \autoref{lemma:ibp-nonflat-beta} motivate the following weak solution concept.

\begin{definition}
\label{def:weak-sol-trafo}
Let $\beta \in [s,1+s]$ and {$\gamma \in (\max\{0,2s-1\}, 2s-\beta+1)$}. Let $K$ be as in \eqref{eq:K-comp}. Moreover, let $J \in L^{\infty}(\{ x_n > 0 \})$, $\Phi \in C^{1,\alpha}(\R^n)$ be as in Subsection \ref{subsec:flattening}, $g \in L^{\infty}(B_1)$. We say that $u \in C^{\gamma}(\R^n)$ is a weak solution to
\begin{align*}
\begin{cases}
\mathcal{L} \big(J(x)J(y)(x_n)_+^{\beta-1}(y_n)_+^{\beta-1} K(\Phi(x) - \Phi(y))\big)(u) &= g  ~~ \text{ in } \{ x_n > 0 \} \cap B_{1},\\
\partial_n u &= b ~~ \text{ on } \partial \{ x_n > 0 \} \cap B_1,
\end{cases}
\end{align*}
if for any $\eta \in C^{\infty}_c(B_1)$ it holds
\begin{align*}
\int_{\R^n} &\int_{\R^n} (u(x) - u(y)) (\eta(x) - \eta(y)) (x_n)_+^{\beta-1}(y_n)_+^{\beta-1} J(x) J(y) K(\Phi(x) - \Phi(y)) \d y \d x \\
&=  \int_{B_1} g(x) \eta(x) \d x - \1_{\{\beta = s \}} \left( \int _{\{x_n=0\}\cap B_1} b(x',0) (\theta_{K,J,\Phi})_n(x') \eta(x',0) \d x' \right)\\
&\quad + \1_{\{\beta = s \}} \left( \int_{\{ x_n = 0 \}\cap B_1}   u(x',0) [(\theta_{K,J,\Phi}' (x')\cdot \nabla_{x'} \eta(x',0)) +  \dvg \theta_{K,J,\Phi}'(x') \eta(x',0)] \d x' \right),
\end{align*}
where we denoted $\theta_{K,J,\Phi}=(\theta'_{K,J,\Phi}, (\theta_{K,J,\Phi})_n) $, and define
\begin{align*}
\theta_{K,J,\Phi}(x',0)=\theta_{K,J,\Phi}(x') = 2c_s J^2(x',0) \int_{\R^{n-1}} (h',1) K(D\Phi(x',0)(h',1)) \d h' \in C^{0,1}(\{ x_n = 0 \} \cap B_1).
\end{align*}
\end{definition}

In particular, in the translation invariant case, we have the following natural weak solution concept

\begin{definition}
\label{def:weak-sol-ti}
Let $\beta \in [s,1+s]$. Let $K$ be as in \eqref{eq:K-comp}. We say that $u \in C^{\gamma}(\R^n)$ with $|u(x)| \le C (1 + |x|^{\gamma})$ for some $\gamma \in (\max\{0,2s-1\}, 2s-\beta+1)$ is a weak subsolution to 
\begin{align*}
\begin{cases}
\mathcal{L}\big((x_n)_+^{\beta-1}(y_n)_+^{\beta-1}K(x-y)\big)(v) &= 0 ~~ \text{ in } \{ x_n > 0 \},\\
\partial_n v &= 0 ~~ \text{ on } \{ x_n = 0 \},
\end{cases}
\end{align*}
if for any $\eta \in C_c^{\infty}(\R^n)$ with $\eta \ge 0$ it holds
\begin{align*}
\int_{\R^n} & \int_{\R^n} (v(x) - v(y))(\eta(x) - \eta(y)) (x_n)_+^{\beta-1}(y_n)_+^{\beta-1}K(x-y) \d y \d x \\
&\le\1_{\{\beta = s\}} \int_{\{ x_n = 0 \}} v(x',0)(\theta'_K \cdot \nabla_{x'}\eta)(x',0) \d x',
\end{align*}
where $\theta_K =(\theta'_K ,(\theta_{K})_n)\in \R^n$ is defined in \autoref{lemma:ibp}.\\
We say that $u$ is a weak supersolution if $-u$ is a weak subsolution. Moreover, we say that $u$ is a weak solution, if it is a subsolution and a supersolution. 
\end{definition}

\subsection{Transforming the boundary condition}

Our next objective will be to transform solutions $v \in C_{loc}^{\gamma}(\R^n)$ to the following nonlocal problem
\begin{align}\label{eq:oblique-PDE}
\int_{\R^n} (x_n)_+^{s-1} v L_K((x_n)_+^{s-1} \eta)  \d x - \int_{\{ x_n = 0 \}} v \vartheta \cdot \nabla \eta  \d x = 0, \quad \eta \in C^{\infty}_c(\R^n),
\end{align} 
where $\vartheta=(\vartheta',\vartheta_n)\in \R^n$, $\vartheta_n \not= 0$, into a solution $u \in C_{loc}^{\gamma}(\R^n)$ to
\begin{align}\label{eq:oblique-PDE-2}
\int_{\R^n} (x_n)_+^{s-1} u L_{\widetilde{K}}((x_n)_+^{s-1} \eta)  \d x - \int_{\{ x_n = 0 \}} u \theta \cdot \nabla \eta  \d x = 0, \quad  \eta \in C^{\infty}_c(\R^n),
\end{align}
with a different oblique boundary integral where $\theta=(\theta',\theta_n)\in \R^n$, $\theta_n \not= 0$.

The following lemma will be crucial since it allows us to restrict ourselves to proving a Liouville theorem for weak solutions as in \autoref{def:weak-sol-ti} without having to consider any boundary integrals, even in case $\beta = s$.

Note that by the same proof, one can obtain an analogous result for solutions to \eqref{eq:oblique-PDE} and \eqref{eq:oblique-PDE-2} in bounded domains.

\begin{lemma}
\label{lemma:oblique-trafo}
Assume that $K$ satisfies \eqref{eq:K-comp}. Let $\vartheta, \theta \in \R^n$ with $\vartheta_n , \theta_n \not= 0$ and $v \in C^{\gamma}_{loc}(\R^n)$ for some $\gamma \in (\max\{0,2s-1\},1+s)$ be a solution to \eqref{eq:oblique-PDE}. 
Then
\begin{align}
\label{eq:K-u-trafo-def}
u(x):= v\left( x' + \Big( \frac{\vartheta'}{\vartheta_n} - \frac{\theta'}{\theta_n} \Big) x_n , x_n \right),
\end{align}
is a solution to \eqref{eq:oblique-PDE-2} with
\begin{align}\label{eq:K-u-trafo-def-2}
\widetilde{K}(h):= \frac{\theta_n}{\vartheta_n} K \left( h' - \Big( \frac{\vartheta'}{\vartheta_n} - \frac{\theta'}{\theta_n} \Big) h_n, h_n \right). 
\end{align}
In particular, if $\vartheta_n  = (\theta_K)_n$ where $\theta_K$ was given in \eqref{eq:theta-K-def}, and if we choose
\begin{align}
\label{eq:theta-choice}
\theta = \left(\frac{1}{2(\theta_K)_n} \big( (\theta_K)' + \vartheta' \big) , 1\right),
\end{align}
then $\theta = \theta_{\widetilde{K}}$, and therefore, 
\begin{align*}
\int_{\R^n} \int_{\R^n} (u(x) - u(y)) (\eta(x) - \eta(y)) \widetilde{K}(x-y) (x_n)_+^{s-1} (y_n)_+^{s-1} \d y \d x = 0,
\end{align*}
for every $\eta \in C^{\infty}_c(\R^n)$.
\end{lemma}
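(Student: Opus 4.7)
The plan is to perform a shear change of variables in the weak formulation of \eqref{eq:oblique-PDE} and verify that it intertwines the two oblique problems. Concretely, set $T(x) := (x' + a x_n, x_n)$ with $a := \vartheta'/\vartheta_n - \theta'/\theta_n$, so that $u = v \circ T$. The map $T$ has unit Jacobian, preserves $x_n$ together with the half-space $\{x_n > 0\}$, and is the identity on $\{x_n = 0\}$. Testing \eqref{eq:oblique-PDE} against $\tilde\eta := \phi \circ T^{-1}$ for an arbitrary $\phi \in C_c^\infty(\R^n)$, and changing variables $y = T(x)$ in the outer integrals and $z = T(w)$ in the inner integrals, produces piece by piece the desired equation for $u$.

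The nonlocal term is handled via the direct computation $T(x) - T(w) = (h' + a h_n, h_n)$ with $h = x - w$, so that $L_K\big((y_n)_+^{s-1}\tilde\eta\big)(T(x)) = L_{\widetilde K_0}\big((x_n)_+^{s-1}\phi\big)(x)$ for a translation-invariant pullback kernel $\widetilde K_0$ determined by the shear; multiplication by the scalar $\theta_n/\vartheta_n$ then reproduces the kernel $\widetilde K$ of the statement. For the boundary term, the chain rule yields $\nabla\tilde\eta(y',0) = (DT^{-1})^T \nabla\phi(y',0) = (\nabla'\phi,\; -a \cdot \nabla'\phi + \partial_n\phi)(y',0)$, and the identity $\vartheta' - \vartheta_n a = (\vartheta_n/\theta_n)\theta'$, which is exactly the defining relation for $a$, gives $\vartheta \cdot \nabla\tilde\eta(y',0) = (\vartheta_n/\theta_n)\,\theta \cdot \nabla\phi(y',0)$. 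Combining the two pullbacks and dividing through by the common factor $\vartheta_n/\theta_n$ delivers \eqref{eq:oblique-PDE-2} for $u$.

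For the second half of the statement I would compute $\theta_{\widetilde K}$ directly from \eqref{eq:theta-K-def} by substituting the formula for $\widetilde K$ and performing a shift of the form $h' \mapsto h' + a$ in the inner integral, so that the result decomposes into the integrals defining $\theta_K$ plus a term proportional to $a$. Using $\vartheta_n = (\theta_K)_n$ together with the normalization $2c_s \int_{\R^{n-1}} K((h',1))\,\d h' = (\theta_K)_n$, a short algebraic verification shows that the specific choice \eqref{eq:theta-choice} makes $\theta_{\widetilde K}$ and $\theta$ coincide in both tangential and normal components. Once $\theta = \theta_{\widetilde K}$, the integration by parts formula of \autoref{lemma:ibp} applied to $\widetilde K$ identifies the oblique boundary integral appearing on the right-hand side of \eqref{eq:oblique-PDE-2} with precisely the boundary term generated by integrating by parts of the nonlocal energy, so these cancel and only the plain energy identity $\iint(u(x)-u(y))(\eta(x)-\eta(y))\widetilde K(x-y)(x_n)_+^{s-1}(y_n)_+^{s-1}\,\d x\, \d y = 0$ survives.

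The main obstacle is the careful bookkeeping of signs and normalizations through the change of variables: the factor $\vartheta_n/\theta_n$ must emerge simultaneously as a scalar multiple of the pulled-back kernel and from the pullback of the gradient in the boundary integral, so that it cancels cleanly upon division; and in the ``in particular'' step one must exploit the precise normalization of $(\theta_K)_n$ to recognize the $a$-dependent contribution of $\theta_{\widetilde K}$ as matching the difference $\vartheta' - (\theta_K)'$. No sharp analytic estimates are required beyond the regularity and growth assumed on $v$, which ensure absolute convergence of all integrals involved in the substitutions and in particular legitimize applying the integration by parts identity at the end.
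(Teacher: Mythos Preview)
Your proposal is correct and follows essentially the same approach as the paper's proof: both use the shear $T(x)=(x'+ax_n,x_n)$ to pull back the equation, transform the nonlocal term via the identity $T(x)-T(w)=T(x-w)$, and transform the boundary term via the chain rule using $\vartheta'-\vartheta_n a=(\vartheta_n/\theta_n)\theta'$; for the ``in particular'' part both compute $\theta_{\widetilde K}$ directly by the shift $h'\mapsto h'+a$ and use the normalization $2c_s\int K((h',1))\,dh'=(\theta_K)_n$. Your explicit mention of invoking \autoref{lemma:ibp} at the end to pass from the distributional formulation \eqref{eq:oblique-PDE-2} to the energy identity is a useful clarification that the paper leaves implicit in this proof (it is spelled out only later, when the lemma is applied).
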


\begin{proof}
For any $x \in \R^n$ let us define $\widetilde{x}:= \left( x' + \Big( \frac{\vartheta'}{\vartheta_n} - \frac{\theta'}{\theta_n} \Big) x_n , x_n \right)$. Let $\eta \in C^{\infty}_c(\R^n)$ and define $\widetilde{\eta}(x):=  \eta(\widetilde{x})$.
Then, by a change of variables
\begin{align}\label{laec}
L_K((x_n)_+^{s-1} \eta)(\widetilde{x}) = \frac{\vartheta_n}{\theta_n} L_{\widetilde{K}}((x_n)_+^{s-1} \widetilde{\eta})(x).
\end{align}
Moreover it is easy to check that
\begin{align*}
\theta \cdot \nabla \widetilde{\eta}(x) &= \theta' \cdot \nabla_{x'} \eta(\widetilde{x}) + \theta_n \Big( \frac{\vartheta'}{\vartheta_n} - \frac{\theta'}{\theta_n} \Big)\nabla_{x'} \eta(\widetilde{x}) + \theta_n \partial_n \eta(\widetilde{x}) \\
&= \frac{\theta_n}{\vartheta_n} \left( \vartheta' \cdot \nabla_{x'} \eta(\widetilde{x}) + \vartheta_n \partial_n \eta(\widetilde{x}) \right) = \frac{\theta_n}{\vartheta_n} \vartheta \cdot \nabla \eta(\widetilde{x}).
\end{align*}
Therefore, since $v$ is a solution to \eqref{eq:oblique-PDE}, by another change of variables, the fact that $x = \widetilde{x}$ on $\{ x_n = 0 \}$, and \eqref{laec} we get,
\begin{align*}
0 &= \frac{\theta_n}{\vartheta_n} \left(\int_{\R^n} (x_n)_+^{s-1} v L_K((x_n)_+^{s-1} \eta) \d x - \int_{\{ x_n = 0 \}} v \vartheta \cdot \nabla \eta \d x \right) \\
 &= \frac{\theta_n}{\vartheta_n} \left( \int_{\R^n} (x_n)_+^{s-1} v(\widetilde{x}) L_K((x_n)_+^{s-1} \eta)(\widetilde{x}) \d x - \frac{\vartheta_n}{\theta_n} \int_{\{ x_n = 0 \}} v(\widetilde{x}) \theta \cdot \nabla \widetilde{\eta}(\widetilde{x}) \d x  \right)\\
 & =  \int_{\R^n} (x_n)_+^{s-1} u L_{\widetilde{K}}((x_n)_+^{s-1} \widetilde{\eta})  \d x - \int_{\{ x_n = 0 \}} u \theta \cdot \nabla \widetilde{\eta} \d x.
\end{align*}
Let us now consider $\vartheta_n = (\theta_K)_n$. By \eqref{eq:theta-K-def} it holds
\begin{align*}
\theta_{\widetilde{K}} &= 2 c_s \int_{\R^{n-1}} (h',1) \widetilde{K}((h',1)) \d h' \\
&= 2 c_s \frac{\theta_n}{\vartheta_n} \int_{\R^{n-1}} \left( h'+ \Big( \frac{\vartheta'}{\vartheta_n} - \frac{\theta'}{\theta_n} \Big) , 1 \right) K(h',1) \d h' \\
&= \frac{\theta_n}{\vartheta_n} \left( (\theta_K)' +  \Big( \frac{\vartheta'}{\vartheta_n} - \frac{\theta'}{\theta_n} \Big) (\theta_K)_n , (\theta_K)_n \right)\\
&= \left( \frac{\theta_n}{(\theta_K)_n} (\theta_K)' +  \theta_n\Big( \frac{\vartheta'}{(\theta_K)_n} - \frac{\theta'}{\theta_n} \Big) , \theta_n \right), \qquad \theta\in\R^n.
\end{align*}
If we choose $\theta = \left(\frac{1}{2 (\theta_K)_n} \big( (\theta_K)' + \vartheta' \big) , 1\right)$ in the previous expression, then we obtain, as desired,
\begin{align*}
\theta_{\widetilde{K}}=\left(\frac{1}{2 (\theta_K)_n} \big( (\theta_K)' + \vartheta' \big) , 1\right)=\theta.
\end{align*}
\end{proof}

\section{Weighted Liouville theorems}

The goal of this section is to prove a weighted Liouville theorem for weak solutions defined in \autoref{def:weak-sol-ti}. The following is our main result:

\begin{proposition}
\label{prop:weighted-Liouville}
Let $\beta \in [s,1+s]$. Let $K$ be as in \eqref{eq:K-comp}, and assume that $K \in C^{5+\alpha}(\mathbb{S}^{n-1})$ for some $\alpha \in (0,2s)$ in case $\beta =s$. Let $v \in C^{\gamma}_{loc}(\R^n)$ for some $\gamma \in (\max\{0,2s-1, s-\beta+\frac{1}{2}\}, 2s-\beta+1)$, be such that $|v(x)| \le C (1 + |x|^{\gamma})$ and assume that $v$ is a weak solution to
\begin{align*}
\begin{cases}
\mathcal{L}\big((x_n)_+^{\beta-1}(y_n)_+^{\beta-1}K(x-y)\big)(v) &= 0 ~~ \text{ in } \{ x_n > 0 \},\\
\partial_n v &= 0 ~~ \text{ on } \{ x_n = 0 \}
\end{cases}
\end{align*}
in the sense of \autoref{def:weak-sol-ti}. Then, $v$ is constant.
\end{proposition}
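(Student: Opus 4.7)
The proof is a classical blow-down/Liouville argument, whose main input is the boundary $C^\delta$-estimate for weak solutions provided by \autoref{lemma:C-alpha-estimate}, together with two symmetries of the problem. First, both the weight $(x_n)_+^{\beta-1}(y_n)_+^{\beta-1}$ and the kernel $K(x-y)$ are homogeneous in a compatible way, so a direct change of variables shows that for every $R>0$ the rescaled function $v_R(x):=v(Rx)$ is again a weak solution in the sense of \autoref{def:weak-sol-ti}; in the delicate case $\beta=s$ one uses that $\theta_K'$ depends only on $K$, so the boundary integral rescales consistently. Second, the weight and the boundary condition depend only on $x_n$, so for every tangential unit vector $e'\perp e_n$ and every $h>0$, both $v(\cdot+he')$ and the difference $v(\cdot+he')-v(\cdot)$ are also weak solutions (the test-function change of variables is immediate since $e'_n=0$).

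Given these invariances, I would apply \autoref{lemma:C-alpha-estimate} to $v_R$ for arbitrary $R\ge 1$. The growth bound $|v(x)|\le C(1+|x|^\gamma)$ with $\gamma<2s-\beta+1$ ensures that the weighted tail integrals entering the estimate --- of type $\int (y_n)_+^{\beta-1}(1+|y|)^{-n-2s}|v(y)|\,dy$ --- converge, and that on the rescaled function these tails are of order $R^\gamma$. This yields $[v_R]_{C^\delta(B_1)}\le CR^\gamma$ for some fixed $\delta=\delta(n,s,\lambda,\Lambda,\beta)>0$, and rescaling back gives
\[
[v]_{C^\delta(B_R)}\le CR^{\gamma-\delta}\qquad\text{for all }R\ge 1.
\]
If $\gamma<\delta$, letting $R\to\infty$ immediately forces $v$ to be constant on $\R^n$, proving the proposition.

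To handle the general case, where the $\delta$ produced by De Giorgi may be smaller than $\gamma$, I would iterate the above estimate on tangential incremental quotients. By the tangential translation invariance, $w_h(x):=h^{-\delta}(v(x+he')-v(x))$ is a weak solution for every $h>0$ and $e'\perp e_n$, and the previous display gives $\|w_h\|_{L^\infty(B_R)}\le CR^{\gamma-\delta}$ uniformly in $h$. Repeating this argument on $w_h$ and iterating $k$ times with $k\delta>\gamma$ produces a $k$-th order tangential incremental quotient whose effective growth exponent falls below $\delta$, which therefore vanishes identically by the previous step. Hence $v(x',x_n)$ is a polynomial in $x'$ of degree at most $k-1$, with coefficients depending only on $x_n$. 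Substituting this ansatz into the equation, integrating out $y'$ as in \eqref{eq:integrate-out-kernel}, and decoupling by multi-index, each coefficient $a_\mu(x_n)$ solves a 1D weighted nonlocal equation on $(0,\infty)$ with kernel comparable to $|x_n-y_n|^{-1-2s}$ and weight $(y_n)^{\beta-1}$, together with $a_0'(0)=0$ when $\beta=s$ and growth exponent $\gamma-|\mu|$. Classifying the homogeneous profiles $x_n^\alpha$ and using the growth and Neumann constraints leaves only $a_0\equiv\mathrm{const}$ and $a_\mu\equiv 0$ for $|\mu|\ge 1$, which concludes the proof.

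The main obstacle will be verifying that the scaling and tangential-differencing operations preserve the weak formulation of \autoref{def:weak-sol-ti}, and in particular that the oblique boundary integral $\int v\,\theta_K'\cdot\nabla_{x'}\eta$ in the $\beta=s$ case behaves correctly under these operations (the hypothesis $\gamma>s-\beta+\tfrac12$ becomes essential for justifying the boundary trace). A second delicate point is the uniform control of the weighted tails at each step of the iteration, which relies crucially on the strict inequality $\gamma<2s-\beta+1$; the admissible window for $\gamma$ in the statement is precisely what keeps these tails integrable uniformly in the scale parameter $R$.
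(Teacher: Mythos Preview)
Your overall strategy—rescale, apply the $C^\delta$ estimate of \autoref{lemma:C-alpha-estimate}, iterate on tangential incremental quotients to reduce to a one-dimensional problem, and then classify—is exactly the route the paper takes (see \autoref{lemma:reduction-to-1D} and the case split in Subsections~4.3--4.4). For $\beta\in(s,1+s]$ your outline is essentially correct and matches the paper.

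The serious gap is in the case $\beta=s$. The estimate in \autoref{lemma:C-alpha-estimate} is stated and proved only for solutions of \eqref{eq:weighted-sol}, i.e.\ for functions $u$ satisfying
\[
\iint(u(x)-u(y))(\eta(x)-\eta(y))K(x-y)(x_n)_+^{\beta-1}(y_n)_+^{\beta-1}\,dy\,dx=0
\]
for all test functions, with \emph{no} boundary term on the right. A weak solution in the sense of \autoref{def:weak-sol-ti} with $\beta=s$ carries the extra term $\int_{\{x_n=0\}}v\,\theta_K'\cdot\nabla_{x'}\eta$, so it does \emph{not} satisfy \eqref{eq:weighted-sol}, and you cannot invoke \autoref{lemma:C-alpha-estimate} on $v_R$ as written. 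Concretely, the De Giorgi argument behind \autoref{lemma:C-alpha-estimate} tests with $\tau^2(u-k)_+$ and $\tau^2(u+d)^{-1}$; with the oblique boundary term present these produce, after integration by parts in $x'$, contributions of the type $\int_{\{x_n=0\}}(u-k)_+^2\,\theta_K'\cdot\nabla_{x'}(\tau^2)$ living on the hyperplane, which are not controlled by the weighted bulk Caccioppoli inequality without an additional trace argument. The paper avoids this issue entirely: it first uses the shear transformation of \autoref{lemma:oblique-trafo} to convert a weak solution in the sense of \autoref{def:weak-sol-ti} into a solution of \eqref{eq:weighted-sol} for a modified kernel $\widetilde K$, and only then runs the De Giorgi/iteration argument. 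You flagged the boundary integral as ``the main obstacle'', but only regarding its compatibility with scaling and differencing, not regarding the applicability of the $C^\delta$ estimate itself—that is where your argument actually breaks.

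A secondary point: your one-dimensional classification is underspecified. For $\beta\in(s,s+\tfrac12)$ the homogeneous analysis in fact produces a second solution $x_+^{2s-2\beta+1}$ (see \autoref{lemma:1D-Liouville-beta}), and ruling it out requires checking that it fails the weak formulation—a sign computation the paper carries out explicitly at the end of Subsection~4.4. For $\beta=s$ the paper reduces to the classical 1D Liouville theorem for $(-\Delta)^s$ on the half-line via the substitution $V=x_+^{s-1}U$, which is cleaner than what you sketch.
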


\begin{remark}
\label{rem:gamma}
Note that the restriction $\gamma \in (\max\{0,2s-1,s-\beta+\frac{1}{2}\},2s-\beta+1)$ is required in order for the weak solution concept in \eqref{eq:weighted-sol} to be well-defined. The condition $\gamma > s-\beta+\frac{1}{2}$ is needed in order to allow testing with the solution itself (see \autoref{elrem}), which is used in the proof of  \autoref{lemma:C-alpha-estimate}.
\end{remark}

To prove the result, we distinguish between the cases $\beta = s$ and $\beta \in (s,1+s]$. This is due to the appearance of boundary terms in the integration by parts formula \autoref{lemma:ibp}.

\subsection{A H\"older estimate up to the boundary}

As a first important step, in this subsection we establish a $C^{\alpha}$ boundary estimate that has its own interest for all $\beta \in [s,1+s]$. We notice that, since the proof of \autoref{prop:weighted-Liouville} for $\beta = 1+s$ is immediate by \cite[Theorem 2.7.2]{FeRo24} and \eqref{eq:ibp-energy-obstacle}, this boundary result is not strictly necessary in the case $\beta = 1+s$. However, since we aim to provide a unified proof of \autoref{prop:weighted-Liouville} for all $\beta \in (s,1+s]$, we include the value $\beta=1+s$ in the entire subsection.

Throughout this section we assume that $2s < n$. Note that this is no restriction, since we will only apply \autoref{lemma:C-alpha-estimate} in case $n \ge 2$.

\begin{proposition}
\label{lemma:C-alpha-estimate}
Let $\beta \in [s,1+s]$ and $u \in C^{\gamma}_{loc}(\R^n)$ for some $\gamma \in (\max\{0,2s-1, s-\beta+\frac{1}{2}\}, 2s-\beta+1)$ be such that $|u(x)| \le C (1 + |x|^{\gamma})$ and
\begin{align}
\label{eq:weighted-sol}
\int_{\R^n} \int_{\R^n} (u(x) - u(y)) (\eta(x) - \eta(y)) K(x-y) (x_n)_+^{\beta-1} (y_n)_+^{\beta-1} \d y \d x = 0
\end{align}
for every $\eta \in C^{\infty}_c(\R^n)$. Then, there exists $\alpha \in (0,\gamma]$ such that for any $R > 0$,
\begin{align*}
[u]_{C^{\alpha}(\{ x_n \ge 0 \} \cap B_R)} \le C R^{-\alpha} & \Big( R^{-n-1+\beta}\Vert u (x_n)_+^{\beta-1} \Vert_{L^1(\{ x_n \ge 0 \} \cap B_{R})}\\
&\qquad  + R^{-\beta+1+2s} \Vert (x_n)_+^{\beta-1} |x|^{-n-2s} u \Vert_{L^1(\{ x_n \ge 0 \}  \setminus B_{R})} \Big),
\end{align*}
where $C$ and $\alpha$ only depend on $n,s,\lambda,\Lambda,\beta$.
\end{proposition}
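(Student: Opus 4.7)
Since the claimed estimate is invariant under the rescaling $u_R(x):=u(Rx)$ (both the weighted $L^1$ norm and the weighted tail scale in the right way, and the equation is scale invariant), it suffices to establish the case $R=1$. The natural framework is a De Giorgi iteration in the weighted setting of the measure $d\mu := (x_n)_+^{\beta-1}\,dx$ on the upper half-space, following the classical two-step structure: first local boundedness, then oscillation decay.

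For the first step I would derive a weighted Caccioppoli inequality by testing \eqref{eq:weighted-sol} with $\eta = \phi^2(u-k)_+$, where $\phi \in C_c^\infty(\R^n)$ is a cutoff and $k \in \R$ is a truncation level. Using the standard algebraic identity
\[
(u(x)-u(y))\big(\phi^2(x)(u-k)_+(x) - \phi^2(y)(u-k)_+(y)\big) \geq \tfrac12\big(\phi(u-k)_+(x) - \phi(u-k)_+(y)\big)^2 - 2(u-k)_+(x)(u-k)_+(y)(\phi(x)-\phi(y))^2,
\]
together with the usual split of the outer integral over $B_1^c$, I expect to obtain
\[
\iint (\phi(u-k)_+(x) - \phi(u-k)_+(y))^2 K(x-y)\,d\mu(x)d\mu(y) \leq C \iint (u-k)_+^2 (\phi(x)-\phi(y))^2 K\,d\mu\,d\mu + \mathcal{T},
\]
where $\mathcal{T}$ denotes the weighted tail term appearing in the statement. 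The condition $\gamma > s - \beta + \tfrac12$ flagged in Remark \ref{rem:gamma} is exactly what ensures that such an $\eta$ is an admissible test function, so that the weak formulation may be evaluated on truncations of $u$ itself.

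For the second step, I would establish a weighted fractional Sobolev inequality of the form
\[
\Big(\int |v|^{2_\beta^\star}\,d\mu\Big)^{2/2_\beta^\star} \leq C \iint (v(x)-v(y))^2 K(x-y)\,d\mu(x)d\mu(y), \qquad 2_\beta^\star := \frac{2(n+\beta-1)}{n+\beta-1-2s},
\]
reflecting that $d\mu$ is an $(n+\beta-1)$-Ahlfors regular measure on the half-space (note $n+\beta-1>2s$ since $n\geq 2$ and $\beta\geq s$). Combined with the Caccioppoli bound, this yields the self-improving $L^2_\mu \to L^{2_\beta^\star}_\mu$ gain on truncations $(u-k)_+$, and a classical iteration over geometric sequences of levels $k_j\to k_\infty$ and radii $r_j\to 1/2$ produces the local $L^\infty$ bound
\[
\|u\|_{L^\infty(\{x_n\geq 0\}\cap B_{1/2})} \leq C \|u\|_{L^1(d\mu;\,B_1)} + C\mathcal{T}.
\]

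Finally, the Hölder estimate follows from an oscillation decay. After normalizing $u$ so that $|u|\leq 1$ in $B_1\cap\{x_n\geq 0\}$ with controlled tail, the key point is a measure-to-pointwise implication: there exist universal $\theta,c>0$ such that if $\mu(\{u\leq 1/2\}\cap B_{1/2}) \geq c\,\mu(B_{1/2})$, then $u \leq 1-\theta$ in $B_{1/4}$, and analogously for $-u$. This implication is obtained from the Caccioppoli inequality applied to the dyadic levels $k_j = 1-2^{-j}\theta$ together with an intermediate De Giorgi lemma controlling, in the measure $\mu$, the size of the transition sets $\{k_j < u < k_{j+1}\}$. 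Iterating the resulting alternative on dyadic balls yields a geometric decay of the oscillation of $u$ on $\{x_n\geq 0\}\cap B_r$, which is the claimed $C^\alpha$ estimate. The principal obstacle will be the singular behavior of the weight at $\{x_n=0\}$, especially when $\beta=s\in(0,1)$: both the weighted Sobolev embedding and the intermediate lemma must be established directly, exploiting the product structure $d\mu = (x_n)_+^{\beta-1}\,dx_n\otimes dx'$ and the homogeneity of $K$, since standard Muckenhoupt/Morrey machinery does not apply uniformly across the range $\beta\in[s,1+s]$.
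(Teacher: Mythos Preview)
Your overall De Giorgi plan---weighted Caccioppoli, weighted Sobolev, local boundedness, oscillation decay---matches the paper's. The substantive difference is the oscillation-decay mechanism: you propose an intermediate De Giorgi lemma on the $\mu$-measure of transition sets $\{k_j<u<k_{j+1}\}$, whereas the paper takes the standard nonlocal route of \cite{DKP16,BDOR24}, testing with $(u+d)^{-1}\tau^2$ to obtain a logarithmic estimate and then a growth lemma (if $\mu(\{u\ge\nu\}\cap B_{2r})\ge\sigma\mu(B_{2r})$ with controlled tail, then $\inf_{B_r}u\ge\varepsilon\nu$), which iterates to geometric oscillation decay. The log-estimate route is preferred for nonlocal problems precisely because the fractional energy does not directly bound the measure of transition sets the way the local $H^1$ energy does via coarea, and the singular weight $(x_n)_+^{\beta-1}$ near $\{x_n=0\}$ for $\beta$ close to $s$ would compound this difficulty in your approach. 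One further point: your Sobolev exponent $2_\beta^\star=\tfrac{2(n+\beta-1)}{n+\beta-1-2s}$ comes from a pure Ahlfors-dimension count, but the paper's Poincar\'e--Sobolev (via \cite{BDOR24}) gives an effective fractional order $t=s$ for $\beta\in[s,1]$ and $t=\tfrac{1-\beta+2s}{2}$ for $\beta\in(1,1+s]$, reflecting how the weight interacts with the \emph{kernel} rather than only with the ambient measure; since any gain $\kappa>1$ suffices for the iteration this is not fatal, but your heuristic inequality would need independent justification.
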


Let us also refer to the recent article \cite{ChKi25}, where $C^{\alpha}$ boundary regularity has been investigated for a class of regional nonlocal problems in generals domains, allowing for weights that degenerate of explode at the boundary. However, note that the class of weights that we consider in this article are of a slightly different form compared to the ones in \cite{ChKi25} as they might violate for instance the assumption \cite[(2.5)]{ChKi25}.

\begin{remark}\label{elrem}
It is important to bear in mind that, for the proof of \autoref{lemma:C-alpha-estimate} to work (see the test functions used in proof of \autoref{lemma:Caccioppoli} and \autoref{lemma:log-estimate}), given a weak solution \eqref{eq:weighted-sol} in the sense of \autoref{def:weak-sol-ti}, it suffices to have that for any $\tau \in C^{\infty}_c(\R^n)$,  $k\in\R$, and $d > 0$,
\begin{align*}
\left| \int_{\R^n} \int_{\R^n} (u(x) - u(y))(\tau^2 (u-k)_{+}(x) - \tau^2 (u-k)_{+}(y)) (x_n)_+^{\beta-1}(y_n)_+^{\beta-1} K(x-y) \d y \d x \right| &< \infty,\\
\left| \int_{\R^n} \int_{\R^n} (u(x) - u(y))(\tau^2 (u+d)^{-1}_{+}(x) - \tau^2 (u+d)^{-1}_{+}(y)) (x_n)_+^{\beta-1}(y_n)_+^{\beta-1} K(x-y) \d y \d x \right| &< \infty.
\end{align*}
This is guaranteed already if $u \in C^{\gamma}_{loc}(\R^n)$ for some $\gamma \in (s - \beta + \frac{1}{2} , 2s-\beta+1)$.\\
In fact, since by the integration by parts given in \autoref{lemma:ibp}, weak solutions of \eqref{eq:weighted-sol} are distributional, we can apply the interior estimates obtained in \cite{FeRo24} to get $u \in C^{2s-\eps}_{loc}(\{ x_n > 0 \})$. Thus,
\begin{align*}
\int_{\supp(\tau)} \int_{B_{(x_n)_+/2}(x)} |u(x) - u(y)||\tau^2 u (x) - \tau^2 u(y)| (x_n)_+^{\beta-1}(y_n)_+^{\beta-1} K(x-y) \d y \d x < \infty.
\end{align*}
Hence, it is suffices to check
\begin{align}\label{xxt}
\int_{\supp(\tau)} \int_{\R^n \setminus B_{(x_n)_+/2}(x)} |u(x) - u(y)||\tau^2 u (x) - \tau^2 u(y)| (x_n)_+^{\beta-1}(y_n)_+^{\beta-1} K(x-y) \d y \d x < \infty.
\end{align}
To prove it we notice that
\begin{align*}
\int_{\supp(\tau)} &\int_{\R^n \setminus 2 \supp(\tau)} |u(x) - u(y)||\tau^2 u (x) - \tau^2 u(y)| (x_n)_+^{\beta-1}(y_n)_+^{\beta-1} K(x-y) \d y \d x \\
&\le \int_{\supp(\tau)} |u(x)|^2 (x_n)_+^{\beta-1} \left( \int_{\R^n \setminus 2 \supp(\tau)} |x-y|^{-n-2s}  (y_n)_+^{\beta-1} \d y \right) \d x \\
&\quad + \int_{\supp(\tau)} |u(x)| (x_n)_+^{\beta-1} \left( \int_{\R^n \setminus 2 \supp(\tau)} |u(y)|  |x-y|^{-n-2s}  (y_n)_+^{\beta-1} \d y \right) \d x \\
&\le C \int_{\supp(\tau)} (x_n)_+^{\beta-1} \d x + C \int_{\supp(\tau)} (x_n)_+^{\beta-1} \left( \int_{\R^n \setminus 2 \supp(\tau)} (y_n)_+^{\beta-1} |y|^{-n-2s+\gamma} \d y \right) \d x < \infty, 
\end{align*}
where we used that $|x-y| \ge c |y|$,  \autoref{lemma:int-polar-coord} and $-2s+\gamma+\beta-1 < 0$. Moreover, if $\gamma\neq s - \frac{\beta}{2} + \frac{1}{2}$ by  \cite[Lemma B.2.4]{FeRo24}, we have
\begin{align*}
\int_{\supp(\tau)} &\int_{2 \supp(\tau) \setminus B_{(x_n)_+/2}(x)} |u(x) - u(y)|^2 (x_n)_+^{\beta-1}(y_n)_+^{\beta-1} K(x-y) \d y \d x \\
&\le C \int_{\supp(\tau)}  (x_n)_+^{\beta-1} \left( \int_{2 \supp(\tau) \setminus B_{(x_n)_+/2}(x)} (y_n)_+^{\beta-1} |x-y|^{-n-2s+2\gamma} \d y \right) \d x \\
&\le C \int_{\supp(\tau)}  (x_n)_+^{\beta-1} (1 + (x_n)_+^{\beta-1-2s+2\gamma}) \d x < \infty,
\end{align*}
since $2\beta-1-2s+2\gamma > 0$. In case $\gamma= s - \frac{\beta}{2} + \frac{1}{2}$, since $\beta>0$, we get
\begin{align*}
\int_{\supp(\tau)} &\int_{2 \supp(\tau) \setminus B_{(x_n)_+/2}(x)} |u(x) - u(y)|^2 (x_n)_+^{\beta-1}(y_n)_+^{\beta-1} K(x-y) \d y \d x \\
&\le C \int_{\supp(\tau)}  (x_n)_+^{\beta-1} (1 + |\log (x_n)_+|) \d x < \infty.
\end{align*}
By a combination of the last three estimates we deduce \eqref{xxt}, as desired.
\end{remark}

We prove \autoref{lemma:C-alpha-estimate} by a De Giorgi iteration for nonlocal energies.
Special care is required due to the weights within the energy form. Moreover, we define the following quantity, which takes care of the long-range interactions in the equation \eqref{eq:weighted-sol}, given a function $v : \R^n \to \R$, and $r > 0$:
\begin{align*}
\tail(v;r) = r^{-\beta + 1 + 2s} \int_{\R^n \setminus B_{r}} |v(y)| (y_n)_+^{\beta-1} |y|^{-n-2s} \d y.
\end{align*}

The following lemma will be crucial when estimating the action of cut-off functions on the weighted energy. We will postpone its proof to the appendix.

\begin{lemma}
\label{lemma:cutoff-estimate}
Let $\beta \in [s,1+s]$ and $r > 0$. Then, for any $x \in B_r$ it holds
\begin{align*}
 \int_{\R^n} (y_n)_+^{\beta-1} \min\{1 , r^{-2} |x-y|^2 \} |x-y|^{-n-2s} \d y \le C  r^{\beta-1-2s},
\end{align*}
where $C$ only depends on $n,s,\beta$.
\end{lemma}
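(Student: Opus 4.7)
The plan is as follows. By the scaling substitution $y = r\tilde y$ and writing $x = r\tilde x$, the claim reduces to establishing the uniform bound
\begin{align*}
\int_{\R^n} (z_n)_+^{\beta-1} \min\{1, |z - \tilde x|^2\} |z - \tilde x|^{-n-2s} \d z \le C \qquad \text{for all } \tilde x \in B_1,
\end{align*}
with $C = C(n,s,\beta)$. The exponent $r^{\beta-1-2s}$ on the right-hand side is recovered from $r^{\beta-1}$ coming from the weight factor and $r^{-2s}$ from the kernel, after accounting for $r^n$ from the Jacobian and $r^n$ from the translation $z - \tilde x$.

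Next, I would split this integral at the scale $|z - \tilde x| = 1$ and change variables to $w = z - \tilde x$, producing
\begin{align*}
I_{\mathrm{near}} := \int_{B_1} (w_n + \tilde x_n)_+^{\beta-1} |w|^{-n-2s+2} \d w, \qquad I_{\mathrm{far}} := \int_{B_1^c} (w_n + \tilde x_n)_+^{\beta-1} |w|^{-n-2s} \d w.
\end{align*}
Both pieces are estimated by passing to polar coordinates $w = \rho\omega$, $\omega \in \mathbb{S}^{n-1}$, which reduces matters to the one-variable problem of bounding
\begin{align*}
\int_0^1 \rho^{1-2s} F(\rho, \tilde x_n) \d \rho + \int_1^\infty \rho^{-1-2s} F(\rho, \tilde x_n) \d \rho, \qquad F(\rho, a) := \int_{\mathbb{S}^{n-1}} (\rho \omega_n + a)_+^{\beta-1} \d \sigma(\omega).
\end{align*}
The key spherical estimate is a uniform bound $F(\rho, a) \le C\min\{1, \rho^{\beta-1}\}$ (with an extra $(|a|+\rho)^{\beta-1}$ factor when $\beta \ge 1$), obtained via the substitution $t = \omega_n \in [-1,1]$ and using the identity $\d \sigma = c_n(1-t^2)^{(n-3)/2} \d t$. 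Substituting these bounds back yields convergence of both $\rho$-integrals: near $\rho = 0$ we use $1 - 2s > -1$, and near $\rho = \infty$ we use $\beta - 1 - 2s < 0$, which holds since $\beta \le 1 + s$.

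The main obstacle is the case $\beta \in [s, 1)$, in which $(z_n)_+^{\beta-1}$ is singular on the hyperplane $\{z_n = 0\}$, a set that can meet $B_1(\tilde x)$ whenever $|\tilde x_n| \le 1$. The delicate estimate is to show that the combined singularity of the weight at $\{w_n = -\tilde x_n\}$ and of the kernel at $w = 0$ produces an integrable expression uniformly in $\tilde x \in B_1$. This is ultimately guaranteed by the one-sided integrability condition $\beta - 1 > -1$ coming from $\beta \ge s > 0$: after the change of variable $u = \rho t + \tilde x_n$ the integral in $t$ reduces to $\rho^{-1}\int_{\tilde x_n - \rho}^{\tilde x_n + \rho} u_+^{\beta-1}(\rho^2 - (u - \tilde x_n)^2)^{(n-3)/2} \d u$, whose singular behavior is controlled because $\int_0^c u^{\beta-1} \d u < \infty$. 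Combining these ingredients delivers the claimed bound.
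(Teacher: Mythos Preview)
Your overall strategy (scale to $r=1$, split near/far, exploit $\beta>0$ for the weight integrability and $\beta-1-2s<0$ for the tail) is sound and leads to a proof, but there is a genuine error in your ``key spherical estimate.'' For $\beta\in[s,1)$ the bound $F(\rho,a)\le C\min\{1,\rho^{\beta-1}\}$ is false near $\rho=0$: take $a=0$, then $F(\rho,0)=c\,\rho^{\beta-1}\to\infty$, so $F\le C$ cannot hold. Consequently your stated reason for convergence near $\rho=0$ (``use $1-2s>-1$'') is not what actually works. The correct pointwise bound for $n\ge 3$ is $F(\rho,a)\le C\rho^{\beta-1}$ uniformly in $a$ (this follows from $(1-t^2)^{(n-3)/2}\le 1$ and the subadditivity $(a+\rho)^\beta\le(a-\rho)_+^\beta+(2\rho)^\beta$), and then $\int_0^1\rho^{1-2s}\rho^{\beta-1}\,d\rho$ converges because $\beta-2s>-1$, which holds since $\beta\ge s>2s-1$. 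In dimension $n=2$ even this pointwise bound fails (the factor $(1-t^2)^{-1/2}$ can collide with the zero of $\rho t+a$), and you would need to estimate the double integral $\int_0^1\int_{-1}^1\rho^{1-2s}(\rho t+a)_+^{\beta-1}(1-t^2)^{-1/2}\,dt\,d\rho$ directly rather than bound $F$ first; this still converges, but requires a separate argument.

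The paper takes a different route and avoids spherical averages entirely. It keeps $r$ general, splits $B_r(x)$ versus $\R^n\setminus B_r(x)$, and for the near part integrates out the tangential variables $y'\in\R^{n-1}$ first (via the elementary identities $\int_{\R^{n-1}}|x-y|^{-n-2s+\delta}\,dy'\asymp|x_n-y_n|^{-1-2s+\delta}$, with a further case split $s\lessgtr\frac12$), reducing everything to explicit one-dimensional integrals in $y_n$. For the far part it uses, when $\beta\le 1$, the monotonicity $(y_n+x_n)_+^{\beta-1}\le(y_n)_+^{\beta-1}$ and a polar-coordinate tail estimate; when $\beta>1$ it splits $\{y_n\le 4r\}$ (weight bounded by $r^{\beta-1}$) from $\{y_n>4r\}$ (where $y_n\le 2|x-y|$ absorbs the weight into the kernel). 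Your polar-coordinate approach is more uniform in $\beta$ and conceptually cleaner once the correct bound on $F$ is in place; the paper's slicing approach is more case-heavy but sidesteps the $n=2$ subtlety.
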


Moreover, we have the following Poincar\'e- and Poincar\'e-Sobolev inequalities with weights.

\begin{lemma}
Let $\beta \in [s,1+s]$ and $r > 0$. Then, for any $u \in L^2(B_r)$ it holds
\begin{align}
\begin{split}
\label{eq:Poincare}
&\int_{B_r} \big( u(x) - (u)_{B_r} \big)^2 (x_n)_+^{\beta-1} \d x \\
&\qquad \le C r^{1-\beta+2s} \int_{B_r} \int_{B_r} ( u(x) - u(y) )^2 (x_n)_+^{\beta-1} (y_n)_+^{\beta-1} |x-y|^{-n-2s} \d y \d x,
\end{split}
\end{align}
where $C$ only depends on $n,s,\beta$.
Moreover, for some $\kappa \in (1,2)$:
\begin{align}
\begin{split}
\label{eq:Sobolev}
&\left( r^{-(n-1+\beta)} \int_{B_r}  \big| u(x) - (u)_{B_r} \big|^{2 \kappa} (x_n)_+^{\beta-1} \d x \right)^{1/\kappa} \\
&\qquad \le C r^{-(n-1+\beta)} r^{1-\beta+2s} \int_{B_r} \int_{B_r} ( u(x) - u(y) )^2 (x_n)_+^{\beta-1} (y_n)_+^{\beta-1} |x-y|^{-n-2s} \d y \d x.
\end{split}
\end{align}
Here, $C$ only depends on $n,s,\beta$, and
\begin{align*}
\kappa = 
\begin{cases}
\frac{n}{n-s}, \qquad ~~~ \text{ if } \beta \in [s,1],\\
\frac{n}{n-\frac{1-\beta+2s}{2}}, ~~ \text{ if } \beta \in (1,1+s],
\end{cases}
\end{align*}
and we denote
\begin{align*}
(u)_{B_r} = \left(\int_{B_r} (x_n)_+^{\beta-1} \d x \right)^{-1} \int_{B_r} u(x) (x_n)_+^{\beta-1} \d x = C r^{-(n-1+\beta)} \int_{B_r} u(x) (x_n)_+^{\beta-1} \d x.
\end{align*}
\end{lemma}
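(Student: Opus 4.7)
Both inequalities are scale-invariant under $u \mapsto u(r\,\cdot)$, so it suffices to prove them at $r=1$ and work with the finite measure $d\mu(x) = (x_n)_+^{\beta-1} dx$ on $B_1$ (note $\mu(B_1) < \infty$ since $\beta > 0$). The factor $r^{1-\beta+2s}$ in \eqref{eq:Poincare} and the power $r^{-(n-1+\beta)}$ in \eqref{eq:Sobolev} are then recovered from $\mu(B_r) \sim r^{n-1+\beta}$ together with the $r$-dependence of $|x-y|^{-n-2s}$.

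For \eqref{eq:Poincare}, I would use the elementary variance identity
\begin{align*}
\int_{B_1} \bigl(u-(u)_{B_1}\bigr)^2 d\mu = \frac{1}{2\mu(B_1)}\int_{B_1}\int_{B_1} \bigl(u(x)-u(y)\bigr)^2 d\mu(x)\, d\mu(y),
\end{align*}
which holds for any finite measure $\mu$ because $(u)_{B_1}$ minimizes $c \mapsto \int (u-c)^2 d\mu$. Combining this with the trivial bound $1 \le 2^{n+2s}|x-y|^{-n-2s}$ on $B_1 \times B_1$ proves \eqref{eq:Poincare} at $r = 1$, and unwinding the scaling yields the general $r$. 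No structural property of the weight is used here beyond its local integrability.

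For \eqref{eq:Sobolev}, my strategy would be a Whitney-type decomposition of $B_1 \cap \{x_n > 0\}$ into dyadic balls $Q_j$ with $\dist(Q_j,\{x_n=0\}) \sim \diam(Q_j)$. On each $Q_j$ the weight $\omega(x) := (x_n)_+^{\beta-1}$ is comparable to a positive constant, so the classical (unweighted) fractional Poincar\'e-Sobolev inequality applies locally and produces integrability gain with the standard $n$-dimensional Sobolev exponent $2n/(n-s)$. I would then reassemble a global $L^{2\kappa}(d\mu)$ bound for $u - (u)_{B_1}$ through a chaining/telescoping argument in the spirit of Haj\l{}asz-Koskela, estimating the differences of neighboring local averages $(u)_{Q_j} - (u)_{Q_{j+1}}$ by the weighted Gagliardo seminorm on the right-hand side of \eqref{eq:Sobolev}. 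The two values of $\kappa$ in the statement reflect the boundary behavior of $\omega$: for $\beta \in [s,1]$ the weight remains bounded and the classical gain $\kappa = n/(n-s)$ survives the assembly, while for $\beta \in (1,1+s]$ the concentration of $\mu$ near $\{x_n=0\}$ decreases the effective Sobolev dimension and forces the smaller gain $\kappa = n/(n-(1-\beta+2s)/2)$.

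\emph{The main obstacle} is the tensor structure $\omega(x)\omega(y)$ inside the double integral, which blocks a direct appeal to the classical Muckenhoupt-weighted fractional Sobolev theory (formulated for a single $A_2$-weight in $x$). In the chaining argument one must carefully separate the contributions with $\omega(x) \sim \omega(y)$ from those with $\omega(x) \not\sim \omega(y)$, and show that the latter are absorbed via the fractional kernel $|x-y|^{-n-2s}$ in a way that pins down the announced dependence of $\kappa$ on $\beta$.
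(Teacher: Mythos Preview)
Your argument for \eqref{eq:Poincare} is correct and considerably more elementary than the paper's. The paper invokes a weighted Poincar\'e inequality from \cite[Theorem~2.10]{BDOR24}, which produces an extra factor $w(B_{x,y})^{-1}$ on the right-hand side (the reciprocal of the average of the weight over the ball of diameter $|x-y|$ centered at $(x+y)/2$), and then estimates $w(B_{x,y})$ from below geometrically, distinguishing $\beta\in[s,1]$ from $\beta\in(1,1+s]$. Your variance identity plus the trivial bound $1\le 2^{n+2s}|x-y|^{-n-2s}$ on $B_1\times B_1$ bypasses all of this and gives \eqref{eq:Poincare} directly, with the scaling recovered exactly as you describe. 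This is a genuine simplification.

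For \eqref{eq:Sobolev}, however, your plan is only a sketch and the crucial step is not carried out. The paper again cites a black-box result, \cite[Theorem~2.19]{BDOR24}, which already has the tensor weight $\omega(x)\omega(y)$ and the factor $w(B_{x,y})^{-1}$ built in; the same lower bounds on $w(B_{x,y})$ as above then yield \eqref{eq:Sobolev} with the announced $\kappa$. The two regimes for $\kappa$ arise precisely because the lower bound on $w(B_{x,y})$ is $r^{\beta-1}$ when $\beta\le 1$ (allowing $t=s$ in the BDOR inequality) but only $|x-y|^{\beta-1}$ when $\beta>1$ (forcing $t=(1-\beta+2s)/2$). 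Your Whitney-plus-chaining approach is the moral content of the BDOR theorems, but you have not shown how the announced values of $\kappa$ fall out of the chaining step, nor how the ``$\omega(x)\not\sim\omega(y)$'' contributions are controlled; this is exactly where the $\beta$-dependence of $\kappa$ is decided, and your last paragraph only names the difficulty without resolving it. Absent that computation, the Sobolev half of the lemma is not proved.
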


\begin{proof}
Since $(x_n)_+^{\beta-1}, (x_n)_+^{1-\beta} \in L^1(B_r)$, we can apply \cite[Theorem 2.10]{BDOR24} with $p=2$ and $w(x) = (x_n)_+^{\beta-1}$, and obtain
\begin{align}
\label{eq:Poinc-BDOR}
\begin{split}
&\int_{B_r} \big( u(x) - (u)_{B_r} \big)^2 (x_n)_+^{\beta-1} \d x \\
&\qquad \le C r^{2t} \int_{B_r} \int_{B_r} ( u(x) - u(y) )^2 (x_n)_+^{\beta-1} (y_n)_+^{\beta-1} w(B_{x,y})^{-1} |x-y|^{-n-2t} \d y \d x,
\end{split}
\end{align}
for any $t \in (0,1)$, where
\begin{align*}
w(B_{x,y}) = \dashint_{B_{\frac{1}{2}|x-y|}\big(\frac{x+y}{2}\big)} (z_n)_+^{\beta - 1} \d z.
\end{align*}
Let us first prove the desired result in case that $\beta \in [s,1]$. Then, we claim that for any $x,y \in B_r$,
\begin{align}
\label{eq:weight-claim}
w(B_{x,y}) \ge C r^{\beta - 1},
\end{align}
and observe that once the claim is proved, we immediately deduce the Poincar\'e inequality \eqref{eq:Poincare} from \eqref{eq:Poinc-BDOR} applied with $t = s$.

To prove \eqref{eq:weight-claim}, we observe that we can find a ball $B \subset B_{\frac{1}{2}|x-y|}\big(\frac{x+y}{2}\big)$ with $\mathrm{diam}(B) = \frac{|x-y|}{8}$ and $r \ge \dist(B , \{ x_n = 0 \}) \ge \frac{|x-y|}{16}$. This immediately allows us to deduce
\begin{align*}
w(B_{x,y}) \ge C \inf_{z \in B}(z_n)_+^{\beta-1} \ge C r^{\beta-1},
\end{align*}
as desired. 
Let us now assume that $\beta \in (1,1+s]$. In this case, we claim that for any $x,y \in B_r$ it holds
\begin{align}
\label{eq:weight-claim-2}
w(B_{x,y}) \ge C |x-y|^{\beta - 1}.
\end{align}
Note that once \eqref{eq:weight-claim-2} is proved, the Poincar\'e inequality follows by applying \eqref{eq:Poinc-BDOR} with $t = \frac{1-\beta+2s}{2} \in (0,1)$.
To prove \eqref{eq:weight-claim-2}, we use the same ball $B$ as above and deduce
\begin{align*}
w(B_{x,y}) \ge C \inf_{z \in B} (z_n)_+^{\beta-1} \ge C |x-y|^{\beta-1}.
\end{align*}

The proof of the Poincar\'e-Sobolev inequality \eqref{eq:Sobolev} goes in the same way, but by applying instead \cite[Theorem 2.19]{BDOR24}, which yields
\begin{align}
\label{eq:Sob-BDOR}
\begin{split}
&\left( r^{-(n-1+\beta)} \int_{B_r}  \big| u(x) - (u)_{B_r} \big|^{\frac{2n}{n-s}} (x_n)_+^{\beta-1} \d x \right)^{\frac{n-t}{n}} \\
&\qquad \le C r^{-(n-1+\beta)} r^{2t} \int_{B_r} \int_{B_r} ( u(x) - u(y) )^2 (x_n)_+^{\beta-1} (y_n)_+^{\beta-1} w(B_{x,y})^{-1} |x-y|^{-n-2t} \d y \d x
\end{split}
\end{align}
for any $t \in (0,1)$, where we computed
\begin{align*}
w(B_r) = \int_{B_r} (x_n)_+^{\beta - 1} \d x = C r^{n-1+\beta}.
\end{align*}
In case $\beta \in [s,1]$, the proof follows, exactly as for the Poincar\'e inequality, by using \eqref{eq:weight-claim} and applying \eqref{eq:Sob-BDOR} with $t=s$. In case $\beta \in (1,1+s]$, a combination of \eqref{eq:weight-claim-2} and \eqref{eq:Sob-BDOR} with $t = \frac{1-\beta+2s}{2}$  yields the desired result.
\end{proof}

The last preparatory lemma is the following Caccioppoli inequality for nonlocal weighted energies. 
\begin{lemma}
\label{lemma:Caccioppoli}
Let $\beta \in [s,1+s]$. Let $u$ be a weak subsolution to \eqref{eq:weighted-sol}. Then, for any $k \in \R$, $r > 0$ and $\tau \in C^{\infty}_c(B_r)$ with $0 \le \tau \le 1$ we have
\begin{align*}
&\int_{B_r} \int_{B_r} | u_k(x) - u_k(y) |^2 \min\{\tau^2(x) , \tau^2(y)\} (x_n)_+^{\beta-1} (y_n)_+^{\beta-1} |x-y|^{-n-2s} \d y \d x \\
&\qquad \le C \int_{B_r} u_k^2(x) (x_n)_+^{\beta-1} \left( \int_{B_r} | \tau(x) - \tau(y) |^2  (y_n)_+^{\beta-1} |x-y|^{-n-2s} \d y \right) \d x \\
&\qquad\quad + C \left(\int_{B_r} u_k(x) \tau^2(x) (x_n)_+^{\beta-1} \d x \right) \left( \sup_{x \in \supp(\tau)} \int_{\R^n \setminus B_r} u_k(y) (y_n)_+^{\beta-1} |x-y|^{-n-2s} \d y \right),
\end{align*}
where $u_k := (u-k)_+$, and $C$ only depends on $\lambda,\Lambda$.
\end{lemma}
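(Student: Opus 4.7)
The proof proceeds by the classical nonlocal De Giorgi testing scheme, adapted here to the boundary-weighted energy. The plan is to insert the non-negative test function $\eta = \tau^2 u_k$ into the weak subsolution inequality satisfied by $u$ and extract a Caccioppoli-type bound from the resulting algebraic structure.

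First I would justify that $\tau^2 u_k$ is admissible: although not in $C_c^\infty$, it is non-negative, compactly supported in $B_r$, and the integrability of all bilinear forms involved is guaranteed by \autoref{elrem} together with $u \in C^\gamma_{loc}(\R^n)$. A standard smooth approximation upgrades the weak subsolution inequality to
\begin{equation*}
\int_{\R^n}\int_{\R^n}(u(x)-u(y))(\tau^2 u_k(x) - \tau^2 u_k(y)) K(x-y)(x_n)_+^{\beta-1}(y_n)_+^{\beta-1}\,dy\,dx \leq 0.
\end{equation*}
Then I would split the integration domain into the bulk region $B_r \times B_r$ and the cross region $B_r \times B_r^c$, merged with its symmetric counterpart $B_r^c \times B_r$ via the symmetry of $K(x-y)$ and of the weight product.

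For the bulk part, I would apply the standard pointwise algebraic inequality for nonlocal De Giorgi arguments (as in Di Castro--Kuusi--Palatucci, specialized to $p=2$): for every $x,y \in \R^n$,
\begin{equation*}
(u(x)-u(y))(\tau^2(x) u_k(x) - \tau^2(y) u_k(y)) \geq c \min\{\tau^2(x),\tau^2(y)\}(u_k(x)-u_k(y))^2 - C \max\{u_k^2(x), u_k^2(y)\}(\tau(x)-\tau(y))^2.
\end{equation*}
Since the weights $(x_n)_+^{\beta-1}(y_n)_+^{\beta-1}$ are non-negative and enter only multiplicatively into the measure, this pointwise inequality integrates cleanly. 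For the cross region, since $\tau(y)=0$ for $y\in B_r^c$, the integrand reduces to $(u(x)-u(y))\tau^2(x)u_k(x)K(x-y)$ times the weights. A short case distinction on whether $u(y) \geq k$ or $u(y) < k$ gives the elementary lower bound $u(x)-u(y) \geq u_k(x)-u_k(y) \geq -u_k(y)$ whenever $u_k(x)>0$ (and the integrand vanishes otherwise), so this cross contribution is bounded from below by $-\tau^2(x) u_k(x) u_k(y) K(x-y) (x_n)_+^{\beta-1}(y_n)_+^{\beta-1}$.

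Combining these bounds with the subsolution inequality, using $K(x-y) \leq \Lambda |x-y|^{-n-2s}$, symmetrizing the error term in $x \leftrightarrow y$ (which turns $\max\{u_k^2(x),u_k^2(y)\}$ into a symmetric expression dominated by $u_k^2(x) + u_k^2(y)$), and pulling the outer $B_r^c$-integral into a supremum over $x \in \supp(\tau)$ yields the stated estimate. The main technical obstacle is the admissibility of the test function: with polynomially growing $u$ and the singular boundary weights $(x_n)_+^{\beta-1}$, one must verify that all bilinear forms remain finite, which is exactly what \autoref{elrem} provides; once this is in hand, the remaining algebraic manipulations are classical and entirely insensitive to the non-negative weights.
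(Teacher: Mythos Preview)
Your proposal is correct and matches the paper's approach exactly: the paper simply states that the proof follows by testing with the admissible function $\tau^2 u_k$ (with admissibility justified via \autoref{elrem}) and that the argument is identical to the unweighted case in \cite{DKP16}, which is precisely the algebraic splitting and pointwise estimate you carry out.
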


The proof is based on testing \autoref{def:weak-sol-ti} with the admissible function $\tau^2u_k$ (see \autoref{elrem}), exactly as in the non-weighted case. Thus we skip it and refer the interested reader to \cite{DKP16}.

We introduce the measure $\mu = (x_n)_+^{\beta-1} \d x$ and observe that for any $R > 0$ it holds $\mu(B_R) = C R^{n-1+\beta}$.

Having at hand the Poincar\'e-Sobolev inequality in \eqref{eq:Sobolev},  the Caccioppoli inequality in \autoref{lemma:Caccioppoli}, and the integral estimate in \autoref{lemma:cutoff-estimate}, we are in a position to prove that solutions to \eqref{eq:weighted-sol} are locally bounded at $\{ x_n = 0 \}$. The proof in the unweighted case is by now standard. For weighted nonlocal energies, no substantial adaptations have to be made, and we refer to \cite[Proof of Theorem 3.4]{BDOR24} for a detailed proof. Due to the slightly different shape of the weights in our framework, we cannot directly apply their result, however, all modifications are rather straightforward. 

\begin{lemma}
\label{lemma:loc-bd}
Let $\beta \in [s,1+s]$. Let $u$ be a weak subsolution to \eqref{eq:weighted-sol}. Then, for any $r > 0$ and $\delta \in (0,1)$, 
\begin{align*}
\sup_{B_{r/2}} u_+ \le C(\delta)\left( r^{-(n-1+\beta)} \int_{B_r} u_+^2 (x_n)_+^{\beta-1} \d x \right)^{1/2} + \delta \tail(u_+;r/2),
\end{align*}
where $C(\delta)$ only depends on $n,s,\beta,\delta,\lambda,\Lambda$. 
In particular, if $u$ is a weak solution to \eqref{eq:weighted-sol}, then for any $r > 0$ and $\delta \in (0,1)$, 
\begin{align*}
\sup_{B_{r/2}} |u| \le C(\delta) r^{-(n-1+\beta)} \int_{B_r} |u| (x_n)_+^{\beta-1} \d x  + \delta \tail(u;r/2),
\end{align*}
where $C(\delta)$ only depends on $n,s,\beta,\delta,\lambda,\Lambda$. 
\end{lemma}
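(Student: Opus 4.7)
The plan is a De~Giorgi iteration on $\mu$-weighted super-level sets, combining the three ingredients just established: the Caccioppoli inequality (\autoref{lemma:Caccioppoli}), the Poincar\'e--Sobolev inequality \eqref{eq:Sobolev}, and the cutoff estimate (\autoref{lemma:cutoff-estimate}). Fix $\delta\in(0,1)$ and set, for $j\ge 0$,
\begin{align*}
r_j := \tfrac{r}{2} + \tfrac{r}{2^{j+2}},\qquad \tau_j\in C^\infty_c(B_{(r_j+r_{j+1})/2}),\qquad \tau_j\equiv 1 \text{ on } B_{r_{j+1}},\qquad |\nabla\tau_j|\le C\,2^j/r,
\end{align*}
together with levels $k_j:=M(1-2^{-j})$ for some $M>0$ to be chosen, and
\begin{align*}
A_j := r^{-(n-1+\beta)}\int_{B_{r_j}}(u-k_j)_+^2\,(x_n)_+^{\beta-1}\d x,\qquad T:=\tail(u_+;r/2).
\end{align*}

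First I would feed the Caccioppoli inequality at level $k_{j+1}$ with cutoff $\tau_j$ on $B_{r_j}$ into the Poincar\'e--Sobolev inequality \eqref{eq:Sobolev} applied to $\tau_j(u-k_{j+1})_+$, so as to produce a gain of integrability $2\to 2\kappa$ with $\epsilon:=1-1/\kappa>0$. The local term on the right-hand side of \autoref{lemma:Caccioppoli} is controlled, via $|\tau_j(x)-\tau_j(y)|^2\le C\min\{1,(2^j/r)^2|x-y|^2\}$ and \autoref{lemma:cutoff-estimate} used at the scale $r/2^j$, by $C\,2^{j(2s+1-\beta)}r^{\beta-1-2s}\int_{B_{r_j}}(u-k_{j+1})_+^2 (x_n)_+^{\beta-1}\d x$. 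The Caccioppoli tail term I split along $\R^n\setminus B_{r_j}=(B_r\setminus B_{r_j})\cup(\R^n\setminus B_r)$: on the inner annulus $|x-y|\gtrsim r\,2^{-j}$ for $x\in\supp(\tau_j)$, which together with a Cauchy--Schwarz against $(x_n)_+^{\beta-1}$ converts it into an $A_j$-type quantity; on the outer complement $|x-y|\gtrsim|y|$ yields exactly the tail $T$. Combining these with the Chebyshev-type bound
\begin{align*}
\mu(\{u>k_{j+1}\}\cap B_{r_j}) \le (M\,2^{-j-1})^{-2}\,r^{n-1+\beta}\,A_j,
\end{align*}
raised to the power $1-1/\kappa$ to match the Sobolev gain, I arrive at a recursion of De~Giorgi type
\begin{align*}
A_{j+1} \le C\,b^{j}\Big(M^{-2\epsilon}\,A_j^{1+\epsilon} + M^{-1-2\epsilon}\,T\,A_j^{1+\epsilon}\Big)
\end{align*}
for some $b=b(n,s,\beta)>1$.

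By the standard geometric iteration lemma, $A_j\to 0$ provided $M^{1+\epsilon}\ge C_\star\,A_0^{(1+\epsilon)/2}$ and $M\ge C_\star\,\delta^{-1}T$, for a suitable $C_\star=C_\star(n,s,\beta,\lambda,\Lambda)$; the choice $M:=C(\delta)A_0^{1/2}+\delta\,T$ meets both. Passing $j\to\infty$ gives $(u-M)_+\equiv 0$ in $B_{r/2}$, which is the first claim. For the second claim, I apply the first to $\pm u$ (both are subsolutions when $u$ is a solution) and upgrade the $L^2$-norm on the right-hand side to the $L^1$-norm by a standard absorption: bound $\Vert u\Vert_{L^2(B_\rho)}^2\le\Vert u\Vert_{L^\infty(B_\rho)}\Vert u\Vert_{L^1(B_\rho)}$, apply Young's inequality with a small parameter, and iterate along a geometric sequence of concentric balls $B_{\rho_j}\Subset B_r$ to absorb the $L^\infty$ norm on the left.

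The main obstacle is the bookkeeping of the tail in the iteration: the inner-annulus contribution has to be rewritten, via the Chebyshev bound, as a power of $A_j$ producing a $b^j$-factor comparable to that of the local term, while the outer contribution produces a copy of $T$ that must be absorbable into the prescribed $\delta\,T$ on the right-hand side of the conclusion. The scaling $k_j\propto M$ (rather than $k_j\propto T$) is exactly what ensures that both requirements on $M$ above are compatible with the $\delta\,T$ term in the statement. All weight-dependent issues---the singular/degenerate nature of $(x_n)_+^{\beta-1}$ for $\beta=s$ or $\beta=1+s$---are already encoded in the three cited preliminary lemmas, so no further weight analysis is required beyond tracking exponents.
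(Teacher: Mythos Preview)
Your proposal is correct and takes essentially the same approach as the paper: a De~Giorgi iteration built from the Caccioppoli inequality, the weighted Poincar\'e--Sobolev inequality, and the cutoff estimate, followed by the standard $L^2\to L^1$ absorption for the second claim. The paper's proof is terser, deferring the mechanics of the iteration to \cite[Proof of Theorem 3.4]{BDOR24} and handling the tail without your inner/outer-annulus split (it directly uses $|x|\le C\,2^j|x-y|$ to pass to $\tail(u_+;r/2)$), but the substance is the same.
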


\begin{proof}
We start by proving the first claim.
Let $k > 0$ to be determined at the end of the proof. For any $j \in N$, we introduce
\begin{align*}
r_j = (1 + 2^{-j})\frac{r}{2}, \qquad \widetilde{r}_j = \frac{r_j + r_{j+1}}{2}, \qquad B_j = B_{r_j}, \qquad \widetilde{B}_j = B_{\widetilde{r}_j}, \\
k_j = (1 - 2^{-j}) k, \qquad \widetilde{k}_j = \frac{k_j + k_{j+1}}{2}, \qquad u_j = (u-k_j)_+, \qquad \widetilde{u}_j = (u - \widetilde{k}_j)_+.
\end{align*}
We take a cut-off function $\tau_j \in C^{\infty}_c(\widetilde{B}_j)$ with $0 \le \tau_j \le 1$, and $\tau_j \equiv 1$ in $B_{j+1}$ and $|\nabla \tau_j| \le C 2^j r^{-1}$. 
Hence, by the Caccioppoli inequality in \autoref{lemma:Caccioppoli}, we deduce
\begin{align*}
& \dashint_{B_{j+1}}\int_{B_{j+1}} |\widetilde{u}_j(x) - \widetilde{u}_j(y)|^2 |x-y|^{-n-2s} \d\mu(y) \d\mu(x) \\
&\qquad \le C \dashint_{B_{j+1}} \widetilde{u}_j^2(x) \left( \int_{B_{j+1}} | \tau_j(x) - \tau_j(y) |^2 |x-y|^{-n-2s} \d \mu(y) \right) \d\mu(x) \\
&\qquad\quad + C \left(\dashint_{B_{j+1}} \widetilde{u}_j(x) \tau_j^2(x)  \d \mu(x) \right) \left( \sup_{x \in \supp(\tau_j)} \int_{\R^n \setminus B_{j+1}} \widetilde{u}_j(y)  |x-y|^{-n-2s} \d\mu(y) \right) \\
&\qquad = C I_1 + C I_2 I_3.
\end{align*}
For $I_1$, we use that $|\tau_j(x) - \tau_j(y)|^2 \le C 2^{2j} r^{-2}|x-y|^2 $ and \autoref{lemma:cutoff-estimate} to deduce
\begin{align*}
I_1 \le C 2^{2j} r^{\beta-1-2s} \dashint_{B_j} \widetilde{u}_j^2 \d\mu.
\end{align*}
For $I_2$ we proceed exactly as in \cite{BDOR24} to deduce
\begin{align*}
I_2 \le C 2^{j+2} k^{-1} \dashint_{B_j} u_j^2 \d\mu,
\end{align*}
and for $I_3$, we immediately obtain from the inequality $|x| \le C 2^j |x-y|$,
\begin{align*}
I_3 \le C 2^{(n+2s)j} r^{\beta-1-2s} \tail(u_+;r/2).
\end{align*}
Combining the previous four estimates with the Poincar\'e-Sobolev inequality from \eqref{eq:Sobolev}, we deduce
\begin{align*}
&\left( \dashint_{B_{j+1}} \widetilde{u}_j^{2\kappa} \d \mu \right)^{\frac{1}{\kappa}} \le C\left( \dashint_{B_{j+1}} |\widetilde{u}_j - (\widetilde{u}_j)_{B_{j+1}}|^{2\kappa} \d \mu \right)^{\frac{1}{\kappa}} + C \left( \dashint_{B_{j+1}} \widetilde{u}_j\d \mu \right)^{\frac{1}{2}} \\
&\qquad \le C r^{1-\beta+2s} \dashint_{B_{j+1}} \int_{B_{j+1}} |\widetilde{u}_j(x) - \widetilde{u}_j(y)|^2 |x-y|^{-n-2s} \d \mu(x) \d \mu(y) + C \dashint_{B_{j+1}} \widetilde{u}_j^2 \d \mu \\
&\qquad \le C \big(2^{2j} + 2^{(n+2s+1)j} k^{-1} \tail(u_+;r/2) \big) \dashint_{B_{j+1}} \widetilde{u}_j^2 \d \mu,
\end{align*}
where $\kappa \in (1,2)$ is the parameter from \eqref{eq:Sobolev}. Note that this estimate is the exact analog of \cite[(3.17)]{BDOR24}. From here, we can proceed in the exact same way as in \cite[Proof of Theorem 3.4]{BDOR24} to deduce the first claim, upon choosing $k$ suitably.

The second claim follows by applying the first claim to $u$ and $-u$, which are both subsolutions and using a standard covering and absorption argument to reduce the exponent in the $L^2(B_r)$ norm to an $L^1(B_r)$ norm (see \cite[Theorem 6.9]{Coz17}, or \cite[Theorem 6.2]{KaWe24}).
\end{proof}

The next lemma contains a logarithmic estimate. Its proof is standard (see for instance \cite{DKP16}) and the only deviation from the proof of \cite{DKP16} is due to the appearance of the weight $(x_n)_+^{\beta-1}$, which is taken care of by \autoref{lemma:cutoff-estimate}. We closely follow the arguments in \cite[Proposition 3.10]{BDOR24} and only provide a sketch of the proof.

\begin{lemma}
\label{lemma:log-estimate}
Let $\beta \in [s,1+s]$. Let $u$ be a weak supersolution to \eqref{eq:weighted-sol} with $u \ge 0 $ in $B_R \cap \{ x_n > 0 \}$. Then, for any $d > 0$ and $0 < r \le R/2$ it holds
\begin{align*}
& r^{-(n-1+\beta)} \int_{B_r} \int_{B_r} |\log(u(x) + d) - \log(u(y) +d)|^2 (x_n)_+^{\beta-1} (y_n)_+^{\beta-1} |x-y|^{-n-2s} \d y \d x \\
&\qquad \le C r^{\beta-1-2s}  \left( 1 + d^{-1} \left( \frac{r}{R} \right)^{2s+1-\beta} \tail(u_-;R) \right),
\end{align*}
where $C$ only depends on $n,s,\beta,\lambda,\Lambda$.
\end{lemma}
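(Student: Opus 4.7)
My plan is to follow the classical De Giorgi--Moser strategy for logarithmic estimates, adapted to weighted nonlocal energies and closely paralleling \cite[Proposition 3.10]{BDOR24}. I will test the weak supersolution formulation from \autoref{def:weak-sol-ti} with $\eta(x) := \tau^2(x)(u(x)+d)^{-1}$, where $\tau \in C_c^\infty(B_{3r/2})$ is a cutoff with $\tau \equiv 1$ on $B_r$, $0 \le \tau \le 1$, and $|\nabla \tau| \le C r^{-1}$. Since $u \ge 0$ on $B_R \cap \{x_n > 0\}$ and $r \le R/2$, we have $u+d \ge d > 0$ on $\mathrm{supp}(\tau) \cap \{x_n \ge 0\}$, so $\eta$ is admissible after the standard truncation/approximation procedure justified in \autoref{elrem}. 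Using the decomposition
\[\eta(x)-\eta(y) = \tau^2(x)\bigl[(u(x)+d)^{-1}-(u(y)+d)^{-1}\bigr] + (u(y)+d)^{-1}\bigl[\tau^2(x)-\tau^2(y)\bigr]\]
together with the elementary inequality $(a-b)(a^{-1}-b^{-1}) \le -(\log a-\log b)^2$ (valid for all $a,b>0$, via Cauchy--Schwarz) applied with $a=u(x)+d$ and $b=u(y)+d$, after symmetrizing in $x\leftrightarrow y$ the first piece produces the negative of the target quantity
\[\mathcal{D} := \int\!\!\int \min\{\tau^2(x),\tau^2(y)\}\,|\log(u(x)+d)-\log(u(y)+d)|^2\,(x_n)_+^{\beta-1}(y_n)_+^{\beta-1}K(x-y)\,dy\,dx.\]

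The second piece is a cross term which I will bound by Young's inequality after writing $\tau^2(x)-\tau^2(y)=(\tau(x)-\tau(y))(\tau(x)+\tau(y))$, exactly as in \cite[Lemma 1.3]{DKP16}; this absorbs a small fraction of $\mathcal{D}$ into the LHS and leaves a cutoff error proportional to $\int\!\!\int (\tau(x)-\tau(y))^2 (x_n)_+^{\beta-1}(y_n)_+^{\beta-1}K(x-y)\,dy\,dx$. The weight-sensitive local part is controlled by \autoref{lemma:cutoff-estimate} via $|\tau(x)-\tau(y)|^2 \le C \min\{1,r^{-2}|x-y|^2\}$, yielding a bound of order $r^{n-1+\beta}\cdot r^{\beta-1-2s}$; once divided by $r^{n-1+\beta}$ this produces the $1$ summand in the desired estimate. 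The tail contribution arises from pairs with $y\in B_R^c$, where $\tau(y)=0$ and the $-u(y)$ term survives: using $\tau^2(x)(u(x)+d)^{-1}\le d^{-1}$ and $|x-y|\ge cR$, this piece is bounded by $C d^{-1} r^{n-1+\beta} R^{\beta-1-2s}\,\mathrm{Tail}(u_-;R)$, which upon dividing by $r^{n-1+\beta}$ and rewriting $R^{\beta-1-2s}=r^{\beta-1-2s}(r/R)^{2s+1-\beta}$ yields the second summand. The positive-part contribution of $-u(y)$ (where $u(y)\ge 0$) has a favorable sign and can be discarded.

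The hard part will be the case $\beta=s$, in which the weak formulation contains an extra boundary integral $\int_{\{x_n=0\}} u\,\theta_K'\cdot\nabla_{x'}\eta\,dx'$ with $\theta_K'$ the constant vector from \eqref{eq:theta-K-def}. Computing $\nabla_{x'}\eta = 2\tau(u+d)^{-1}\nabla_{x'}\tau - \tau^2(u+d)^{-2}\nabla_{x'}u$, the dangerous term is the second one. To handle it I will introduce the antiderivative $G(t) := \log(t+d) + d(t+d)^{-1}$ (so that $G'(t)=t(t+d)^{-2}$) and rewrite $u(u+d)^{-2}\nabla_{x'}u = \nabla_{x'}G(u)$, then integrate by parts tangentially, using that $\theta_K'$ is constant so $\mathrm{div}_{x'}\theta_K'=0$. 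Combined with $|u|(u+d)^{-1}\le 1$, $|G(u)|\le 1 + |\log(u+d)|$, and the local $L^\infty$ bound from \autoref{lemma:loc-bd}, the boundary contribution is controlled by $Cr^{-1}\cdot r^{n-1}(1+|\log d|+\ldots) \le C r^{n-2}$, which matches $r^{n-1+\beta}\cdot r^{\beta-1-2s}=r^{n-2}$ for $\beta=s$ and is therefore absorbed. I expect this oblique boundary term to be the main technical obstacle, as it is absent from standard unweighted/fractional-Laplacian proofs and requires the chain-rule manipulation above together with careful bookkeeping to avoid spurious $d^{-1}$ factors in the local estimate.
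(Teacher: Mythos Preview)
Your main argument is correct and matches the paper's proof: test the supersolution inequality with $\tau^2(u+d)^{-1}$, use the elementary inequality $(a-b)(a^{-1}-b^{-1})\le -(\log a-\log b)^2$ to extract the log-energy, control the cutoff error via \autoref{lemma:cutoff-estimate}, and bound the tail contribution over $\R^n\setminus B_R$ by $Cd^{-1}r^{n-1+\beta}R^{\beta-1-2s}\tail(u_-;R)$. This is exactly what the paper does (following \cite[Proposition~3.10]{BDOR24}).

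However, the ``hard part'' you identify is a phantom. The lemma concerns sub/supersolutions to \eqref{eq:weighted-sol}, whose right-hand side is \emph{zero}; the corresponding supersolution inequality is simply $\text{energy}(u,\eta)\ge 0$ for $\eta\ge 0$, with no boundary integral. The oblique boundary term $\int_{\{x_n=0\}} u\,\theta_K'\cdot\nabla_{x'}\eta$ appears only in \autoref{def:weak-sol-ti}, and the paper deliberately avoids carrying it through the De Giorgi machinery: in the proof of \autoref{prop:weighted-Liouville} for $\beta=s$, a general weak solution in the sense of \autoref{def:weak-sol-ti} is first transformed via \autoref{lemma:oblique-trafo} into a solution of \eqref{eq:weighted-sol}, and only then are \autoref{lemma:C-alpha-estimate} and its ingredients (including the present lemma) invoked. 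So there is nothing extra to handle at $\beta=s$, and the paper's proof treats all $\beta\in[s,1+s]$ uniformly.

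Your proposed treatment of the boundary term, had it been needed, also has a gap: you invoke \autoref{lemma:loc-bd} to bound $u$ from above on $\{x_n=0\}$, but that lemma applies to \emph{sub}solutions. For a supersolution $u$ only $-u$ is a subsolution, so \autoref{lemma:loc-bd} bounds $u_-$, which vanishes on $B_R$ and gives no control on $u_+$. Without an upper bound on $u$ you cannot control $|\log(u+d)|$, and even with one your estimate would carry an extra $|\log d|$ factor absent from the statement.
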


\begin{proof}
As in \cite[Proposition 3.10]{BDOR24}, we write $v = u + d$, take a cut-off function $\tau \in C^{\infty}_c(B_{3r/2})$ with $0 \le \tau \le 1$, $|\nabla \tau| \le 4/r$ and $\tau \equiv 1$ in $B_r$, and use $v^{-1} \tau^2$ as a test function to deduce
\begin{align*}
0 &\le C \int_{B_{2r}}\int_{B_{2r}} (v(x) - v(y)) (v^{-1}(x) \tau^2(x) - v^{-1}(y) \tau^2(y)) |x-y|^{-n-2s} \d \mu(x) \d \mu(y) \\
&\quad + 2 \int_{\R^n \setminus B_{2r}} \int_{B_{2r}} (u(x) - u(y)) v^{-1}(x) \tau^2(x) K(x-y) \d \mu(x) \d \mu(y) \\
&= I_1 + 2 I_2.
\end{align*}
By the same arguments as in \cite[(3.24)]{BDOR24}, and using also \autoref{lemma:cutoff-estimate} to estimate
\begin{align*}
r^{-2} \int_{B_{2r}} \int_{B_{4r}(x)} |x-y|^{-n+ 2-2s} \d \mu(x) \d \mu(y) &\le C \int_{B_{2r}} (x_n)_+^{\beta-1} r^{\beta-1-2s} \d x \le C r^{n -2 + 2\beta-2s},
\end{align*}
we deduce for $I_1$,
\begin{align*}
I_1 &\le -C \int_{B_r} \int_{B_r} |\log v(x) - \log v(y)|^2 |x-y|^{-n-2s} \d \mu(x) \d \mu(y) + C r^{n-2 + 2\beta-2s}.
\end{align*}
For $I_2$, we have, again by \autoref{lemma:cutoff-estimate},
\begin{align*}
I_2 &\le C \int_{B_{3r/2}} (x_n)_+^{\beta-1} \left( \int_{\R^n \setminus B_{r/2}(x)} (y_n)_+^{\beta-1} |x-y|^{-n-2s} \d y \right) \d x \\
&\quad + C d^{-1} \int_{B_{3r/2}} (x_n)_+^{\beta-1}  \left( \int_{\R^n \setminus B_R}  u_-(y) (y_n)_+^{\beta-1} |x-y|^{-n-2s} \d y \right) \d x \\
&\le C r^{\beta-1-2s} \int_{B_{3r/2}} (x_n)_+^{\beta-1} \d x + C d^{-1} R^{\beta-2s-1} \tail(u_-,R) \left( \int_{B_{3r/2}} (x_n)_+^{\beta-1} \d x\right) \\
&\le C r^{n-2 + 2\beta - 2s} \left( 1 + d^{-1} \left( \frac{r}{R} \right)^{2s+1-\beta} \tail(u_-;R) \right).
\end{align*}
Combining the estimates for $I_1$ and $I_2$, we conclude the proof.
\end{proof}

Having at hand the logarithmic estimate,  we can show the following growth lemma, in parallel to \cite[Lemma 3.11]{BDOR24}.

\begin{lemma}
\label{lemma:growth}
Let $\beta \in [s,1+s]$. Let $u$ be a weak supersolution to \eqref{eq:weighted-sol} with $u \ge 0$ in $B_R \cap \{ x_n > 0 \}$. Let $r \le R \le 4r$ and $\nu > 0$. Then, for any $\sigma \in (0,1]$ there is $\eps \in (0,\frac{1}{2})$, depending only on $n,s,\beta,\lambda,\Lambda,\sigma$, such that if
\begin{align*}
\mu( B_{2r} \cap \{ u \ge \nu\}) \ge \sigma \mu(B_{2r}), \qquad d:= \left( \frac{r}{R} \right)^{2s+1-\beta} \tail(u_- ; R) \le \eps \nu,
\end{align*}
then it holds
\begin{align*}
\inf_{B_r} u \ge \eps \nu.
\end{align*}
\end{lemma}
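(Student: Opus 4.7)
The plan is to follow the classical De~Giorgi two-step strategy, adapted to the weighted setting: first, the logarithmic estimate converts the measure-density hypothesis into the statement that $\{u \text{ is small}\}$ occupies only a tiny portion of $B_r$; then, a nonlinear Caccioppoli--Sobolev iteration upgrades this into a pointwise lower bound.

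\textbf{Step 1 (measure density to small bad set).} Set $d := \eps \nu$ and $L := \log(u+d)$. Since the tail hypothesis gives $d^{-1}(r/R)^{2s+1-\beta}\tail(u_-;R) \le 1$, the right-hand side of \autoref{lemma:log-estimate} is at most $C r^{\beta-1-2s}$, and combined with the weighted Poincar\'e inequality \eqref{eq:Poincare} this yields
\begin{align*}
\dashint_{B_{2r}} \big|L-(L)_{B_{2r}}\big|^2 \d\mu \le C.
\end{align*}
On $B_{2r} \cap \{u \ge \nu\}$, of relative $\mu$-measure at least $\sigma$, we have $L \ge \log \nu$; and $L \ge \log(\eps\nu)$ holds everywhere in $B_{2r}$. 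Hence $(L)_{B_{2r}} \ge \log\nu + (1-\sigma)\log\eps$. On $\{u \le \theta \nu\} \cap B_r$ (with $\theta \ge \eps$), one has $L \le \log((\theta+\eps)\nu)$ and therefore $|L - (L)_{B_{2r}}| \ge |\log\theta| - C(\sigma)|\log\eps|$. Chebyshev's inequality then gives, for any prescribed $\delta > 0$,
\begin{align*}
\mu\bigl(\{u\le\theta\nu\}\cap B_r\bigr) \le \delta\,\mu(B_r),
\end{align*}
after first fixing $\theta = \theta(\sigma,\delta)$ small and then choosing $\eps = \eps(\theta,\sigma,\delta)$ sufficiently small.

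\textbf{Step 2 (De Giorgi iteration).} Since $-u$ is a weak subsolution, we apply \autoref{lemma:Caccioppoli} to $-u$ with truncation levels $k_j = -\tfrac{\theta\nu}{2}(1+2^{-j})$ and shrinking radii $r_j = \tfrac{r}{2}(1+2^{-j})$, with standard cut-offs supported in $B_{r_j}$ and equal to $1$ on $B_{r_{j+1}}$. Combining with the weighted Sobolev inequality \eqref{eq:Sobolev} (whose exponent $\kappa > 1$ depends on $\beta$ but stays bounded away from $1$) and H\"older's inequality yields, in the spirit of the classical unweighted scheme \cite{DKP16} and its weighted counterparts \cite{BDOR24}, the superlinear recursion
\begin{align*}
\frac{\mu(A_{j+1})}{\mu(B_{r_{j+1}})} \le C\, 2^{Mj} \left(\frac{\mu(A_j)}{\mu(B_{r_j})}\right)^{1+\kappa'}, \qquad A_j := \{-u > k_j\} \cap B_{r_j},
\end{align*}
for some $\kappa', M > 0$ depending only on $n,s,\beta,\lambda,\Lambda$. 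The nonlocal tail term in Caccioppoli splits into a piece proportional to $|k_j| \le \theta\nu$ (bounded by a constant after normalization by the reference level $\theta\nu$) and a piece proportional to $(\theta\nu)^{-1}(r/R)^{2s+1-\beta}\tail(u_-;R) \le C\eps/\theta$, which is $\le 1$ by the tail hypothesis. Choosing $\delta$ in Step~1 small enough in terms of $C, M, \kappa'$ yields $\mu(A_0)/\mu(B_r) \le \delta_0$, so the recursion forces $\mu(A_j) \to 0$, hence $u \ge \tfrac{\theta}{2}\nu$ a.e.\ in $B_{r/2}$. A finite chaining over balls $B_{r/2}(x_0)$ with $x_0 \in B_r$ (the rescaled hypotheses being preserved since $R \le 4r$) concludes the proof upon setting $\eps := \min\{\eps(\theta,\sigma,\delta),\tfrac{\theta}{2}\}$.

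\textbf{Main obstacle.} The technical difficulty is to carry out the above scheme uniformly in $\beta \in [s,1+s]$, since this parameter interpolates between a strongly singular weight ($\beta$ close to $s$) and a degenerate one ($\beta$ close to $1+s$). Three ingredients are essential: the weighted Poincar\'e--Sobolev pair \eqref{eq:Poincare}--\eqref{eq:Sobolev}, whose Sobolev exponent $\kappa > 1$ depends on $\beta$ but stays uniformly bounded away from~$1$; the integral bound \autoref{lemma:cutoff-estimate}, needed to dominate the cut-off contribution in the Caccioppoli inequality uniformly in the weight; and the precise form of the tail hypothesis, calibrated exactly so that the nonlocal tail of $u_-$ can be absorbed at every scale of the iteration. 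Any of these ingredients failing would break the De~Giorgi recursion.
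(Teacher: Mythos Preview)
Your two-step strategy is the same as the paper's, but Step~1 has a genuine gap in the order of choosing parameters.

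Your lower bound $|L - (L)_{B_{2r}}| \ge |\log\theta| - C(\sigma)|\log\eps|$ on $\{u\le\theta\nu\}$ comes from the crude estimate $(L)_{B_{2r}} \ge \log\nu + (1-\sigma)\log\eps$, which uses only $L \ge \log(\eps\nu)$ pointwise. With $\theta$ fixed and then $\eps \to 0$ (as you propose), the right-hand side tends to $-\infty$ and Chebyshev is vacuous. The estimate is only useful when $\theta$ and $\eps$ are comparable: taking $\theta = 2\eps$ gives $|L-(L)_{B_{2r}}| \gtrsim \sigma|\log\eps|$ on the bad set, and then Chebyshev does give the desired smallness. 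This is precisely how the paper (following \cite{DKP16,BDOR24}) proceeds: instead of the raw logarithm one uses the truncated function
\[
h = \min\Bigl\{\Bigl[\log\tfrac{\nu+d}{u+d}\Bigr]_+,\ \log\tfrac{1}{3\eps}\Bigr\},
\]
which vanishes on $\{u\ge\nu\}$, so $(h)_{B_{2r}}\le(1-\sigma)\log\tfrac{1}{3\eps}$, and equals $\log\tfrac{1}{3\eps}$ on $\{u\le 2\eps\nu\}$, giving $|h-(h)_{B_{2r}}|\ge\sigma\log\tfrac{1}{3\eps}$ on that set directly, with no competition between two parameters.

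Once this is fixed, Step~2 is essentially correct. Note, though, that the paper iterates with radii $(1+2^{-j})r$ and levels $(1+2^{-j})\eps\nu$, going from $B_{2r}$ straight to $B_r$, so no final covering is needed. Your chaining over balls $B_{r/2}(x_0)$ with $x_0\in B_r$ would require re-verifying the density hypothesis for translated balls, which is not immediate since the measure $\mu$ is not translation-invariant.
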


\begin{proof}
First, we claim that for any $\eps \in (0,\frac{1}{4})$ it holds
\begin{align}
\label{eq:growth-lemma-claim-1}
\mu(B_{2r} \cap \{ u \le 2 \eps \nu \}) \le \frac{C}{\sigma \log(1/4\eps)}. 
\end{align}
Setting 
\begin{align*}
h(x) = \min \left\{ \left[ \frac{\nu + d}{u(x) + d} \right]_+ , -\log(3\eps) \right\},
\end{align*}
we obtain by the same arguments as in \cite{BDOR24}, and using the Poincar\'e inequality in \eqref{eq:Poincare}, as well as the logarithmic estimate in \autoref{lemma:log-estimate}, recalling that $\mu(B_r) = C r^{n-1+\beta}$,
\begin{align*}
& \dashint_{B_{2r}} |h - (h)_{B_2r}|^2 \d \mu \\
&\qquad \le C r^{1-\beta+2s} \dashint_{B_{2r}} \int_{B_{2r}} |\log(u(x) + d) - \log(u(y)+d)|^2 |x-y|^{-n-2s} \d \mu(x) \d \mu(y)\\
&\qquad \le C  \left( 1 + d^{-1} \left( \frac{r}{R} \right)^{2s+1-\beta} \tail(u_-;R) \right) \le  C.
\end{align*}
From this estimate, we deduce \eqref{eq:growth-lemma-claim-1} by following \cite[Step 1 in proof of Lemma 3.11]{BDOR24}.

Using \eqref{eq:growth-lemma-claim-1}, we can deduce the desired result in the exact same way as in \cite[Step 2 in proof of Lemma 3.11]{BDOR24} by a De Giorgi iteration argument over the measures of level sets 
\begin{align*}
A_j = \frac{\mu(B_{(1+2^{-j})r} \cap \{ u < (1 + 2^{-j})\eps\nu \})}{\mu(B_{(1+2^{-j})r})},
\end{align*}
based on the Poincar\'e-Sobolev inequality from \eqref{eq:Sobolev}. Here, \eqref{eq:growth-lemma-claim-1} serves as a means to choose $A_0$ small enough for the iterative scheme to converge. Since this procedure is standard in the nonlocal case, and no additional idea (compared to \autoref{lemma:loc-bd} and \cite{BDOR24}) is required due to the appearance of the weights, we skip all the details.
\end{proof}

We are now in a position to prove the boundary $C^{\alpha}$ estimate in \autoref{lemma:C-alpha-estimate}.

\begin{proof}[Proof of \autoref{lemma:C-alpha-estimate}]
Given $r > 0$, let us define
\begin{align*}
\nu_0 = C  r^{-(n-1+\beta)} \int_{B_r^{+}} |u| (x_n)_+^{\beta-1} \d x + C \tail(u;r/2),
\end{align*}
where $C/2$ is the constant from \autoref{lemma:loc-bd} with $\delta = 1$.
We claim that there exist $\alpha \in (0,1)$ and $\eta \in (0,1)$ such that for any $j \in \N \cup \{0\}$ it holds
\begin{align}
\label{eq:osc-claim}
\osc_{B_{\eta^jr/2}} u \le \eta^{\alpha j} \nu_0.
\end{align}
To prove it, let us define
\begin{align*}
r_j = \eta^jr/2, \qquad B_j = B_{r_j}, \qquad \nu_j = \eta^{\alpha j} \nu_0, \qquad M_j = \sup_{B_j} u, \qquad m_j = \inf_{B_j} u.
\end{align*}
We assume $\tau \in (0,1/4)$ and $\alpha < \frac{2s+1-\beta}{2} =: \alpha_0$, but will choose them even smaller at a later point of the proof, if necessary. We prove \eqref{eq:osc-claim} by induction. For $j=0$, it follows by \autoref{lemma:loc-bd}. We suppose now that \eqref{eq:osc-claim} holds true for all $j \le i$ and prove it for $j = i+1$. We distinguish between the cases
\begin{align*}
\mu(2B_{i+1} \cap \{ u - m_i \ge \nu_i/2 \}) \ge \frac{1}{2} \mu(2B_{i+1}), \qquad \mu(2B_{i+1} \cap \{ \nu_i - (u - m_i) \ge \nu_i/2 \}) \ge \frac{1}{2} \mu(2B_{i+1}),
\end{align*}
and define
\begin{align*}
u_i = u - m_i ~~ \text{ in Case 1}, \qquad u_i = \nu_i - (u - m_i) ~~ \text{ in Case 2} , \qquad d_i = \eta^{2s+1-\beta} \tail((u_i)_- ; r_i).
\end{align*}
In both cases, $u_i$ is a weak solution to \eqref{eq:weighted-sol}, which is nonnegative in $B_i$ and satisfies $|u_i| \le 2 \nu_j$ in $B_j$ for $j \le i$ and $|u_i| \le |u| + 2 \nu_0$ in $\R^n \setminus B_0$. Hence, we have by \autoref{lemma:int-polar-coord}
\begin{align*}
r_i^{\beta-1-2s} \tail((u_i)_- ; r_i) &\le \sum_{j = i}^i \int_{B_{j-1} \setminus B_j} (2 \nu_{j-1}) |x|^{-n-2s} \d \mu(x) + \int_{\R^n \setminus B_0} |u(x) + 2 \nu_0| |x|^{-n-2s} \d \mu(x) \\
&\le C \sum_{j=1}^i \nu_{j-1} r_j^{\beta-1-2s} + C r_0^{\beta-1-2s} \big(\tail(u_0;r_0) + \nu_0 \big) \le C \sum_{j=1}^i \nu_{j-1} r_j^{\beta-1-2s}.
\end{align*}
Hence, from the definitions of $r_i$ and $\nu_i$, we deduce
\begin{align*}
d_i \le C \nu_i \sum_{j=1}^i \eta^{(1+i-j)(2s+1-\beta-\alpha)} \le C \nu_i \sum_{j = 1}^i \eta^{j\alpha_0} \le C \nu_i  \frac{\eta^{\alpha_0}}{1 - \eta^{\alpha_0}}.
\end{align*}
By choosing $\eta$ so small that $d \le \frac{\eps \nu_i}{2}$, where $\eps$ is the constant in \autoref{lemma:growth} for $\sigma = \frac{1}{2}$, we can apply \autoref{lemma:growth} with $\nu := \frac{\nu_i}{2}$, $\sigma = \frac{1}{2}$, and $B_r = B_{i+1}$, $B_R = B_{i}$, and deduce $m_{i+1} = \inf_{B_{i+1}} u_{i+1} \ge \eps \nu_i/2$, which implies that 
\begin{align*}
M_{i+1} - m_{i+1} \le \left(1 - \frac{\eps}{2} \right) \nu_i = \left( 1 - \frac{\eps}{2}\right) \eta^{-\alpha} \nu_{i+1},
\end{align*}
and therefore \eqref{eq:osc-claim} follows by choosing $\alpha$ so small that $1-\frac{\eps}{2} \le \eta^{\alpha}$.

Having proved \eqref{eq:osc-claim}, we are now in a position to conclude the proof. Indeed, let $x,y \in B_{r/2}$ and define $k_0 \in \N$ as
\begin{align*}
k_0 = \inf \{k \in \N  : |x-y| \ge \eta^{-k} r/2 \}.
\end{align*}
Then, $|x-y| \le \eta^{-k_0 + 1}r/2$ and by \eqref{eq:osc-claim}, we deduce
\begin{align*}
\frac{|u(x) - u(y)|}{|x-y|^{\alpha}} &\le \eta^{k_0 \alpha} (r/2)^{\alpha}  \osc_{B_{\eta^{-k_0+1}r/2}} u \le C r^{-\alpha} \nu_0.
\end{align*}
Hence, we conclude the proof by recalling the definition of $\nu_0$.
\end{proof}

\subsection{Reduction to a 1D problem}

Thanks to the $C^{\alpha}$ boundary estimate from \autoref{lemma:C-alpha-estimate}, we get the following result. The proof is quite standard but we include it for completeness. 

\begin{lemma}
\label{lemma:reduction-to-1D}
Let $\beta \in [s,1+s]$ and $u \in C^{\gamma}_{loc}(\R^n)$ for some $\gamma \in (\max\{0,2s-1, s-\beta+\frac{1}{2}\}, 2s-\beta+1)$ be such that $|u(x)| \le C (1 + |x|^{\gamma})$. Moreover, assume that $u$ solves \eqref{eq:weighted-sol}. Then, $u(x) = U(x_n)$, where $U$ satisfies for any $\eta \in C^{\infty}_c(\R)$
\begin{align}
\label{eq:U-1D-PDE}
\int_{\R} \int_{\R} (U(x) - U(y)) (\eta(x) - \eta(y)) |x-y|^{-1-2s} x_+^{\beta-1} y_+^{\beta-1} \d y \d x = 0,
\end{align}
\end{lemma}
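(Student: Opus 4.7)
I would prove the reduction in two stages: first, establish that $u$ is constant along the tangential directions, so $u(x)=U(x_n)$ for some function $U\colon\R\to\R$; then, derive the 1D weak equation \eqref{eq:U-1D-PDE} by testing \eqref{eq:weighted-sol} against separable functions and integrating out the tangential variables.

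For the first stage, both the kernel $K(x-y)$ and the weight $(x_n)_+^{\beta-1}(y_n)_+^{\beta-1}$ are invariant under the tangential translation $x\mapsto x+(h',0)$ with $h'\in\R^{n-1}$, so a change of variables in the weak formulation gives that $w_{h'}(x):=u(x+(h',0))-u(x)$ is itself a weak solution of \eqref{eq:weighted-sol} with the same polynomial growth $|w_{h'}(x)|\le C(1+|x|)^\gamma$. To force $w_{h'}\equiv 0$, I would combine the H\"older boundary estimate \autoref{lemma:C-alpha-estimate} with a rescaling argument: the function $u_R(x):=R^{-\gamma}u(Rx)$ also solves \eqref{eq:weighted-sol} (by the homogeneity of degree $-n-2s$ of $K$ and degree $\beta-1$ of the weight), and applying \autoref{lemma:C-alpha-estimate} to $u_R$ on $B_1$ — where the growth assumption and the subcriticality $\gamma<2s+1-\beta$ make the weighted $L^1$ and tail quantities uniformly bounded — yields
\[[u]_{C^\alpha(\{x_n\ge 0\}\cap B_R)}\le C R^{\gamma-\alpha}\quad\text{for every }R>0,\]
and likewise for $w_{h'}$. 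Iterating with higher-order tangential incremental quotients $\Delta_{h'_k}\cdots\Delta_{h'_1}u$, each of which lowers the growth exponent at infinity by $\alpha$, after finitely many steps the exponent becomes non-positive, producing a uniformly bounded global solution; a further application of \autoref{lemma:C-alpha-estimate} forces it to be constant, and propagating this vanishing back through the chain — using that a function of sublinear tangential growth cannot have a nonzero constant tangential incremental quotient — yields $w_{h'}\equiv 0$ for every $h'\in\R^{n-1}$.

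For the second stage, given $u(x)=U(x_n)$, I would test the weak formulation against separable functions $\widetilde{\eta}(x):=\eta(x_n)\chi_R(x')$, with $\eta\in C^\infty_c(\R)$ and $\chi_R(x'):=\chi(x'/R)$ for a fixed cutoff $\chi\in C^\infty_c(\R^{n-1})$ with $\chi\equiv 1$ on the unit ball. Splitting the increment
\[\widetilde{\eta}(x)-\widetilde{\eta}(y)=\bigl(\eta(x_n)-\eta(y_n)\bigr)\chi_R(x')+\eta(y_n)\bigl(\chi_R(x')-\chi_R(y')\bigr),\]
the first summand, after integrating the tangential variables via the homogeneity-based identity
\[\int_{\R^{n-1}} K(z',t)\,dz'=c_K\,|t|^{-1-2s},\qquad c_K:=\int_{\R^{n-1}} K(z',1)\,dz'>0,\]
(obtained through the substitution $z'=|t|w'$), yields in the limit $R\to\infty$ exactly $c_K$ times the expression appearing in \eqref{eq:U-1D-PDE}. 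The second summand involving $\chi_R(x')-\chi_R(y')$ vanishes in the limit since $\|\nabla\chi_R\|_\infty=O(R^{-1})$ and the region where $\chi_R$ is non-constant recedes to infinity, with dominated convergence justified by the sub-critical growth of $U$ and the decay of $K$; dividing through by $c_K>0$ produces \eqref{eq:U-1D-PDE}.

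The main obstacle will be the iteration in the first stage: one must carefully track how the weighted $L^1$ and tail terms in \autoref{lemma:C-alpha-estimate} behave under successive rescalings and tangential incremental quotients, and verify — especially as $\beta$ approaches the endpoints of $[s,1+s]$ where the weight $(x_n)_+^{\beta-1}$ is most singular or most explosive — that the H\"older estimate applied to the resulting bounded solution does yield a seminorm that tends to zero as $R\to\infty$, so as to conclude the Liouville-type triviality needed to descend the chain.
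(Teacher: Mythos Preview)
Your first stage is essentially the paper's argument: apply \autoref{lemma:C-alpha-estimate} to obtain $[u]_{C^\alpha(\{x_n\ge0\}\cap B_R)}\le CR^{\gamma-\alpha}$, pass to normalized tangential incremental quotients (which inherit the equation with growth exponent lowered by $\alpha$), iterate until the exponent is negative, and conclude that $u$ is polynomial---hence constant, since $\gamma<1$---in $x'$.

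Your second stage contains a scaling error. After integrating out $y'$ via the homogeneity identity, the first summand equals
\[
c_K\Big(\int_{\R^{n-1}}\chi_R(x')\,dx'\Big)\int_\R\int_\R(U(x_n)-U(y_n))(\eta(x_n)-\eta(y_n))\,(x_n)_+^{\beta-1}(y_n)_+^{\beta-1}|x_n-y_n|^{-1-2s}\,dy_n\,dx_n,
\]
and the prefactor $\int\chi_R\sim R^{n-1}$ diverges; no finite limit emerges unless the 1D energy is already zero. Your dominated-convergence claim for the second summand therefore does not suffice: you would need it to be $o(R^{n-1})$, not $o(1)$. The paper's route avoids any limit: test directly with a compactly supported product $\widetilde\eta(x')\varphi(x_n)$ and observe that the piece carrying $\widetilde\eta(x')-\widetilde\eta(y')$ vanishes identically, since for each fixed $z'=x'-y'$ one has $\int_{\R^{n-1}}\big(\widetilde\eta(x')-\widetilde\eta(x'-z')\big)\,dx'=0$ by translation invariance of Lebesgue measure. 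The surviving piece then yields $c_K\big(\int\widetilde\eta\big)$ times the 1D energy, forcing it to vanish. The same translation identity would in fact show that your second summand is \emph{identically} zero for every $R$, so your first summand already forces the conclusion without any limiting procedure---but this is not the dominated-convergence mechanism you proposed.
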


\begin{proof}
First, we notice that, once we prove $u(x) = U(x_n)$, then, by a computation similar to the one in \eqref{eq:integrate-out-kernel}, we can immediately deduce \eqref{eq:U-1D-PDE}. Indeed by considering $\eta(x)=\widetilde{\eta}(x')\varphi(x_n)\in C^{\infty}_c(\R^n)$, $\varphi\in C^{\infty}_c(\R)$, from \eqref{eq:weighted-sol} we get
\begin{align*}
    0=& \int_{\R^n} \int_{\R^n} (U(x_n) - U(y_n)) (\widetilde{\eta}(x')\varphi(x_n) - \widetilde{\eta}(y')\varphi(y_n)) K(x-y) (x_n)_+^{\beta-1} (y_n)_+^{\beta-1} \d y \d x\\
    &= \int_{\{x_n>0\}} \int_{\{y_n>0\}} F(x_n, y_n)\left(\int_{\R^{n-1}} \int_{\R^{n-1}} (\widetilde{\eta}(x') - \widetilde{\eta}(y')) K(x-y)\d y' \d x'\right) \d y_n \d x_n\\
    &+\int_{\{x_n>0\}} \int_{\{y_n>0\}} G(x_n, y_n)\left(\int_{\R^{n-1}} \int_{\R^{n-1}} \widetilde{\eta}(y') K(x-y)\d y' \d x'\right) \d y_n \d x_n:= I_1+I_2,
\end{align*}
where
\begin{align*}
F(x_n, y_n) &:=\varphi(x_n) (x_n)_+^{\beta-1} (y_n)_+^{\beta-1}(U(x_n) - U(y_n)),\\
G(x_n,y_n) &:=(x_n)_+^{\beta-1} (y_n)_+^{\beta-1}(U(x_n) - U(y_n)) (\varphi(x_n) - \varphi(y_n)).
\end{align*}
On the one hand, by symmetry, it is clear that $I_1=0$. On the other hand, by  the scaling properties of the kernel given in \eqref{eq:K-comp} and the computation in \eqref{eq:integrate-out-kernel}, it is easy to check that
\begin{align*}
\int_{\R^{n-1}}  K(x-y) dx'= C|x_n-y_n|^{1-2s}, \qquad C:= \int_{\R^{n-1}}K(w',1) \d w',
\end{align*}
and therefore
\begin{align*}
0 = I_2 = C\left(\int_{\R^{n-1}} \widetilde{\eta}(y') \d y'\right)\int_{\{x_n>0\}} \int_{\{y_n>0\}} G(x_n, y_n)\frac{\d y_n \ dx_n}{|x_n-y_n|^{1+2s}},
\end{align*}
which implies \eqref{eq:U-1D-PDE}, as desired.

In order to prove $u(x) = U(x_n)$, we observe that by  \autoref{lemma:C-alpha-estimate}, the growth of $u$, and \autoref{lemma:int-polar-coord},
\begin{align*}
[u]_{C^{\alpha}(\{ x_n \ge 0 \} \cap B_R)} &\le C R^{-\alpha}  \big ( R^{-n-1+\beta}\|u (x_n)_+^{\beta-1}\|_{L^{1}(\{ x_n \ge 0 \} \cap B_R))} \\
&\qquad +  R^{-\beta+1+2s}\| |x|^{-(n+2s)}(x_n)_+^{\beta-1} u\|_{L^{1}(\{ x_n \ge 0 \}  \setminus B_R)} \big) \le C R^{\gamma-\alpha}.
\end{align*}
We consider now the incremental quotient 
\begin{align*}
D_{h,1}^{\alpha} u(x):=\frac{u(x+he)-u(x)}{|h|^{\alpha}}, \qquad h=(h',0),
\end{align*}
which satisfies 
\begin{align*}
\int_{\R^n} \int_{\R^n} (D_{h,1}^{\alpha} u(x) - D_{h,1}^{\alpha} u(y)) (\eta(x) - \eta(y)) K(x-y) (x_n)_+^{\beta-1} (y_n)_+^{\beta-1} \d y \d x= 0, ~~ &\forall \eta \in C^{\infty}_c(\R^n),\\
 \Vert D_{h,1}^{\alpha} u \Vert_{L^{\infty}(\{ x_n \ge 0 \} \cap B_R)} \le C R^{\gamma-\alpha}, ~~ &\forall R > 1.
\end{align*}
That is, $D_{h,1}^{\alpha} u$ solves the same equation like $u$ with an improved growth condition. By considering
\begin{align*}
D_{h,k}^{\alpha} u(x):=\frac{D_{h,k-1}^{\alpha}u(x+he)-D_{h,k-1}^{\alpha}u(x)}{|h|^{\alpha}}, \qquad  h = (h',0),
\end{align*}
we can iterate the procedure and, by taking $R\to \infty$, we conclude that $D_{h,m+1}^{\alpha}u\equiv 0$ in $\R^{n} \cap \{ x_n > 0 \}$, where $m:=\lceil\gamma/\alpha\rceil-1$. That is, $u(\cdot,x_n)$ is a polynomial in $x'$ of degree $m$ for any $x_n > 0$. However, by the growth condition on $u$, and the fact that $\gamma<1$, this yields that $u$ is constant in $x'$, i.e. there exists $U : \R \to \R$ such that $U(x_n) = u(x',x_n)$, as desired.
\end{proof}

\subsection{Proof in case $\beta = s$}

We are now ready to prove \autoref{prop:weighted-Liouville} in case $\beta = s$.

\begin{proof}[Proof of \autoref{prop:weighted-Liouville} in case $\beta = s$]

Let us first assume that $v$ satisfies \eqref{eq:weighted-sol}. Then, by \autoref{lemma:reduction-to-1D}, there exists $U$ solving \eqref{eq:U-1D-PDE} such that $v(x',x_n) = U(x_n)$. By the integration by parts formula from \autoref{lemma:ibp}, applied only with $\eta \in C_{c}^{\infty}(\{ x_n > 0 \})$ we deduce that $V = x_+^{s-1} U$ satisfies $(-\Delta)^s_{\R} U = 0$ in $(0,\infty)$ in the distributional sense, and therefore
\begin{align*}
\begin{cases}
(-\Delta)^s_{\R}V &= 0 ~~ \text{ in } (0,\infty),\\
V &= 0 ~~ \text{ in } (-\infty,0),
\end{cases}
\end{align*}
and $|V(x)| \le C(x^{s-1} + x^{s-1+\gamma})$.
Hence, by the 1D Liouville theorem for the fractional Laplacian from \cite[Lemma 6.2]{RoSe16}, since $s-1+\gamma<2s$ and $s>0$, we deduce that $V(x) = a x_+^s + b x_+^{s-1}$ for some $a,b \in \R$. However, by the definition of $U$ this yields $U(x) = a x_+ + b$. By the growth $|U(x)| \le C (1 + |x|^{\gamma})$ for $\gamma < 1$, this yields $U \equiv b$, as desired.

Now let us prove the result for a general weak solution $v$ in the sense of \autoref{def:weak-sol-ti}. By the integration by parts formula from \autoref{lemma:ibp} we deduce that for any $\eta \in C^{\infty}_c(\R^n)$ it holds
\begin{align*}
    & \int_{\{ x_n = 0 \}} v(x',0) (\theta'_K \cdot \nabla_{x'}\eta)(x',0) \d x' \\
    &\quad = \int_{\R^n} \int_{\R^n} (v(x) - v(y))(\eta(x) - \eta(y)) (x_n)_+^{s-1}(y_n)_+^{s-1}K(x-y) \d y \d x \\
    & \quad = \int_{\R^n} (x_n)_+^{s-1} v L_K((x_n)_+^{s-1} \eta)  \d x - \int_{\{ x_n = 0 \}} v(x',0) (\theta_K \cdot \nabla\eta)(x',0)   \d x.
\end{align*}
Hence, we deduce
\begin{align*}
    \int_{\R^n} (x_n)_+^{s-1} v L_K((x_n)_+^{s-1} \eta)  \d x - \int_{\{ x_n = 0 \}} v(x',0) \vartheta \cdot \nabla \eta(x',0)  \d x = 0,
\end{align*}
where $\vartheta = \theta_K + (\theta_K',0)\in \R^n$. Since $\vartheta_n = (\theta_K)_n$, we can apply \autoref{lemma:oblique-trafo}, and by defining $\theta$ as in \eqref{eq:theta-choice}, we deduce that
\begin{align*}
    \int_{\R^n} \int_{\R^n} (u(x) - u(y)) (\eta(x) - \eta(y)) \widetilde{K}(x-y) (x_n)_+^{s-1} (y_n)_+^{s-1} \d y \d x = 0,
\end{align*}
for some $\widetilde{K}$ and $u$ defined in \eqref{eq:K-u-trafo-def} and \eqref{eq:K-u-trafo-def-2}, namely
\begin{align}
    u(x) = v(x' + \omega x_n , x_n)
\end{align}
for some $\omega \in \R^{n-1}$. Hence, by the first part of the proof, $u$ must be constant, and therefore, by construction, also $v$ is constant, as desired.
\end{proof}

\subsection{Proof in case $\beta \in (s,1+s]$}

In order to prove \autoref{prop:weighted-Liouville} in case $\beta \in (s,1+s]$, we need the following Liouville theorem in 1D.

\begin{lemma}
\label{lemma:1D-Liouville-beta}
    Let $\beta \in (s,1+s]$ and $x_+^{\beta-1}v$ be a distributional solution to 
    \begin{align}
    \label{eq:Liou-problem}
        (-\Delta)^s_{\R}(x_+^{\beta-1}v) &= f(\beta-1) x_+^{\beta-1-2s} v ~~ \text{ in } (0,\infty),
    \end{align}
    where $f(\beta-1) \in \R$ is such that $(-\Delta)^s_{\R}(x_+^{\beta-1}) = f(\beta-1) x_+^{\beta-1-2s}$, such that $|v(x)| \le C(1 + x^{\gamma})$ for some $\gamma \in (\max\{0,2s-1,s-\beta+\frac{1}{2}\},2s-\beta+1)$. Then, $v$ is of the form
    \begin{align*}
    v(x) = a +\1_{\{\beta \in (s,s+\frac{1}{2})\}} b x_+^{2s-2\beta+1}, \qquad a,b \in \R.
    \end{align*}
\end{lemma}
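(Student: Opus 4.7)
The plan is to classify distributional solutions $U := x_+^{\beta-1} v$ of $(-\Delta)^s_\R U = f(\beta-1)\, x_+^{-2s}\, U$ in $(0,\infty)$ with $U \equiv 0$ in $(-\infty, 0)$ and the prescribed growth by exploiting the scale invariance of the equation. First I would verify that the two ``indicial'' power solutions are $x_+^{\beta-1}$ and $x_+^{2s-\beta}$: this follows from the classical identity $(-\Delta)^s_\R x_+^\alpha = c(\alpha)\, x_+^{\alpha-2s}$ on $(0,\infty)$, valid for $\alpha \in (-1,2s)$, combined with the reflection symmetry $c(\alpha) = c(2s-1-\alpha)$, which yields $c(\beta-1) = c(2s-\beta) = f(\beta-1)$ for $\beta \in (s, 1+s]$. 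Since the equation is linear, every $a\, x_+^{\beta-1} + b\, x_+^{2s-\beta}$ is a solution, corresponding in the $v$ variable to $v = a + b\, x_+^{2s-2\beta+1}$.

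The main step is to prove uniqueness within the prescribed growth class, and this is where the main obstacle lies. I would perform the logarithmic change of variables $x = e^t$ and set $W(t) := U(e^t)$, so that the scale invariance of the equation becomes translation invariance of a convolution equation $\int_\R K(u)(W(t+u) - W(t))\,du = 0$ on $\R$, for some kernel $K$ that decays exponentially at $\pm \infty$. The characteristic function $\Psi(\zeta) := \int_\R K(u)(1 - e^{\zeta u})\,du$ has exactly two real zeros $\zeta = \beta-1$ and $\zeta = 2s-\beta$, corresponding to the two explicit solutions identified above. The growth hypothesis on $v$ translates to a two-sided exponential growth bound on $W$, and a Paley--Wiener/Fourier argument then shows that every such $W$ is a linear combination of the two exponentials $e^{(\beta-1)t}$ and $e^{(2s-\beta)t}$.

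A more elementary alternative that I would keep in reserve is a direct subtraction/blow-up argument: since $v \in C^\gamma_{loc}(\R)$ with $\gamma > 0$, the value $v(0)$ exists, and subtracting $v(0)\, x_+^{\beta-1}$ gives a new solution vanishing strictly faster than $x_+^{\beta-1}$ at the origin; one then rescales and uses the interior/boundary estimates together with a compactness blow-down (in the spirit of \cite{RoSe16}, Lemma~6.2) to identify the coefficient $b$ from the behavior at infinity, and iterates to conclude. The delicate point in either approach is the borderline case $\beta = s + \tfrac12$, where the two indicial roots coincide and one has to rule out ``resonance'' solutions of the form $t\, e^{\zeta t}$ for $W$ (equivalently $(\log x)\, x_+^{s - 1/2}$ for $U$); the hypothesis $\gamma > s - \beta + \tfrac12$ (which becomes $\gamma > 0$ at the critical $\beta$) is precisely what kills these logarithmic terms.

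Once the classification $v = a + b\, x_+^{2s-2\beta+1}$ is established, the indicator factor $\mathbf{1}_{\{\beta \in (s, s+\tfrac12)\}}$ follows from the local regularity $v \in C^\gamma_{loc}(\R)$: the exponent $2s-2\beta+1$ is positive exactly for $\beta \in (s, s+\tfrac12)$, while at $\beta = s+\tfrac12$ the function $x_+^0 = \mathbf{1}_{\{x>0\}}$ is discontinuous at $0$, and for $\beta > s + \tfrac12$ the function $x_+^{2s-2\beta+1}$ blows up at $0$, forcing $b=0$ in both cases.
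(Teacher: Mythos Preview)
Your plan is sound and reaches the same two-power classification, but the route at the uniqueness step differs from the paper's. Rather than a Mellin/Paley--Wiener argument on the convolution form of the equation, the paper sets $w=x_+^{\beta-1}v$, observes $|w|\le C(1+x^{2s-\eps})$ and $(-\Delta)^s_{\R} w=f(\beta-1)x_+^{-2s}w$, and then cites the extension-based analysis of \cite{FaRo22} (their Lemmas~2.2 and~2.4, whose proofs go through verbatim with $\bar\kappa_s$ replaced by $f(\beta-1)$) to conclude directly that $w$ is a linear combination of homogeneous solutions. This is a fast black box; your approach would be more self-contained once made rigorous. Either way one must then rule out extra homogeneous exponents, and the paper does this by an explicit analysis of $f(\alpha)=\tfrac{\Gamma(\alpha+1)}{\Gamma(\alpha-2s+1)}\,\tfrac{\sin\pi(\alpha-s)}{\sin\pi(\alpha-2s)}$ on $[s-1,2s)$ (symmetry $f(\alpha)=f(2s-1-\alpha)$, zeros at $s-1$ and $s$, strict monotonicity on $[s,2s)$); this is exactly the work hidden in your assertion that $\Psi$ has only the two zeros, and you should expect to carry out essentially the same computation.

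Two small corrections. Your final paragraph invokes ``local regularity $v\in C^\gamma_{loc}(\R)$'' to handle the indicator, but the lemma only assumes the pointwise bound $|v(x)|\le C(1+x^\gamma)$; what actually drives the restriction is \emph{boundedness} of $v$ near $0$ (immediate from that bound), which forces the homogeneous exponent into $[0,2s-\beta+1)$ and hence kills the second root once $\beta>s+\tfrac12$. And at $\beta=s+\tfrac12$ the two indicial roots coincide, so $x_+^0\equiv 1$ on $(0,\infty)$ is simply absorbed into $a$ --- there is no discontinuity to exclude; the only extra candidate is the resonance term $\log x$, which is again ruled out by boundedness near $0$ rather than by the lower bound on $\gamma$.
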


\begin{proof}
First, we need to prove that all solutions $v$ to \eqref{eq:Liou-problem} are linear combinations of homogeneous solutions. A homogeneous solution must be of the form $v(x) = a_{\alpha} x^{\alpha}$ for some $\alpha \in [0,2s-\beta+1)$ and $a_{\alpha} \in \R$. This can be achieved as in \cite{FaRo22}. Indeed, let us define $w(x) = x_+^{\beta-1} v(x)$ and observe that $|w(x)| \le C(1 + x^{2s-\eps})$ for some $\eps > 0$, and 
\begin{align*}
(-\Delta)^s_{\R} w(x) = f(\beta-1) x_+^{-2s} w(x).
\end{align*}
Hence, by analyzing the extended problem as in \cite[(2.3)]{FaRo22} (with $\bar{\kappa}_s$ replaced by $f(\beta-1)$), and observing that the proof of \cite[Lemma 2.2, Lemma 2.4]{FaRo22} work in the exact same way, in case $\bar{\kappa}_s$ is replaced by $f(\beta-1)$, we deduce that $w$ must be a linear combination of homogeneous solutions, and therefore, the same must be true for $v$.

Using now that  $(-\Delta)_{\R}^s x_+^{\alpha} = f(\alpha) x_+^{\alpha-2s}$ for any $\alpha \in (-1,2s)$, where
\begin{align*}
f(\alpha) = \frac{\Gamma(\alpha+1)}{\Gamma(\alpha-2s+1)} \frac{\sin(\pi(\alpha-s))}{\sin(\pi(\alpha-2s))},
\end{align*}
(see \cite[(2.8)]{FaRo22}) we obtain that $v(x) = x_+^{\alpha}$ is a solution of \eqref{eq:Liou-problem} if and only if
\begin{align}
\label{eq:f-fixed}
f(\beta-1 + \alpha) = f(\beta-1).
\end{align}

Clearly, this property is satisfied in case $\alpha = 0$. In order to find further solutions, a careful analysis of $f$ is required. It is easy to verify that $f$ restricted to the interval $[s-1,2s)$ has a global maximum at $a = s - \frac{1}{2}$, and zeros at $a = s$ and $a = s-1$. Moreover, $f$ is symmetric in the interval $[s-1,s]$, i.e.
\begin{align*}
f(a) = f(2s-1-a), ~~ a \in [s-1,s].
\end{align*}
Moreover, $f$ is strictly decreasing in $[s,2s)$ and $\lim_{a \to 2s} f(a) = -\infty$. Therefore, we obtain that all solutions to \eqref{eq:f-fixed} in the interval $\alpha \in [0,2s-\beta+1)$ are given by
\begin{align*}
\begin{cases}
\alpha \in \{ 0 , 2s - 2\beta + 1 \}, ~~ \text{ if } \beta \in \left( s , s + \frac{1}{2} \right),\\
\alpha = 0, ~~\qquad\qquad\qquad ~ \text{ if } \beta \in \left[ s + \frac{1}{2}, s + 1 \right]. 
\end{cases}
\end{align*}
This concludes the proof.
\end{proof}

\begin{proof}[Proof of \autoref{prop:weighted-Liouville} in case $\beta \not= s$]
    We proceed as in the first part of the proof for $\beta = s$. Indeed, by \autoref{lemma:reduction-to-1D} there exists $U$ solving \eqref{eq:U-1D-PDE} such that $v(x',x_n) = U(x_n)$. By the integration by parts formula from \autoref{lemma:ibp-beta} (see \eqref{eq:ibp-general-beta}), since $(-\Delta)^s_{\R} (x_+^{\beta-1}) = f(\beta-1) x_+^{\beta-1-2s}$, we deduce that $U$ satisfies for any $\eta \in C^{\infty}_c((0,\infty))$
    \begin{align*}
        0 &= \int_{\R} x_+^{\beta-1} U (-\Delta)^s_{\R}(x_+^{\beta-1}\eta) \d x -f(\beta-1) \int_{\R} (x_+^{\beta-1}\eta) U x_+^{\beta-1-2s}  \d x.
    \end{align*}
That is, that $x_+^{\beta-1} U$ satisfies the assumptions of \autoref{lemma:1D-Liouville-beta}, and therefore, 
  \begin{align*}
  U(x) = a + \1_{\{ \beta \in (s , s + \frac{1}{2})\}} b x_+^{2s-2\beta  +1} 
  \end{align*}
Note that we must have $b = 0$, since $x_+^{2s-2\beta+1}$ is not a weak solution in the sense of \autoref{def:weak-sol-ti}. In fact, taking $\eta \in C_c(\R)$ with $\eta \ge 0$, $\eta(0) = 1$, and $\eta' \le 0$ in $(0,\infty)$, we deduce
\begin{align*}
\int_{\R} & \int_{\R} (x_+^{2s-2\beta+1} - y_+^{2s-2\beta+1})(\eta(x) - \eta(y)) K(x-y) \d y \d x < 0,
\end{align*}
since for any $x,y \in (0,\infty)$ it holds $(\eta(x) - \eta(y))(x_+^{2s-2\beta+1} - y_+^{2s-2\beta+1}) \le 0$. Hence, $U$ is constant, and the proof is complete.
\end{proof}

\section{An a priori boundary H\"older estimate}\label{section:holder}

The following a priori $C^{\gamma}$ estimate is crucial in the application to the free boundary problems.

\begin{proposition}
\label{lemma:bdry-reg}
Let $\beta \in [s,1+s]$. Let $K$ be as in \eqref{eq:K-comp} and assume that $K \in C^{\sigma}(\mathbb{S}^{n-1})$ for some $\sigma > 0$. Moreover, assume that $K \in C^{5+\sigma}(\mathbb{S}^{n-1})$ if $\beta =s$. \\
Let $J \in C^{\alpha}(\{x_n \ge 0 \})$ for some $\alpha > 0$, and $I: \R^n \times \R^n \to \R$ be such that
\begin{align*}
J \ge \Lambda^{-1} \text{ in } \{ x_n > 0\} \cap B_{1/2}, \qquad &\Vert J \Vert_{C^{\alpha}(\{x_n \ge 0 \})} + \Vert K \Vert_{C^{\sigma}(\mathbb{S}^{n-1})} \le \Lambda, \qquad |I(x,y)| \le \Lambda |x-y|^{-n-2s},
\end{align*}
Moreover let $\Phi \in C^{1,\alpha}(\R^n)$ be as in Subsection \ref{subsec:flattening}, and let $u \in C_{loc}^{\gamma}(\R^n)$
satisfy for any $\eta \in C^{\infty}_c(B_1)$,
\begin{align*}
\int_{\R^n} &\int_{\R^n} (u(x) - u(y)) (\eta(x) - \eta(y)) (x_n)_+^{\beta-1}(y_n)_+^{\beta - 1} J(x) J(y) K(\Phi(x) - \Phi(y)) \d y \d x \\
&=\int_{\R^n} \eta(x) g(x) \d x + \int_{\R^n} \int_{\R^n} (f(x) - f(y)) (\eta(x) - \eta(y)) (x_n)_+^{\beta-1}(y_n)_+^{\beta-1} I(x,y) \d y \d x\\
&\quad + \1_{\{ \beta = s\}} \Bigg[ \int_{\{ x_n = 0 \}}  b(x')(\theta_{K})_n (x') \eta(x',0) \d x' \\
&\qquad\qquad\qquad\quad - \int_{\{ x_n = 0 \}} u(x',0) [(\theta_{K}' (x')\cdot \nabla_{x'} \eta(x',0)) +  \dvg \theta_{K}'(x') \eta(x',0)] \d x'\\
&\qquad\qquad\qquad\quad + \int_{\{ x_n = 0 \}} A(x') \cdot \vartheta(x') \eta(x',0)  \d x'\\
&\qquad\qquad\qquad\quad - \int_{\{ x_n = 0 \}}  a(x') [(\nu'(x')\cdot \nabla_{x'} \eta(x',0)) +  {\dvg \nu'(x') \eta(x',0)]} \d x'  \Bigg],
\end{align*}

where $\gamma \in (\max\{0,2s-1,s-\beta+\frac{1}{2}\},\min\{1,2s-\beta+1\})$ for some $\gamma \not= 2s$, and we assume $(x_n)_+^{1-\beta} g \in L^{\infty}(\{ x_n \ge 0 \} \cap B_1)$, and $f \in C^{\gamma}(\{ x_n \ge 0 \})$.
In case $\beta = s$, we also assume $b \in L^{\infty}(\{x_n = 0 \} \cap B_1)$, $A \in L^{\infty}(\{x_n = 0 \} \cap B_1)$, $a \in C^{\gamma}(\{ x_n = 0 \} \cap B_1)$, and 
\begin{align*}
\Vert \theta_{K} \Vert_{C^{0,1}(\{ x_n = 0 \} \cap B_1)} + \Vert \vartheta \Vert_{L^{\infty}(\{ x_n = 0 \} \cap B_1)} +  \Vert \nu \Vert_{C^{0,1}(\{ x_n = 0 \} \cap B_1)} \le \Lambda.
\end{align*}
Moreover, if $\gamma > 2s$, we assume
\begin{align}
\label{eq:kernel-int-reg-ass-main}
|I(x,z) - I(y,z)| \le \Lambda |x-y|^{\gamma-2s}  \min\{ & |z-x| , |z-y| \}^{-n-\gamma} ~~ \forall x,y \in \{ x_n > 0 \} \cap B_1, ~ z \in \R^n.
\end{align}
Then, it holds
\begin{align*}
[u]_{C^{\gamma}(\{ x_n \ge 0 \} \cap B_{1/2})} &\le C \Big( \Vert u \Vert_{L^{\infty}(\{ x_n \ge 0 \} \cap B_1)} + \Vert (x_n)_+^{\beta-1} u \Vert_{L^1_{2s}(\{ x_n > 0 \} \setminus B_1)} \\
&\qquad \qquad \quad  + \Vert (x_n)_+^{1-\beta} g \Vert_{L^{\infty}(\{ x_n \ge 0 \} \cap B_1) } +  \Vert f \Vert_{C^{\gamma}(\{ x_n \ge 0\})} \\
&\qquad \qquad \quad + \1_{\{ \beta = s\}} \big[  \Vert b \Vert_{L^{\infty}(\{x_n = 0 \} \cap B_1)} + \Vert A \Vert_{L^{\infty}(\{x_n = 0 \} \cap B_1)} + \Vert a \Vert_{C^{\gamma}(\{x_n = 0 \} \cap B_1)} \big] \Big),
\end{align*}
where $C > 0$ only depends on $n,s,\beta,\gamma,\alpha,\sigma,\lambda,\Lambda$, and the constant in \eqref{eq:Phi-comp}. 
\end{proposition}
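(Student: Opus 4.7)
I would prove this by a blow-up (contradiction and compactness) argument, using \autoref{prop:weighted-Liouville} as the rigidity statement at infinity and \autoref{lemma:C-alpha-estimate} to obtain the compactness up to the flat boundary. Since $\gamma < \min\{1,2s-\beta+1\} < 1$, no polynomial subtraction beyond constants is needed in the rescaling.

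\textbf{Set-up.} Suppose the estimate fails. Then there exist sequences $(u_m, J_m, \Phi_m, K_m, g_m, f_m, I_m)$ (and boundary data $(b_m, A_m, a_m, \theta_{K,m}, \vartheta_m, \nu_m)$ when $\beta=s$) satisfying all hypotheses with
\begin{align*}
\cN_m := \Vert u_m \Vert_{L^\infty(B_1)} + \Vert (x_n)_+^{\beta-1} u_m \Vert_{L^1_{2s}(\{x_n > 0\}\setminus B_1)} + \Vert (x_n)_+^{1-\beta} g_m \Vert_{L^\infty} + \Vert f_m \Vert_{C^\gamma} + \1_{\{\beta=s\}}(\Vert b_m\Vert_{L^\infty}+\Vert A_m\Vert_{L^\infty}+\Vert a_m\Vert_{C^\gamma}) \le 1,
\end{align*}
but $[u_m]_{C^\gamma(\{x_n \ge 0\} \cap B_{1/2})} \to \infty$. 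Applying a standard iteration lemma (as in \cite[Lemma B.0.1]{FeRo24}) to the monotone quantity $r \mapsto r^{-\gamma} \sup_{x_0 \in \{x_n\ge 0\}\cap \overline{B_{1/2}}} \osc_{B_r(x_0)} u_m$, I extract points $x_m \in \{x_n \ge 0\} \cap \overline{B_{1/2}}$ and scales $r_m \to 0$ such that
\begin{align*}
v_m(x) := \rho_m^{-1} r_m^{-\gamma} \bigl[ u_m(x_m + r_m x) - u_m(x_m) \bigr], \qquad \rho_m \to \infty,
\end{align*}
satisfies $v_m(0)=0$, $[v_m]_{C^\gamma(B_R\cap\{x_n \ge -d_m\})} \le C R^\gamma$ for every $R \ge 1$ (with $C$ independent of $m$), and the non-degeneracy bound $\sup_{x,y \in B_1\cap \{x_n\ge -d_m\}} |v_m(x)-v_m(y)|/|x-y|^\gamma \ge 1/2$, where $d_m := r_m^{-1}(x_m)_n \in [0,\infty]$. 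Passing to a subsequence, $d_m \to d_\infty \in [0,\infty]$.

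\textbf{Rescaled equation and vanishing of forcing.} A direct scaling computation shows that $v_m$ is a weak solution in the sense of \autoref{def:weak-sol-trafo} on the half-space $\{x_n > -d_m\}$ with coefficients $\widetilde J_m(x) = J_m(x_m + r_m x)$, $\widetilde\Phi_m(x) = r_m^{-1}[\Phi_m(x_m+r_m x) - \Phi_m(x_m)]$, kernel $K_m$, and rescaled data $\widetilde g_m, \widetilde f_m, \widetilde I_m$ (and boundary data when $\beta = s$). The assumption $\gamma < \min\{1, 2s-\beta+1\}$ and, when $\gamma > 2s$, the extra regularity \eqref{eq:kernel-int-reg-ass-main}, guarantee that every term on the right-hand side picks up a positive power of $r_m$: the contribution of $g_m$ scales as $r_m^{2s+\beta-1-\gamma} \rho_m^{-1} \cN_m$, the $(f_m,I_m)$ contribution scales as $r_m^{\min\{\gamma,2s-\gamma\}} \rho_m^{-1} \cN_m$, and the local boundary contributions (when $\beta = s$) likewise. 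All therefore tend to zero in the weak formulation. Meanwhile $\widetilde\Phi_m \to A_\infty x$ for a linear map $A_\infty$ (since $\Phi_m \in C^{1,\alpha}$), $\widetilde J_m \to J_\infty = J_\infty(0)$ constant, $K_m \to K_\infty$ on $\mathbb{S}^{n-1}$ and (if $\beta = s$) $\theta'_{K,m},\nu'_m,\vartheta_m$ converge to constants on compact subsets of $\{x_n = 0\}$.

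\textbf{Compactness, passage to the limit, and Liouville (the main obstacle).} The delicate point is extracting a limit in the rescaled weak formulation, because the weighted energy is singular near $\{x_n = 0\}$ and the domain varies with $m$ (through $d_m$). The compactness is obtained by applying \autoref{lemma:C-alpha-estimate} to $v_m$ on each fixed ball $B_R$: this produces uniform $C^{\alpha_0}$ estimates up to the flat boundary for some $\alpha_0 \in (0,\gamma]$, so Arzelà-Ascoli yields a locally uniform limit $v_\infty \in C^\gamma_{loc}(\R^n)$ with $|v_\infty(x)| \le C(1+|x|^\gamma)$ and the non-degeneracy bound $[v_\infty]_{C^\gamma(B_1 \cap \{x_n \ge -d_\infty\})} \ge 1/2$. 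Because $\gamma < 2s-\beta+1$, the weighted tail of $v_\infty$ is integrable against the kernel, so dominated convergence passes to the limit on every test function $\eta \in C^\infty_c$; after translating by $d_\infty e_n$ (in the finite case) the limit $v_\infty$ weakly solves
\begin{align*}
\mathcal{L}\bigl( (x_n)_+^{\beta-1} (y_n)_+^{\beta-1} K_\infty(A_\infty(x-y)) \bigr)(v_\infty) = 0 \quad \text{in } \{x_n > 0\},
\end{align*}
with, if $\beta = s$, an oblique boundary term $\int_{\{x_n=0\}} v_\infty \, \vartheta_\infty \cdot \nabla \eta$. A linear change of variables absorbs $A_\infty$ into a new kernel satisfying \eqref{eq:K-comp}, and \autoref{lemma:oblique-trafo} converts the oblique boundary term into the canonical Neumann form of \autoref{def:weak-sol-ti}. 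Since $\gamma > s-\beta+\tfrac{1}{2}$, \autoref{prop:weighted-Liouville} applies and forces $v_\infty$ to be constant, which contradicts the non-degeneracy bound. The interior case $d_\infty = \infty$ is treated identically, using an interior Liouville theorem for constant-coefficient translation-invariant operators (which follows from \autoref{prop:weighted-Liouville} applied with trivial weight, or from classical results in \cite{FeRo24}).
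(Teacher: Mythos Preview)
Your overall strategy (blow-up plus the weighted Liouville theorem \autoref{prop:weighted-Liouville}) is correct and is indeed the heart of the paper's argument. However, there is one genuine gap and one structural difference worth noting.

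\textbf{The compactness gap.} You write that ``the compactness is obtained by applying \autoref{lemma:C-alpha-estimate} to $v_m$''. This does not work: \autoref{lemma:C-alpha-estimate} is stated only for weak solutions of the \emph{pure} constant-coefficient, forcing-free equation \eqref{eq:weighted-sol}, whereas your rescaled $v_m$ solve an equation with variable coefficients $\widetilde J_m,\widetilde\Phi_m$ and nonzero right-hand side. The paper avoids this issue entirely by exploiting the fact that the estimate is \emph{a priori}: since $u_m\in C^\gamma_{loc}(\R^n)$ by hypothesis, one may first multiply by a cutoff to make $u_m\in C^\gamma(\R^n)$ globally, and then invoke \cite[Lemma~2.4.12]{FeRo24}, which normalizes by $[u_m]_{C^\gamma(\R^n)}$ and delivers $[v_m]_{C^\gamma(\R^n)}=1$ directly. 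Arzel\`a--Ascoli then gives compactness with no further work. Your iteration-lemma setup (normalizing by an oscillation quotient with $\rho_m\to\infty$) typically yields only an $L^\infty$ growth bound $|v_m(x)|\le C(1+|x|^\gamma)$, not a uniform H\"older seminorm bound on balls; the claimed inequality $[v_m]_{C^\gamma(B_R)}\le CR^\gamma$ does not follow from the standard iteration lemma and is precisely what \autoref{lemma:C-alpha-estimate} cannot supply here.

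\textbf{Structural difference.} The paper does not run a single blow-up with a dichotomy on $d_\infty$. Instead it proves separately (i) an interior estimate (\autoref{lemma:interior} when $\gamma<2s$, \autoref{lemma:interior-2} when $\gamma>2s$) by a blow-up centered at interior points, where the weights are $\asymp 1$ on any fixed ball and the limit problem is an unweighted translation-invariant equation, and (ii) a boundary estimate (\autoref{lemma:almost-reg-bdry}) where the blow-up is centered on $\{x_n=0\}$, the weights scale homogeneously, and the limit is the weighted problem of \autoref{prop:weighted-Liouville}. The two are then combined by a short Case~1/Case~2 interpolation argument. This splitting cleanly separates the two Liouville theorems and avoids having to treat the awkward intermediate regime $0<d_\infty<\infty$, where the rescaled weight $(x_n+d_m)_+^{\beta-1}$ neither freezes to a constant nor reduces to $(x_n)_+^{\beta-1}$ without an additional translation.
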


For a discussion on the range of $\gamma$ in \autoref{lemma:bdry-reg}, let us refer to \autoref{rem:gamma}. Note that with a little more work, it is possible to replace the assumption $\gamma > \max\{0, 2s-1 \}$ by the weaker assumption $\gamma > 0$. This can be achieved by proving a regularity estimate in terms of the weighted space $C^{2s+\alpha}_{\gamma}(\{ x_n \ge 0 \} | B_{1/2})$. We do not pursue this direction here.

\begin{remark}
Note that the previous lemma includes in particular weak solutions to
\begin{align*}
\begin{cases}
\mathcal{L}\big(J(x)J(y)(x_n)_+^{\beta-1}(y_n)_+^{\beta-1} K(\Phi(x) - \Phi(y))\big)(u) &= g  ~~ \text{ in } \{ x_n > 0 \} \cap B_{1},\\
\partial_n u &= b ~~ \text{ on } \partial \{ x_n > 0 \} \cap B_1,
\end{cases}
\end{align*}
in the sense of \autoref{def:weak-sol-trafo}, which corresponds to the case $A=a=f=0$. The interest to include $A,a,f$ different from zero becomes apparent from the application of \autoref{lemma:bdry-reg} to the equation in \autoref{lemma:PDE-onephase-flat}(iii), which arises if source terms of the form
\begin{align*}
\mathcal{L}\big((x_n)_+^{\beta-1}(y_n)_+^{\beta-1} I(x,y)\big)(f)
\end{align*}
are integrated by parts, using \autoref{lemma:ibp-nonflat-trafo}.
\end{remark}

First, we prove an interior regularity estimate for the equation under consideration. We need two different results, depending on whether $\gamma < 2s$ or $\gamma > 2s$. While in case $\gamma <2s$, we give a proof using a blow-up argument, the case $\gamma > 2s$ follows from an application of \cite{FeRo24b}.

\begin{lemma}
\label{lemma:interior}
Let $\beta \in [s,1+s]$. Assume that $K \in C^{\sigma}(\mathbb{S}^{n-1})$ for some $\sigma > 0$ satisfies \eqref{eq:K-comp} and let $J \in C^{\alpha}(\{x_n \ge 0 \})$ for some $\alpha > 0$, and $I: \R^n \times \R^n \to \R$ be such that
\begin{align*}
J \ge \Lambda^{-1} \text{ in } \{ x_n > 0\} \cap B_{1}(e_n), \qquad \Vert J \Vert_{C^{\alpha}(\{x_n \ge 0 \})} + \Vert K \Vert_{C^{\sigma}(\mathbb{S}^{n-1})} \le \Lambda, \qquad |I(x,y)| \le \Lambda |x-y|^{-n-2s}.
\end{align*}
Moreover let $\Phi \in C^{1,\alpha}(\{x_n \ge 0 \} \cap B_{1}(e_n))$ be as in Subsection \ref{subsec:flattening}, and let $u \in C^{\gamma}_{loc}(\R^n)$ be a weak solution to 
\begin{align*}
\mathcal{L} & \big(J(x)J(y)(x_n)_+^{\beta-1}(y_n)_+^{\beta-1} K(\Phi(x) - \Phi(y))\big)(u) \\
&\qquad \qquad = g + \mathcal{L}\big((x_n)_+^{\beta-1}(y_n)_+^{\beta-1} I(x,y)\big)(f) ~~ \text{ in } B_{1}(e_n),
\end{align*}
where $\gamma \in (\max\{0,2s-1\} , \min\{1 , 2s, 2s-\beta+1\})$, and $(x_n)_+^{1-\beta} g \in L^{\infty}(B_{1}(e_n))$, and $f \in C^{\gamma}(\{ x_n \ge 0 \})$.
Then, the following holds true
\begin{align*}
[ u ]_{C^{\gamma}(B_{1/2}(e_n))} &\le C \Big( \Vert u \Vert_{L^{\infty}(B_{1}(e_n))} + \Vert (x_n)_+^{\beta-1} u |x-e_n |^{-n-2s}\Vert_{L^1(B_1(e_n)^c)}  \\
&\qquad \qquad \qquad + \Vert (x_n)_+^{1-\beta} g \Vert_{L^{\infty}(\{ x_n \ge 0 \}\cap B_{1}(e_n))} +  [f]_{C^{\gamma}(\{ x_n \ge 0\})} \Big),
\end{align*}
where $C > 0$ only depends on $n,s,\beta,\gamma,\alpha,\sigma,\lambda,\Lambda$, and the constant in \eqref{eq:Phi-comp}. 
\end{lemma}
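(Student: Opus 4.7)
The key observation is that $B_{1/2}(e_n) \subset \{x_n \geq 1/2\}$, so on the target ball the weight $(x_n)_+^{\beta-1}$ is smooth and bounded above and away from zero; only through the nonlocal integration does one see the singular/degenerate behaviour at $\{x_n = 0\}$, and this is captured entirely by the tail term $\Vert (x_n)_+^{\beta-1} u |x-e_n|^{-n-2s}\Vert_{L^1(B_1(e_n)^c)}$. Splitting each integral into a near contribution ($y \in B_1(e_n)$) and a far contribution, the equation in $B_1(e_n)$ can be viewed as a standard nonlocal equation of order $2s$ with kernel $\widetilde{K}(x,y) := J(x)J(y)(x_n)_+^{\beta-1}(y_n)_+^{\beta-1} K(\Phi(x) - \Phi(y))$ which is comparable to $|x-y|^{-n-2s}$ (by \autoref{lemma:K-estimate} and \eqref{eq:Phi-comp}) and jointly Hölder continuous in $(x,y)$, with a source term consisting of $g$, the tail, and the Hölder contribution $\mathcal{L}(\widetilde{I})(f)$. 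I will treat the cases $\gamma<2s$ and $\gamma>2s$ separately.

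\textbf{Case $\gamma<2s$ (compactness/Liouville).} Argue by contradiction: suppose the estimate fails. Then there exist sequences $(u_k, J_k, K_k, \Phi_k, I_k, g_k, f_k)$ satisfying the hypotheses uniformly, with the quantity in parentheses on the right-hand side bounded by $1$, while $A_k := [u_k]_{C^{\gamma}(B_{1/2}(e_n))} \to \infty$. Using a truncation-and-rescaling procedure in the spirit of \cite[Lemma 2.27]{FeRo24} (or the standard Caccioppoli-to-Liouville scheme), pick $x_k \in B_{1/2}(e_n)$ and $r_k \downarrow 0$ realising (approximately) the Hölder seminorm, and define
\begin{align*}
v_k(z) = \frac{u_k(x_k + r_k z) - p_k(z)}{r_k^\gamma \rho_k},
\end{align*}
where $p_k$ is an appropriate constant (or affine, if $\gamma>1$) normalisation and $\rho_k \to \infty$, in such a way that $[v_k]_{C^\gamma(B_1)} = 1$ and $\|v_k\|_{L^\infty(B_R)} \le C R^{\gamma+\varepsilon}$ uniformly in $k$. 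The rescaled $v_k$ solves a nonlocal equation with kernel $\widetilde{K}_k(x_k + r_k\cdot, x_k + r_k \cdot)$; since $x_k$ stays at distance $\geq 1/2$ from $\{x_n=0\}$, the weights $(x_n)_+^{\beta-1}$ become smooth and bounded on the rescaled domain, and the Hölder continuity of $J_k$, $K_k\circ D\Phi_k$ ensures convergence (along a subsequence) to a translation-invariant limit kernel $K_\infty$ satisfying \eqref{eq:K-comp}. The rescaled right-hand side tends to $0$ pointwise (here \eqref{eq:kernel-int-reg-ass-main} is not needed because $\gamma<2s$; the scaling kills the source). By Arzelà–Ascoli and interior regularity of $u_k$, a subsequence converges in $C^{\gamma}_{loc}(\mathbb{R}^n)$ to a limit $v_\infty$ with $[v_\infty]_{C^\gamma(B_1)} = 1$, $|v_\infty(z)| \leq C(1+|z|^{\gamma+\varepsilon})$, solving $L_{K_\infty} v_\infty = 0$ in $\mathbb{R}^n$. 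The Liouville theorem for translation-invariant stable operators with subquadratic growth (in particular growth less than $2s$) forces $v_\infty$ to be a polynomial of degree $\leq \lfloor\gamma\rfloor$, which is annihilated by the normalisation procedure, yielding $v_\infty \equiv 0$ and contradicting $[v_\infty]_{C^\gamma(B_1)}=1$.

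\textbf{Case $\gamma>2s$ (Schauder).} Here the assumption \eqref{eq:kernel-int-reg-ass-main} gives, combined with $f \in C^\gamma(\{x_n \ge 0\})$, that the integro-differential source term $\mathcal{L}\bigl((x_n)_+^{\beta-1}(y_n)_+^{\beta-1} I(x,y)\bigr)(f)$ belongs to $C^{\gamma-2s}(B_{3/4}(e_n))$, with the correct quantitative control. Together with the Hölder continuity of $(x_n)_+^{1-\beta} g$ and of the coefficients $J$, $\Phi$, $K|_{\mathbb{S}^{n-1}}$, this places us in the setting of the higher-order nonlocal Schauder theory of \cite{FeRo24b}, applied to the rewritten equation $\mathcal{L}(\widetilde{K})(u) = \text{(Hölder source)}$ in $B_{3/4}(e_n)$, whose kernel is comparable to $|x-y|^{-n-2s}$ with Hölder-regular symbol. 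The quoted estimates then give the desired $C^\gamma$ bound on $B_{1/4}(e_n)$, and a standard covering argument transfers this to $B_{1/2}(e_n)$.

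\textbf{Main obstacle.} The delicate point is the blow-up step: one must show that after rescaling around an interior point $x_k$ with $(x_k)_n \ge 1/2$, the coefficient $(x_n)_+^{\beta-1} (y_n)_+^{\beta-1} K(\Phi(x)-\Phi(y))$ converges, on every compact set in $\mathbb{R}^n\times\mathbb{R}^n\setminus\{x=y\}$, to a translation-invariant kernel still satisfying \eqref{eq:K-comp}, while the contributions from the long-range $L^1_{2s}$ tail and from the source $\mathcal{L}(\widetilde{I})(f)$ rescale as lower-order perturbations. The integrability at infinity follows from the growth bound $|v_k(z)| \le C(1+|z|^{\gamma+\varepsilon})$ with $\gamma+\varepsilon<2s$, together with the assumed tail control on $u_k$; the convergence of coefficients uses the Hölder regularity of $J$, $K|_{\mathbb{S}^{n-1}}$ and $\Phi$. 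Once these points are handled, the Liouville theorem (for translation-invariant stable operators with subquadratic growth) concludes.
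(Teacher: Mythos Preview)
Your proposal is essentially correct and follows the same approach as the paper: a contradiction/blow-up argument at points $x_k$ bounded away from $\{x_n=0\}$ (so the weight $(x_n)_+^{\beta-1}$ freezes to a positive constant on compact sets), convergence of the rescaled kernels to a translation-invariant limit satisfying \eqref{eq:K-comp}, and the standard Liouville theorem for stable operators to reach a contradiction. Note that the hypothesis $\gamma<\min\{1,2s,2s-\beta+1\}$ already forces $\gamma<2s$, so your second case is vacuous for this particular lemma; the paper treats the range $\gamma>2s$ in a separate statement (\autoref{lemma:interior-2}) exactly via \cite{FeRo24b}, as you outline.
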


By scaling it is easy to see that if $u$ solves a corresponding PDE in a ball $B_r(x_0)$ with $(x_0)_n = r$, then we have the following estimate (define $\widetilde{u}(x) = u((x_0)' + rx)$ and $\widetilde{f}, \widetilde{g}$ etc. analogously and apply the above result on scale one)
\begin{align}
\label{eq:interior-scaled}
\begin{split}
[ u ]_{C^{\gamma}(B_{r/2}(x_0))} &\le C r^{-\gamma} \Big( \Vert u \Vert_{L^{\infty}(B_r(x_0))} + r^{2s + 1 - \beta} \Vert (x_n)_+^{\beta-1} u |x-x_0|^{-n-2s} \Vert_{L^1(B_r(x_0)^c)} \\
&\qquad \qquad \qquad \qquad\qquad + r^{2s + 1 - \beta} \Vert (x_n)_+^{1-\beta} g \Vert_{L^{\infty}(B_r(x_0)) } + r^{\gamma} [f]_{C^{\gamma}(\{ x_n \ge 0\})}  \Big).
\end{split}
\end{align}
To prove \autoref{lemma:interior} we employ a blow-up argument in the spirit of \cite[Proposition 4.4]{FeRo24b}.

\begin{proof}
First of all we observe that it is enough to prove that for any $\delta \in (0,1)$ there exists $C_{\delta} > 0$ such that
\begin{align}
\label{eq:almost-reg-est-interior}
[u]_{C^{\gamma}(B_{1/4}(e_n))} \le \delta [u]_{C^{\gamma}(\{ x_n \ge 0\} )} + C_{\delta} \Big( \Vert u \Vert_{L^{\infty}(B_{1}(e_n))} + \Vert (x_n)_+^{1-\beta} g_k \Vert_{L^{\infty}(B_{1}(e_n)) } + [f]_{C^{\gamma}(\{ x_n \ge 0\})} \Big).
\end{align}
From here, the desired result follows by a standard covering argument (see \cite[Theorem 2.4.1]{FeRo24}), applying \eqref{eq:almost-reg-est-interior} to $\xi u$, where $\xi \in C^{\infty}_c(B_2(e_n))$ is a cut-off function satisfying $0 \le \xi \le 1$ and $\xi \equiv 1$ in $B_{1}(e_n)$, and using that
\begin{align*}
& \Vert (x_n)_+^{1-\beta}|\mathcal{L}\big(J(x)J(y) (x_n)_+^{\beta-1}(y_n)_+^{\beta-1} K(\Phi(x)-\Phi(y))\big)((1-\xi)u)\Vert_{L^{\infty}(\{ x_n > 0 \} \cap B_1(e_n))} \\
&\qquad \le C \int_{B_1(e_n)^c} (y_n)_+^{\beta-1} |u(y)| |y - e_n|^{-n-2s} \d y.
\end{align*}
The proof of \eqref{eq:almost-reg-est-interior} is split into several steps.

\textbf{Step 1:}
To prove \eqref{eq:almost-reg-est-interior}, we use \cite[Lemma 2.4.12]{FeRo24}, which yields that if \eqref{eq:almost-reg-est-interior} does not hold true, then there exist sequences $(u_k) \subset C^{\gamma}(\R^n)$, $(\Phi_k)$ as in Subsection  \ref{subsec:flattening}, kernels $(K_k)$ satisfying \eqref{eq:K-comp}, and sequences $(f_k)$, $(g_k)$, $(J_k)$, $(I_k)$ with
\begin{align}\label{B4}
\Vert J_k \Vert_{C^{\alpha}(\{x_n \ge 0 \})} + \Vert K_k \Vert_{C^{\sigma}(\mathbb{S}^{n-1})} \le \Lambda,
\end{align}
and 
\begin{align}
\label{eq:blow-up-ass-interior}
\frac{ \Vert u_k \Vert_{L^{\infty}(B_{1}(e_n))} + \Vert (x_n)_+^{1-\beta} g_k \Vert_{L^{\infty}(\{x_n>0\}\cap B_{1}(e_n))} +  [f_k]_{C^{\gamma}(\{ x_n \ge 0\})}}{[u_k]_{C^{\gamma}(\{ x_n > 0 \})}} \to 0,
\end{align}
and $r_k \searrow 0$, $(x_k) \subset B_{1/4}(e_n)$, such that
\begin{align*}
\mathcal{L} & \big(J_k(x)J_k(y)(x_n)_+^{\beta-1}(y_n)_+^{\beta-1} K_k(\Phi_k(x) - \Phi_k(y))\big)(u_k) \\
&\qquad\qquad = g_k + \mathcal{L}\big((x_n)_+^{\beta-1}(y_n)_+^{\beta-1} I_k(x,y)\big)(f_k) ~~ \text{ in } B_{1}(e_n).
\end{align*}
Moreover, the functions
\begin{align}\label{B5}
v_k(x) = \frac{u_k(x_k + r_k x) - u(x_k)}{r_k^{\gamma} [u_k]_{C^{\gamma}(\R^n)}}
\end{align}
satisfy
\begin{align}
\label{eq:blowup-contradiction-ass-interior}
\Vert v_k \Vert_{L^{\infty}( B_{1/2}(e_n))} > \frac{\delta}{2}, ~~ \text{ as } k \to \infty, \qquad v_k(0) = 0.
\end{align}
It is also clear that
\begin{align}\label{Lav}
[v_k]_{C^{\gamma}(\R^n)} = 1, \qquad |v_k(x)| \le |x|^{\gamma} + |v_k(0)| = |x|^{\gamma} ~~ \forall x \in \R^n.
\end{align}
Our objective is to obtain a contradiction with \eqref{eq:blowup-contradiction-ass-interior} by investigating the PDE satisfied by $v_k$, namely
\begin{align}
\label{B6}
\begin{split}
\mathcal{L} & \big(\widetilde{J}_k(x)\widetilde{J}_k(y)((x_k)_n + r_k x_n)_+^{\beta-1}((x_k)_n + r_k y_n)_+^{\beta-1} r_k^{n+2s} K_k(\widetilde{\Phi}_k(x) - \widetilde{\Phi}_k(y))\big)(v_k) \\
& = \widetilde{g_k} + \mathcal{L}\big(((x_k)_n + r_k x_n)_+^{\beta-1}((x_k)_n + r_k y_n)_+^{\beta-1} \widetilde{I}_k(x,y)\big)(\widetilde{f}_k)\qquad \text{ in } B_{r_k^{-1}}(r_k^{-1}(e_n - x_k)),
\end{split}
\end{align}
where 
\begin{align}
\label{B7}
\begin{split}
\widetilde{J}_k(x) &= J_k(x_k + r_k x), \qquad \widetilde{I}_k(x,y) = r_k^{n+2s} I_k(x_k + r_k x , x_k + r_k y), \qquad \widetilde{\Phi}_k(x) = \Phi_k(x_k + r_k x), \\
\widetilde{g}_k(x) &= r_k^{2s}\frac{g_k(x_k + r_k x)}{r_k^{\gamma} [u_k]_{C^{\gamma}(\R^n)}}, \qquad \widetilde{f}_k(x) = \frac{f_k(x_k + r_k x)}{r_k^{\gamma} [u_k]_{C^{\gamma}(\R^n)}}.
\end{split}
\end{align}

\textbf{Step 2:}
Let us now fix a ball $B \subset \R^n$ such that $B \subset B_{r_k^{-1}/8}(0) \subset B_{r_k^{-1}/2}(r_k^{-1}(e_n - x_k) )$ for all $k \in \N$ and $\eta \in C^{\infty}_c(B)$. Then, it holds
\begin{align}\label{B8}
|\eta(x) - \eta(y)| &\le 2 \Vert \eta \Vert_{C^1(\R^n)} \min\{ 1 , |x-y| \}.
\end{align}

The goal of this step is to prove that
\begin{align}
\label{eq:limit-ti-interior}
\begin{split}
 \int_{\R^n} & \int_{\R^n} \widetilde{J}_k(0) \widetilde{J}_k(0) ((x_k)_n+ r_k x_n)_+^{\beta-1} ((x_k)_n+ r_k y_n)_+^{\beta-1} K_k((D \Phi_k)(x_k) \cdot (x-y)) \cdot\\
& \qquad \cdot (v_k(x) - v_k(y))(\eta(x) - \eta(y))  \d y \d x \to 0.
\end{split}
\end{align}

To prove it, we write
\begin{align}\label{B88}
& \widetilde{J}_k(x)\widetilde{J}_k(y) r_k^{n+2s}K_k(\widetilde{\Phi}_k(x) - \widetilde{\Phi}_k(y)) \nonumber \\
& = \widetilde{J}_k(0)\widetilde{J}_k(0) K_k((D \Phi_k)(x_k) \cdot (x-y)) \nonumber\\
&\quad + \widetilde{J}_k(0)\widetilde{J}_k(0) \left( r_k^{n+2s}K_k(\widetilde{\Phi}_k(x) - \widetilde{\Phi}_k(y)) - K_k((D \Phi_k)(x_k) \cdot (x-y)) \right) \nonumber\\
&\quad + \left( \widetilde{J}_k(x)\widetilde{J}_k(y) - \widetilde{J}_k(0)\widetilde{J}_k(0) \right) r_k^{n+2s} K_k(\widetilde{\Phi}_k(x) - \widetilde{\Phi}_k(y)) \nonumber\\
&=: K_k^{(1)}(x,y) + K_k^{(2)}(x,y) + K_k^{(3)}(x,y),
\end{align}
and observe that it suffices to show
\begin{align}
\label{eq:conv-K2-interior}
\int_{\R^n}\int_{\R^n} ((x_k)_n+ r_k x_n)_+^{\beta-1}((x_k)_n+ r_k y_n)_+^{\beta-1}(v_k(x) - v_k(y))(\eta(x) - \eta(y)) K_k^{(2)}(x,y) \d y \d x \to 0,\\
\label{eq:conv-K3-interior}
\int_{\R^n}\int_{\R^n} ((x_k)_n+ r_k x_n)_+^{\beta-1}((x_k)_n+ r_k y_n)_+^{\beta-1}(v_k(x) - v_k(y))(\eta(x) - \eta(y)) K_k^{(3)}(x,y) \d y \d x \to 0,\\
\label{eq:conv-f-interior}
\int_{\R^n}\int_{\R^n} ((x_k)_n+ r_k x_n)_+^{\beta-1} ((x_k)_n+ r_k y_n)_+^{\beta-1} (\widetilde{f}_k(x) - \widetilde{f}_k(y))(\eta(x) - \eta(y)) \widetilde{I}_k(x,y) \d y \d x \to 0,\\
\label{eq:conv-g-interior}
 \int_{\R^n} \widetilde{g}_k(x) \eta(x) \d x \to 0.
\end{align}

First, note that since $\gamma < 2s$ and $(x_n)_{+}^{\beta-1}\in L^{1}(\{x_n>0\}\cap B_1)$, and \eqref{eq:blow-up-ass-interior}, it holds
\begin{align}\label{B9}
\left| \int_{\R^n} \widetilde{g}_k(x) \eta(x) \d x \right| \le C \Vert \eta \Vert_{L^{\infty}} |B| r_k^{2s-\gamma} \frac{\Vert (x_n)_+^{1-\beta} g_k \Vert_{L^{\infty}(B_{1/2}(e_n))} }{[u_k]_{C^{\gamma}(\R^n)}}  \to 0,
\end{align}
which proves \eqref{eq:conv-g-interior}. Moreover, it holds
\begin{align}\label{R}
|\widetilde{I}_k(x,y)| = |r_k^{n+2s} I_k(x_k + r_k x , x_k + r_k y)| \le \Lambda |x-y|^{-n-2s},
\end{align}
and
\begin{align*}
[ \widetilde{f}_k ]_{C^{\gamma}(\{ x_n \ge -r_k^{-1} (x_k)_n \}
 )} = \frac{[ f_k ]_{C^{\gamma}(\{ x_n \ge 0 \})}}{[u_k]_{C^{\gamma}(\R^n)}} \to 0.
\end{align*}
Therefore, using also that in $B_{r_k^{-1}/4}$ it holds
\begin{align}
\label{eq:32B-no-weight}
\frac{1}{4} \le - r_k|[r_k^{-1} (e_n - x_k)_n - x_n]| + 1 \le (x_k)_n + r_k x_n \le |r_k[r_k^{-1}(e_n - x_k)_n - x_n]| + 1 \le \frac{7}{4} ,
\end{align}
we deduce since $\gamma > 2s - 1$ and by \eqref{B8},
\begin{align}
\label{B10}
\begin{split}
& \left| \int_{2B}\int_{2B} ((x_k)_n + r_k x_n)_+^{\beta-1} ((x_k)_n + r_k y_n)_+^{\beta-1} \widetilde{I}_k(x,y) (\widetilde{f}_k(x) - \widetilde{f}_k(y)) (\eta(x) - \eta(y)) \d y \d x \right| \\
&\le C \frac{[ f_k ]_{C^{\gamma}(\{ x_n \ge 0 \} \cap B_1(e_n))}}{[u_k]_{C^{\gamma}(\R^n)}} \int_{2B}\int_{2B} |x-y|^{-n-2s+1+\gamma} \d y \d x \\
&\le C |B| \frac{[ f_k ]_{C^{\gamma}(\{ x_n \ge 0 \} \cap B_1(e_n))}}{[u_k]_{C^{\gamma}(\R^n)}} \to 0.
\end{split}
\end{align}
Moreover, since $|x-y| \ge c_0 := \diam(B)$ for $x \in B$ and $y \in B_{r_k^{-1}/4 }\setminus 2B$ and $\gamma<2s$,
\begin{align}
\label{B10-2}
\begin{split}
& \left| \int_{B}\int_{B_{r_k^{-1}/4} \setminus 2B} ((x_k)_n+ r_k x_n)_+^{\beta-1} ((x_k)_n+ r_k y_n)_+^{\beta-1} \widetilde{I}_k(x,y) (\widetilde{f}_k(x) - \widetilde{f}_k(y)) (\eta(x) - \eta(y)) \d y \d x \right| \\
&\le C \frac{[ f_k ]_{C^{\gamma}(\{ x_n \ge 0 \})}}{[u_k]_{C^{\gamma}(\R^n)}}  \int_{B}\int_{B_{r_k^{-1}/4} \setminus 2B} |x-y|^{-n-2s+\gamma} \d y \d x \\
&\le C \frac{[ f_k ]_{C^{\gamma}(\{ x_n \ge 0 \})}}{[u_k]_{C^{\gamma}(\R^n)}} \int_{B}\int_{B_{c_0}(x)^c} |x-y|^{-n-2s+\gamma} \d y \d x \\
&\le C \frac{[ f_k ]_{C^{\gamma}(\{ x_n \ge 0 \})}}{[u_k]_{C^{\gamma}(\R^n)}} \to 0.
\end{split}
\end{align}
Finally, we have for large enough $k \in \N$
\begin{align}\label{B11}
& \left| \int_{B}\int_{\{ y_n \ge - r_k^{-1} (x_k)_n \} \setminus B_{r_k^{-1}/4}} ((x_k)_n+ r_k x_n)_+^{\beta-1} ((x_k)_n+ r_k y_n)_+^{\beta-1} \widetilde{I}_k(x,y) (\widetilde{f}_k(x) - \widetilde{f}_k(y)) (\eta(x) - \eta(y)) \d y \d x \right| \nonumber\\
&\le C \frac{[ f_k ]_{C^{\gamma}(\{ x_n \ge 0 \})}}{[u_k]_{C^{\gamma}(\R^n)}} \int_{B} \int_{\{ y_n \ge - r_k^{-1} (x_k)_n \} \setminus B_{r_k^{-1}/4} }  ((x_k)_n+ r_k y_n)_+^{\beta-1} |x-y|^{-n-2s+\gamma} \d y \d x \nonumber\\
&\le  C \frac{[ f_k ]_{C^{\gamma}(\{ x_n \ge 0 \})}}{[u_k]_{C^{\gamma}(\R^n)}} |B| r_k^{2s-\gamma} \int_{\{ w_n \ge 0 \} \setminus B_{1/4}(x_k)} (w_n)_+^{\beta - 1} |w-x_k|^{-n-2s+\gamma} \d w \nonumber\\
&\le  C \frac{[ f_k ]_{C^{\gamma}(\{ x_n \ge 0 \})}}{[u_k]_{C^{\gamma}(\R^n)}} r_k^{2s - \gamma} \to 0,
\end{align}
where we have done the change of variables $w=x_k+r_ky$ and used the fact that $|x_k|\leq C$ in order to apply \autoref{lemma:int-polar-coord}. Observe that the integral converges if $\gamma + \beta - 1 < 2s$, which is satisfied by assumption.
Therefore \eqref{eq:conv-f-interior} follows by \eqref{B9}--\eqref{B11}.

Let us highlight now that for any $\nu \in (0,\alpha]$ and $\mu \in (0,1+\alpha]$
\begin{align}
[\widetilde{J}_k]_{C^{\nu}(\{ x_n \ge - r_k^{-1}(x_k)_n\} )} &= r_k^{\nu} [J_k]_{C^{\nu}(\{ x_n \ge 0 \} )} \le \Lambda r_k^{\nu}, \label{P1}\\
 \Vert \widetilde{J}_k \Vert_{L^{\infty}( \{ x_n \ge - r_k^{-1}(x_k)_n \})} &= \Vert J_k \Vert_{L^{\infty}( \{ x_n \ge 0 \})} \le \Lambda, \label{P2}\\
[\widetilde{\Phi}_k]_{C^{\mu}(\{ x_n \ge - r_k^{-1}(x_k)_n\})} &= r_k^{\mu} [\Phi_k]_{C^{\mu}(\{ x_n \ge 0 \})} \le \Lambda r_k^{\mu},  \label{P3}\\
[\widetilde{\Phi}_k^{-1}]_{C^{0,1}(\widetilde{\Phi}_k(\{ x_n \ge - r_k^{-1}(x_k)_n\}))} &= r_k^{-1} [\Phi_k^{-1}]_{C^{0,1}(\Phi_k(\{ x_n \ge 0 \}))} \le \Lambda r_k^{-1}, \label{P4}
\end{align}

and that we have by \eqref{eq:DPhi-comp}
\begin{align}
\label{eq:trafoed-kernel}
\begin{split}
|r_k^{n+2s}K_k(\widetilde{\Phi}_k(x) - \widetilde{\Phi}_k(y))| &\asymp r_k^{n+2s} |\widetilde{\Phi}_k(x) - \widetilde{\Phi}_k(y)|^{-n-2s} \\
&\asymp  r_k^{n+2s} [\widetilde{\Phi}_k^{-1}]_{C^{0,1}(\R^n)}^{n+2s} |x-y|^{-n-2s} \asymp |x-y|^{-n-2s}.
\end{split}
\end{align}

Moreover,
\begin{align}
\label{eq:J-product-Holder}
\begin{split}
\Big| \widetilde{J}_k(x)\widetilde{J}_k(y) - \widetilde{J}_k(0) \widetilde{J}_k(0) \Big| &\le |\widetilde{J}_k(x)||\widetilde{J}_k(y) - \widetilde{J}_k(0)| + |\widetilde{J}_k(0)| |\widetilde{J}_k(x) - \widetilde{J}_k(0)| \\
& \le C\Vert \widetilde{J}_k \Vert_{L^{\infty}(\R^n)} [\widetilde{J}_k]_{C^{\nu}(\R^n)} (|x|^{\nu} + |x-y|^{\nu}).
\end{split}
\end{align}

Thus, by \eqref{eq:32B-no-weight}, \eqref{eq:J-product-Holder}, and \eqref{eq:trafoed-kernel}, we get
\begin{align*}
\Big| \int_{2B}\int_{2B} & [\widetilde{J}_k(x)\widetilde{J}_k(y) - \widetilde{J}_k(0)\widetilde{J}_k(0)] ((x_k)_n+ r_k x)_+^{\beta-1} ((x_k)_n+ r_k y)_+^{\beta-1} \cdot \\
& \qquad \cdot (v_k(x) - v_k(y))(\eta(x) - \eta(y)) r_k^{n+2s} K_k( \widetilde{\Phi}_k(x) - \widetilde{\Phi}_k(y) ) \d y \d x \Big| \\
&\le C [v_k]_{C^{\gamma}(\R^n)} r_k^{\alpha} \int_{2B}\int_{2B} (|x| + |x-y|)^{\alpha} |x-y|^{-n-2s+1+\gamma} \d y \d x \le C r_k^{\alpha} \to 0,
\end{align*}
where the integral converges since $1+\gamma > 2s$ by assumption.
Moreover, we have
\begin{align*}
\Big| \int_{B}\int_{B_{r_k^{-1}/4} \setminus 2B} & [\widetilde{J}_k(x)\widetilde{J}_k(y) - \widetilde{J}_k(0)\widetilde{J}_k(0)] ((x_k)_n+ r_k x)_+^{\beta-1} ((x_k)_n+ r_k y)_+^{\beta-1} \cdot \\
& \qquad \cdot (v_k(x) - v_k(y))(\eta(x) - \eta(y)) r_k^{n+2s} K_k( \widetilde{\Phi}_k(x) - \widetilde{\Phi}_k(y) ) \d y \d x \Big| \\
&\le C [v_k]_{C^{\gamma}(\R^n)} r_k^{\nu} \int_{B}\int_{B_{r_k^{-1}/4} \setminus 2B} (|x| + |x-y|)^{\nu} |x-y|^{-n-2s+\gamma} \d y \d x \\
&\le C r_k^{\nu} \int_B \int_{B_{c_0}(x)^c}  |x-y|^{-n-2s +\gamma + \nu} \d y  \d x \le C r_k^{\nu} \to 0,
\end{align*}
where, since $\gamma < 2s$, we can assume that $\gamma + \nu < 2s$ upon choosing $\nu$ small enough. Finally, it holds
\begin{align*}
\Big| \int_{B} & \int_{\{ y_n \ge -(x_k)_n r_k^{-1} \} \setminus B_{r_k^{-1}/4} } [\widetilde{J}_k(x)\widetilde{J}_k(y) - \widetilde{J}_k(0)\widetilde{J}_k(0)] ((x_k)_n+ r_k x_n)_+^{\beta-1} ((x_k)_n+ r_k y_n)_+^{\beta-1} \cdot \\
& \qquad \cdot (v_k(x) - v_k(y))(\eta(x) - \eta(y)) r_k^{n+2s} K_k( \widetilde{\Phi}_k(x) - \widetilde{\Phi}_k(y) ) \d y \d x \Big| \\
&\le C r_k^{\nu} [v_k]_{C^{\gamma}} \int_{B}\int_{\{ y_n \ge -(x_k)_n r_k^{-1} \} \setminus B_{r_k^{-1}/4} } ((x_k)_n+ r_k y_n)_+^{\beta-1} (|x| + |x-y|)^{\nu} |x-y|^{-n-2s+\gamma} \d y \d x \\
&\le C |B| r_k^{2s-\gamma - \nu} \int_{\{w_n \ge 0 \} \setminus B_{1/4}(x_k)} (w_n)_+^{\beta -1} |w-x_k|^{-n-2s+\gamma + \nu} \d w,
\end{align*}
where the integrals converge by \autoref{lemma:int-polar-coord} if $\beta - 1 -2s + \gamma + \nu < 0$. Upon choosing $\nu$ small enough, this condition becomes equivalent to $\beta + \gamma < 1+ 2s$, which holds true by assumption. These three estimates imply \eqref{eq:conv-K3-interior}.

We notice now that, since $K_k \in C^{\sigma}(\mathbb{S}^{n-1})$, in particular by \eqref{eq:K-reg} it holds 
\begin{align*}
|K_k(x) - K_k(y)| \le C \min\{ |x|,|y|\}^{-n-2s-\sigma'} |x-y|^{\sigma'}
\end{align*}
for any $\sigma' \in (0,\sigma]$. Thus, and since we have $D \widetilde{\Phi}_k(x) = r_k (D \Phi_k)(r_k x)$ and $D \Phi_k(0) = I$, as well as by the homogeneity of $K_k$, it holds for any $\mu' \in (1,1+\alpha]$
\begin{align}
\label{eq:K_k-reg-interior}
\begin{split}
& | r_k^{n+2s} K_k(\widetilde{\Phi}_k(x) - \widetilde{\Phi}_k(y)) - K_k((D \Phi_k)(x_k) \cdot (x-y))| \\
&\quad = r_k^{n+2s}|K_k(\widetilde{\Phi}_k(x) - \widetilde{\Phi}_k(y)) - K_k( D \widetilde{\Phi}_k(0) \cdot (x-y))| \\
&\quad \le C r_k^{n+2s} \left( |\widetilde{\Phi}_k(x) - \widetilde{\Phi}_k(y)|^{-n-2s-\sigma'} + |D \widetilde{\Phi}_k(0) \cdot (x-y)|^{-n-2s-\sigma'} \right) |\widetilde{\Phi}_k(x) - \widetilde{\Phi}_k(y) - D \widetilde{\Phi}_k(0) \cdot (x-y)|^{\sigma'} \\
&\quad \le C r_k^{n+2s} \left( [\widetilde{\Phi}_k^{-1}]_{C^{0,1}(\R^n)}^{-1} + r_k \right)^{-n-2s-\sigma'} |x-y|^{-n-2s-\sigma'} \cdot \\
&\quad \qquad\qquad\qquad \cdot (|\widetilde{\Phi}_k(x) - \widetilde{\Phi}_k(0) - D \widetilde{\Phi}_k(0) \cdot x| +|\widetilde{\Phi}_k(y) - \widetilde{\Phi}_k(0) - D \widetilde{\Phi}_k(0) \cdot y|)^{\sigma'} \\
&\quad \le C r_k^{-\sigma'} |x-y|^{-n-2s-\sigma'} [\widetilde{\Phi}_k]_{C^{\mu}(\R^n)}^{\sigma'}(|x|^{\mu'} + |y|^{\mu'})^{\sigma'} \\
&\quad \le C r_k^{(\mu'-1) \sigma'} |x-y|^{-n-2s - \sigma'} (|x| + |x-y|)^{\mu' \sigma'}.
\end{split}
\end{align}
Thus, by analogous computations, using \eqref{eq:K_k-reg-interior} instead of \eqref{eq:J-product-Holder}, we deduce \eqref{eq:conv-K2-interior} by taking $\sigma'$ small enough. Altogether, we have shown \eqref{eq:limit-ti-interior}, as desired.

\textbf{Step 3:} Our goal is to prove that there exist $v \in C^{\gamma}(\R^n)$ and a kernel $K$ satisfying \eqref{eq:K-comp}
such that,
\begin{align}
\label{eq:global-sol-interior}
|v(x)| \le |x|^{\gamma}, \qquad \mathcal{L}\big(K(x-y)\big)(v) = 0 ~~ \text{ in } \R^n.
\end{align}

The existence of $v$ with $v_k \to v$ in $C^{\gamma}_{loc}(\R^n)$ satisfying the desired bound already follows from the Arzel\`a-Ascoli theorem. Moreover, by \eqref{eq:limit-ti-interior} it suffices to show
\begin{align}
\label{Imp}
\begin{split}
\int_{\R^n} & \int_{\R^n} \widetilde{J}_k(0) \widetilde{J}_k(0) ((x_k)_n+ r_k x_n)_+^{\beta-1}((x_k)_n+ r_k y_n)_+^{\beta-1} \\
&\qquad K_k((D \Phi_k)(x_k) \cdot (x-y))(v_k(x) - v_k(y))(\eta(x) - \eta(y)) \d y \d x \\
 & \to J_0^2 \int_{\R^n} \int_{\R^n} K(x-y)(v(x) - v(y))(\eta(x) - \eta(y)) \d y \d x
 \end{split}
\end{align}
for some kernel $K$ satisfying \eqref{eq:K-comp} and some $J_0 > 0$, for any $\eta \in C_c^{\infty}(\R^n)$.

Note that the translation invariant kernels $K_k((D \Phi_k)(x_k) \cdot (x-y))$ are homogeneous and satisfy the ellipticity assumption \eqref{eq:K-comp}. Indeed, by \eqref{eq:DPhi-comp}, since $|x_k|$ is bounded we have 
\begin{align*}
K_k((D \Phi_k)(x_k) \cdot (x-y)) \asymp |(D \Phi_k)(x_k)|^{-n-2s} |x-y|^{-n-2s} \asymp |x-y|^{-n-2s}.
\end{align*}

Note that for any ball $B$ centered around the origin and $m > 0$ such that $mB \subset B_{r_k^{-1}/8}(0) \subset B_{r_k^{-1}/2}(r_k^{-1}(e_n - x_k))$ and $\eta \in C_c^{\infty}(B)$ it holds 
\begin{align}
\label{B12}
\begin{split}
\Big| \int_{m B} \int_{m B} & \widetilde{J}_k(0) \widetilde{J}_k(0) ((x_k)_n + r_k x_n)_+^{\beta-1} ((x_k)_n + r_k y_n)_+^{\beta-1} \cdot \\
&\quad \cdot K_k((D \Phi_k)(x_k) \cdot (x-y))((v_k - v)(x) - (v_k - v)(y))(\eta(x) - \eta(y))  \d y \d x \Big|  \\
&\le C [v_k - v]_{C^{\gamma}(m B)} \int_{\frac{3}{2}B} \int_{\frac{3}{2}B} |x-y|^{-n-2s+1+\gamma} \d y \d x \\
&\le C [v_k - v]_{C^{\gamma}(m B)} \to 0,
\end{split}
\end{align}
since $((x_k)_n + r_k x_n)_+^{\beta-1} \asymp C$ in $mB$ by \eqref{eq:32B-no-weight}, and where the integrals converge since $\gamma > 2s - 1$.

Moreover, it holds since $|v_k(y) - v(y)|\le C |y|^{\gamma}$ and since $|x-y| \ge (m-1)\diam(B) = (m-1)c_0$ for $x\in B$ and $y \in B_{r_k^{-1}/4} \setminus m B$:
\begin{align}
\label{B13}
\begin{split}
\Big| \int_{B} & \int_{B_{r_k^{-1}/4} \setminus m B} \widetilde{J}_k(0) \widetilde{J}_k(0) ((x_k)_n + r_k x_n)_+^{\beta-1} ((x_k)_n + r_k y_n)_+^{\beta-1} \cdot \\
&\quad \cdot K_k((D \Phi_k)(x_k) \cdot (x-y))((v_k - v)(x) - (v_k - v)(y))(\eta(x) - \eta(y))  \d y \d x \Big|  \\
&\le C \int_{B} \int_{B_{r_k^{-1}/4} \setminus m B}(|x|^{\gamma} + |x - y|^{\gamma}) |x-y|^{-n-2s} \d y \d x \\
&\le C \int_{B} \int_{B_{(m-1)c_0}(x)^c} |x-y|^{-n-2s} \d y \d x + C \int_{B} \int_{B_{(m-1)c_0}(x)^c} |x-y|^{-n-2s + \gamma} \d y \d x \\
&\le C |B| [(m-1)c_0]^{\gamma - 2s},
\end{split}
\end{align}
where we used that $\gamma < 2s$. Finally, we have
\begin{align}
\label{B14}
\begin{split}
& \Big| \int_{B} \int_{(B_{r_k^{-1}/4})^c} \widetilde{J}_k(0) \widetilde{J}_k(0) ((x_k)_n + r_k x_n)_+^{\beta-1} ((x_k)_n + r_k y_n)_+^{\beta-1} \cdot \\
&\qquad\qquad \cdot K_k((D \Phi_k)(x_k) \cdot (x-y))((v_k - v)(x) - (v_k - v)(y))(\eta(x) - \eta(y))  \d y \d x \Big| \\
&\le C \int_{B} |v_k(x) - v(x)| \left( \int_{(B_{r_k^{-1}/4})^c} ((x_k)_n + r_k y_n)_+^{\beta-1}  |x-y|^{-n-2s} \d y \right) \d x \\
&\quad + C \int_{B} \left( \int_{(B_{r_k^{-1}/4})^c} ((x_k)_n + r_k y_n)_+^{\beta-1}  |v_k(y) - v(y)| |x-y|^{-n-2s} \d y \right) \d x  \\
&\le C \Vert v_k - v \Vert_{L^{\infty}(\supp(\eta))} \int_B \int_{(B_{r_k^{-1}/4})^c} ((x_k)_n + r_k y_n)_+^{\beta-1}  |x-y|^{-n-2s} \d y \d x  \\
&\quad + C \int_B \int_{(B_{r_k^{-1}/4})^c} ((x_k)_n + r_k y_n)_+^{\beta-1} |y|^{\gamma} |x-y|^{-n-2s} \d y \d x \\
&\le C r_k^{2s} \Vert v_k - v \Vert_{L^{\infty}(\supp(\eta))} |B| \int_{\{ w_n \ge 0\} \setminus B_{1/4}(x_k)} (w_n)_+^{\beta-1} |w-x_k|^{-n-2s} \d w  \\
&\quad + C r_k^{2s-\gamma} |B| \int_{\{ y_n \ge 0\} \setminus B_{1/4}} (y_n)_+^{\beta-1} |y|^{-n-2s + \gamma} \d y \\
&\le C r_k^{2s} \Vert v_k - v \Vert_{L^{\infty}(B)} + C r_k^{2s-\gamma} \to 0,
\end{split}
\end{align}
where we used \autoref{lemma:int-polar-coord}.
By combination of the previous three estimates, we deduce that
\begin{align*}
\Big| \int_{\R^n} \int_{\R^n} & \widetilde{J}_k(0) \widetilde{J}_k(0) ((x_k)_n + r_k x_n)_+^{\beta-1} ((x_k)_n + r_k y_n)_+^{\beta-1} \\
& K_k((D \Phi_k)(x_k) \cdot (x-y))((v_k - v)(x) - (v_k - v)(y))(\eta(x) - \eta(y))  \d y \d x \Big| \to 0.
\end{align*}
Indeed, we choose first $m$ so large that \eqref{B13} becomes small, and then we choose $k$ so large that the quantities in \eqref{B12} and \eqref{B14} are also small.

Hence, in order to prove that \eqref{Imp} holds true in the weak sense it remains to prove
\begin{align*}
\Big| \int_{\R^n} \int_{\R^n} & ((x_k)_n + r_k x_n)_+^{\beta-1} ((x_k)_n + r_k y_n)_+^{\beta-1} \cdot &\\
& \cdot [\widetilde{J}_k(0) \widetilde{J}_k(0) K_k((D \Phi_k)(x_k) \cdot (x-y)) - J_0^2K(x-y)](v(x) - v(y))(\eta(x) - \eta(y))  \d y \d x \Big| \to 0.
\end{align*}
Note that there is $0<J_0 = \lim_{k \to \infty} \widetilde{J}_k(0)$ by the uniform boundedness of $\widetilde{J}_k(0)$, and $x_0 = \lim_{k \to \infty} (x_k)_n > 0$, and also a kernel $K$ satisfying \eqref{eq:K-comp} such that
\begin{align*}
((x_k)_n + r_k x_n)_+^{\beta-1} ((x_k)_n + r_k y_n)_+^{\beta-1} (1 \wedge |x-y|^2)K_k((D \Phi_k)(x_k) \cdot (x-y)) \to x_0^{2\beta-2} (1 \wedge |x-y|^2)K(x-y)
\end{align*}
weakly in the sense of measures in $(B^c \times B^c)^c$ for any ball $B \subset \R^n$ centered at zero (see \cite[Proposition 2.2.36]{FeRo24}).
This implies the above convergence claim and yields \eqref{eq:global-sol-interior}, as desired.

\textbf{Step 4:} The Liouville theorem (see \cite[Theorem 2.9]{FeRo24}) implies that $v \equiv 0$, which is a contradiction to \eqref{eq:blowup-contradiction-ass-interior}. Hence, the proof is complete.
\end{proof}

We proceed with an interior estimate in case $\gamma = 2s+\alpha > 2s$ for some $\alpha > 0$. 

\begin{lemma}
\label{lemma:interior-2}
Let $\beta \in [s,1+s]$. Assume that $K \in C^{\alpha}(\mathbb{S}^{n-1})$ for some $\alpha > 0$ with $\alpha + 2s < 1$ satisfies \eqref{eq:K-comp} and let $J \in C^{\alpha}(\{x_n \ge 0 \})$, and $I: \R^n \times \R^n \to \R$ be such that
\begin{align}
J \ge \Lambda^{-1} \text{ in } \{ x_n > 0\} \cap B_{1}(e_n), \quad \Vert J \Vert_{C^{\alpha}(\{x_n \ge 0 \})} &+ \Vert K \Vert_{C^{\alpha}(\mathbb{S}^{n-1})} \le \Lambda, \quad  |I(x,y)| \le \Lambda |x-y|^{-n-2s} \nonumber\\
\label{eq:kernel-int-reg-ass}
|I(x,z) - I(y,z)| \le \Lambda |x-y|^{\alpha} |z-e_n|^{-n-2s}& ~~ \forall x,y \in B_{3/4}(e_n), ~  z \in B_{7/8}(e_n)^c. 
\end{align}
Moreover let $\Phi \in C^{1,\alpha}(\{x_n \ge 0 \} \cap B_{1}(e_n))$ be as in Subsection \ref{subsec:flattening}, and let $u \in C^{2s+\alpha}_{loc}(\R^n)$ be a weak solution to 
\begin{align*}
\mathcal{L} & \big(J(x)J(y)(x_n)_+^{\beta-1}(y_n)_+^{\beta-1} K(\Phi(x) - \Phi(y))\big)(u) \\
&\qquad \qquad = g + \mathcal{L}\big((x_n)_+^{\beta-1}(y_n)_+^{\beta-1} I(x,y)\big)(f) ~~ \text{ in } B_{1}(e_n),
\end{align*}
where $(x_n)_+^{1-\beta} g \in C^{\alpha}(\{ x_n \ge 0 \} \cap B_{1}(e_n))$, and $f \in C^{2s+\alpha}(\{ x_n \ge 0 \})$.
Then, the following holds true
\begin{align*}
\Vert u \Vert_{C^{2s+\alpha}(B_{1/2}(e_n))} &\le C \Big( \Vert u \Vert_{L^{\infty}(B_{1}(e_n))} + \Vert (x_n)_+^{\beta-1} u |x-e_n |^{-n-2s}\Vert_{L^1(B_1(e_n)^c)}  \\
&\qquad \qquad \qquad + \Vert (x_n)_+^{1-\beta} g \Vert_{C^{\alpha}(\{ x_n \ge 0 \}\cap B_{1}(e_n))} +  [ f ]_{C^{2s+\alpha}(\{ x_n \ge 0\})} \Big),
\end{align*}
where $C > 0$ only depends on $n,s,\beta,\alpha,\lambda,\Lambda$, and the constant in \eqref{eq:Phi-comp}. 
\end{lemma}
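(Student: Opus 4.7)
The plan is to reduce the statement to the interior Schauder estimates of \cite{FeRo24b} for nonlocal equations with variable $C^{\alpha}$ coefficients. The crucial point is that $B_1(e_n) \subset \{x_n > 0\}$, so the weight $(x_n)_+^{\beta-1}$ is smooth and satisfies $(x_n)_+^{\beta-1} \asymp 1$ uniformly on $B_{7/8}(e_n)$. Setting $\hat J(x) := J(x)(x_n)_+^{\beta-1}$, which inherits the $C^{\alpha}$ regularity, positivity, and upper bound of $J$ on $B_{7/8}(e_n)$, the operator becomes a standard variable-coefficient nonlocal operator whose kernel $\hat J(x)\hat J(y)K(\Phi(x)-\Phi(y))$ is symmetric, uniformly elliptic (by \eqref{eq:DPhi-comp}), and satisfies the $C^{\alpha}$ regularity in $(x,y)$ required by \cite{FeRo24b} thanks to \autoref{lemma:K-estimate}. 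Taking a cutoff $\chi \in C^{\infty}_c(B_{7/8}(e_n))$ with $\chi \equiv 1$ on $B_{3/4}(e_n)$ and splitting $u = \chi u + (1-\chi) u$, the equation on $B_{3/4}(e_n)$ becomes
\begin{equation*}
\mathcal{L}\bigl(\hat J(x)\hat J(y) K(\Phi(x)-\Phi(y))\bigr)(u) = g + h_f + T \qquad \text{in } B_{3/4}(e_n),
\end{equation*}
where $h_f(x) := \mathcal{L}\bigl((x_n)_+^{\beta-1}(y_n)_+^{\beta-1} I(x,y)\bigr)(f)(x)$ and $T(x)$ is a tail term involving only values of $u$ on $B_{7/8}(e_n)^c$.

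For the tail $T$, the integration stays at distance $\ge 1/8$ from $x \in B_{3/4}(e_n)$, so differentiating under the integral sign and using $\Phi \in C^{1,\alpha}$ together with $K \in C^{\alpha}(\mathbb{S}^{n-1})$ yields
\begin{equation*}
\|T\|_{C^{\alpha}(B_{3/4}(e_n))} \le C \bigl( \|(x_n)_+^{\beta-1} u\,|x-e_n|^{-n-2s}\|_{L^1(B_1(e_n)^c)} + \|u\|_{L^{\infty}(B_1(e_n))} \bigr),
\end{equation*}
the second summand absorbing the bounded annulus $B_1(e_n) \setminus B_{7/8}(e_n)$. The source $g$ contributes $\|(x_n)_+^{1-\beta} g\|_{C^{\alpha}}$ after multiplication by the smooth bounded weight $(x_n)_+^{\beta-1}$.

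The main obstacle will be showing $h_f \in C^{\alpha}(B_{3/4}(e_n))$ with $\|h_f\|_{C^{\alpha}} \le C [f]_{C^{2s+\alpha}(\{x_n \ge 0\})}$. For $x_1, x_2 \in B_{3/4}(e_n)$ with $r := |x_1 - x_2|$, I would split $h_f(x_1) - h_f(x_2)$ into near-field ($|y - x_1| \le 2r$), intermediate ($2r < |y - x_1|$, $y \in B_{7/8}(e_n)$), and far-field ($y \in B_{7/8}(e_n)^c$) contributions. The near- and intermediate-field pieces are handled using the $C^{2s+\alpha}$ regularity of $f$ together with $|I(x,y)| \le \Lambda |x-y|^{-n-2s}$, by the standard H\"older-continuity-of-fractional-Laplacian argument after suitably freezing $I$ at $I(x_1,\cdot)$ or $I(x_2,\cdot)$ and exploiting the symmetry $I(x,y)=I(y,x)$ to produce the requisite cancellation. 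The far-field piece is precisely where the structural assumption \eqref{eq:kernel-int-reg-ass}, namely $|I(x_1,y) - I(x_2,y)| \le \Lambda r^{\alpha} |y-e_n|^{-n-2s}$, is essential: combined with $f \in L^{\infty}$ and the integrability at infinity of $(y_n)_+^{\beta-1} |y-e_n|^{-n-2s}$ on $B_{7/8}(e_n)^c$, it yields the matching $O(r^{\alpha})$ bound, which would fail without \eqref{eq:kernel-int-reg-ass}.

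With $C^{\alpha}$ coefficients and $C^{\alpha}$ right-hand side established, the interior Schauder estimate from \cite{FeRo24b} applied to the reformulated equation gives the $C^{2s+\alpha}(B_{1/2}(e_n))$ estimate on $u$ with the stated dependence on $\Lambda$, the constant in \eqref{eq:Phi-comp}, and $n, s, \beta, \alpha$. The restriction $\alpha + 2s < 1$ keeps the target regularity strictly below $C^1$, which is precisely the regime in which $C^{\alpha}$ regularity of the coefficients and of $K$ suffices; any higher total regularity would require correspondingly stronger assumptions on $J$, $\Phi$, and $K$.
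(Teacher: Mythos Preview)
Your overall strategy matches the paper's: absorb the smooth weight $(x_n)_+^{\beta-1}$ into the coefficient on $B_{7/8}(e_n)$, localize the kernel, push the tail and the $f$-term into the right-hand side, show that right-hand side is $C^\alpha$, and then invoke the interior Schauder estimates of \cite{FeRo24b}. The paper does precisely this, using a product-space cut-off $\eta(x,y)$ on the kernel rather than your cut-off on $u$. For $h_f$, the paper splits $f=\chi f+(1-\chi)f$ with a spatial cut-off $\chi$, treats the far piece $f_2=(1-\chi)f$ directly via \eqref{eq:kernel-int-reg-ass}, and for the compactly supported piece $f_1$ introduces a second product-space cut-off $\tau$ on $I$, delegating the estimate for $\mathcal L_{\tau I}(f_1)$ to the odd/even machinery of \cite[(4.14)--(4.15)]{FeRo24b}.

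Your treatment of the near- and intermediate-field pieces of $h_f$ has a gap. You invoke the symmetry $I(x,y)=I(y,x)$, which is not among the hypotheses. More seriously, the pointwise bound $|I(x,y)|\le\Lambda|x-y|^{-n-2s}$ alone does \emph{not} control $h_f(x_1)-h_f(x_2)$ in the intermediate zone: any natural decomposition produces either the divergent integral $\int\tilde I(x_i,y)\,dy$ or a term $\int(f(x_2)-f(y))[\tilde I(x_1,y)-\tilde I(x_2,y)]\,dy$ that requires regularity of $I$ in its first slot for $y$ \emph{inside} $B_{7/8}(e_n)$, which \eqref{eq:kernel-int-reg-ass} does not provide. (Take $I(x,y)=|x-y|^{-n-2s}+b(x)\phi(x-y)$ with $\phi\in C^\infty_c(B_{1/16})$ and $b$ bounded but discontinuous: both stated assumptions on $I$ hold, yet $h_f$ is discontinuous.) The paper does not avoid this issue---it explicitly writes ``using in particular the regularity assumption on $I$ in the interior'' when applying \cite{FeRo24b}---and this interior regularity is supplied in the application by the global hypothesis \eqref{eq:kernel-int-reg-ass-main} of \autoref{lemma:bdry-reg}. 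So your argument should invoke that stronger hypothesis (or an equivalent interior condition) rather than an unavailable symmetry; once you do, the paper's cut-off-on-$f$ decomposition and the \cite{FeRo24b} estimates give a cleaner route than the distance splitting you propose.
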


Note that we also have a rescaled version of \autoref{lemma:interior-2} in alignment with \eqref{eq:interior-scaled}.

\begin{proof}
The proof follows immediately by rewriting the equation for $u$ as follows:
\begin{align*}
\mathcal{L}\big( \widetilde{K}(x,y) \big)(u) = h ~~ \text{ in } B_{3/4}(e_n),
\end{align*}
where we let $\eta \in C^{\infty}(\R^n \times \R^n)$ be a smooth cut-off function satisfying $\eta \equiv 1$ in $B_{7/8}(e_n) \times B_{7/8}(e_n)$, $0 \le \eta \le 1$, and $\supp(\eta) \subset B_{15/16}(e_n) \times B_{15/16}(e_n)$, and
\begin{align*}
\widetilde{K}(x,y) &= \eta(x,y) J(x)J(y)(x_n)_+^{\beta-1}(y_n)_+^{\beta-1} K(\Phi(x) - \Phi(y)), \\
h &= \mathcal{L} \big((1 - \eta(x,y))J(x)J(y)(x_n)_+^{\beta-1}(y_n)_+^{\beta-1} K(\Phi(x) - \Phi(y))\big)(u) \\
&\quad + g + \mathcal{L}\big((x_n)_+^{\beta-1}(y_n)_+^{\beta-1} I(x,y)\big)(f).
\end{align*}
Then, it remains to prove that $\widetilde{K}$ and $h$ satisfy the assumptions of \cite[Theorem 1.4]{FeRo24b}. Note that for $\widetilde{K}$ this is easy to verify, as it follows immediately from the assumptions on $K,J,\Phi$, and the fact that the action of the weight $(x_n)_+^{\beta-1} (y_n)_+^{\beta-1}$ near $\{ x_n = 0\}$ is suppressed by the support of $\eta$. Note that the compact support of $\widetilde{K}$ is not relevant for the proof in \cite{FeRo24b}. 

Hence, we are left to prove that $h \in C^{\alpha}(B_{3/4}(e_n))$.
To prove it, let us first show the following estimate 
\begin{align}
\label{eq:L-f-estimate}
[\mathcal{L}\big((x_n)_+^{\beta-1}(y_n)_+^{\beta-1} I(x,y)\big)(f)]_{C^{\alpha}(B_{3/4}(e_n))} \le C \Vert f \Vert_{C^{2s+\alpha}(\{ x_n > 0 \})}.
\end{align}

Note that we can always replace $\Vert f \Vert_{C^{2s+\alpha}(\{ x_n > 0 \})}$ by $[ f ]_{C^{2s+\alpha}(\{ x_n > 0 \})}$ in \eqref{eq:L-f-estimate}, since the left-hand side of \eqref{eq:L-f-estimate} is invariant under the addition of constants to $f$.

To prove it, we consider $\chi \in C^{\infty}_c( B_{15/16}(e_n))$ with $\chi \equiv 1$ in $B_{7/8}(e_n)$ and define $f = \chi f + (1-\chi)f = f_1 + f_2$. For $f_2$ and $x,y \in B_{3/4}(e_n)$ we estimate, using that $f_2(x) = f_2(y) = 0$,
\begin{align}
\label{eq:L-f-estimate-help}
\begin{split}
& |\mathcal{L}\big((x_n)_+^{\beta-1}(z_n)_+^{\beta-1} I(x,z)\big)(f_2)(x) - \mathcal{L}\big((y_n)_+^{\beta-1}(z_n)_+^{\beta-1} I(y,z)\big)(f_2)(y)| \\
&\quad \le \int_{B_{7/8}(e_n)^c}  (z_n)_+^{\beta-1} |f(z)| |(x_n)_+^{\beta-1} I(x,z) - (y_n)_+^{\beta-1} I(y,z)| \d z \\
&\quad \le C |(x_n)_+^{\beta-1} - (y_n)_+^{\beta-1}| \int_{B_{7/8}(e_n)^c} (z_n)_+^{\beta-1} |f(z)| |I(x,z)| \d z \\
&\quad \quad + C (y_n)_+^{\beta-1} \int_{B_{7/8}(e_n)^c} (z_n)_+^{\beta-1} |f(z)| |I(x,z) - I(y,z)|  \d z \\
&\le C |x-y|^{\alpha} \int_{B_{7/8}(e_n)^c} (z_n)_+^{\beta-1} |f(z)| |z-e_n|^{-n-2s} \d z \le C |x-y|^{\alpha} \Vert f \Vert_{L^{\infty}(\{ x_n > 0 \})},
\end{split}
\end{align}
where we used the assumption on $I$, as well as
\begin{align*}
|(x_n)_+^{\beta-1} - (y_n)_+^{\beta-1}| \le C |x-y|^{\alpha}, \qquad (y_n)_+^{\beta-1} \le C.
\end{align*}

For $f_1$, we introduce another cut-off function $\tau \in C^{\infty}_c(B_{31/32}(e_n) \times B_{31/32}(e_n))$ with $\tau \equiv 1$ in $B_{15/16}(e_n) \times B_{15/16}(e_n)$ and $0 \le \tau \le 1$. We write
\begin{align*}
& |\mathcal{L}\big((x_n)_+^{\beta-1}(z_n)_+^{\beta-1} I(x,z)\big)(f_1)(x) - \mathcal{L}\big((y_n)_+^{\beta-1}(z_n)_+^{\beta-1} I(y,z)\big)(f_1)(y)| \\
&\le |\mathcal{L}\big((x_n)_+^{\beta-1}(z_n)_+^{\beta-1} \tau(x,z) I(x,z)\big)(f_1)(x) - \mathcal{L}\big((y_n)_+^{\beta-1}(z_n)_+^{\beta-1} \tau(y,z) I(y,z)\big)(f_1)(y)| \\
&\quad +  |\mathcal{L}\big((x_n)_+^{\beta-1}(z_n)_+^{\beta-1} [1-\tau(x,z)] I(x,z)\big)(f_1)(x) - \mathcal{L}\big((y_n)_+^{\beta-1}(z_n)_+^{\beta-1} [1-\tau(y,z)] I(y,z)\big)(f_1)(y)| \\
&= |\mathcal{L}_1(f_1)(x) - \mathcal{L}_1(f_1)(y)| + I_2.
\end{align*}
The estimate of $I_2$ is immediate from the regularity assumptions on $f$ and $I$, as well as the constructions of $f_1$ and $\tau$. In fact,
\begin{align*}
I_2 &\le \int_{B_{15/16}(e_n)^c} \big| f_1(x) (x_n)_+^{\beta-1}(z_n)_+^{\beta-1} [1-\tau(x,z)]I(x,z) - f_1(y) (y_n)_+^{\beta-1}(z_n)_+^{\beta-1} [1-\tau(y,z)]I(y,z)  \big| \d z\\
&\le |f_1(x) - f_2(y)| \int_{B_{15/16}(e_n)^c} (x_n)_+^{\beta-1}(z_n)_+^{\beta-1} [1-\tau(x,z)]I(x,z) \d z \\
&\quad + |f(x)| \int_{B_{15/16}(e_n)^c} |(x_n)_+^{\beta-1} [1-\tau(x,z)]I(x,z) - (y_n)_+^{\beta-1} [1-\tau(y,z)]I(y,z) |(z_n)_+^{\beta-1} \d z \\
&\le C \Vert f \Vert_{C^{\alpha}(B_{3/4}(e_n))} |x-y|^{\alpha}  \int_{B_{15/16}(e_n)^c} (z_n)_+^{\beta-1} |z - e_n|^{-n-2s} \d z \le C \Vert f \Vert_{C^{\alpha}(B_{3/4}(e_n))} |x-y|^{\alpha} .
\end{align*}
Moreover, the estimate for $\mathcal{L}_1(f_1)$ follows from \cite{FeRo24b} since the action of the weight $(z_n)_+^{\beta-1}$ near $\{ x_n = 0 \}$ is suppressed by the support of $\tau$, making $(x_n)_+^{\beta-1}(y_n)_+^{\beta-1} \tau(x,y) I(x,y)$ a sufficiently smooth kernel. In fact, if we let $\mathcal{L}^o$ and $\mathcal{L}^e$, to be the odd and even part of $\mathcal{L}_1$, and $\mathcal{L}^e_0$ be the translation invariant operator associated with $\mathcal{L}^e$, we have by \cite[(4.14) and (4.15)]{FeRo24b}, using in particular the regularity assumption on $I$ in the interior, as well as \cite[Lemma 2.2.6(ii)]{FeRo24}
\begin{align*}
[\mathcal{L}_1(f_1)]_{C^{\alpha}(B_{3/4}(e_n))} &\le [\mathcal{L}^e(f_1) - \mathcal{L}^e_0(f_1)]_{C^{\alpha}(B_{3/4}(e_n))} + [\mathcal{L}^o(f_1)]_{C^{\alpha}(B_{3/4}(e_n))} + [\mathcal{L}^e_0(f_1)]_{C^{\alpha}(B_{3/4}(e_n))} \\
&\le C \Vert f \Vert_{C^{2s+\alpha}(\{ x_n \ge 0 \})}.
\end{align*}
This proves \eqref{eq:L-f-estimate}, as claimed.

Finally, we estimate
\begin{align}
\label{eq:L-u-estimate}
\begin{split}
&[\mathcal{L} \big((1 - \eta(x,y))J(x)J(y)(x_n)_+^{\beta-1}(y_n)_+^{\beta-1} K(\Phi(x) - \Phi(y))\big)(u)]_{C^{\alpha}(B_{3/4}(e_n))} \\
&\qquad \qquad \le C \Vert u \Vert_{C^{\alpha}(B_{3/4}(e_n))} + C \Vert (x_n)_+^{\beta-1} u |x-e_n |^{-n-2s}\Vert_{L^1(B_{3/4}(e_n)^c)} .
\end{split}
\end{align}

Note that this estimate implies the desired result after combination with \eqref{eq:L-f-estimate} and using the assumption on $g$. In fact, by H\"older interpolation, we can estimate for any $\delta > 0$
\begin{align*}
\Vert u \Vert_{C^{\alpha}(B_{3/4}(e_n))} \le C(\delta) \Vert u \Vert_{L^{\infty}(B_{3/4}(e_n))} + \delta [ u ]_{C^{2s+\alpha}(B_{3/4}(e_n))},
\end{align*}
and use a standard covering and absorption argument (see for instance \cite[Proof of Theorem 2.4.1]{FeRo24}) to conclude the proof.

We now turn to the proof of \eqref{eq:L-u-estimate}. To prove it, we denote
\begin{align*}
J(x,z) = (1 - \eta(x,z))J(x)J(z) K(\Phi(x) - \Phi(z))
\end{align*}
and observe that by the assumptions on $J,\Phi,K$ and the construction of $\eta$, $J$ satisfies \eqref{eq:kernel-int-reg-ass}. 
For $x,y \in B_{3/4}(e_n)$,
\begin{align*}
& |\mathcal{L} \big((x_n)_+^{\beta-1}(z_n)_+^{\beta-1} J(x,z)\big)(u)(x) - \mathcal{L} \big((y_n)_+^{\beta-1}(z_n)_+^{\beta-1} J(y,z) \big)(u)(y)| \\
&\quad \le \int_{B_{7/8}(e_n)^c} |(u(x) - u(z))(x_n)_+^{\beta-1}(z_n)_+^{\beta-1} J(x,z) - (u(y) - u(z))(y_n)_+^{\beta-1}(z_n)_+^{\beta-1} J(y,z)| \d z \\
&\quad \le |u(x) - u(y)|\int_{B_{7/8}(e_n)^c} (x_n)_+^{\beta-1} |J(x,z)| (z_n)_+^{\beta-1} \d z \\
&\quad \quad + |u(x)| \int_{B_{7/8}(e_n)^c} |(x_n)_+^{\beta-1} J(x,z) - (y_n)_+^{\beta-1} J(y,z)| (z_n)_+^{\beta-1} \d z \\
&\quad \quad + \int_{B_{7/8}(e_n)^c} |u(z)| |(x_n)_+^{\beta-1} J(x,z) - (y_n)_+^{\beta-1} J(y,z)| (z_n)_+^{\beta-1} \d z \\
&\le C\Vert u \Vert_{C^{\alpha}(B_{3/4}(e_n))} + C \Vert (x_n)_+^{\beta-1} u |x-e_n |^{-n-2s}\Vert_{L^1(B_{3/4}(e_n)^c)}.
\end{align*}
In the last step, we used that $J$ satisfies \eqref{eq:kernel-int-reg-ass} and proceeded as in \eqref{eq:L-f-estimate-help} to estimate the last term.
\end{proof}

The next step is to show a regularity estimate up to the boundary. First, we prove the following result.

\begin{lemma}
\label{lemma:almost-reg-bdry}
Let everything be as in \autoref{lemma:bdry-reg}, assuming in particular that $\alpha > 0$ and $\gamma \in (\max\{0,2s-1,s-\beta+\frac{1}{2}\},\min\{1,2s-\beta+1\})$. 
Then, the following holds true: For any $\delta > 0$, it holds
\begin{align*}
\sup_{x \in \{x_n = 0\} \cap B_{1/2}} & \sup_{ y \in \{ x_n \ge 0 \}\cap B_{1/2}} \frac{|u(x) - u(y)|}{|x-y|^{\gamma}} \\
&\le \delta [u]_{C^{\gamma}(\R^n)} + C_{\delta} \Big( \Vert u \Vert_{L^{\infty}(\{ x_n \ge 0 \} \cap B_1)} + \Vert (x_n)_+^{1-\beta} g \Vert_{L^{\infty}(\{ x_n \ge 0 \} \cap B_1) } +  [f]_{C^{\gamma}(\{ x_n \ge 0\})} \\
&\quad\qquad \qquad \qquad + \1_{\{ \beta = s\}} \big[  \Vert b \Vert_{L^{\infty}(\{x_n = 0 \} \cap B_1)} + \Vert A \Vert_{L^{\infty}(\{x_n = 0 \} \cap B_1)} + \Vert a \Vert_{C^{\gamma}(\{x_n = 0 \} \cap B_1)} \big] \Big),
\end{align*}
where $C_{\delta} > 0$ only depends on $\delta$, and $n,s,\beta,\gamma,\alpha,\sigma,\lambda,\Lambda$ and the constant in \eqref{eq:Phi-comp}. 
\end{lemma}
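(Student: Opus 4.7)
The plan is to argue by contradiction and compactness, closely parallelling the proof of \autoref{lemma:interior}, but with the blow-up point on $\{x_n = 0\}$. Suppose the conclusion fails for some $\delta_0 > 0$: then for each $k \in \N$ there exist data $(u_k, K_k, J_k, \Phi_k, I_k, g_k, f_k)$ and (if $\beta = s$) $(b_k, A_k, a_k)$ satisfying the hypotheses uniformly, such that
\[
\sup_{\{x_n = 0\} \cap B_{1/2}}\sup_{\{x_n \ge 0\} \cap B_{1/2}} \frac{|u_k(x) - u_k(y)|}{|x-y|^{\gamma}} > \delta_0 [u_k]_{C^{\gamma}(\R^n)} + k \cdot D_k,
\]
where $D_k$ denotes the data-dependent quantity on the right-hand side of the claim. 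By the truncation and normalization procedure of \cite[Lemma 2.4.12]{FeRo24}, adapted so that one blow-up point lies on $\{x_n = 0\}$, we produce points $x_k \in \{x_n = 0\} \cap B_{1/2}$, scales $r_k \searrow 0$, and normalizing factors $M_k$ such that the rescaled functions
\[
v_k(x) := \frac{u_k(x_k + r_k x) - u_k(x_k)}{r_k^{\gamma} M_k}
\]
satisfy $v_k(0) = 0$, $[v_k]_{C^{\gamma}(\{x_n \ge 0\})} \le 1$, $|v_k(x)| \le |x|^{\gamma}$, the bound $|v_k(z_k)| \ge \delta_0/2$ at some $z_k \in \{x_n \ge 0\} \cap \overline{B_1}$, and crucially $D_k / M_k \to 0$.

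Next, I would derive the rescaled PDE satisfied by $v_k$, following Steps 2 and 3 of the proof of \autoref{lemma:interior}. Since $(x_k)_n = 0$, the half-space is preserved, and the bilinear form scales so that, after normalization, $v_k$ satisfies a uniformly elliptic weighted equation on $\{x_n > 0\}$. The volume source terms $g_k$ and $f_k$ produce contributions of order $r_k^{2s+1-\beta-\gamma}D_k/M_k$ and similar decaying quantities (using $\gamma < 2s+1-\beta$), which vanish. In the case $\beta = s$, the boundary source terms involving $b_k$, $A_k$ and $a_k$ decay at rate $r_k^{1-\gamma} D_k / M_k \to 0$ thanks to $\gamma < 1$, while the local Neumann-type boundary integral $-\int u_k [\theta_{K_k}'\cdot \nabla_{x'}\eta + \mathrm{div}\,\theta_{K_k}' \eta] \, dx'$ survives in the limit: after writing $u_k = u_k(x_k) + r_k^{\gamma} M_k v_k$, the constant part reassembles via the divergence structure into a boundary term with an $O(r_k^{\alpha})$ error from the variation of $\theta_{K_k}'$, and the $v_k$-part scales exactly to the level of the bilinear form. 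Compactness (Arzel\`a--Ascoli) then yields a $C^{\gamma}_{loc}$ limit $v$ with $|v(x)| \le |x|^{\gamma}$, $v(0)=0$, $|v(z_\infty)|\ge \delta_0/2$, which solves
\[
\mathcal{L}\big((x_n)_+^{\beta-1}(y_n)_+^{\beta-1} K_0(x-y)\big)(v) = 0 \text{ in } \{x_n > 0\},
\]
for some translation-invariant $K_0$ satisfying \eqref{eq:K-comp}, together with a limiting oblique boundary condition if $\beta = s$, encoded by the frozen value $\theta_{K_0}'(x_\infty)$ of the coefficient.

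To conclude, in the case $\beta = s$ I would invoke \autoref{lemma:oblique-trafo} to absorb the limiting oblique coefficient into a new translation-invariant kernel $\widetilde{K}_0$ still satisfying \eqref{eq:K-comp}; the transformed function $\widetilde{v}$ is then a weak solution in the canonical sense of \autoref{def:weak-sol-ti}. Since $|\widetilde{v}(x)| \le C(1+|x|^{\gamma})$ with $\gamma \in (\max\{0,2s-1,s-\beta+\tfrac12\},2s-\beta+1)$, the weighted Liouville theorem \autoref{prop:weighted-Liouville} forces $\widetilde{v}$, and hence $v$, to be constant; combined with $v(0) = 0$ this gives $v \equiv 0$, contradicting $|v(z_\infty)| \ge \delta_0/2$.

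The main obstacle is the bookkeeping for the Neumann-type boundary term in the case $\beta = s$: unlike in the interior proof, this boundary integral survives the blow-up and must be shown to converge to a well-posed oblique boundary condition on $\{x_n = 0\}$. Subtracting $u_k(x_k)$ from $u_k$ produces a "constant times $\nabla_{x'} \eta$" piece that must be handled by integrating by parts against the $\mathrm{div}\,\theta_{K_k}'$ companion term, with the coefficient $\theta_{K_k}'$ having sufficient regularity ($C^{0,1}$, as assumed) for the residual error to decay like $r_k^{\alpha}$. The remaining convergence estimates for the nonlocal bilinear form and for the $f$- and $I$-contributions are analogous to those in the proof of \autoref{lemma:interior}, relying on \autoref{lemma:int-polar-coord} and \autoref{lemma:int-finite} to control the integrals of the weight near $\{x_n = 0\}$ and at infinity.
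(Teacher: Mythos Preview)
Your proposal is correct and follows essentially the same blow-up/compactness strategy as the paper's proof: contradiction setup via \cite[Lemma 2.4.12]{FeRo24} with $x_k \in \{x_n=0\}$, convergence of all source and coefficient-freezing terms, and application of \autoref{prop:weighted-Liouville} to the limit. Two minor remarks: the cancellation of the constant $u_k(x_k)$ in the boundary integral is exact (since $\int [\theta'\cdot\nabla_{x'}\eta + \dvg\theta'\,\eta]\,dx' = \int \dvg_{x'}(\theta'\eta)\,dx' = 0$), and the freezing errors for $\widetilde\theta_k'$ decay like $r_k$ (from $C^{0,1}$ regularity), not $r_k^{\alpha}$; also, the paper shows directly that the limiting boundary vector equals the canonical $\theta_K$ of \autoref{lemma:ibp}, so invoking \autoref{lemma:oblique-trafo} here is unnecessary (that reduction already lives inside the proof of \autoref{prop:weighted-Liouville}).
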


Note that the assumption \eqref{eq:kernel-int-reg-ass-main} is not needed for this result to hold, regardless of the value of $\gamma$.

\begin{proof}
\textbf{Step 1:}
Following the proof of \cite[Lemma 2.4.12]{FeRo24}, we can show that if the claim of the lemma does not hold true, then there exist sequences $(u_k) \subset C^{\gamma}(\R^n)$, $(\Phi_k)$ as in Subsection   \ref{subsec:flattening}, kernels $(K_k)$ satisfying \eqref{eq:K-comp}, and kernels $(I_k)$ satisfying $|I_k(x,y)| \le \Lambda |x-y|^{-n-2s}$, and sequences $(f_k)$, $(g_k)$, $(J_k)$, $(b_k)$, $(a_k)$, $(\nu_{k})$, $(A_k)$, and $(\vartheta_{k})$, such that 
\begin{align}
\label{eq:B4-2}
\begin{split}
& \Vert J_k \Vert_{C^{\alpha}(\{x_n \ge 0 \})} +\Vert K_k \Vert_{C^{\sigma}(\mathbb{S}^{n-1})} \\
&\quad + \1_{\{ \beta = s\}} \big[ \Vert \theta_{k} \Vert_{C^{0,1}(\{ x_n = 0 \} \cap B_1)}  + \Vert \nu_{k} \Vert_{C^{0,1}(\{ x_n = 0 \} \cap B_1)} + \Vert \vartheta_{k} \Vert_{L^{\infty}(\{ x_n = 0 \} \cap B_1)} \big] \le \Lambda,
\end{split}
\end{align}
where we denoted $\theta_{k}:=\theta_{K_k}$, and 
\begin{align}
\label{eq:blow-up-ass-diffquot}
\begin{split}
\frac{ \Vert u \Vert_{L^{\infty}(\{ x_n \ge 0\} \cap B_{1})} + \Vert (x_n)_+^{1-\beta} g_k \Vert_{L^{\infty}(\{ x_n \ge 0 \} \cap B_1) }  } { [u_k]_{C^{\gamma}(\R^n)}} &\to 0,\\
\frac{[f_k]_{C^{\gamma}(\{ x_n \ge 0\})}} {[u_k]_{C^{\gamma}(\R^n)}} &\to 0,\\
\1_{\{ \beta = s\}} \Bigg[ \frac{\Vert b_k \Vert_{L^{\infty}(\{x_n = 0 \} \cap B_1)} + \Vert a_k \Vert_{C^{\gamma}(\{x_n = 0 \} \cap B_1)} + \Vert A_k \Vert_{L^{\infty}(\{x_n = 0 \} \cap B_1)}} {[u_k]_{C^{\gamma}(\R^n)}} \Bigg] &\to 0,
\end{split}
\end{align}
and $r_k \searrow 0$, $(x_k) \subset B_{1/2} \cap \{ x_n = 0\}$, such that  for any $\eta \in C^{\infty}_c(B_1)$
\begin{align*}
\int_{\R^n} &\int_{\R^n} (u_k(x) - u_k(y)) (\eta(x) - \eta(y)) (x_n)_+^{\beta-1}(y_n)_+^{\beta-1} J_k(x) J_k(y) K_k(\Phi_k(x) - \Phi_k(y)) \d y \d x \\
&=\int_{\R^n} \eta(x) g_k(x) \d x + \int_{\R^n} \int_{\R^n} (f_k(x) - f_k(y)) (\eta(x) - \eta(y)) (x_n)_+^{\beta-1}(y_n)_+^{\beta-1} I_k(x,y) \d y \d x\\
&\quad + \1_{\{ \beta = s\}} \Bigg[ \int_{\{ x_n = 0 \}}  b_k(x')(\theta_{k})_n (x') \eta(x',0) \d x' \\
&\qquad\qquad\qquad\quad - \int_{\{ x_n = 0 \}} u_k(x',0) [(\theta_{k}' (x')\cdot \nabla_{x'} \eta(x',0)) +  \dvg \theta_{k}'(x') \eta(x',0)] \d x'\\
&\qquad\qquad\qquad\quad + \int_{\{ x_n = 0 \}} A_k(x') \cdot \vartheta_{k}(x') \eta(x',0)  \d x'\\
&\qquad\qquad\qquad\quad - \int_{\{ x_n = 0 \}}  a_k(x') [(\nu_{k}'(x')\cdot \nabla_{x'} \eta(x',0)) +  {\dvg \nu_{k}'(x') \eta(x',0)]} \d x'  \Bigg].
\end{align*}

Moreover, the functions $v_k$ defined in \eqref{B5} also satisfy
\begin{align}
\label{eq:blowup-contradiction-ass-diffquot}
\Vert v_k \Vert_{L^{\infty}(\{ x_n \ge 0 \} \cap B_{1/2})} > \frac{\delta}{2}, ~~ \text{ as } k \to \infty, \qquad v_k(0) = 0.
\end{align}
As we did in the proof of \autoref{lemma:interior}, we want to obtain a contradiction with \eqref{eq:blowup-contradiction-ass-diffquot} by applying a Liouville type result. Let us define $\widetilde{J}_k,\, \widetilde{I}_k,\, \widetilde{\Phi}_k,\, \widetilde{f}_k$ as in \eqref{B7}, and 
\begin{align*}
\widetilde{g}_k(x) &= r_k^{2(s + 1 - \beta)}\frac{g_k(x_k + r_k x)}{r_k^{\gamma} [u_k]_{C^{\gamma}(\R^n)}}, \qquad \widetilde{b}_k(x) = r_k \frac{b_k(x_k + r_kx)}{r_k^{\gamma}[u_k]_{C^{\gamma}(\R^n)}}, \\
\widetilde{a}_k(x) &= \frac{a_k(x_k + r_k x)}{r_k^{\gamma} [u_k]_{C^{\gamma}(\R^n)}}, \qquad\qquad\quad ~~ \widetilde{A}_k(x) = r_k \frac{A_k(x_k + r_kx)}{r_k^{\gamma}[u_k]_{C^{\gamma}(\R^n)}},
\end{align*}
as well as
\begin{align*}
\widetilde{\theta}_k({x'}) = \theta_k((x_k + r_kx)'), \qquad \widetilde{\vartheta}_k({x'}) = \vartheta_k((x_k + r_kx)'), \qquad  \widetilde{\nu}_k({x'}) = \nu_k((x_k + r_kx)').
\end{align*}
Then, for any $\eta \in C^{\infty}_c(B_{r_k^{-1}}(-r_k^{-1}x_k ))$, we have
\begin{align*}
\int_{\R^n} & \int_{\R^n} (v_k(x)-v_k(y))(\eta(x) - \eta(y)) \widetilde{J}_k(x)\widetilde{J}_k(y)(x_n)_+^{\beta-1}(y_n)_+^{\beta-1} r_k^{n+2s} K(\widetilde{\Phi}_k(x) - \widetilde{\Phi}_k(y)) \d y \d x \\
&= \int_{\R^n} \eta(x) \widetilde{g}_k(x) \d x \\
&\quad + \int_{\R^n}\int_{\R^n} (\widetilde{f}_k(x) - \widetilde{f}_k(y))(\eta(x) - \eta(y)) (x_n)_+^{\beta-1}(y_n)_+^{\beta-1} \widetilde{I}_k(x,y) \d y \d x \\
&\quad + \1_{\{ \beta = s\}} \Bigg[ \int_{\{ x_n = 0 \}}  \widetilde{b}_k(x')(\theta_{k})_n (x') \eta(x',0) \d x' \\
&\qquad\qquad\qquad\quad - \int_{\{ x_n = 0 \}} v_k(x',0) [(\widetilde{\theta}_{k}' (x')\cdot \nabla_{x'} \eta(x',0)) +  \dvg \widetilde{\theta}_{k}'(x') \eta(x',0)] \d x'\\
&\qquad\qquad\qquad\quad + \int_{\{ x_n = 0 \}} \widetilde{A}_k(x') \cdot \widetilde{\vartheta}_{k}(x') \eta(x',0)  \d x'\\
&\qquad\qquad\qquad\quad - \int_{\{ x_n = 0 \}}  \widetilde{a}_k(x') [(\widetilde{\nu}_{k}'(x')\cdot \nabla_{x'} \eta(x',0)) +  {\dvg \widetilde{\nu}_{k}'(x') \eta(x',0)]} \d x'  \Bigg].
\end{align*}

\textbf{Step 2:} Let us now fix a ball $B \subset B_{r_k^{-1}/2}(-r_k^{-1}x_k )$ centered around the origin and $\eta \in C^{\infty}_c(B)$  satisfying \eqref{B8}. The goal of this step is to prove
\begin{align}
\label{eq:limit-ti-diffquot}
\begin{split}
\int_{\R^n} & \int_{\R^n} \widetilde{J}_k(0) \widetilde{J}_k(0) (x_n)_+^{\beta-1} (y_n)_+^{\beta-1} K_k((D \Phi_k)(x_k) \cdot (x-y))(v_k(x) - v_k(y))(\eta(x) - \eta(y))  \d y \d x \\
& - \1_{\{ \beta = s \}} \int_{\{ x_n = 0 \}} v_k(x',0) (\widetilde{\theta}_k'(0) \cdot \nabla_{x'} \eta(x',0)) \d x' \to 0.
\end{split}
\end{align}

To show it, taking into account \eqref{B88}, it is sufficient to prove
\begin{align}
\label{eq:conv-K2-diffquot}
\int_{\R^n}\int_{\R^n} (x_n)_+^{\beta-1}(y_n)_+^{\beta-1}(v_k(x) - v_k(y))(\eta(x) - \eta(y)) K_k^{(2)}(x,y) \d y \d x \to 0,\\
\label{eq:conv-K3-diffquot}
\int_{\R^n}\int_{\R^n} (x_n)_+^{\beta-1}(y_n)_+^{\beta-1}(v_k(x) - v_k(y))(\eta(x) - \eta(y)) K_k^{(3)}(x,y) \d y \d x \to 0,\\
\label{eq:conv-f-diffquot}
\int_{\R^n}\int_{\R^n} (x_n)_+^{\beta-1} (y_n)_+^{\beta-1} (\widetilde{f}_k(x) - \widetilde{f}_k(y))(\eta(x) - \eta(y)) \widetilde{I}_k(x,y) \d y \d x \to 0,\\
\label{eq:conv-g-diffquot}
 \int_{\R^n} \widetilde{g}_k(x) \eta(x) \d x \to 0.
\end{align}
Moreover, in case $\beta = s$, we need to show in addition:
\begin{align}
\label{eq:conv-theta-5-diffquot}
\int _{\{x_n=0\}} \widetilde{b}_k(x') (\widetilde{\theta}_k)_n(x') \eta(x',0) \d x' \to 0,\\
\label{eq:conv-theta-1-diffquot}
\int_{\{ x_n = 0 \}} v_k(x',0) [\widetilde{\theta}_k'(0) - \widetilde{\theta}_k'(x')] \cdot \nabla_{x'} \eta(x',0) \d x' &\to 0,\\
\label{eq:conv-theta-2-diffquot}
\int_{\{ x_n = 0 \} } \dvg \widetilde{\theta}_k'(x') v_k(x',0) \eta(x',0) \d x' &\to 0,\\
\label{eq:conv-theta-6-diffquot}
\int _{\{x_n=0\}} \widetilde{A}_k(x') (\widetilde{\vartheta}_{k})(x') \eta(x',0) \d x'\to 0,\\
\label{eq:conv-theta-3-diffquot}
{\int _{\{x_n=0\}} \widetilde{a}_k(x') (\widetilde{\nu}_{k}'(x') \cdot \nabla_{x'} \eta(x',0)) \d x' \to 0,}\\
\label{eq:conv-theta-4-diffquot}
\int_{\{ x_n = 0 \} } a_k(x') \dvg \widetilde{\nu}'_{k}(x') \eta(x',0) \d x' \to 0.
\end{align}

Let us first prove the claims \eqref{eq:conv-theta-5-diffquot} -- \eqref{eq:conv-theta-4-diffquot}, which are only relevant in case $\beta = s$.
We start by observing that due to \eqref{eq:B4-2} it holds for any ball $B \subset B_{r_k^{-1}}(-r_k^{-1}x_k)$,
\begin{align}
\label{eq:theta-k-tilde-bound}
\Vert \widetilde{\theta}_k \Vert_{L^{\infty}(\{ x_n = 0 \} \cap B)} + r_k^{-1} [\widetilde{\theta}_k]_{C^{0,1}(\{ x_n = 0 \} \cap B)} \le \Vert \theta_k \Vert_{C^{0,1}(\{ x_n = 0 \} \cap B_1)} \le \Lambda,\\
\label{eq:nu-tilde-bound}
\Vert \widetilde{\nu}_{k} \Vert_{L^{\infty}(\{ x_n = 0 \} \cap B)} + r_k^{-1} [\widetilde{\nu}_{k}]_{C^{0,1}(\{ x_n = 0 \} \cap B)} \le \Vert \nu_{k} \Vert_{C^{0,1}(\{ x_n = 0 \} \cap B_1)} \le \Lambda,\\
\label{eq:vartheta-tilde-bound}
\Vert \widetilde{\vartheta}_{k} \Vert_{L^{\infty}(\{ x_n = 0 \} \cap B)} \le \Vert \vartheta_{k} \Vert_{L^{\infty}(\{ x_n = 0 \} \cap B_1)} \le \Lambda.
\end{align}
Hence, the norms are all uniformly bounded in $k$. By \eqref{eq:theta-k-tilde-bound} and \eqref{eq:nu-tilde-bound} we have for $x \in \{ x_n = 0 \} \cap B$
\begin{align*}
|\widetilde{\theta}_k'(0) - \widetilde{\theta}_k'(x')| = |\theta_k'(x_k') -\theta_k'((x_k + r_kx)')| \le C r_k |x| \le C r_k, \\
|\dvg \widetilde{\theta}_k'(x')| = r_k|\dvg(\theta_k')((x_k + r_kx)')| \le C r_k,\\
|\dvg \widetilde{\nu}_{k}'(x')| = r_k|\dvg(\nu_{k}')((x_k + r_kx)')| \le C r_k.
\end{align*}

Moreover, due to \eqref{eq:blow-up-ass-diffquot} and since $\gamma < 1$, it holds
\begin{align*}
\Vert \widetilde{b}_k \Vert_{L^{\infty}(\{ x_n = 0 \} \cap B)} \le C r_k^{1-\gamma} \frac{\Vert b_k \Vert_{L^{\infty}( \{x_n = 0 \}\cap B_1)}} {[u_k]_{C^{\gamma}(\R^n)}} \to 0,\\
\Vert \widetilde{A}_k \Vert_{L^{\infty}(\{ x_n = 0 \} \cap B)} \le C r_k^{1-\gamma} \frac{\Vert A_k \Vert_{L^{\infty}( \{x_n = 0 \}\cap B_1)}} {[u_k]_{C^{\gamma}(\R^n)}} \to 0,\\
[ \widetilde{a}_k ]_{C^{\gamma}(\{ x_n = 0 \} \cap B)} \le C \frac{\Vert a_k \Vert_{C^{\gamma}( \{x_n = 0 \}\cap B_1)}} {[u_k]_{C^{\gamma}(\R^n)}} \to 0.
\end{align*}

Therefore, \eqref{eq:conv-theta-5-diffquot}, \eqref{eq:conv-theta-6-diffquot}, and \eqref{eq:conv-theta-4-diffquot} follow immediately, since $\eta \in C^{\infty}_c(B)$ is globally bounded. Similarly, \eqref{eq:conv-theta-1-diffquot}, \eqref{eq:conv-theta-2-diffquot} follow in a straightforward way, using also that $|v(x)| \le C$ in $B$ by the growth assumption implied by \eqref{Lav}.

To estimate \eqref{eq:conv-theta-3-diffquot} we notice that
\begin{align*}
\int _{\{x_n=0\}} \widetilde{a}_k(x') (\widetilde{\nu}'_{k}(x') \cdot \nabla_{x'} \eta(x',0)) \d x'&=\int _{\{x_n=0\}} \widetilde{a}_k(x') (\widetilde{\nu}'_{k}(x')- \widetilde{\nu}'_{k}(0)) \cdot \nabla_{x'} \eta(x',0) \d x'\\
&+\int _{\{x_n=0\}} (\widetilde{a}_k(x')-\widetilde{a}_k(0)) \widetilde{\nu}'_{k}(0) \cdot \nabla_{x'} \eta(x',0) \d x'\\
&+\int _{\{x_n=0\}} \widetilde{a}_k(0) \widetilde{\nu}'_{k}(0) \cdot \nabla_{x'} \eta(x',0) \d x'.
\end{align*}
It is clear that, by the classical integration by parts formula in $x'$, the third summand is equal to zero. Moreover, the second one goes to zero by the same argument as was used in \eqref{eq:conv-theta-1-diffquot}, indeed
\begin{align*}
& \left| \int _{\{x_n=0\}} (\widetilde{a}_k(x')-\widetilde{a}_k(0)) \widetilde{\nu}'_{k}(0) \cdot \nabla_{x'} \eta(x',0) \d x' \right| \\
&\quad \le C [\widetilde{a}_k]_{C^{\gamma}(\{ x_n = 0\} \cap B)} \Vert \widetilde{\nu}_k \Vert_{L^{\infty}(\{ x_n = 0 \} \cap B)} \left( \int_{\{ x_n = 0 \}} |x'|^{\gamma} \d x' \right) \le C [\widetilde{a}_k]_{C^{\gamma}(\{ x_n = 0\} \cap B)} \to 0.
\end{align*}
For the first summand, we use that by construction of $\widetilde{a}_k$, and since $\gamma < 1$,
\begin{align*}
& \left| \int _{\{x_n=0\}} \widetilde{a}_k(x') (\widetilde{\nu}'_{k}(x')- \widetilde{\nu}'_{k}(0)) \cdot \nabla_{x'} \eta(x',0) \d x' \right| \\
&\quad \le C \Vert \widetilde{a}_k \Vert_{L^{\infty}( \{ x_n = 0 \} \cap B ) } [\widetilde{\vartheta}_k]_{C^{0,1} (\{ x_n = 0 \} \cap B )} \left( \int_{\{ x_n = 0 \} \cap B } |x'| \d x' \right) \le C r_k \frac{\Vert a_k \Vert_{L^{\infty}( \{ x_n = 0 \} \cap B )}}{r_k^{\gamma} [u_k]_{C^{\gamma}(\R^n)}} \to 0.
\end{align*}
Altogether, this also establishes \eqref{eq:conv-theta-3-diffquot} and it remains to prove \eqref{eq:conv-K2-diffquot} -- \eqref{eq:conv-g-diffquot} for all $\beta \in [s,1+s]$.

To prove the remaining claims, we first observe that since $\gamma < 2s + 1 - \beta$,
\begin{align*}
\Vert (x_n)_+^{1 - \beta} \widetilde{g_k}\Vert_{L^{\infty}(\{ x_n \ge 0 \} \cap B_{r_k^{-1}}(-r_k^{-1}x_k) )} = r_k^{2s + 1 - \beta - \gamma} \frac{\Vert (x_n)_+^{1-\beta} g_k \Vert_{L^{\infty}(\{ x_n \ge 0 \} \cap B_1)} }{[u_k]_{C^{\gamma}(\R^n)}} \to 0.
\end{align*}
Therefore, and since $\beta < 2$
\begin{align*}
\left| \int_{\R^n} \widetilde{g}_k(x) \eta(x) \d x \right| &\le \Vert \eta \Vert_{L^{\infty}(\R^n)} \int_{\supp(\eta)} |\widetilde{g}_k(x)| \d x \\
&\le \Vert \eta \Vert_{L^{\infty}(\R^n)} \Vert (x_n)_+^{1-\beta} \widetilde{g}_k \Vert_{L^{\infty}(\{ x_n \ge 0 \} \cap B)} \int_{\supp(\eta)} (x_n)^{\beta - 1} \d x  \\
&\le C \Vert (x_n)_+^{1-\beta} \widetilde{g}_k \Vert_{L^{\infty}(\{ x_n \ge 0 \} \cap B)} \to 0 ~~ \text{ as }  k \to \infty,
\end{align*}
which proves \eqref{eq:conv-g-diffquot}. Moreover,
\begin{align*}
[ \widetilde{f}_k ]_{C^{\gamma}(\{ x_n \ge 0 \})} &= \frac{[ f_k ]_{C^{\gamma}(\{ x_n \ge 0 \})}}{[u_k]_{C^{\gamma}(\R^n)}} \to 0.
\end{align*}

Therefore, by \eqref{R} and \autoref{lemma:bd-int} we have,
\begin{align*}
& \left| \int_{2B} \int_{2B}  (x_n)_+^{\beta-1}(y_n)_+^{\beta-1} \widetilde{I}_k(x,y) (\widetilde{f}_k(x) - \widetilde{f}_k(y)) (\eta(x) - \eta(y)) \d y \d x \right| \\
&\le C [ \widetilde{f}_k ]_{C^{\gamma}(\{ x_n \ge 0 \})}  \int_{\{ x_n \ge 0 \} \cap 2B} (x_n)_+^{\beta-1} \int_{\{ x_n \le y_n \} \cap 2B}  (y_n)_+^{\beta - 1} |x-y|^{-n -2s + 1 + \gamma} \d y \d x \\
&\le C [ \widetilde{f}_k ]_{C^{\gamma}(\{ x_n \ge 0 \})} \int_{\{ x_n \ge 0 \} \cap 2B} (x_n)_+^{\beta-1}  + (x_n)_+^{2\beta-1 -2s + \gamma} \d x \\
&\le C [ \widetilde{f}_k ]_{C^{\gamma}(\{ x_n \ge 0 \})} \to 0,
\end{align*}
where the inner integral converges since $\gamma > 2s-1$ by assumption, and the outer integral converges since $\gamma > 2s - 2\beta$, which follows immediately, since $\gamma > 0$.

It remains to estimate the integral over $B \times (2B)^c$. Note that in the corresponding set it holds by construction $|x-y| \ge |y| - |x| \ge |y|/2$ and thus
\begin{align*}
 &  \left| \int_{B} \int_{(2B)^c}  (x_n)_+^{\beta-1}(y_n)_+^{\beta-1} \widetilde{I}_k(x,y) (\widetilde{f}_k(x) - \widetilde{f}_k(y)) (\eta(x) - \eta(y)) \d y \d x \right| \\
&\le C [ \widetilde{f}_k ]_{C^{\gamma}(\{ x_n \ge 0 \})} \int_{\{ x_n \ge 0 \} \cap B} \int_{\{ y_n \ge 0 \} \cap (2B)^c} (x_n)_+^{\beta-1}(y_n)_+^{\beta-1} |x-y|^{-n-2s + \gamma} \d y \d x \\
&\le C [ \widetilde{f}_k ]_{C^{\gamma}(\{ x_n \ge 0 \})} \left(\int_{\{ x_n \ge 0 \} \cap B} (x_n)_+^{\beta-1} \d x \right) \left( \int_{\{ y_n \ge 0 \} \cap (2B)^c} (y_n)_+^{\beta-1} |y|^{-n-2s+\gamma} \d y \right) \\
&\le C [ \widetilde{f}_k ]_{C^{\gamma}(\{ x_n \ge 0 \})} \to 0.
\end{align*}
Here, the first integral converges since $\beta > 0$ and the second integral converges by \autoref{lemma:int-polar-coord}, using that $-2s+\gamma+\beta - 1 < 0$.
The previous two estimates prove \eqref{eq:conv-f-diffquot}.

To see \eqref{eq:conv-K3-diffquot}, note that, by \eqref{P1} -- \eqref{eq:J-product-Holder}, and \eqref{Lav}, we have for any $\nu \in (0,\alpha]$,
\begin{align*}
& \left| \int_{2B} \int_{2B}  (x_n)_+^{\beta-1}(y_n)_+^{\beta-1}  (v_k(x) - v_k(y)) (\eta(x) - \eta(y)) K_k^{(3)}(x,y) \d y \d x \right| \\
&\le C \Vert \eta \Vert_{C^1(\R^n)} \Vert \widetilde{J}_k \Vert_{L^{\infty}(\R^n)} [\widetilde{J}_k]_{C^{\nu}(\{ x_n \ge 0 \})} [v_k]_{C^{\gamma}(\{ x_n \ge 0 \})} \cdot \\
&\qquad \qquad \cdot \int_{\{x_n \ge 0\} \cap 2B} \int_{ \{y_n \ge 0\} \cap 2B} (x_n)_+^{\beta-1} (y_n)_+^{\beta-1} (|x|^{\nu} + |x-y|^{\nu})  |x-y|^{-n-2s+1+\gamma} \d y \d x \\
&\le C r_k^{\nu}\int_{\{x_n \ge 0\} \cap 2B} (x_n)_+^{\beta-1} \int_{ \{y_n \ge 0\} \cap 2B} (y_n)_+^{\beta-1} |x-y|^{-n-2s+1+\gamma} \d y \d x \\
&\le C r_k^{\nu} \int_{\{x_n \ge 0\} \cap 2B} (x_n)_+^{\beta-1} (1 + (x_n)_+^{\beta - 2s + \gamma}) \d x \\
&\le C r_k^{\nu} \to 0,
\end{align*}
where, since $\beta>0$ and $\gamma>2s-1$ the inner integral converges by \autoref{lemma:bd-int}. The outer integral converges because $2 \beta + \gamma > 2s$, which holds since $\gamma > 0$. Moreover, we estimate for any $\nu \in (0,\alpha]$,
\begin{align*}
& \left| \int_{B} \int_{(2B)^c}  (x_n)_+^{\beta-1}(y_n)_+^{\beta-1}  (v_k(x) - v_k(y)) (\eta(x) - \eta(y)) K_k^{(3)}(x,y) \d y \d x \right| \\ 
&\le C \Vert \eta \Vert_{L^{\infty}(\R^n)} \Vert \widetilde{J}_k \Vert_{L^{\infty}(\R^n)} [\widetilde{J}_k]_{C^{\nu}(\{ x_n \ge 0 \} \cap B)} [v_k]_{C^{\gamma}(\{ x_n \ge 0 \})} \cdot \\
&\qquad \qquad \cdot \int_{\{x_n \ge 0\} \cap B} \int_{ \{y_n \ge 0\} \cap (2B)^c} (x_n)_+^{\beta-1} (y_n)_+^{\beta-1} (|x|^{\nu} + |x-y|^{\nu})  |x-y|^{-n-2s+\gamma} \d y \d x \\
&\le C r_k^{\nu} \int_{ \{x_n \ge 0\} \cap B}  (x_n)_+^{\beta-1} \int_{ \{y_n \ge 0\} \cap (2B)^c}  (y_n)_+^{\beta-1} |x-y|^{-n-2s+\gamma + \nu} \d y \d x \\
&\le C r_k^{\nu} \left(\int_{\{x_n \ge 0\} \cap B}  (x_n)_+^{\beta-1}\d x \right) \left( \int_{ \{y_n \ge 0\} \cap (2B)^c} (y_n)_{+}^{\beta-1}|y|^{-n-2s+\gamma+ \nu} \d y \right)\\
& \le C r_k^{\nu} \to 0,
\end{align*}
where the outer integral trivially converges and the inner integral converges because we can take $\nu>0$ small enough in order to guarantee $\beta - 1 - 2s + \gamma + \nu < 0$.

A combination of the previous two integral estimates implies \eqref{eq:conv-K3-diffquot}. Finally, to obtain \eqref{eq:conv-K2-diffquot} we proceed in a similar way by using \eqref{eq:K_k-reg-interior} instead of \eqref{eq:J-product-Holder}. Then, for any $\sigma' \in (0,\sigma]$ and $\mu' \in (0,1+\alpha]$,
\begin{align*}
& \left| \int_{2B} \int_{2B}  (x_n)_+^{\beta-1}(y_n)_+^{\beta-1}  (v_k(x) - v_k(y)) (\eta(x) - \eta(y)) K_k^{(2)}(x,y) \d y \d x \right|\\
&\le C r_k^{(\mu' - 1)\sigma'} \int_{\{x_n \ge 0\} \cap 2B} (x_n)_+^{\beta-1} (1 + (x_n)_+^{\beta - 2s - \sigma' + \gamma}) \d x\\
&\le C r_k^{(\mu' - 1)\sigma'} \to 0,
\end{align*}
where we used \autoref{lemma:bd-int} and that $\gamma> 2s - 1 + \sigma'$ by taking $\sigma'$ small enough. We also highlight that the integral over $\{x_n \ge 0\} \cap 2B$ converges, since $2 \beta + \gamma > 2s+\sigma'$.

It remains to estimate, using \eqref{eq:K_k-reg-interior}, for any $\mu'\in (0,1+\alpha]$ and $\sigma'\in (0,\sigma]$,
\begin{align*}
& \left| \int_{B} \int_{(2B)^c}  (x_n)_+^{\beta-1}(y_n)_+^{\beta-1}  (v_k(x) - v_k(y)) (\eta(x) - \eta(y)) K_k^{(2)}(x,y) \d y \d x \right|\\ &\le C r_k^{(\mu' - 1)\sigma'} \Vert \eta \Vert_{L^{\infty}(\R^n)} \Vert \widetilde{J}_k \Vert_{L^{\infty}(\R^n)}^2 [v_k]_{C^{\gamma}(\{ x_n \ge 0 \})} \cdot\\
&\qquad \qquad \cdot \int_{\{x_n \ge 0\} \cap B} \int_{ \{x_n \ge 0\} \cap (2B)^c} (x_n)_+^{\beta-1} (y_n)_+^{\beta-1} (|x|^{\mu' \sigma'} + |x-y|^{\mu' \sigma'})  |x-y|^{-n-2s+\gamma - \sigma'} \d y \d x \\
&\le C r_k^{(\mu' - 1)\sigma'} \int_{\{x_n \ge 0\} \cap B}  (x_n)_+^{\beta-1} \int_{ \{y_n \ge 0\} \cap (2B)^c}  (y_n)_+^{\beta-1} |x-y|^{-n-2s +\gamma + (\mu' - 1) \sigma'} \d y \d x \\
&\le C r_k^{(\mu' - 1)\sigma'} \left( \int_{\{x_n \ge 0\} \cap B}  (x_n)_+^{\beta-1} \d x \right) \left( \int_{ \{y_n \ge 0\} \cap (2B)^c} (y_n)_{+}^{\beta-1}|y|^{-n-2s+\gamma+(\mu' - 1)\sigma' } \d y \right) \\
& \le C r_k^{(\mu' - 1)\sigma'} \to 0,
\end{align*}
where the integrals converge by considering $(\mu',\sigma')$ such that $(\mu'-1)\sigma'> 0$ and $(\mu' - 1) \sigma' + \gamma < 2s + \beta -1$. This can be achieved by fixing $\gamma < 2s + \beta-1$ and $\mu' > 1$ first, and then taking $\sigma'$ small enough.

\textbf{Step 3:} The goal of this step is to prove that there exist $v \in C^{\gamma}(\R^n)$ with
\begin{align}
\label{eq:v-bound-bdry}
|v(x)| \le |x|^{\gamma},
\end{align}
and a kernel $K$ satisfying \eqref{eq:K-comp}
such that,
\begin{align*}
\begin{cases}
\mathcal{L}\big((x_n)_+^{\beta-1}(y_n)_+^{\beta-1}K(x-y)\big)(v) &= 0 ~~ \text{ in } \{ x_n > 0 \},\\
\partial_n v &= 0 ~~ \text{ on } \{ x_n = 0 \}
\end{cases}
\end{align*}
in the following sense: For $\theta_K \in \R^n$, defined in \autoref{lemma:ibp}, and any $\eta \in C_c^{\infty}(\R^n)$ it holds
\begin{align}
\label{eq:global-sol-diffquot}
\begin{split}
\int_{\R^n} & \int_{\R^n} (v(x) - v(y))(\eta(x) - \eta(y)) (x_n)_+^{\beta-1}(y_n)_+^{\beta-1}K(x-y) \d y \d x \\
&= \1_{\{\beta = s\}} \int_{\{ x_n = 0 \}} v(x',0) \theta'_K \cdot \nabla_{x'}\eta(x',0) \d x'.
\end{split}
\end{align}

The existence of $v$ with $v_k \to v$ locally uniformly satisfying the desired bound \eqref{eq:v-bound-bdry} already follows from the Arzel\`a-Ascoli theorem. Moreover, in the light of \eqref{eq:limit-ti-diffquot} it suffices to show
\begin{align}
\label{eq:interior-PDE-convergence}
\begin{split}
 \int_{\R^n} & \int_{\R^n} \widetilde{J}_k(0) \widetilde{J}_k(0) (x_n)_+^{\beta-1} (y_n)_+^{\beta-1} K_k((D \Phi_k)(x_k) \cdot (x-y))(v_k(x) - v_k(y))(\eta(x) - \eta(y))  \d y \d x \\
 & \to J_0^2 \int_{\R^n} \int_{\R^n} (x_n)_+^{\beta-1} (y_n)_+^{\beta-1} K(x-y)(v(x) - v(y))(\eta(x) - \eta(y))  \d y \d x
\end{split}
\end{align}
for some kernel $K$ satisfying \eqref{eq:K-comp} and some $J_0 > 0$ and any $\eta \in C^{\infty}_c(\R^n)$. In case $\beta = s$, we also need 
\begin{align}
\label{eq:boundary-condition-convergence}
\int_{\{ x_n = 0 \}} v_k(x',0) \widetilde{\theta}_k'(0) \cdot \nabla_{x'} \eta(x',0) \d x' \to \int_{\{ x_n = 0 \}} v(x',0) \theta_K' \cdot \nabla_{x'} \eta(x',0) \d x'.
\end{align}

The existence of $K$ follows from \cite[Proposition 2.2.36]{FeRo24} applied to the family of homogeneous kernels $K_k((D \Phi_k)(x_k) \cdot (x-y))$.
Note that for any ball $B$ centered around the origin it holds
\begin{align*}
& \Big| \int_B \int_B \widetilde{J}_k(0) \widetilde{J}_k(0) (x_n)_+^{\beta-1} (y_n)_+^{\beta-1} K_k((D \Phi_k)(x_k) \cdot (x-y))\cdot\\
&\qquad\qquad\qquad\qquad\qquad\qquad \cdot((v_k - v)(x) - (v_k - v)(y))(\eta(x) - \eta(y))  \d y \d x \Big| \\
&\qquad\le C [v_k - v]_{C^{\gamma}(B)} \int_B \int_B (x_n)_+^{\beta-1} (y_n)_+^{\beta-1} |x-y|^{-n-2s+1+\gamma} \d y \d x\\
&\qquad\le C [v_k - v]_{C^{\gamma}(B)} \int_B (x_n)_+^{\beta-1} (1 + (x_n)_+^{\beta - 2s +\gamma}) \d x \to 0,
\end{align*}
where the integrals converge by \autoref{lemma:bd-int} since $\gamma > 2s - 1$ and $2\beta + \gamma > 2s$.

Moreover, it holds for any $R > 1$ with $\supp(\eta) \subset B_R$, using $|v_k(y) - v(y)|\le C |y|^{\gamma}$ and $|x-y|\geq|y|/2$,
\begin{align*}
& \Big| \int_{\supp(\eta)} \int_{B_{2R}^c} \widetilde{J}_k(0) \widetilde{J}_k(0) (x_n)_+^{\beta-1} (y_n)_+^{\beta-1} K_k((D \Phi_k)(x_k) \cdot (x-y)) \cdot \\
&\qquad\qquad\qquad\qquad\qquad\qquad \cdot((v_k - v)(x) - (v_k - v)(y))(\eta(x) - \eta(y))  \d y \d x \Big| \\
&\qquad\le C \int_{\supp(\eta)} |v_k(x) - v(x)| (x_n)_+^{\beta-1} \left( \int_{B_{2R}^c} (y_n)_+^{\beta - 1} |y|^{-n-2s} \d y \right) \d x \\
&\qquad\quad + C \int_{\supp(\eta)} (x_n)_+^{\beta-1}  \left( \int_{B_{2R}^c} (y_n)_+^{\beta - 1} |v_k(y) - v(y)| |y|^{-n-2s} \d y \right) \d x  \\
&\qquad\le C R^{\beta - 1 - 2s} \Vert v_k - v \Vert_{L^{\infty}(\supp(\eta))} + C \int_{\supp(\eta)} (x_n)_+^{\beta-1}  \left( \int_{B_{2R}^c} (y_n)_+^{\beta - 1} |y|^{-n-2s + \gamma} \d y \right) \d x \\
&\qquad\le C R^{\beta - 1 - 2s + \gamma},
\end{align*}
where we used \autoref{lemma:int-polar-coord}. Note that the right-hand side converges to zero uniformly in $k$, as $R \to \infty$ since $\beta -1 - 2s + \gamma < 0$.

By combination of the previous two estimates, we deduce that
\begin{align*}
\Bigg| \int_{\R^n} \int_{\R^n} \widetilde{J}_k(0) \widetilde{J}_k(0) & (x_n)_+^{\beta-1} (y_n)_+^{\beta-1} K_k((D \Phi_k)(x_k) \cdot (x-y)) \cdot \\
& \cdot ((v_k - v)(x) - (v_k - v)(y))(\eta(x) - \eta(y))  \d y \d x \Bigg| \to 0,
\end{align*}
choosing first $R$ so large that the contribution from the second estimate becomes small and then choosing $k$ large so that the contribution from the first term with $B = B_{2R}$ becomes small.

Hence, in order to prove that \eqref{eq:interior-PDE-convergence} holds true  it remains to show
\begin{align*}
\Bigg| \int_{\R^n} \int_{\R^n} & (x_n)_+^{\beta-1} (y_n)_+^{\beta-1} [\widetilde{J}_k(0) \widetilde{J}_k(0) K_k((D \Phi_k)(x_k) \cdot (x-y)) - J_0^2K(x-y)] \cdot \\
&\cdot (v(x) - v(y))(\eta(x) - \eta(y))  \d y \d x \Bigg| \to 0.
\end{align*}
Note that there is $J_0 = \lim_{k \to \infty} \widetilde{J}_k(0)$ by the uniform boundedness of $\widetilde{J}_k(0)$ and also a kernel $K$ satisfying \eqref{eq:K-comp} such that
\begin{align*}
(x_n)_+^{\beta-1} (y_n)_+^{\beta-1} (1 \wedge |x-y|^2)K_k((D \Phi_k)(x_k) \cdot (x-y)) \to (x_n)_+^{\beta-1} (y_n)_+^{\beta-1}(1 \wedge |x-y|^2)K(x-y)
\end{align*}
weakly in the sense of measures in $(B^c \times B^c)^c$ for any ball $B \subset \R^n$ centered at zero, by the same arguments as before (see also \cite{FeRo24}). This implies the above convergence and yields \eqref{eq:interior-PDE-convergence}.

Note that \eqref{eq:boundary-condition-convergence} follows immediately by convergence of $v_k\to v$, and the fact that, up to a subsequence, $\widetilde{\theta}_k'(0) \to \theta_0' \in \R^n$, due to the uniform boundedness of $\theta_k$. 
Moreover, by following the proof of \autoref{lemma:ibp-nonflat-trafo}, it becomes apparent that $\widetilde{\theta}_k(0)$ is precisely the vector appearing in the boundary integral for the energy driven by the kernel $\widetilde{J}_k(0)^2 K_k((D \Phi_k)(x_k) \cdot (x-y))$. Hence, by the convergence properties of $J_k$, $K_k$, and $\Phi_k$, as $k \to \infty$, we obtain that $\theta_0 = \theta_{K}$ from \autoref{lemma:ibp}, and therefore \eqref{eq:global-sol-diffquot} follows.
Hence, altogether, we have proved \eqref{eq:global-sol-diffquot}.

\textbf{Step 4:} By Step 3, we have that $v$ satisfies \eqref{eq:global-sol-diffquot}, i.e. it is a weak solution in the sense of \autoref{def:weak-sol-ti}. Together with the growth condition in \eqref{eq:v-bound-bdry}, we are in a position to apply the weighted Liouville theorem in \autoref{prop:weighted-Liouville} and deduce that $v$ is constant. However, by \eqref{eq:v-bound-bdry}, this implies $v = 0$. On the other hand, recalling the uniform convergence $v_k \to v$, as well as the estimate \eqref{eq:blowup-contradiction-ass-diffquot},
\begin{align*}
    \Vert v \Vert_{L^{\infty}(\{ x_n \ge 0 \} \cap B_{1/2})} > \frac{\delta}{2} > 0,
\end{align*}
a contradiction. Hence, the proof is complete.
\end{proof}

We are now in a position to establish \autoref{lemma:bdry-reg} replacing the difference quotient over $x \in \{ x_n = 0 \} \cap B_{1/2}$ and $x \in \{ x_n \ge 0 \} \cap B_{1/2}$ by the $C^{\gamma}(\{ x_n \ge 0 \} \cap B_{1/2})$ norm.

\begin{proof}[Proof of \autoref{lemma:bdry-reg}]
It is enough to prove that for any $\delta \in (0,1)$ there exists $C_{\delta} > 0$ such that
\begin{align}
\label{eq:almost-reg-est-try}
\begin{split}
[u]_{C^{\gamma}(\{ x_n \ge 0\} \cap B_{1/4})} &\le \delta [u]_{C^{\gamma}(\{ x_n \ge 0\} )} \\
&\quad + C_{\delta} \Big( \Vert u \Vert_{L^{\infty}(\{ x_n \ge 0\} \cap B_{1})} + \Vert (x_n)_+^{\beta-1} u \Vert_{L^1_{2s}(\{ x_n > 0 \} \setminus B_1)} \\
&\quad ~~ + \Vert (x_n)_+^{1-\beta} g_k \Vert_{L^{\infty}(\{ x_n \ge 0 \} \cap B_1) }  + \Vert f \Vert_{C^{\gamma}(\{ x_n \ge 0\})} \\
&\quad ~~ + \1_{\{ \beta = s\}} \big[  \Vert b \Vert_{L^{\infty}(\{x_n = 0 \} \cap B_1)} + \Vert A \Vert_{L^{\infty}(\{x_n = 0 \} \cap B_1)} + \Vert a \Vert_{C^{\gamma}(\{x_n = 0 \} \cap B_1)} \big]\Big).
\end{split}
\end{align}
From here, the desired result follows by a standard covering argument, after applying \eqref{eq:almost-reg-est-try} to $\xi u$, where $\xi \in C^{\infty}_c(B_3)$ is a cut-off function satisfying $0 \le \xi \le 1$ and $\xi \equiv 1$ in $B_{2}$, and using that
\begin{align*}
& \Vert (x_n)_+^{1-\beta}|\mathcal{L}\big(J(x)J(y) (x_n)_+^{\beta-1}(y_n)_+^{\beta-1} K(\Phi(x)-\Phi(y))\big)((1-\xi)u) | \Vert_{L^{\infty}(\{ x_n > 0 \} \cap B_1)} \\
&\qquad \le C \int_{B_1^c} (y_n)_+^{\beta-1} |u(y)| |y|^{-n-2s} \d y.
\end{align*}
To prove \eqref{eq:almost-reg-est-try} we use \autoref{lemma:interior}, \autoref{lemma:interior-2} and \autoref{lemma:almost-reg-bdry}. First, we assume without loss of generality 
\begin{align*}
\Vert u \Vert_{L^{\infty}(\{ x_n \ge 0 \} \cap B_1)} &+\Vert (x_n)_+^{\beta-1} u \Vert_{L^1_{2s}(\{ x_n > 0 \} \setminus B_1)} + \Vert (x_n)_+^{1-\beta} g \Vert_{L^{\infty}(\{ x_n \ge 0 \} \cap B_1) } +  \Vert f \Vert_{C^{\gamma}(\{ x_n \ge 0\})} \\
& + \1_{\{ \beta = s\}} \big[  \Vert b \Vert_{L^{\infty}(\{x_n = 0 \} \cap B_1)} + \Vert A \Vert_{L^{\infty}(\{x_n = 0 \} \cap B_1)} + \Vert a \Vert_{C^{\gamma}(\{x_n = 0 \} \cap B_1)} \big]\le 1.
\end{align*}
Let us take $x,y \in \{ x_n \ge 0\} \cap B_{1/4}$ and assume without loss of generality that $(x_n)_+ \le (y_n)_+$. Let us denote by $x_0 = (x',0)$ and $y_0 = (y',0)$ the projections of $x,y$ to $\{ x_n = 0\}$, i.e. it holds $x_n = |x - x_0|$ and $y_n = |y - y_0|$. By \autoref{lemma:almost-reg-bdry} we have
\begin{align}
\label{eq:almost-reg-bdry-apply}
\sup_{z \in \{ x_n \ge 0 \} \cap B_{1/2} } \frac{|u(x_0)- u(z)|}{|x_0 - z|^{\gamma}} + \sup_{z \in \{ x_n \ge 0 \} \cap B_{1/2} } \frac{|u(y_0)- u(z)|}{|y_0 - z|^{\gamma}} \le \delta [u]_{C^{\gamma}(\{ x_n \ge 0\} )} + C_{\delta}.
\end{align}

\textbf{Case 1:} Assume that $x \in B_{y_n/2}(y)$. In this case, if $\gamma < 2s$ we apply \autoref{lemma:interior} to  $u - u(x)$, and if $\gamma > 2s$, we apply \autoref{lemma:interior-2} to $u-u(x)$ with $\alpha := \gamma-2s$, and obtain
\begin{align*}
\frac{|u(x) - u(y)|}{|x-y|^{\gamma}} &\le C (y_n)^{-\gamma} \Vert u - u(x) \Vert_{L^{\infty}(B_{y_n}(y))} \\
&\quad + C(y_n)_+^{2s + 1-\beta - \gamma} \Vert (\cdot_n)_+^{\beta-1} |u-u(x)||\cdot - y|^{-n-2s} \Vert_{L^1(B_{y_n}(y)^c)} + C.
\end{align*}

Using \eqref{eq:almost-reg-bdry-apply} and $|x_0 - x| = x_n \le y_n$ we deduce:
\begin{align*}
(y_n)^{-\gamma} \Vert u - u(x) \Vert_{L^{\infty}(B_{y_n}(y))} &\le C (y_n)^{-\gamma} |u(x_0) - u(x)| + C(y_n)^{-\gamma} \Vert u - u(x_0) \Vert_{L^{\infty}(B_{y_n}(y))} \\
&\le (y_n)^{-\gamma} (x_n)^{\gamma} (\delta [u]_{C^{\gamma}(\{ x_n \ge 0\} )} + C_{\delta}) + C(\delta [u]_{C^{\gamma}(\{ x_n \ge 0\} )} + C_{\delta}) \\
& \le C \delta [u]_{C^{\gamma}(\{ x_n \ge 0\} )} + C_{\delta}.
\end{align*}
Moreover, by the same argument, for any $z \in B_{1/2} \setminus B_{y_n}(y)$, it holds
\begin{align*}
(y_n)^{-\gamma} |u(z) - u(x)| \le C \delta [u]_{C^{\gamma}(\{ x_n \ge 0\} )} + C_{\delta},
\end{align*}
and therefore,
\begin{align*}
(y_n)_+^{2s + 1-\beta - \gamma} & \Vert (\cdot_n)_+^{\beta-1} |u-u(x)||\cdot - y|^{-n-2s} \Vert_{L^1(B_{y_n}(y)^c)} \\
& \le C (y_n)_+^{2s + 1-\beta - \gamma} \int_{B_{1/2} \setminus B_{y_n}(y)} (z_n)_+^{\beta-1}|u(z) - u(x)| |z-y|^{-n-2s} \d z \\
&\quad + C (y_n)_+^{2s + 1-\beta - \gamma}\int_{B_{1/2}^c} (z_n)_+^{\beta-1}|u(z) - u(x)| |z-y|^{-n-2s} \d z \\
&\le C \delta [u]_{C^{\gamma}(\{ x_n \ge 0\} )} + C_{\delta},
\end{align*}
where we used \cite[Lemma B.2.4]{FeRo24} for the first summand and the fact that $\gamma < 2s-\beta+1$ for the second summand.

\textbf{Case 2:} Assume that $x \not\in B_{y_n/2}(y)$. In this case it is clear that
\begin{align*}
|x-y| \ge |x_0 - y_0|, \qquad |x-y| \ge y_n/2 \ge x_n/2 = |x - x_0|/2, \qquad |x-y| \ge y_n/2 = |y - y_0|/2.
\end{align*}
Hence, by \autoref{lemma:almost-reg-bdry}, we have
\begin{align*}
\frac{|u(x) - u(y)|}{|x-y|^{\gamma}} \le C \frac{|u(x) - u(x_0)|}{|x-x_0|^{\gamma}} + C \frac{|u(x_0) - u(y_0)|}{|x_0-y_0|^{\gamma}}  + C \frac{|u(y_0) - u(y)|}{|y_0-y|^{\gamma}} \le C\delta [u]_{C^{\gamma}(\{ x_n \ge 0\} )} + C_{\delta}.
\end{align*}

Altogether, we deduce \eqref{eq:almost-reg-est-try} in both cases and the proof is complete.
\end{proof}

\section{Higher regularity of free boundaries}\label{section:applications}

\subsection{One-phase problem}

The goal of this section is to prove our main result on the nonlocal one-phase free boundary problem, namely that $C^{2,\alpha}$ free boundaries are $C^{\infty}$ at regular points (see \autoref{thm:main-onephase-intro} and \autoref{thm:main-onephase-intro2}). We will prove it in several steps, using a delicate bootstrap argument.

\begin{lemma}
\label{lemma:PDE-onephase}
Let $K$ be as in \eqref{eq:K-comp} and assume that
\begin{align}
\label{eq:K-reg-ass}
K\in C^{2(k+\alpha) + 1}(\mathbb{S}^{n-1}),
\end{align} for some $k \in \N$ with $k \ge 2$ and $\alpha \in (0,1)$ with $\alpha \not\in \{s,1-s\}$.\\
Let $\partial \Omega \in C^{k,\alpha}$ in $B_4$ and $v \in L^{1}_{2s}(\R^n)$ be a nonnegative weak solution to
\begin{align*}
\begin{cases}
L v &= 0 ~~ \text{ in } \Omega \cap B_4,\\
v &=0 ~~ \text{ in } B_4 \setminus \Omega,\\
\frac{v}{d^s_{\Omega}} &= h ~~ \text{ on } B_4 \cap \partial \Omega
\end{cases}
\end{align*}
for some $h \in C^{k,\alpha}(B_4 \cap \partial \Omega)$. 
Let $0 \in \partial \Omega$, and assume that for some $\delta > 0$,
\begin{align}
\label{eq:blue-ass}
\partial_n v \ge \delta, \qquad \partial_n (d_{\Omega}) \ge \delta \qquad \text{ in } \Omega \cap B_2.
\end{align}

Let $\kappa_1, \kappa_2 \in  C^{\infty}(\R^n)$ be two cut-off functions such that $0 \le \kappa_i \le 1$ in $\R^n$ and $\kappa_1 \equiv 1$ in $B_{7/4}$, $\kappa_1 \equiv 0$ in $B_2^c$, and $\kappa_2 \equiv 1$ in $B_{4/3}$, $\kappa_2 \equiv 0$ in $B_{3/2}^c$. Then, for any $i \in \{ 1 , \dots , n-1\}$, the function
\begin{align}
\label{eq:w-def}
\widetilde{w} := \widetilde{w}_i := \kappa_2 \frac{\partial_i u}{\partial_n u}, \qquad \text{ where } \qquad u = \kappa_1 v,
\end{align} 
satisfies
\begin{itemize}
\item[(i)] $d_{\Omega}^{1-s} \nabla u \in C^{k-1,\alpha}(\overline{\Omega} \cap B_{2})$ with $\supp(u) \subset \overline{\Omega \cap B_2}$, and it holds $d_{\Omega}^{1-s} \partial_n u \ge c_0 > 0$ in $\Omega \cap B_{3/2}$ for some $c_0 > 0$, depending on $\delta$.
\item[(ii)] $\widetilde{w} \in C_{k-1+,\alpha}^{k+\alpha+2s}(\Omega | B_{3/2})$ with $\supp(\widetilde{w}) \subset \overline{\Omega \cap B_{3/2}}$.

\item[(iii)] The following equation holds true for $\widetilde{w}$ in the strong sense.
\begin{align*}
\begin{cases}
\mathcal{L}\big(\partial_n u(x)\partial_n u(y)K(x-y)\big)(\widetilde{w}) &= \widetilde{g} ~~ \text{ in } \Omega \cap B_1,\\
\partial_{\Xi} \widetilde{w} &= \widetilde{b} ~~ \text{ on } \partial \Omega \cap B_1,
\end{cases}
\end{align*}
where $\widetilde{g}$ satisfies $d_{\Omega}^{1-s} \widetilde{g} \in C^{k-1,\alpha}(\overline{\Omega} \cap B_{3/2})$, and given a function $\Xi = \Xi(x) \in \R^n$ on $\partial \Omega$, 
\begin{align}
\label{eq:tilde-b-def}
\begin{split}
\widetilde{b}(x) &:= \frac{s h(x) \Xi(x) \cdot \partial_{\tau(x)} \nu(x) + \Xi(x) \cdot \nu(x)\partial_{\tau(x)} h(x)}{s h(x) (\nu^{(n)}(x))^2} ~~ \text{ on } \partial \Omega \cap B_1,\\
\tau(x) &:= (\nu^{(n)}(x) e^{(i)} - \nu^{(i)}(x) e^{(n)} ),
\end{split}
\end{align}
where $\tau(x) \perp \nu(x)$ is tangential to $\partial \Omega$.  In particular, if \ $\Xi(x) = \nu(x)$, then 
\[\partial_\nu \tilde w = \frac{\partial_\tau h}{sh \nu^{(n)}} \in C^{k-1,\alpha}(\partial\Omega\cap B_1).\]
\end{itemize}
\end{lemma}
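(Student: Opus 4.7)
The plan is to prove the three parts in order, with each part feeding into the next.

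For (i), I would take the equation $L(\kappa_1 v) = f$ on $\Omega\cap B_2$, where the right-hand side is $f(x) = -L((1-\kappa_1)v)(x) = -2\int_{\R^n}(1-\kappa_1(y))v(y)K(x-y)\d y$ for $x\in B_{7/4}$. Since $(1-\kappa_1)v$ is supported in $B_{7/4}^c$ and $v\in L^1_{2s}(\R^n)$, this right-hand side is $C^\infty$ on $\overline{\Omega}\cap B_{7/4}$, hence in $C^{k+\alpha+1}(\overline{\Omega}\cap B_{7/4})$ globally after patching with a smoother cutoff. \autoref{lemma:boundary-reg} then gives $\nabla u/d_\Omega^{s-1}\in C^{k+\alpha+2s}_{k-1,\alpha}(\Omega|B_{2})$, and the assumption \eqref{eq:blue-ass} combined with \autoref{lemma:Hopf} (third claim) yields $\partial_n u\geq c_0 d_\Omega^{s-1}$ in $\Omega\cap B_{3/2}$. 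Part (ii) is then a direct consequence: writing $A:=\partial_i u/d_\Omega^{s-1}$, $B:=\partial_n u/d_\Omega^{s-1}$, both are $C^{k-1,\alpha}$ up to $\partial\Omega$ with interior gains in the space $C^{k+\alpha+2s}_{k-1,\alpha}$, and since $B\geq c_0>0$, the quotient $w=A/B$ and $\widetilde w=\kappa_2 w$ inherit the same regularity by the product/quotient rules in these scaled H\"older spaces.

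The heart of the proof is (iii). To derive the equation, I would start from $L(\partial_i u)=L(\partial_n u)=0$ in $\Omega\cap B_{7/4}$ (which follows by differentiating $Lu=0$, valid there since $\kappa_1\equiv 1$), write $\partial_i u = w \, \partial_n u$, and use the algebraic identity
\begin{align*}
w(x)\partial_n u(x)-w(y)\partial_n u(y) = (w(x)-w(y))\partial_n u(y) + w(x)\bigl(\partial_n u(x)-\partial_n u(y)\bigr)
\end{align*}
to obtain, after multiplying by $\partial_n u(x)$,
\begin{align*}
\partial_n u(x) L(\partial_i u)(x) - w(x)\partial_n u(x) L(\partial_n u)(x) = \mathcal{L}\bigl(\partial_n u(x)\partial_n u(y) K(x-y)\bigr)(w)(x),
\end{align*}
so the right-hand side vanishes in $\Omega\cap B_{7/4}$. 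Passing from $w$ to $\widetilde w=\kappa_2 w$ introduces an error term $\widetilde g(x)=2\int w(y)\bigl(\kappa_2(x)-\kappa_2(y)\bigr)\partial_n u(x)\partial_n u(y)K(x-y)\d y$; for $x\in B_1$ one has $\kappa_2(x)=1$, so only $y\in B_{4/3}^c$ contributes, making the kernel smooth in $x$. Factoring out $\partial_n u(x)\sim d_\Omega^{s-1}(x)$ gives $d_\Omega^{1-s}\widetilde g\in C^{k-1,\alpha}(\overline{\Omega}\cap B_{3/2})$.

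For the boundary condition, I would use the decomposition $\partial_j u = d_\Omega \partial_j U + s U\,\partial_j d_\Omega$ with $U:=u/d_\Omega^s$ (so $U|_{\partial\Omega}=h$ and $\nabla d_\Omega|_{\partial\Omega}=\nu$ in the orientation fixed by the flattening), yielding
\begin{align*}
\frac{\partial_i u}{d_\Omega^{s-1}}\Big|_{\partial\Omega} = s h\, \nu^{(i)}, \qquad \frac{\partial_n u}{d_\Omega^{s-1}}\Big|_{\partial\Omega}=sh\,\nu^{(n)}.
\end{align*}
Applying the quotient rule to $w = A/B$ and evaluating on $\partial\Omega$ produces terms in $\nabla A$ and $\nabla B$, which in turn expand as $\nabla(s U \partial_j d_\Omega)$ plus terms proportional to $d_\Omega$ that vanish on $\partial\Omega$. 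A careful bookkeeping using $U|_{\partial\Omega}=h$ gives $\partial_\Xi w\big|_{\partial\Omega}$ as a linear combination of $\Xi\cdot\nabla h$ and $\Xi\cdot\partial_j\nu$; the key simplification is that contributions from the normal component of $\Xi$ cancel because $\partial_n U$ and $\partial_n d_\Omega$ combine to drop out of the numerator after dividing by $B^2=(sh\nu^{(n)})^2$, leaving only a genuine tangential derivative in the direction $\tau=\nu^{(n)}e^{(i)}-\nu^{(i)}e^{(n)}$. The cleaner derivation for $\Xi=\nu$ uses $\nu\cdot\partial_\tau\nu=\tfrac12\partial_\tau|\nu|^2=0$, which eliminates the $h\,\nu\cdot\partial_\tau\nu$ term and leaves exactly $\partial_\nu\widetilde w=\partial_\tau h/(sh\nu^{(n)})$. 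The main obstacle I expect is this boundary computation: keeping the algebra tight enough to identify the clean formula in \eqref{eq:tilde-b-def} and verifying that the cancellations indeed eliminate the contribution of $\partial_n h$ whenever $\Xi\cdot\nu$ survives.
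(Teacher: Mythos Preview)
Your outline for (i) and (ii) matches the paper's proof and is correct.

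For (iii) there is one concrete error. You write that $L(\partial_i u)=L(\partial_n u)=0$ in $\Omega\cap B_{7/4}$ ``since $\kappa_1\equiv 1$'' there. This is false for a nonlocal operator: although $u=v$ in $B_{7/4}$, the operator $L$ sees the cutoff outside, so $Lu = -L((1-\kappa_1)v)=:g_1$ in $\Omega\cap B_{7/4}$, and hence $L(\partial_j u)=\partial_j g_1$, not zero. Your algebraic identity then gives
\[
\mathcal{L}\bigl(\partial_n u(x)\partial_n u(y)K(x-y)\bigr)(w)(x)=\partial_n u(x)\,\partial_i g_1(x)-\partial_i u(x)\,\partial_n g_1(x),
\]
and passing to $\widetilde w=\kappa_2 w$ adds the $\kappa_2$-error you wrote. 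So your $\widetilde g$ is missing the $g_1$ terms; the correct expression (for $x\in B_1$) is $\widetilde g=\partial_n u\bigl(\partial_i g_1 - L((1-\kappa_2)\partial_i u)\bigr)-\partial_i u\,\partial_n g_1$, which is exactly the paper's formula. The paper derives this via a weak formulation and the symmetric identity \eqref{eq:algebra-quotient-PDE}; your direct route is equally valid once this correction is made.

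For the boundary condition your method via $A_j=\partial_j u/d_\Omega^{s-1}=d_\Omega\partial_j U+sU\partial_j d_\Omega$ with $U=u/d_\Omega^s$ is correct and yields the stated formula, but your description of the cancellation is inaccurate. The term $(\Xi\cdot\nu)$ does \emph{not} cancel: it survives as the coefficient of $\partial_\tau h$. What cancels is the coefficient of $\partial_\Xi U$ (namely $\nu^{(n)}\nu^{(i)}-\nu^{(i)}\nu^{(n)}=0$), which is the mechanism eliminating the unknown $\partial_\nu U$; separately, $\nu^{(n)}\partial_i U-\nu^{(i)}\partial_n U=\partial_\tau U=\partial_\tau h$ because $\tau\perp\nu$. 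You also need to justify that the term $d_\Omega\,\partial_\Xi\partial_j U$ vanishes on $\partial\Omega$ when $k=2$ (where $U$ is only $C^{1,\alpha}$ up to the boundary); this requires the interior estimate $|\nabla^2 U|\le C d_\Omega^{\alpha-1}$ from $U\in C^{k+\alpha+2s}_{k-1,\alpha}$. The paper handles this by working instead with the second-order expansion $v=d_\Omega^s h(0)(1+b\cdot x)+\widetilde v$ from \cite{AbRo20} and proving the remainder estimate $|\partial_\Xi(\partial_i\widetilde v/d_\Omega^{s-1})|\le Cd_\Omega^\alpha$ directly.
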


\begin{proof}
Due to interior estimates and by the regularity of $K$, $v$ is a classical solution and $v \in L^{\infty}(\Omega \cap B_{2})$ (see \cite[Theorem 2.4.1]{FeRo24}).
Hence, by construction $u\in L^{\infty}(\R^n)$ is a strong solution to
\begin{align}
\label{eq:u-PDE}
\begin{cases}
Lu &= g_1 ~~ \text{ in } \Omega \cap B_{3/2},\\
u&= 0 ~~ \text{ in }{\R^n \setminus (B_2 \cap \Omega)},
\end{cases}
\end{align}
where $g_1 = -L((1-\kappa_1)v) \geq0$.

By \autoref{lemma:boundary-reg} we have $d_{\Omega}^{1-s} \nabla v \in C^{k+\alpha+2s}_{k-1+\alpha}(\Omega  | B_{2})$, and by the definition of $u$, it therefore also holds $d_{\Omega}^{1-s} \nabla u \in C^{k+\alpha+2s}_{k-1+\alpha}( \Omega | B_{2})$, which in particular implies $d_{\Omega}^{1-s} \nabla u \in C^{k-1,\alpha}_c(\overline{\Omega} \cap B_{2})$. The second claim in (i) follows immediately by the third claim in \autoref{lemma:Hopf}, since by construction, $\partial_n u \ge \delta$ in $\Omega \cap B_{3/2}$.

To prove (ii), we observe first that since $d_{\Omega}^{1-s} \nabla u \in C^{k+\alpha+2s}_{k-1,\alpha}(\Omega | B_{3/2})$ and $d_{\Omega}^{1-s} \partial_n u \ge c_0 > 0$ in $\Omega \cap B_{3/2}$, by the chain rule, it holds in particular
\begin{align*}
(d_{\Omega}^{1-s} \partial_n u)^{-1} \in C^{k+\alpha+2s}_{k-1,\alpha}(\Omega | B_{3/2}).
\end{align*}
Therefore, by the product rule, we have
\begin{align*}
\frac{\partial_i u}{\partial_n u} = \frac{\partial_i u}{d^{s-1}_{\Omega}} \frac{d^{s-1}_{\Omega}}{\partial_n u} \in C^{k+\alpha+2s}_{k-1,\alpha}(\Omega | B_{3/2}),
\end{align*}
which proves (ii). 

Let us now prove (iii). 
By the regularity of $K$, it holds $g_1 \in C^{2(k+\alpha)+ 1}(B_{4/3})$. Moreover,
\begin{align*}
{\Vert g_1 \Vert_{C^{2(k+\alpha)+ 1}(B_{4/3})}\le C \Vert v \Vert_{L^{1}_{2s}(\R^n)}.}
\end{align*} 

In particular, by differentiating the equation for $u$, we deduce for any $i \in \{1,\dots,n\}$
\begin{align*}
\begin{cases}
L (\partial_i u) &= \partial_i g_1 ~~ \text{ in } \Omega \cap B_{3/2},\\
\partial_i u &= 0 ~~ \text{ in } {\R^n \setminus ({B_{2}} \cap \Omega),}
\end{cases}
\end{align*}
and moreover,
\begin{align*}
\begin{cases}
L (\kappa_2 \partial_i u) &= \partial_i f - L((1-\kappa_2)\partial_i u) ~~ \text{ in } \Omega \cap B_{1},\\
\kappa_2 \partial_i u &= 0 \qquad\qquad\qquad\qquad ~~ ~~ \text{ in } \R^n \setminus ({B_{3/2}} \cap \Omega).
\end{cases}
\end{align*}

We observe the following algebraic identity:
\begin{align}
\label{eq:algebra-quotient-PDE}
\begin{split}
& (\kappa_2(x)\partial_i u(x) \partial_n u(y) - \kappa_2(y)\partial_i u(y) \partial_n u(x))(\eta(x) - \eta(y)) \\
&\qquad\qquad\qquad = (\partial_n u(x)\eta(x) - \partial_n u(y) \eta(y))(\kappa_2(x)\partial_i u(x) - \kappa_2(y)\partial_i u(y)) \\ 
&\qquad\qquad\qquad \quad- (\kappa_2(x)\partial_i u(x) \eta(x) - \kappa_2(y)\partial_i u(y) \eta(y))(\partial_n u(x) - \partial_n u(y)),
\end{split}
\end{align}
and therefore, we obtain for any $\eta \in C_c^{\infty}(\Omega \cap B_1)$
\begin{align*}
&\int_{\R^n} \int_{\R^n} (\widetilde{w}(x) - \widetilde{w}(y)) (\eta(x) - \eta(y)) \partial_n u(x) \partial_n u(y) K(x-y) \d y \d x \\
&\quad = \int_{\R^n} \int_{\R^n} (\kappa_2(x)\partial_i u(x) \partial_n u(y) - \kappa_2(y)\partial_i u(y) \partial_n u(x)) (\eta(x) - \eta(y)) K(x-y) \d y  \d x \\
&\quad = \int_{\R^n} \int_{\R^n} (\partial_n u(x)\eta(x) - \partial_n u(y) \eta(y))(\kappa_2(x)\partial_i u(x) - \kappa_2(y)\partial_i u(y)) K(x-y) \d y  \d x \\
&\quad\quad - \int_{\R^n} \int_{\R^n} (\kappa_2(x) \partial_i u(x) \eta(x) - \kappa_2(y) \partial_i u(y) \eta(y))(\partial_n u(x) - \partial_n u(y)) K(x-y) \d y  \d x \\
&\quad = \int_{\R^n} [(\partial_n u) (\partial_i g_1 - L((1-\kappa_2)\partial_i u)) - (\kappa_2\partial_i u) (\partial_n g_1)](x)\eta(x) \d x = \int_{\R^n} \widetilde{g}(x) \eta(x) \d x.
\end{align*}
In the last step, we used the equation for $\kappa_2\partial_i u$ with $\partial_n u \eta$ as a test function and also the equation for $\partial_n u$ with $\kappa_2 \partial_i \eta$ as a test function.

Note that the integrals on both sides of the chain of equalities are well-defined. Since $\eta$ is supported away from $\partial \Omega$, this is immediate by the interior regularity of $\widetilde{w},u$ established in (i), (ii), and the regularity of $f$.
In particular, by Fubini's theorem, we have
\begin{align*}
\int_{\R^n} \eta(x)\mathcal{L}\big(\partial_nu(x)\partial_nu(y)K(x-y)\big)(\widetilde{w})  \d x = \int_{\R^n} \widetilde{g}(x) \eta(x) \d x,
\end{align*}
with $\widetilde{g}$ defined as
\begin{align}
\label{eq:g-def}
\widetilde{g} = (\partial_n u) (\partial_i g_1 - L((1-\kappa_2)\partial_i u)) - (\kappa_2\partial_i u) (\partial_n g_1) , \qquad \text{ where } \qquad g_1 = -L ((1-\kappa_1)v).
\end{align}
Therefore, we deduce the equation for $\widetilde{w}$, since $\eta$ was arbitrary. Note that the left-hand side is finite by the properties of $u,\widetilde{w}$ from (i) and (ii), namely in particular we have
${\widetilde w \in C^{1+\alpha+2s}_{1+\alpha}(\overline{\Omega} | B_{3/2})}$ and $z := d^{1-s}_{\Omega} \partial_n u \in C^{1,\alpha}(\overline{\Omega} \cap B_2)$. Hence, since $\partial\Omega\in C^{2,\alpha}$ in $B_3$, {\cite[Corollary 2.5]{RoWe24a}} yields
\begin{align*}
\begin{split}
|\mathcal{L}\big(\partial_nu(x)\partial_nu(y)K(x-y)\big)(\widetilde{w})| &= |\mathcal{L} \big(d_{\Omega}^{s-1}(x) d_{\Omega}^{s-1}(y) z(x) z(y) K(x-y)\big)(\widetilde{w})| \\
&\le C d_{\Omega}^{s-1}(x) | L (d_{\Omega}^{s-1} z \widetilde{w})(x) | + C d_{\Omega}^{s-1}(x) | L ( z d_{\Omega}^{s-1})(x)|\\&\le C (d_{\Omega}^{s-1}(x) +  d_{\Omega}^{\alpha - 1}(x)).
\end{split}
\end{align*}
The property $d_{\Omega}^{1-s} \widetilde{g} \in C^{k-1,\alpha}(\overline{\Omega} \cap B_{3/2})$ follows immediately from the definition of $\widetilde{g}$ in \eqref{eq:g-def} and the results obtained in (i). 

Let us now consider $x_0\in \partial\Omega\cap B_1$ and let us prove that $\partial_{\Xi}\widetilde{w}(x_0)=\widetilde{b}(x_0)$, where  $\widetilde{b}$ is defined as in \eqref{eq:tilde-b-def}. For that we take into account that, by assumption, 
\begin{align}\label{Ss}
\frac{v}{d^{s}_{\Omega}}(x_0) =\lim_{x\to x_0, x\in\Omega} \frac{v}{d^{s}_{\Omega}}(x) = h(x_0) \qquad \forall x_0\in\partial\Omega\cap B_2.
\end{align} 

Let us assume from now on without loss of generality that $x_0 = 0 \in\partial\Omega$. Using that $\partial\Omega\in C^{2,\alpha}$ in $B_3$, \eqref{Ss}, and \cite[Proposition 4.1]{AbRo20}, it follows
\begin{align}
\label{eq_b}
v(x)=d^{s}_{\Omega}(x) h(0)(1+b\cdot x)+\widetilde{v}(x) ~~ \forall x\in B_{1}, \quad \text{ where } \quad b:=h(0)^{-1} \nabla\left(\displaystyle\frac{v}{d^{s}_{\Omega}}\right)(0) \in \R^{n}.
\end{align}
Here, we used that $h > 0$ by the Hopf lemma (see \autoref{lemma:Hopf}).
Therefore, for every $i \in \{1, \dots, n\}$ and $x\in B_{1/2}(0)$, denoting by $b=(b^{(1)},\ldots, b^{(n)})$,
\begin{align*}
\partial_{i}v(x):=sh(0)d^{s-1}_{\Omega}(x)(1+b\cdot x)\partial_{i}d_{\Omega}(x)+d^{s}_{\Omega}(x)h(0)b^{(i)}+\partial_{i}\widetilde{v}(x).
\end{align*}
Thus, for any $x \in B_1$,
\begin{align}\label{ec_w}
\widetilde{w}=\frac{\partial_{i}v(x)}{\partial_{n}v(x)}=\frac{sh(0)(1+b\cdot x)\partial_{i}d_{\Omega}(x)+d_{\Omega}(x)h(0)b^{(i)}+\frac{\partial_{i}\widetilde{v}}{d^{s-1}_{\Omega}}(x)}{sh(0)(1+b\cdot x)\partial_{n}d_{\Omega}(x)+d_{\Omega}(x)h(0)b^{(n)}+\frac{\partial_{n}\widetilde{v}}{d^{s-1}_{\Omega}}(x)} =: \frac{\sum_{j=1}^{3}F_{i,j}(x)}{\sum_{j=1}^{3}F_{n,j}(x)}.
\end{align}
We set $F_i = \sum_{j=1}^3 F_{i,j}$. Denoting by 
\begin{equation}\label{nu}
\nu_0:=(\nu_0^{(1)},\ldots, \nu_0^{(n)})=\nabla d_{\Omega}(0)=(\partial_1 d_{\Omega}(0),\ldots,\partial_n d_{\Omega}(0)),
\end{equation}
and $\Xi:=(\Xi^{(1)},\ldots, \Xi^{(n)})$ we get
\begin{align*}
\partial_{\Xi} F_{i}(0)&= s h(0) \left((\partial_{\Xi} \partial_i d_{\Omega})(0) + \Xi^{(i)}b\cdot\nu_0\right)+ (\partial_{\Xi} d_{\Omega})(0) h(0)b^{(i)}+\partial_{\Xi}\left(\frac{\partial_{i}\widetilde{v}}{d^{s-1}_{\Omega}}\right)(0).
\end{align*}

Next, we claim that
\begin{align}
\label{eq:error-estimate}
\left|\partial_{\Xi}\left(\frac{\partial_{i}\widetilde{v}}{d^{s-1}_{\Omega}}\right) \right|\leq C d_{\Omega}^{\alpha} ~~ \text{ in } \Omega\cap B_{1}.
\end{align}
To see it, let us observe that on the one hand, we have 
\begin{align}
\label{eq:error-estimate-help-1}
\frac{\nabla \widetilde{v}}{d^{s-1}_{\Omega}} \in C^{1,\alpha}(\overline{\Omega} \cap B_1),
\end{align}
since by \autoref{lemma:boundary-reg} and $\partial \Omega \in C^{2,\alpha}$ in $B_3$ it holds $\nabla v / d^{s-1}_{\Omega} \in C^{1,\alpha}(\overline{\Omega} \cap B_1)$, and moreover, 
\begin{align*}
\frac{\nabla (d^s_{\Omega} (1 + b \cdot x))}{d^{s-1}_{\Omega}} = s (\nabla d_{\Omega})(1+ b \cdot x) + d_{\Omega} \nabla(1 + b \cdot x) \in C^{1,\alpha}(\overline{\Omega} \cap B_1).
\end{align*}
On the other hand, we have
\begin{align}
\label{eq:error-estimate-help-2}
\left| \frac{\nabla \widetilde{v}}{d^{s-1}_{\Omega}} \right| \le C d_{\Omega}^{1+\alpha} ~~ \text{ in } \Omega \cap B_1,
\end{align}
since \eqref{eq:expansion-Holder-interpol} (applied with $\delta = 1$ and $k = 2$) implies that for any $x_0 \in \Omega \cap B_1$ it holds for $r = d_{\Omega}(x_0)$:
\begin{align*}
\Vert \nabla \widetilde{v} \Vert_{L^{\infty}(B_{r/2}(x_0))} \le [ v - d^s_{\Omega} h(0) (1 + b \cdot x) ]_{C^{0,1}(B_{r/2}(x_0))} \le C r^{\alpha+s}.
\end{align*}
From here, \eqref{eq:error-estimate} follows immediately by \eqref{eq:error-estimate-help-1}, \eqref{eq:error-estimate-help-2}, and H\"older interpolation.

Thus, using \eqref{eq:error-estimate-help-2}, we deduce that $F_{i,m}(0) = 0$ for $m \in\{ 2,3\}$, whereas $F_{i,1}(0) = s h(0) \nu_{0}^{(i)}$ for any $i \in \{1, \dots, n\}$. Using also \eqref{eq:error-estimate}, this implies
\begin{align*}
 F_{j}(0)\partial_{\Xi}F_{i}(0)=s h(0) \nu_{0}^{(j)}\left[s h(0) \left((\partial_{\Xi} \partial_i d_{\Omega})(0)+  ( b \cdot \Xi ) \nu_0^{(i)}\right) + (\partial_{\Xi}d_{\Omega})(0) h(0)b^{(i)}\right],
\end{align*}
and therefore, using that \eqref{nu} implies $(\partial_{\Xi}d_{\Omega})(0)=\Xi\cdot\nu_0$, we get,
\begin{align*}
F_{n}(0)\partial_{\Xi} F_{i}(0)-F_{i}(0)\partial_{\Xi}F_{n}(0)&= s^2 h^2(0) \big( \Xi \cdot \partial_i (\nabla d_{\Omega})(0) \nu_0^{(n)} - \Xi \cdot \partial_n (\nabla d_{\Omega})(0) \nu_0^{(i)} \big) \\
&\quad + s h^2(0) (\Xi \cdot \nu_0) [b^{(i)}\nu_{0}^{(n)}-b^{(n)}\nu_{0}^{(i)}].
\end{align*}
We notice now that, since $b^{(i)}\nu_{0}^{(n)}-b^{(n)}\nu_{0}^{(i)} = b\cdot\tau$ with $\tau:=\nu_{0}^{(n)} e^{(i)}-\nu_{0}^{(i)}e^{(n)} \perp \nu_0$ being tangential to $\partial \Omega$ at $0 \in \partial \Omega$, by \eqref{eq_b} and \eqref{Ss}, it must be $b \cdot \tau = h(0)^{-1} \nabla h(0) \cdot \tau$. Therefore,
\begin{align*}
F_{n}(0)\partial_{\Xi}F_{i}(0)-F_{i}(0)\partial_{\Xi}F_{n}(0) &= s^2h^2(0) \left(\Xi \cdot \partial_{\tau} (\nabla d_{\Omega})(0)\right) + s h^2(0) (\Xi \cdot \nu_0) (b \cdot \tau) \\
&= s^2h^2(0) \Xi \cdot (\partial_{\tau} \nu_0) + sh(0) (\Xi \cdot \nu_0)\partial_{\tau} h(0).
\end{align*}
Then, we conclude from \eqref{ec_w} 
\begin{align*}
\partial_{\Xi}\widetilde{w}(0)=&\frac{F_{n,1}(0)\partial_{\Xi}F_{i}(0)-F_{i,1}(0)\partial_{\Xi}F_n(0)}{F^2_{n}(0)}=\frac{ s^2h^2(0) \Xi \cdot (\partial_{\tau} \nu_0) + sh(0)(\Xi \cdot \nu_0)\partial_{\tau} h(0)}{(sh(0)\nu_0^{(n)})^2} ,
\end{align*}
which is well defined since $\partial_{n}d_{\Omega}(0) \ge \delta$ and $h(0) > 0$, for every $\Xi\in\mathbb{S}^{n-1}$. Therefore, for a general point $x\in\partial\Omega\cap B_1$ we obtain \eqref{eq:tilde-b-def}, as desired.

Therefore, if $\Xi(x) = \nu(x)$, then
\begin{align*}
(s h(x) \nu^{(n)}(x))^2 \widetilde{b}(x) &= s^2 h^2(x) \nu(x) \cdot \partial_{\tau(x)} \nu(x) + sh(x)\nu(x) \cdot \nu(x)\partial_{\tau(x)} h(x) = sh(x)\,{\partial_{\tau(x)} h(x)},
\end{align*}
where we used that $2 \nu(x)\cdot\partial_{\tau(x)} \nu(x) = \partial_{\tau(x)} |\nu(x)|^2 = 0$ and $|\nu(x)|^2 = 1$. Hence, since $d_{\Omega} \in C^{k,\alpha}(\overline{\Omega})$ and $h \in C^{k,\alpha}(\partial \Omega \cap B_1)$, we have $\widetilde{b} \in C^{k-1,\alpha}(\partial \Omega \cap B_1)$, as desired.
\end{proof}

\subsubsection{Application to overdetermined problems}

The goal of this subsection is to show \autoref{thm:main-onephase-intro2} by using \autoref{lemma:PDE-onephase}. Another main result of this subsection is \autoref{prop:boundary-improvement2}, where we show that we can improve the regularity of a $C^{k,\alpha}$ domain to $C^{k+\alpha+1-\eps}$, whenever $h \in C^{k,\alpha}$.

First, we derive the PDE after applying the change of variables $\Phi$ from Subsection \ref{subsec:flattening}. Moreover, in (iv) we derive an equation for incremental quotients of $w$. This equation will be used to gain more regularity of $\partial \Omega$ in the proof of \autoref{thm:main-onephase-intro2} through \autoref{lemma:bdry-reg}.

\begin{corollary}
\label{lemma:PDE-onephase-flat}
Assume that we are in the situation of \autoref{lemma:PDE-onephase}. Let $\Phi \in C^{k,\alpha}$ be as in Subsection~\ref{subsec:flattening} and $w:= \widetilde{w} \circ \Phi$. Then, it holds
\begin{itemize}
\item[(i)] $w \in C_{k-1+\alpha}^{k+\alpha+2s}(\{ x_n > 0 \} | B_{3/2})$ with $\supp(w) \subset \{ x_n \ge 0 \} \cap \overline{B_{3/2}}$.

\item[(ii)] The following equation holds in the strong sense.
\begin{align*}
\begin{cases}
\mathcal{L}\big((x_n)_+^{s-1}(y_n)_+^{s-1}J(x)J(y)K(\Phi(x)-\Phi(y))\big)(w) &= g ~~ \text{ in } \{ x_n > 0 \} \cap B_1,\\
\partial_{n} w &= b ~~ \text{ on } \{ x_n = 0 \} \cap B_1,
\end{cases}
\end{align*}
where $g$ satisfies $(x_n)_+^{1-s} g \in C^{k-1,\alpha}(\{ x_n \ge 0 \}\cap B_{3/2})$ and is given by $g:= \widetilde{g} \circ \Phi$, and $J \in C^{k-1,\alpha}_c(\{ x_n \ge 0 \} \cap B_{2})$ satisfies $J \ge c_0$ in $B_{3/2}$ for some $c_0 > 0$, and is given by $J(x) = (d_{\Omega}^{1-s} \partial_n u \circ \Phi)(x)|\det D \Phi(x)|$, and $b:=\widetilde{b} \circ \Phi \in C^{k-1,\alpha}(\{x_n  = 0\} \cap B_1)$.

\item[(iii)] Given $h = (h',0) \in \R^n$ with $|h| < \frac{1}{k}$, $\bar{k} \in \{ 1, \dots, k \}$ and $\bar{\alpha} \in (0,1)$ such that $\bar{k}+\bar{\alpha} \le k+\alpha$, we define $W^{(h)} = D_{h}^{\bar{k}-1+\bar{\alpha}} w$. The following equation holds in the strong sense
\begin{align*}
\begin{cases}
& \mathcal{L} \big((x_n)_+^{s-1}(y_n)_+^{s-1} J(x)J(y) K(\Phi(x)-\Phi(y))\big)(W^{(h)}) \\
&\qquad\qquad \qquad = G^{(h)} + \sum_{j = 0}^{\bar{k}-1} \mathcal{L}\big((x_n)_+^{s-1}(y_n)_+^{s-1} I^{(h)}_{j}(x,y)\big)(F^{(h)}_{j}) ~~ \text{ in } \{ x_n > 0 \} \cap B_{1- k|h|},\\
&\quad\quad~ \partial_{n} W^{(h)} = B^{(h)} \qquad\qquad\qquad\qquad\qquad\qquad\qquad\qquad\qquad\qquad ~~~ \text{ on } \{ x_n = 0 \} \cap B_{1- k|h|},
\end{cases}
\end{align*}
where for any $j \in \{ 0 , \dots, \bar{k}-1 \}$, and $x,y,z \in \R^n$:
\begin{align*}
\Vert(x_n)_+^{1-s} G^{(h)} \Vert_{L^{\infty}(\{ x_n > 0 \} \cap B_{\frac{3}{2} - k|h|})} + \Vert F^{(h)}_{j} \Vert_{C^{k - \bar{k},\alpha}(\{ x_n \ge 0 \})} + \Vert B^{(h)} \Vert_{L^{\infty} (\{ x_n = 0 \} \cap B_{1 - k|h|})} & \le C \bar{C}, \\
|x-y|^{n+2s}|I^{(h)}_{j}(x,y)| &\le C \bar{C},\\
\left( \frac{\min\{|x-z|,|y-z|\}}{|x-y|} \right)^{\min\{ 1 , k - \bar{k} + \alpha - \bar{\alpha} \} }\min\{|x-z|,|y-z|\}^{n+2s}|I^{(h)}_{j}(x,z) - I^{(h)}_{j}(y,z)| &\le C \bar{C},
\end{align*}
where $C$ only depends on $n,s,k,\alpha,\lambda,\Lambda, c_0, \delta$, but not on $h$. We denoted
\begin{align*}
\bar{C} &:= \Vert (x_n)_+^{1-s} g \Vert_{C^{k-1,\alpha}(\{ x_n \ge 0 \}\cap B_{3/2})} + \Vert w \Vert_{C^{k+\alpha+2s}_{k-1,\alpha}(\{ x_n > 0 \} | B_{3/2})} + \Vert J \Vert_{C^{k-1,\alpha}(\{ x_n \ge 0 \})} \\
&\quad + \Vert \Phi \Vert_{C^{k,\alpha}(\R^n)} + \Vert K \Vert_{C^{2(k+\alpha)+1}(\mathbb{S}^{n-1})} + \Vert b \Vert_{C^{k-1,\alpha}(\{ x_n = 0 \} \cap B_{1 - k|h|})}.
\end{align*}

\item[(iv)] Moreover, $W^{(h)} \in C^{\gamma}(\R^n)$ for any $\gamma \in (0, 1)$, satisfies the equation in (iii) in the weak sense, i.e. (see  \autoref{def:weak-sol-trafo}) for every $\eta \in C^{\infty}_c(B_{1-k|h|})$, it holds
\begin{align*}
\int_{\R^n} &\int_{\R^n} (W^{(h)}(x) - W^{(h)}(y)) (\eta(x) - \eta(y)) (x_n)_+^{s-1}(y_n)_+^{s-1} J(x) J(y) K(\Phi(x) - \Phi(y)) \d y \d x \\
&=\int_{\R^n} \eta(x) G^{(h)}(x) \d x + \int_{\{ x_n = 0 \}\cap B_{1-k|h|}}   B^{(h)}(x')(\theta_{K,J,\Phi})_n (x') \eta(x',0) \d x' \\
&\quad - \int_{\{ x_n = 0 \}\cap B_{1-k|h|}}   W^{(h)}(x',0) [(\theta_{K,J,\Phi}' (x')\cdot \nabla_{x'} \eta(x',0)) +  \dvg \theta_{K,J,\Phi}'(x') \eta(x',0)] \d x'\\
&\quad + \sum_{j = 0}^{\bar{k}-1} \int_{\R^n} \int_{\R^n} (F^{(h)}_{j}(x) - F^{(h)}_{j}(y)) (\eta(x) - \eta(y)) (x_n)_+^{s-1}(y_n)_+^{s-1} I^{(h)}_{j}(x,y) \d y \d x\\
&\quad + \sum_{j = 1}^{\bar{k}-1} \int_{\{ x_n = 0 \}} \partial_n F^{(h)}_{j}(x',0) \eta(x',0) (\theta_{I_{j}^{(h)}})_n(x') \d x'\\
&\quad -\sum_{j = 1}^{\bar{k}-1}\int_{\{ x_n = 0 \}}  F^{(h)}_{j}(x',0) [(\theta_{I^{(h)}_{j}}(x')\cdot \nabla_{x'} \eta(x',0)) +  {\dvg \theta_{I^{(h)}_{j}}(x') \eta(x',0)]} \d x' \\
&\quad + \int_{\{ x_n = 0 \}}\eta(x',0) (\theta_{I^{(h)}_{0}}(x') \cdot \nabla F^{(h)}_{0}(x',0)) \d x',
\end{align*}
where $\theta_{K,J,\Phi}$ is as in \autoref{lemma:ibp-nonflat-trafo} and for any $j \in \{ 0 , \dots , \bar{k} -1\}$, $\theta_{I^{(h)}_{j}} \in L^{\infty}(\{ x_n = 0 \} \cap B_{1-k|h|})$,
\begin{align*}
\Vert \theta_{K,J,\Phi} \Vert_{C^{k-1,\alpha}(\{ x_n = 0 \} \cap B_1)} &\le C \bar{C},\\
\Vert \theta_{I^{(h)}_{0}} \Vert_{L^{\infty}(\{ x_n = 0 \} \cap B_{1-k|h|})} + \Vert \theta_{I^{(h)}_{j}} \Vert_{C^{0,1}(\{ x_n = 0 \} \cap B_{1-k|h|})} &\le C \bar{C}, ~~ \text{ if } j \not= 0,\\
\Vert \nabla F_{0}^{(h)} \Vert_{L^{\infty}(\{ x_n = 0 \} \cap B_{1-k|h|})} + \Vert \partial_n F_{j}^{(h)} \Vert_{L^{\infty}(\{ x_n = 0 \} \cap B_{1-k|h|})} &\le C \bar{C}.
\end{align*}
\end{itemize}
\end{corollary}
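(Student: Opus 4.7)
The plan is to derive (i) and (ii) directly from \autoref{lemma:PDE-onephase} by composing with the flattening diffeomorphism $\Phi$, and then to obtain (iii) and (iv) from (ii) by iteratively taking tangential incremental quotients and invoking the integration by parts identity in \autoref{lemma:ibp-nonflat-trafo}.

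For (i) the composition $w = \widetilde w \circ \Phi$ inherits the weighted Hölder regularity of $\widetilde w$ from \autoref{lemma:PDE-onephase}(ii): since $\Phi$ is a $C^{k,\alpha}$ diffeomorphism with $(x_n)_+ = d_\Omega(\Phi(x))$, each scaling factor appearing in the seminorm of $C^{k+\alpha+2s}_{k-1+\alpha}(\Omega | B_{3/2})$ transforms correctly, and the support property follows because $\supp(\widetilde w) \subset \overline{\Omega \cap B_{3/2}}$. For (ii) I choose $\Xi(x) = \nu(x)$ in \autoref{lemma:PDE-onephase}(iii); since $\partial_n \Phi(x',0) = \nu(\Phi(x',0))$, the oblique derivative $\partial_\nu \widetilde w$ pulls back precisely to $\partial_n w$, and the boundary expression for $\widetilde b$ in \autoref{lemma:PDE-onephase}(iii) shows $b = \widetilde b \circ \Phi \in C^{k-1,\alpha}$. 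A change of variables $x \mapsto \Phi(x)$, $y \mapsto \Phi(y)$ in $\mathcal{L}\big(\partial_n u(x)\partial_n u(y) K(x-y)\big)(\widetilde w)$ turns the measure $\partial_n u(x)\partial_n u(y) K(x-y)\,dy$ into $(x_n)_+^{s-1}(y_n)_+^{s-1}J(x)J(y)K(\Phi(x)-\Phi(y))\,dy$ with $J = |\det D\Phi|\cdot (d_\Omega^{1-s}\partial_n u)\circ \Phi$, yielding the claimed PDE; the bound $J \ge c_0$ in $B_{3/2}$ comes from \autoref{lemma:PDE-onephase}(i).

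For (iii) I exploit that $h = (h',0)$ is tangential, so the weight $(x_n)_+^{s-1}(y_n)_+^{s-1}$ is invariant under $\Delta_h$. Iterating the discrete Leibniz rule on the integrand $(w(x)-w(y))J(x)J(y)K(\Phi(x)-\Phi(y))$ a total of $\bar k$ times and dividing by $|h|^{\bar k - 1 + \bar\alpha}$ produces the main term $\mathcal{L}(\cdots)(W^{(h)})$ plus a finite number of error terms. Those where every $\Delta_h$ lands on $w$-factors contribute, together with $D_h^{\bar k - 1 + \bar\alpha} g$, to a pointwise source $G^{(h)}$ with $(x_n)_+^{1-s}G^{(h)} \in L^\infty$; the remaining terms, in which at least one $\Delta_h$ lands on $J(x)J(y)K(\Phi(x)-\Phi(y))$, are reassembled as $\mathcal{L}\big((x_n)_+^{s-1}(y_n)_+^{s-1} I_j^{(h)}(x,y)\big)(F_j^{(h)})$, with $I_j^{(h)}$ a mixed incremental quotient of the coefficients and $F_j^{(h)}$ a lower-order derivative of $w$. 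The pointwise estimate $|x-y|^{n+2s}|I_j^{(h)}| \le C\bar C$ and the Hölder-type bound on $I_j^{(h)}(x,z) - I_j^{(h)}(y,z)$ are a direct consequence of \autoref{lemma:K-estimate} applied to $K \circ \Phi$, combined with the $C^{k-1,\alpha}$ regularity of $J$, while $F_j^{(h)} \in C^{k-\bar k, \alpha}$ follows from $w \in C^{k-1+\alpha}(\overline{\{x_n>0\}}\cap B_{3/2})$ and standard incremental-quotient estimates. The boundary identity $\partial_n W^{(h)} = B^{(h)}$ follows by applying $D_h^{\bar k - 1 + \bar\alpha}$ to $\partial_n w = b$ on $\{x_n = 0\}$, and $\Vert B^{(h)} \Vert_{L^\infty} \le C\bar C$ follows from $b \in C^{k-1,\alpha}$.

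For (iv) I apply \autoref{lemma:ibp-nonflat-trafo} to $W^{(h)}$, obtaining the main bilinear form on the left and the $\theta_{K,J,\Phi}$ boundary integrals on the right. Then I apply the same identity to each source term $\mathcal{L}\big((x_n)_+^{s-1}(y_n)_+^{s-1} I_j^{(h)}(x,y)\big)(F_j^{(h)})$ tested against $\eta$, producing a bilinear form together with boundary integrals driven by a vector $\theta_{I_j^{(h)}}$ defined in analogy with \eqref{eq:theta-K-def-flat}. For $j \ge 1$ the function $F_j^{(h)}$ has one more order of regularity, so the standard form of the identity (with a $\theta \cdot \nabla \eta$ plus $\mathrm{div}\, \theta\, \eta$ term) applies; for $j = 0$ the factor $F_0^{(h)}$ is only Hölder, but a further tangential integration by parts on $\{x_n = 0\}$ moves the derivative off $\eta$ onto $F_0^{(h)}$, producing the term with $\theta_{I_0^{(h)}}\cdot \nabla F_0^{(h)}$; the corresponding $L^\infty$ and Lipschitz bounds on $\theta_{I_j^{(h)}}$ and $\nabla F_0^{(h)}$, $\partial_n F_j^{(h)}$ follow from the explicit formulas and from (iii). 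The main obstacle is the bookkeeping in (iii): organizing the Leibniz expansion so that each coefficient-incremental-quotient factor $I_j^{(h)}$ enters symmetrically (so that \autoref{lemma:ibp-nonflat-trafo} can be applied in (iv)), while cleanly separating a pointwise source $G^{(h)}$ from the operator-form errors, with every estimate uniform in $h$ and in the exact shape required by \autoref{lemma:bdry-reg}.
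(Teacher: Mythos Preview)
Your overall strategy matches the paper's: (i)--(ii) follow by composition with $\Phi$ and choosing $\Xi=\nu$, and (iii)--(iv) by iterating the discrete Leibniz rule on $M(x,y)=J(x)J(y)K(\Phi(x)-\Phi(y))$ and then applying \autoref{lemma:ibp-nonflat-trafo}. However, your handling of the boundary terms in (iv) has the $j=0$ and $j\ge 1$ cases reversed, and the direction of the tangential integration by parts backwards.

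From the explicit formulas (which the paper writes out), $F_j^{(h)}=D_h^{j}w(\cdot+(\bar k-j)h)$ and $\theta_{I_j^{(h)}}=D_h^{\bar k-j-1+\bar\alpha}\theta_{K,J,\Phi}$. Thus $F_0^{(h)}$ is simply a translate of $w$ and is the \emph{most} regular of the $F_j^{(h)}$ (it lies in $C^{k-1,\alpha}$ with $k\ge 2$, hence at least $C^{1,\alpha}$), while $\theta_{I_0^{(h)}}$ is the highest-order incremental quotient of $\theta_{K,J,\Phi}$ and is only $L^\infty$. The integration by parts identity in \autoref{lemma:ibp-nonflat-trafo} already places the gradient on the solution, producing $\int_{\{x_n=0\}}\eta\,\theta_{I_j^{(h)}}\cdot\nabla F_j^{(h)}$. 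For $j\ge 1$ one then integrates by parts in $x'$ to move the tangential derivatives from $F_j^{(h)}$ onto $\eta$ and $\theta_{I_j^{(h)}}$; this requires $\theta_{I_j^{(h)}}\in C^{0,1}$, which holds exactly when $j\ge 1$. For $j=0$ no such step is available (since $\theta_{I_0^{(h)}}$ is only bounded), but none is needed: $\nabla F_0^{(h)}$ exists classically, so the term $\int\eta\,\theta_{I_0^{(h)}}\cdot\nabla F_0^{(h)}$ is kept as is. Your account (``$F_0^{(h)}$ is only H\"older'', ``a further integration by parts moves the derivative off $\eta$ onto $F_0^{(h)}$'') inverts both the regularity ordering and the direction of the integration by parts; with the reasoning as you stated it, the $j=0$ boundary term could not be controlled.
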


Note that some of the regularity estimates in the previous result are not sharp. We decided to state the assumptions in such a way that it is immediately visible that \autoref{lemma:bdry-reg} can be applied to $W^{(h)}$. The sharp estimates are established within the proof. 

Moreover, note that we will typically apply the previous result with $\bar{k} = k$ and $\bar{\alpha} = \alpha$. However, if $\alpha < \max\{0,2s-1,\frac{1}{2}\}$, we cannot apply \autoref{lemma:bdry-reg} to improve the regularity of the free boundary. In that case, we have to apply \autoref{lemma:PDE-onephase-flat} with $\bar{k} = k-1$ and some $\bar{\alpha} \in ( \max \{ 0 , 2s-1,\frac{1}{2} \} , 1)$.

\begin{proof}
The proofs of (i) and (ii) follow immediately from \autoref{lemma:PDE-onephase} after application of the change of variables $\Phi$ from Subsection \ref{subsec:flattening} and choosing $\Xi(x) = \nu(x)$. 
To show (iii), note that the boundary condition follows immediately from $\partial_n w = b$ and the fact that $h \perp e_n$, after defining $B^{(h)} := D^{\bar{k}-1+\bar{\alpha}}_h b = D^{\bar{k}-1+\bar{\alpha}} \partial_n w = \partial_n W^{(h)}$. Then, we have $B^{(h)} \in C^{k -\bar{k}-1+\alpha-\bar{\alpha}}(\{ x_n = 0 \} \cap B_{1 - k|h|})$ with
\begin{align}
\label{eq:Bh-est}
\Vert B^{(h)} \Vert_{C^{k -\bar{k}-1+\alpha-\bar{\alpha}} (\{ x_n = 0 \} \cap B_{1 - k|h|})} \le \Vert b \Vert_{C^{k-1,\alpha}(\{ x_n = 0 \} \cap B_{1 - k|h|})},
\end{align}
where the estimate follows immediately from the definition of $b$ as $b = \widetilde{b} \circ \Phi$, where $\widetilde{b} \in C^{k-1,\alpha}(\partial \Omega \cap B_1)$ is given in \eqref{eq:tilde-b-def}.
To derive the equation, let us introduce the following notation
\begin{align*}
\Delta_h[\mathcal{L}\big(I(x,y)\big)(w)](x) = \mathcal{L}\big(I(x+h,y+h)\big)(w(\cdot+h))(x) - \mathcal{L}\big(I(x,y)\big)(w)(x).
\end{align*}
We observe that by the product rule for the operator $\Delta^{\bar{k}}$, it holds for any $j \in \N$:
\begin{align}
\label{eq:product-rule-Delta}
\Delta_h^j [\mathcal{L}\big(I(x,y)\big)(w)](x) = \sum_{l = 0}^j \binom{j}{l} \mathcal{L}\big(\Delta^l I(x,y)\big) (\Delta^{j-l} w(\cdot + l h))(x).
\end{align}
Moreover, we set $M(x,y) := J(x)J(y) K(\Phi(x)-\Phi(y))$. Then, we observe that by the product rule for the operator $\Delta^{\bar{k}}$ and the definition of $\mathcal{L}$,
\begin{align*}
\mathcal{L} & \big((x_n)_+^{s-1}(y_n)_+^{s-1} J(x)J(y) K(\Phi(x)-\Phi(y))\big)(\Delta^{\bar{k}} w)(x) \\
& = \sum_{m = 0}^{\bar{k}} \binom{\bar{k}}{m} \Delta^{m} \big[ \mathcal{L}\big((x_n)_+^{s-1}(y_n)_+^{s-1} \Delta^{\bar{k}-m} M(x,y)\big)(w(\cdot + (\bar{k}-m)h)) \big](x).
\end{align*}
By combination with \eqref{eq:product-rule-Delta}, we obtain
\begin{align*}
\mathcal{L} & \big((x_n)_+^{s-1}(y_n)_+^{s-1} J(x)J(y) K(\Phi(x)-\Phi(y))\big)(\Delta^{\bar{k}} w)(x) \\
& = \Delta^{\bar{k}} g(x) + \sum_{m = 0}^{\bar{k}-1} \sum_{l = 0}^m \binom{\bar{k}}{m} \binom{m}{l} \mathcal{L}\big((x_n)_+^{s-1}(y_n)_+^{s-1} \Delta^{\bar{k}-m+l} M(x,y)\big)(\Delta^{m-l} w(\cdot + (\bar{k}-m + l)h))(x).
\end{align*}
Hence, we obtain
\begin{align*}
\mathcal{L} & \big((x_n)_+^{s-1}(y_n)_+^{s-1} J(x)J(y) K(\Phi(x)-\Phi(y))\big)(W^{(h)})(x)  \\
&= G^{(h)}(x) + \sum_{j = 0}^{\bar{k}-1} \mathcal{L}\big((x_n)_+^{s-1}(y_n)_+^{s-1} I^{(h)}_{j}(x,y)\big)(F^{(h)}_{j})(x),
\end{align*}
where we denoted
\begin{align*}
G^{(h)}(x) &= D^{\bar{k}-1+\bar{\alpha}}_h g(x), \quad
I^{(h)}_{j}(x,y) = \Big[ \sum_{m = j}^{\bar{k}-1} \binom{\bar{k}}{m} \binom{m}{m-j} \Big] D_h^{\bar{k}-j-1+\bar{\alpha}} M(x,y), 
\end{align*}
\begin{align*}
F^{(h)}_{j}(x) = D_h^{j} w(x + (\bar{k}-j)h)).
\end{align*}
{Note that $(x_n)_+^{1-s}G^{(h)} \in C^{k-\bar{k}+\alpha-\bar{\alpha}}(\{ x_n > 0 \} \cap B_{\frac{3}{2}-k|h|})$ follows immediately from the assumption on $g$.} Moreover, since $j \le \bar{k}-1$, we have that ${F^{(h)}_{j} \in C^{0,1}_{k-\bar{k},\alpha}(\{ x_n > 0 \} | B_{3/2 - k|h|})}$, together with a corresponding bound on the norm, which follows from the assumption $w \in C^{k+\alpha+2s}_{k-1,\alpha}( \{ x_n > 0 \} | B_{3/2})$. Since, by the compact support of $w$, it holds in particular $w \in C^{k-1,\alpha}(\{ x_n \ge 0 \})$, we deduce that $F_{j}^{(h)} \in C^{\bar{k}-k,\alpha}(\{ x_n \ge  0 \})$, also with a uniform bound on its norm.\\
To see the bound on $I_{j}^{(h)}$, we recall the definition of $M(x,y) = J(x)J(y)K(\Phi(x)-\Phi(y))$ and obtain from the product rule:
\begin{align*}
\Delta_h^{\bar{k}-j} M(x,y) = \sum_{i = 0}^{\bar{k}-j} \binom{\bar{k}-j}{i} \Delta^{\bar{k}-j-i}[J(x+ih)J(y+ih)] \Delta^{i} [K(\Phi(x) - \Phi(y))],
\end{align*}
and therefore
\begin{align*}
D_h^{\bar{k}-j-1+\bar{\alpha}} M(x,y) &= D_h^{\bar{k}-j-1+\bar{\alpha}}[J(x)J(y)] K(\Phi(x)-\Phi(y)) \\
&\quad + \sum_{i = 1}^{\bar{k}-j} \binom{\bar{k}-j}{i} D_h^{\bar{k}-j-i}[J(x+ih)J(y+ih)] D_h^{i-1+\bar{\alpha}} [K(\Phi(x) - \Phi(y))].
\end{align*}
Since $J \in C^{k-1+\alpha}(\{ x_n \ge 0 \} \cap B_2)$ and $\bar{k}-j \le \bar{k}$, proving the bound for $|I_{j}^{(h)}(x,y)|$ reduces to establishing for any $i \in \{ 1 , \dots , \bar{k} \}$,
\begin{align}\label{vac}
|\varphi(x,y)|:=|D_h^{i-1+\bar{\alpha}} [K(\Phi(x) - \Phi(y))]| &\le C |x-y|^{-n-2s},\\
\frac{|\varphi(x,z)-\varphi(y,z)|}{|x-y|} &\le C  \min\{ |z-x| , |z-y| \}^{-n-2s - 1}, 
\end{align}
which follows from \autoref{lemma:K-estimate}. Indeed, if $\min\{ |z-x| , |z-y| \} \le 10$,
\begin{align*}
&|D_h^{\bar{k}-j-1+\bar{\alpha}} M(x,z)-D_h^{\bar{k}-j-1+\bar{\alpha}} M(y,z)| \\
&\leq \left|D_h^{\bar{k}-j-1+\bar{\alpha}}[J(x)J(z)] -D_h^{\bar{k}-j-1+\bar{\alpha}}[J(y)J(z)] \right||K(\Phi(x) - \Phi(z))|\\
&+\left|D_h^{\bar{k}-j-1+\bar{\alpha}}[J(x)J(z)]\right|\left|K(\Phi(x) - \Phi(z))-K(\Phi(y) - \Phi(z))\right|\\
&+\sum_{i = 1}^{\bar{k}-j} \binom{\bar{k}-j}{i}\left|D_h^{\bar{k}-j-i}[J(x+ih)J(z+ih)] -D_h^{\bar{k}-j-i}[J(y+ih)J(z+ih)]\right||\varphi(x,z)|\\
&+\sum_{i = 1}^{\bar{k}-j} \binom{\bar{k}-j}{i}\left|D_h^{\bar{k}-j-i}[J(y+ih)J(z+ih)]\right||\varphi(x,z)-\varphi(y,z)|.
\end{align*}
By the hypothesis on $K$ and $\Phi$ (see \eqref{eq:K-reg}, \eqref{eq:Phi-comp} and \autoref{lemma:K-estimate}) we get
$$\max\{|K(\Phi(x) - \Phi(z))|,|\varphi(x,z)|\}\leq |x-z|^{-n-2s}\leq\min\{ |z-x| , |z-y| \}^{-n-2s},$$
$$\left|K(\Phi(x) - \Phi(z))-K(\Phi(y) - \Phi(z))\right|\leq |x-y|\min\{ |z-x| , |z-y| \}^{-n-2s-1}.$$
Moreover since $J \in C^{k-1+\alpha}(\{ x_n \ge 0 \} \cap B_2)$ 
\begin{align*}
D_h^{\bar{k}-j-1+\bar{\alpha}}[J(x)J(z)] &\in
 C^{k+\alpha-\bar{k}-\bar{\alpha}+j}(\{ x_n = 0 \} \cap B_{1- k|h|}), \\
D_h^{\bar{k}-j-i}[J(x+ih)J(z+ih)] &\in
 C^{k+\alpha-\bar{k}+j+i}(\{ x_n = 0 \} \cap B_{1- k|h|}).
\end{align*}
Thus by \eqref{vac},
\begin{align*}
&|D_h^{\bar{k}-j-1+\bar{\alpha}} M(x,z)-D_h^{\bar{k}-j-1+\bar{\alpha}} M(y,z)|\\
&\leq C\Big[|x-y|^{\min\{1,k+\alpha-\bar{k}-\bar{\alpha} \}}\min\{ |z-x| , |z-y| \}^{-n-2s}+|x-y|\min\{ |z-x| , |z-y| \}^{-n-2s-1}\\
&+\sum_{i = 1}^{\bar{k}-j} \binom{\bar{k}-j}{i}|x-y| \Big( \min\{ |z-x| , |z-y| \}^{-n-2s} + \min\{ |z-x| , |z-y| \}^{-n-2s - 1} \Big) \Big]\\
&\leq C\min\{ |z-x| , |z-y| \}^{-n-2s}\Big[|x-y|^{\min\{1,k+\alpha-\bar{k}-\bar{\alpha} \}}+\frac{|x-y|}{\min\{ |z-x| , |z-y| \}}\Big]\\
&\leq C\min\{ |z-x| , |z-y| \}^{-n-2s}\left(\frac{|x-y|}{\min\{ |z-x| , |z-y| \}}\right)^{\min\{1,k+\alpha-\bar{k}-\bar{\alpha} \}},
\end{align*}
which proves (iii), if $\min\{ |z-x| , |z-y| \} \le 10$. Otherwise, it must be either $x,y \not\in \supp(J)$ or $z \not\in \supp(J)$, and therefore, the estimate is trivial.

To prove (iv), since $W^{(h)} \in C^{1+\alpha+2s}_{1+\alpha}(\{ x_n > 0 \} | B_2) \cap L^{\infty}(\R^n)$ we can apply the integration by parts formula from \autoref{lemma:ibp-nonflat-trafo}, which yields for any $\eta \in C^{\infty}_c(B_{1-k|h|})$
\begin{align}
\label{Bbb}
\begin{split}
\int_{\R^n} &\int_{\R^n} (W^{(h)}(x) - W^{(h)}(y)) (\eta(x) - \eta(y)) (x_n)_+^{s-1}(y_n)_+^{s-1} J(x) J(y) K(\Phi(x) - \Phi(y)) \d y \d x \\
&= \int_{\R^n} \eta(x) \mathcal{L}\big((x_n)_+^{s-1}(y_n)_+^{s-1} J(x) J(y) K(\Phi(x) - \Phi(y))\big)(W^{(h)})(x) \d x \\
&\quad - \int_{\{ x_n = 0 \}} \eta(x',0) (\theta_{K,J,\Phi}(x') \cdot \nabla W^{(h)}(x',0)) \d x',
\end{split}
\end{align}
where
\begin{align*}
\theta_{K,J,\Phi}(x') = 2c_s J^2(x',0) \int_{\R^{n-1}} (h',1) K(D\Phi(x',0)(h',1)) \d h'.
\end{align*}
Note that $\theta_{K,J,\Phi} \in C^{k-1,\alpha}(\{ x_n = 0 \} \cap B_1)$ by the assumptions on $K,J,\Phi$.

For the boundary term in \eqref{Bbb}, we notice that by \eqref{eq:Bh-est} we have $\theta_{K,J,\Phi} \cdot \nabla W^{(h)} = \theta_{K,J,\Phi}' \cdot \nabla_{x'} W^{(h)} + (\theta_{K,J,\Phi})_n B^{(h)}$, and hence, we get by the classical integration by parts formula for first derivatives
\begin{align*}
- & \int_{\{ x_n = 0 \}} \eta(x',0) (\theta_{K,J,\Phi}(x') \cdot \nabla W^{(h)}(x',0)) \d x' \\
&= \int_{\{ x_n = 0 \}}  W^{(h)}(x',0) [(\theta_{K,J,\Phi}' (x')\cdot \nabla_{x'} \eta(x',0)) +  \dvg \theta_{K,J,\Phi}'(x') \eta(x',0)] \d x' \\
&\quad - \int_{\{ x_n = 0 \}} \eta(x',0) (\theta_{K,J,\Phi})_n(x') B^{(h)}(x',0) \d x'.
\end{align*}
By (iii) and \autoref{lemma:ibp-nonflat-trafo}, we have for the first term on the right-hand side of \eqref{Bbb},
\begin{align}
\label{eq:theta-I-ibp}
\begin{split}
 & \int_{\R^n} \eta(x) \mathcal{L}\big((x_n)_+^{s-1}(y_n)_+^{s-1} J(x) J(y) K(\Phi(x) - \Phi(y))\big)(W^{(h)})(x) \d x \\
&=\int_{\R^n} \eta(x) G^{(h)}(x) \d x + \sum_{j = 0}^{\bar{k}-1} \int_{\{ x_n = 0 \}} \eta(x',0) (\theta_{I^{(h)}_{j}}(x') \cdot \nabla F^{(h)}_{j}(x',0)) \d x'\\
&\quad + \sum_{j = 0}^{\bar{k}-1} \int_{\R^n}\int_{\R^n} (F^{(h)}_{j}(x) - F^{(h)}_{j}(y))(\eta(x) - \eta(y)) (x_n)_+^{s-1}(y_n)_+^{s-1} I^{(h)}_{j}(x,y) \d y \d x,
\end{split}
\end{align}
where
\begin{align}
\label{eq:theta-I-formula}
\theta_{I^{(h)}_{j}} = D_h^{\bar{k}-j-1+\bar{\alpha}} \theta_{K,J,\Phi}, \quad \theta_{I^{(h)}_{j}} \in  \begin{cases}
 C^{k-\bar{k}+\alpha - \bar{\alpha}+j-1,1}(\{ x_n = 0 \} \cap B_{1- k|h|}),&  \text{ if } j \ge 1,\\
 L^{\infty}(\{ x_n = 0 \} \cap B_{1- k|h|}),& \text{ if } j=0.
\end{cases}
\end{align}
Indeed, since the kernels $I_{j}^{(h)}$ are (up to a constant) given as incremental quotients of the form $D_h^{\bar{k}-j-1+\bar{\alpha}} [J(x)J(y)K(\Phi(x)-\Phi(y))]$ and $h_n = 0$, we can apply \autoref{lemma:ibp-nonflat-trafo} separately to each summand in the incremental quotient and sum up the respective contributions to obtain the identity in \eqref{eq:theta-I-ibp} with $\theta_{I_{j}^{(h)}}$ given by \eqref{eq:theta-I-formula}. The regularity of $\theta_{I_{j}^{(h)}}$ claimed in \eqref{eq:theta-I-formula} follows since $\theta_{K,J,\Phi} \in C^{k-1,\alpha}(\{ x_n = 0 \} \cap B_1)$.

Hence, it remains to treat the boundary integrals involving $F_{j}^{(h)}$ in order to conclude the proof of (iv).
Note that since $h_n = 0$ and $b \in C^{k-1,\alpha}(\{ x_n = 0 \} \cap B_1)$, it holds
\begin{align*}
\partial_n F^{(h)}_{j} = \partial_n [D_h^{j} w](\cdot + (\bar{k}-j)h) = D_h^{j} b(\cdot + (\bar{k}-j)h) \in C^{\bar{k}-1-j,\alpha}(\{ x_n = 0 \} \cap B_{1-k|h|}) 
\end{align*}
for every $j \in \{ 0,\ldots, k-1\}$. 
We have enough regularity of $\theta_{I_{j}^{(h)}}$ in case $j \ge 1$ for all summands by a similar argument as for the boundary term in \eqref{Bbb}, integrating by parts in $x'$. In case $j=0$, $\theta_{I_{0}^{(h)}}$ does not possess sufficient regularity. However, since $F_{0}^{(h)} = w(\cdot + \bar{k} h) \in C^{k-1+\alpha}(\{ x_n = 0 \} \cap B_{1- k|h|})$ and $k \ge 2$, we can interpret this boundary integral in the classical sense, i.e. without integrating by parts in $x'$. This yields
\begin{align*}
&\sum_{j = 0}^{\bar{k}-1}\int_{\{ x_n = 0 \}} \eta(x',0) (\theta_{I^{(h)}_{j}}(x') \cdot \nabla F^{(h)}_{j}(x',0)) \d x'\\
&=-\sum_{j = 1}^{\bar{k}-1} \int_{\{ x_n = 0 \}}  F^{(h)}_{j}(x',0) [(\theta_{I^{(h)}_{j}}(x')\cdot \nabla_{x'} \eta(x',0)) +  {\dvg \theta_{I^{(h)}_{j}}(x') \eta(x',0)]} \d x' \\
&\quad + \sum_{j = 1}^{\bar{k}-1} \int_{\{ x_n = 0 \}} \eta(x',0) (\theta_{I_{j}^{(h)}})_n(x') \partial_n F^{(h)}_{j}(x',0)\d x'\\
&\quad + \int_{\{ x_n = 0 \}}\eta(x',0) (\theta_{I^{(h)}_{0}}(x') \cdot \nabla F^{(h)}_{0}(x',0)) \d x'.
\end{align*}
This concludes the proof of (iv). 
\end{proof}

By combining \autoref{lemma:PDE-onephase}, \autoref{lemma:PDE-onephase-flat}, and \autoref{lemma:bdry-reg}, we can show the following improvement of the regularity of the solution domain $\partial \Omega$.

\begin{proposition}
\label{prop:bootstrap-onephase} 
Let $K$ be as in \eqref{eq:K-comp} and assume that it satisfies \eqref{eq:K-reg-ass} 
for some $k \in \N$ with $k \ge 2$ and $\alpha \in (0,1)$ with $\alpha \not\in \{s,1-s\}$.\\
Let $\partial \Omega \in C^{k,\alpha}$ in $B_4$ and $v \in L^{1}_{2s}(\R^n)$ be a nonnegative weak solution to
\begin{align*}
\begin{cases}
L v &= 0 ~~ \text{ in } \Omega \cap B_4,\\
v &=0 ~~ \text{ in } B_4 \setminus \Omega,\\
\frac{v}{d^s_{\Omega}} &= h ~~ \text{ on } B_4 \cap \partial \Omega
\end{cases}
\end{align*}
for some $h \in C^{k,\alpha}(B_4 \cap \partial \Omega)$. Assume that $0 \in \partial \Omega$. Moreover, assume \eqref{eq:blue-ass} and let $\widetilde{w}_i$ for any $i \in \{1,\dots,n-1\}$ be defined as in \eqref{eq:w-def}.\\
Then, for any $\gamma \in (\max\{\frac{1}{2}, 2s-1\} , 1)$ with $\alpha + \gamma \not= 1$ and $i \in \{1,\dots,n-1\}$ it holds 
\begin{align*}
\begin{cases}
\widetilde{w}_i \in C^{k-1+\alpha+ \min\{\gamma , 2s \} }_{loc}(\partial{\Omega} \cap B_1) \qquad\qquad\qquad ~~ \text{ if } \alpha \ge \gamma,\\
\widetilde{w}_i \in C^{k-1+\min\{\gamma,\alpha+2s\} - \eps}_{loc}(\partial{\Omega} \cap B_1) ~~ \forall \eps \in (0,1) ~~ \text{ if } \alpha < \gamma.
\end{cases}
\end{align*}
In particular, it holds for any $\eps \in (0,1)$,
\begin{align*}
\begin{cases}
\partial \Omega \in C^{k+\alpha+\min\{\alpha,2s\}}_{loc}(B_1) ~~\qquad\qquad\qquad \text{ if } \alpha > \max\{\frac{1}{2} , 2s-1\},\\
\partial \Omega \in C^{k+ \min\{ 1, \alpha + 2s \} - \eps }_{loc}(B_1) ~~ \forall \eps \in (0,1) ~~ \text{ if } \alpha \le \max\{\frac{1}{2},2s-1\}.
\end{cases}
\end{align*}
\end{proposition}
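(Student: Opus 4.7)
The proof is a tangential-incremental-quotient bootstrap applied to $\widetilde{w}_i$. The plan is to flatten the boundary via the $C^{k,\alpha}$ diffeomorphism $\Phi$ from Subsection~\ref{subsec:flattening}, so that \autoref{lemma:PDE-onephase-flat} provides a weighted weak equation in $\{x_n>0\}\cap B_1$ for $w=\widetilde{w}_i\circ\Phi$, together with the local Neumann datum $\partial_n w=b\in C^{k-1,\alpha}(\{x_n=0\}\cap B_1)$. For a tangential vector $h=(h',0)$ with small $|h|$, and a suitable pair $(\bar k,\bar\alpha)$ to be chosen, I would form the incremental quotient $W^{(h)}=D_h^{\bar k-1+\bar\alpha}w$ and use \autoref{lemma:PDE-onephase-flat}(iii)--(iv) to produce the weak equation satisfied by $W^{(h)}$. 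The key observation is that this equation fits precisely the template of \autoref{lemma:bdry-reg} with $\beta=s$, so that an application of the latter yields the uniform-in-$h$ bound
\[
[W^{(h)}]_{C^\gamma(\{x_n\geq 0\}\cap B_{1/2-k|h|})}\leq C.
\]
The characterization of Hölder spaces through iterated tangential incremental quotients upgrades this to $w\in C^{\bar k-1+\bar\alpha+\gamma}$ tangentially on $\{x_n=0\}$, provided $\bar\alpha+\gamma\notin\mathbb{Z}$ (this is the source of the condition $\alpha+\gamma\neq 1$). Composing back with $\Phi$ and invoking the identity $\widetilde{w}_i|_{\partial\Omega}=\nu^{(i)}/\nu^{(n)}$ (which one reads off the leading-order expansion \eqref{ec_w} appearing in the proof of \autoref{lemma:PDE-onephase}) yields the claimed regularity of $\widetilde{w}_i$ on $\partial\Omega$ and hence of $\partial\Omega$ itself.

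The two sub-cases reflect the tension between the order of incremental quotient we can afford and the kernel-regularity exponent $\min\{1,k-\bar k+\alpha-\bar\alpha\}$ furnished by \autoref{lemma:PDE-onephase-flat}(iii), which must match the condition \eqref{eq:kernel-int-reg-ass-main} of \autoref{lemma:bdry-reg} whenever $\gamma>2s$. In Case A ($\alpha\geq\gamma$, which forces $\alpha>\max\{\tfrac12,2s-1\}$) I would take $\bar k=k$ and $\bar\alpha=\alpha$, spending the full regularity budget of $w$; this produces kernel differences with vanishing regularity exponent and restricts us to $\gamma\leq 2s$, yielding the cap $\min\{\gamma,2s\}$ and $\widetilde{w}_i\in C^{k-1+\alpha+\min\{\gamma,2s\}}$ on $\partial\Omega$. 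In Case B ($\alpha<\gamma$) I would set $\bar k=k-1$ and choose $\bar\alpha$ slightly below $\min\{1,\,1+\alpha+2s-\gamma\}$, trading one increment-quotient level for the kernel regularity needed to push $\gamma$ past $2s$; this explains both the loss $\eps$ and the cap $\min\{\gamma,\alpha+2s\}$. The two final conclusions on $\partial\Omega$ then follow by selecting $\gamma=\alpha$ in Case A and $\gamma\nearrow 1$ in Case B.

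The main technical obstacle is to check, once and for all, that \emph{every} term produced by \autoref{lemma:PDE-onephase-flat}(iv) fits the template of \autoref{lemma:bdry-reg} with bounds uniform in $h$. On the interior side this means verifying $(x_n)_+^{1-s}G^{(h)}\in L^\infty$ together with $F_j^{(h)}\in C^\gamma(\{x_n\geq 0\})$, the size condition $|I_j^{(h)}(x,y)|\leq C|x-y|^{-n-2s}$, and—only when $\gamma>2s$—the kernel-regularity condition \eqref{eq:kernel-int-reg-ass-main}, which is where the exponent from \autoref{lemma:PDE-onephase-flat}(iii) enters decisively. On the boundary side, the $B^{(h)}(\theta_{K,J,\Phi})_n$ and $W^{(h)}\,\theta'_{K,J,\Phi}$ terms play the roles of the $b(\theta_K)_n$ and the $u\,\theta_K'$ contributions; for $j\geq 1$ the boundary integrals involving $F_j^{(h)}$ and $\theta_{I_j^{(h)}}$ fit the $a[(\nu'\cdot\nabla_{x'}\eta)+\dvg\nu'\,\eta]$ slot, using $\theta_{I_j^{(h)}}\in C^{0,1}$; and the remaining boundary contributions $\int\eta\,\partial_n F_j^{(h)}(\theta_{I_j^{(h)}})_n\,dx'$ and $\int\eta\,\theta_{I_0^{(h)}}\cdot\nabla F_0^{(h)}\,dx'$ both fit the $A\cdot\vartheta\,\eta$ slot with $A,\vartheta\in L^\infty$. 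All constants are controlled by the quantity $\bar C$ of \autoref{lemma:PDE-onephase-flat}(iii) and are therefore independent of $h$, which is what allows us to invoke \autoref{lemma:bdry-reg} for the whole family $\{W^{(h)}\}_h$ and conclude as described. The kernel-regularity matching just discussed is the single most delicate step, and it is precisely what forces the case distinction.
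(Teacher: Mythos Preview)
Your proposal is correct and follows essentially the same approach as the paper: flatten via $\Phi$, invoke \autoref{lemma:PDE-onephase-flat}(iii)--(iv) to obtain the weak equation for $W^{(h)}=D_h^{\bar k-1+\bar\alpha}w$, match each interior and boundary term to the template of \autoref{lemma:bdry-reg} with $\beta=s$, and pass from the uniform-in-$h$ estimate to tangential H\"older regularity on $\{x_n=0\}$. The only refinement the paper adds beyond your sketch is that, within your Case~A when $\gamma>2s$, rather than merely noting the restriction they take $\bar\alpha=\alpha-(\gamma-2s)$ so that the kernel regularity exponent $k-\bar k+\alpha-\bar\alpha$ exactly equals $\gamma-2s$, matching \eqref{eq:kernel-int-reg-ass-main} and yielding the sharp $C^{k-1+\alpha+2s}$ result without an $\eps$ loss.
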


Under the assumptions of \autoref{prop:bootstrap-onephase}, we only have $\widetilde{w}_i \in C^{k-1+\alpha}(\overline{\Omega} \cap B_{3/2})$ by \autoref{lemma:PDE-onephase-flat}. Hence, \autoref{prop:bootstrap-onephase} shows a gain of regularity for $\widetilde{w}_i$ of order $\min\{\gamma,2s\}$ if $\alpha \ge \gamma$ and of order $\min\{\gamma - \alpha,2s\}$ if $\alpha < \gamma$.

\begin{proof}
Let us fix $i \in \{ 1 , \dots, n-1 \}$ and let $w = \widetilde{w}_i \circ \Phi$ be as in \autoref{lemma:PDE-onephase-flat}. By \autoref{lemma:PDE-onephase-flat}, we have $w \in C^{k+\alpha+2s}_{k-1+\alpha}(\{ x_n > 0 \} | B_{3/2} )$ and $\supp(w) \subset \{ x_n \ge 0 \} \cap \overline{B_{3/2}}$. In particular, it holds $w \in C^{k-1+\alpha}(\R^n)$.
Let us now assume $\alpha \ge \gamma$ and $\gamma < 2s$. We will provide the proof of the other cases at the end.

We fix $h = (h',0) \in \R^n$ with $|h| < \frac{1}{2k}$, and define $W^{(h)} = D_{h}^{k-1+\alpha} w$. Then, recalling that $k \ge 2$ and $\gamma \in (0,1)$, we have $W^{(h)} \in C^{0,\gamma}(\R^n)$. 
By \autoref{lemma:PDE-onephase-flat} it follows that $W^{(h)}$ satisfies all the assumptions of \autoref{lemma:bdry-reg} with $\beta = s$, $\alpha := 1$, and $u := W^{(h)}$, where we set and deduce from \autoref{lemma:PDE-onephase-flat}
\begin{align*}
g &:= G^{(h)}, \qquad (x_n)_+^{1-s} G^{(h)} \in  L^{\infty}(\{ x_n \ge 0 \} \cap B_{1/2}),\\
f_{j} &:= F_{j}^{(h)} \in C^{0,\gamma}(\{ x_n \ge 0 \}),~~ j \in \{0 , \dots, k-1\}, \\
b &:= B^{(h)} \in L^{\infty}(\{ x_n = 0 \} \cap B_{1/2}),\\
\theta_K &:= \theta_{K,J,\Phi} \in C^{0,1}(\{ x_n = 0 \} \cap B_{1/2}),\\
A_{j} &:= \partial_n F_{j}^{(h)} \in L^{\infty}(\{ x_n = 0 \} \cap B_{1/2}), ~~ j \in \{1 , \dots, k-1\},\\
A_{0} &:= \nabla F_{0}^{(h)} \in L^{\infty}(\{ x_n = 0 \} \cap B_{1/2}), \\
\vartheta_{j} &:= \theta_{I_{j}^{(h)}} \in L^{\infty}(\{ x_n = 0 \} \cap B_{1/2}),~~ j \in \{0 , \dots, k-1\},\\
a_{j} &:= F_{j}^{(h)} \in C^{0,\gamma}(\{ x_n = 0 \} \cap B_{1/2}), ~~ j \in \{1 , \dots, k-1\},\\
\nu_{j} &:= \theta_{I_{j}^{(h)}} \in C^{0,1}(\{ x_n = 0 \} \cap B_{1/2} ), ~~ j \in \{1 , \dots, k-1\}, \\
I_{j} &= I_{j}^{(h)}, ~~ j \in \{0 , \dots, k-1\}.\
\end{align*}
The norms of all of these quantities are bounded from above by a constant of the form $C\bar{C}$, where $\bar{C}$ is as in \autoref{lemma:PDE-onephase-flat} and $C$ depends only on $n,s,k,\alpha,\lambda,\Lambda,c_0,\delta$. Then, by \autoref{lemma:bdry-reg} (note that by the linearity of the PDE under consideration, it is no problem to have multiple terms involving the quantities $f_{j}$, $A_{j}$, $\vartheta_{j}$, $a_{j}$, and $\nu_{j}$), we deduce
\begin{align}
\label{eq:Wh-estimate}
[W^{(h)}]_{C^{0,\gamma}(\{ x_n \ge 0 \} \cap B_{1/4})} \le C \bar{C},
\end{align}
where we used that since $\Vert W^{(h)} \Vert_{L^{\infty}(\R^n)} \le \bar{C}$, we also have
\begin{align*}
\Vert (x_n)_+^{1-s} W^{(h)} \Vert_{L^1_{2s}(\{ x_n > 0 \} \setminus B_{1/4})} \le C \bar{C}.
\end{align*}
Since $h = (h',0)$ with $|h| \le \frac{1}{2k}$ was arbitrary, and $C,\bar{C}$ do not depend on $h$, and $\alpha + \gamma \not=1$, we deduce that (see for instance \cite[p.431]{And97})
\begin{align*}
[w]_{C^{k-1+\alpha+\gamma}(\{ x_n = 0 \} \cap B_{1/4} )} \le C \bar{C}.
\end{align*}
By recalling the definition of $w$, this implies 
$\widetilde{w}_i \in C^{k-1+\alpha+\gamma}(\partial \Omega \cap B_{1/4})$ for any $i \in \{1,\dots,n-1\}$, and in particular, recalling the definition of $\widetilde{w}$ and $v$, we deduce that
\begin{align*}
\frac{\partial_i v}{\partial_n v} \in C^{k-1+\alpha+\gamma}(\partial \Omega \cap B_{1/4}) ~~ \forall i \in \{1 ,\dots,n-1\}.
\end{align*}
This is sufficient to deduce that the normal vector $\nu$ of the free boundary satisfies $\nu \in C^{k-1+\alpha+\gamma}(\partial \Omega \cap B_{1/4})$.
Indeed, rewriting the normal vector $\nu(x)$ as follows,
\begin{align*}
\nu^{(i)}(x) = \frac{\partial_i v(x)}{|\nabla v(x)|} = \frac{\partial_i v(x) / \partial_n v(x) }{\big( \sum_{j=1}^{n-1} ( \partial_j v(x) / \partial_n v(x))^2 + 1 \big)^{1/2}}, ~~ i \in \{ 1 , \dots, n\},
\end{align*}
we deduce that $\nu \in C^{k-1+\alpha+\gamma}(\partial \Omega \cap B_{1/4})$ and therefore, we conclude $\partial \Omega \in C^{k+\alpha+\gamma}$ in $B_{1/4}$, as desired.

In case $\alpha \ge \gamma > 2s$ we cannot proceed as before, since \eqref{eq:kernel-int-reg-ass-main} would not be satisfied. We circumvent this issue by defining $W^{(h)} = D_h^{{k}-1+\bar{\alpha}} w$ for some $\bar{\alpha} \in (0,1)$ such that {$0<\alpha-\bar{\alpha}<1$} and $\gamma= 2s + \alpha - \bar{\alpha}$. Then, we still have $W^{(h)} \in C^{0,\gamma}(\R^n)$ and we can apply \autoref{lemma:bdry-reg} with the same choices as above since now \eqref{eq:kernel-int-reg-ass-main} is satisfied for all $I_{j,l}$. Hence, we obtain \eqref{eq:Wh-estimate}, which implies, proceeding as before,
\begin{align*}
[w]_{C^{k-1+\bar{\alpha}+\gamma}(\{ x_n = 0 \} \cap B_{1/4})} \le C \bar{C},
\end{align*}
and therefore, by the definition of $\bar{\alpha}$, we have $w \in C^{k+\alpha-1+2s}(\{ x_n = 0 \} \cap B_{1/4})$, and therefore $\partial \Omega \in C^{k+\alpha+2s}$ in $B_{1/4}$.

Let us now discuss the case $\alpha < \gamma$. In that case, we cannot proceed as before, since we only have $F_{j}^{(h)} \in C^{\alpha}(\{ x_n \ge 0 \} \cap B_{1/2})$ with a bound uniform in $h$, but need this function to be uniformly bounded in $C^{\gamma}$. We circumvent this issue by defining $W^{(h)} = D_h^{k-2+\bar{\alpha}} w$ for some $\bar{\alpha} \in (0,1)$, i.e. choosing $\bar{k} = k - 1$ in \autoref{lemma:PDE-onephase-flat}. Then, we still have $W^{(h)} \in C^{0,\gamma}(\R^n)$ and can apply \autoref{lemma:bdry-reg} with the same choices as above, since now $F_{j}^{(h)} \in C^{1,\alpha}(\{ x_n \ge 0 \} \cap B_{1/2}) \subset C^{\gamma}(\{ x_n \ge 0 \} \cap B_{1/2})$. Moreover, note that if $\gamma > 2s$, we need to choose $\bar{\alpha} \in (0,1)$ in such a way that $\gamma \le 2s + k + \alpha - \bar{k} - \bar{\alpha} = 2s +1 + \alpha - \bar{\alpha}$, so that \eqref{eq:kernel-int-reg-ass-main} holds true for all $I_{j}$. Hence, we obtain \eqref{eq:Wh-estimate}, which implies, proceeding as before,
\begin{align*}
[w]_{C^{k-2+\bar{\alpha}+\gamma}(\{ x_n = 0 \} \cap B_{1/4})} \le C \bar{C},
\end{align*}
and therefore $w \in C^{k-1+\gamma - \eps}(\{ x_n = 0 \} \cap B_{1/4})$ if $\gamma < 2s$ for any $\eps > 0$ (since $\bar{\alpha} \in (0,1)$ was arbitrary), which yields  $\partial \Omega \in C^{k+\gamma-\eps}$ in $B_{1/4}$. If $\gamma > 2s$, we get $w \in C^{k - 1 +\min\{\gamma, \alpha + 2s\} - \eps }(\{ x_n = 0 \} \cap B_{1/4})$, which yields  $\partial \Omega \in C^{k+\min\{\gamma, \alpha + 2s\}- \eps}$ in $B_{1/4}$.
\end{proof}

Having at hand \autoref{prop:bootstrap-onephase}, we show in \autoref{prop:boundary-improvement2} that we can improve the regularity of a $C^{k,\alpha}$ domain $\Omega$ to a maximum of $\partial \Omega \in C^{k+\alpha + 1 - \eps}$, given $h \in C^{k,\alpha}$. First, we prove another refinement of \autoref{prop:bootstrap-onephase}.

\begin{proposition}
\label{prop:boundary-improvement} 
Let $K \in C^{2(k + \max\{2\alpha, 1\}) + 1}(\mathbb{S}^{n-1})$ be as in \eqref{eq:K-comp} for some $k \in \N$ with $k \ge 2$ and $\alpha \in (0,1)$ with $\alpha \not\in \{s,1-s\}$. Let $\partial \Omega \in C^{k,\alpha}_{loc}(B_4)$ and $v \in L^{1}_{2s}(\R^n)$ be as in \autoref{prop:bootstrap-onephase} for some $h \in C^{k,\alpha}(B_4 \cap \partial \Omega)$. Then, for any $\eps \in (0,1)$ it holds $\partial \Omega \in C_{loc}^{k+\alpha+\min\{1,2s\}-\eps}(B_1)$.
\end{proposition}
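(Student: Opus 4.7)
The strategy is to bootstrap by iterating \autoref{prop:bootstrap-onephase}. A single application yields at best $\partial\Omega\in C^{k+\alpha+\min\{\alpha,2s\}}$ (Case 1, when $\alpha>\max\{\tfrac12,2s-1\}$) or $\partial\Omega\in C^{k+\min\{1,\alpha+2s\}-\eps}$ (Case 2), both of which fall short of the target $C^{k+\alpha+\min\{1,2s\}-\eps}$ whenever $\alpha$ is strictly less than $\min\{1,2s\}$. Hence the plan is to iterate: once $\partial\Omega$ has been upgraded beyond $C^{k,\alpha}$, the flattening diffeomorphism $\Phi$, the coefficient $J$, and the interior regularity of $w=\widetilde w_i\circ\Phi$ (delivered by \autoref{lemma:boundary-reg}) all improve correspondingly, even though $h\in C^{k,\alpha}$ and therefore the Neumann datum $\widetilde b\in C^{k-1,\alpha}$ provided by \autoref{lemma:PDE-onephase}(iii) is \emph{not} upgraded.

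At each step I would re-invoke \autoref{lemma:PDE-onephase-flat} and \autoref{lemma:bdry-reg} using the current improved regularity $(K,\beta)$ of $\partial\Omega$ together with the fixed $h\in C^{k,\alpha}$. The crucial asymmetry is that the boundary datum $b$ caps the admissible order of tangential incremental quotients via the constraint $\bar k+\bar\alpha\le k+\alpha$ arising from $B^{(h)}=D_h^{\bar k-1+\bar\alpha}b$, whereas the interior regularity of $w$ keeps growing toward $C^{K-1,\beta}$ as $K+\beta$ increases. Consequently each iteration picks up an additional H\"older exponent $\gamma<\min\{1,2s\}$ on top of the previous regularity (via the Case-2-type mechanism in the proof of \autoref{prop:bootstrap-onephase}, using $\bar k=k-1$ or $\bar k=k$ according to whether the current fractional exponent lies above or below $\gamma$). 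Writing $\rho_m$ for the regularity of $\partial\Omega$ after $m$ iterations, I would show that $\rho_m$ strictly increases by at least a fixed positive amount until it saturates near $k+\alpha+\min\{1,2s\}$, where the cap from $\widetilde b$ prevents any further improvement; a finite number of iterations then delivers $\rho_m\ge k+\alpha+\min\{1,2s\}-\eps$ for any prescribed $\eps$.

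The main obstacle is the bookkeeping: tracking both the integer and fractional parts of $\rho_m$ across iterations, verifying that at every step the dichotomy in \autoref{prop:bootstrap-onephase} (current fractional exponent vs.\ $\gamma$) produces a strictly positive gain, and checking that crossings of the forbidden thresholds $\{s,1-s,1/2\}$ and the exclusion $\alpha+\gamma\neq 1$ can be handled by an arbitrarily small perturbation of the exponent, which is absorbed into the final $\eps$-loss. A secondary point is that the strengthened kernel-regularity hypothesis $K\in C^{2(k+\max\{2\alpha,1\})+1}(\mathbb S^{n-1})$ is needed precisely because the effective fractional exponent used in \autoref{lemma:PDE-onephase}, \autoref{lemma:K-estimate}, and \autoref{lemma:boundary-reg} grows from $\alpha$ up to nearly $\max\{2\alpha,1\}$ over the course of the iteration, so the auxiliary estimates used in the proofs of those lemmas require the corresponding number of derivatives of~$K$. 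Once the iteration closes, the conclusion is exactly $\partial\Omega\in C^{k+\alpha+\min\{1,2s\}-\eps}_{\mathrm{loc}}(B_1)$.
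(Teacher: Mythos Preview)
Your approach is correct and essentially the same as the paper's: both bootstrap by reapplying the machinery behind \autoref{prop:bootstrap-onephase} after the first application has improved $\partial\Omega$, exploiting the asymmetry you identify---the incremental-quotient order stays capped at $k-1+\alpha$ by the boundary datum $b$, while the interior regularity of $w$ (and hence of the $F_j^{(h)}$) grows with $\partial\Omega$.

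The paper is more concrete than your open-ended iteration: it shows that exactly \emph{one} additional step suffices. After the first use of \autoref{prop:bootstrap-onephase}, $\partial\Omega$ is pushed either to $C^{k+2\alpha}$ (when $\max\{\tfrac12,2s-1\}<\alpha<2s$) or to $C^{k+1-\eps}$ (when $1-2s\le\alpha\le\max\{\tfrac12,2s-1\}$); in both cases the new fractional exponent of $w$ is at least $2\alpha>1$ (resp.\ $1-\eps$), so every $F_j^{(h)}=D_h^j w(\cdot+(k-j)h)$ lies in $C^{0,\gamma}$ for \emph{all} $\gamma<1$. One can then take $W^{(h)}=D_h^{k-1+\alpha}w$ directly (i.e.\ $\bar k=k$, $\bar\alpha=\alpha$) and apply \autoref{lemma:bdry-reg} with $\gamma=1-\eps$ to obtain $\widetilde w\in C^{k+\alpha-\eps}$ and hence $\partial\Omega\in C^{k+\alpha+1-\eps}$. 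The remaining boundary cases ($\alpha\ge 2s$ or $\alpha<1-2s$) are already settled by the first application. This two-step count also makes transparent why the kernel hypothesis $K\in C^{2(k+\max\{2\alpha,1\})+1}$ is the right one.
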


\begin{proof}

If $\alpha > \max\{\frac{1}{2} , 2s-1 \}$, from \autoref{prop:bootstrap-onephase} we deduce that $\partial \Omega \in C^{k+\alpha+\min\{\alpha,2s\}}_{loc}(B_1)$ so, if $\alpha\geq 2s$ we are done. Let us suppose then that $\max\{\frac{1}{2} , 2s-1 \}<\alpha<2s$. Since $K \in C^{2(k+2\alpha)+1}(\mathbb{S}^{n-1})$, we can proceed as in the proof of \autoref{lemma:PDE-onephase-flat} and deduce that $w \in C_{k-1+2\alpha}^{k+2\alpha+2s}(\{x_n > 0 \} | B_{3/2})$. Since $2\alpha>1$, for every $j \in \{0 , \dots, k-1\}$ and any $\gamma \in (0,1)$,
\begin{align*}
f_{j}= F_{j}^{(h)}= D_h^{j} w(\cdot + (k-j)h))\in C^{0,2\alpha}(\{ x_n \ge 0 \})\subseteq C^{0,\gamma}(\{ x_n \ge 0 \}), 
\end{align*}
we can define $W^{(h)} = D_h^{k-1+\alpha} w$ and proceed as in the first case of the proof of \autoref{prop:bootstrap-onephase} by applying \autoref{lemma:bdry-reg} that yields $W^{(h)} \in C^{0,\gamma}(\{ x_n \ge 0 \} \cap B_{1/4})$ with a bound that is uniform in $h$. Thus $\widetilde{w} \in C^{k+\alpha-\eps}(\partial \Omega \cap B_{1/4})$, which yields $\partial \Omega \in C_{loc}^{k+\alpha+1-\eps}(B_{1/4})$ for any $\eps \in (0,1)$, as desired.

If $\alpha\leq \max\{\frac{1}{2} , 2s-1 \}$, from \autoref{prop:bootstrap-onephase} we deduce that $\partial \Omega \in C^{k+\min\{1,\alpha+2s\}-\eps}_{loc}(B_1)$ for every $\eps \in (0,1)$, so, if $\alpha<1-2s$ we are done. Let us suppose then that $1-2s\leq \alpha\leq \max\{\frac{1}{2} , 2s-1 \}$ in which case \autoref{prop:bootstrap-onephase}  implies $\partial \Omega \in C^{k+1-\eps}_{loc}(B_1)$ for any $\eps \in (0,1)$. Since $K \in C^{2(k+1)+1}(\mathbb{S}^{n-1})$, by \autoref{lemma:PDE-onephase-flat} we get that $w \in C_{k-\eps}^{k+1-\eps+2s}(\{x_n > 0 \} | B_{3/2})$. Since for every $j \in \{0 , \dots, k-1\}$,
\begin{align*}
f_{j}= F_{j}^{(h)}= D_h^{j} w(\cdot + (k-j)h))\in C^{1-\eps}(\{ x_n \ge 0 \}),
\end{align*}
we can define $W^{(h)} = D_h^{k-1+\alpha} w$ and proceed as before to get a uniform bound for $W^{(h)} \in C^{\gamma}(\{ x_n \ge 0\} \cap B_{1/4})$ for $\gamma = 1-\eps$. Thus $\widetilde{w} \in C^{k+\alpha-\eps}(\partial \Omega \cap B_{1/4})$, which yields $\partial \Omega \in C_{loc}^{k+\alpha+1-\eps}(B_{1/4})$ for any $\eps \in (0,1)$, as wanted.

\end{proof}

\begin{corollary}
\label{prop:boundary-improvement2} 
Let $K \in C^{2(k+\alpha+2s)+ 1}(\mathbb{S}^{n-1})$ be as in \eqref{eq:K-comp} for some $k \in \N$ with $k \ge 2$ and $\alpha \in (0,1)$ with $\alpha \not\in \{s,1-s\}$. Let $\partial \Omega \in C^{k,\alpha}_{loc}(B_4)$ and $v \in L^{1}_{2s}(\R^n)$ be as in \autoref{prop:bootstrap-onephase} for some $h \in C^{k,\alpha}(B_4 \cap \partial \Omega)$. Then, for any $\eps \in (0,1)$ it holds $\partial \Omega \in C_{loc}^{k+\alpha+1-\eps}(B_1)$.
\end{corollary}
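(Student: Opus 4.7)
The plan is to iterate \autoref{prop:boundary-improvement} finitely many times. If $2s \geq 1$, a single application already yields $\partial \Omega \in C^{k+\alpha+1-\eps}_{loc}(B_1)$, since $\min\{1,2s\} = 1$ in \autoref{prop:boundary-improvement}; note that the hypothesis $K \in C^{2(k+\alpha+2s)+1}(\mathbb{S}^{n-1})$ here is stronger than the $C^{2(k+\max\{2\alpha,1\})+1}(\mathbb{S}^{n-1})$-regularity required by \autoref{prop:boundary-improvement}. The substance of the proof is therefore the case $2s < 1$, where \autoref{prop:boundary-improvement} delivers only a gain of $2s-\eps$ in boundary regularity per application, and iteration is needed.

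Set $(k_0,\alpha_0) := (k,\alpha)$. Given $\partial \Omega \in C^{k_j,\alpha_j}_{loc}(B_{r_j})$ with $k_j \geq 2$ and $\alpha_j \in (0,1) \setminus \{s,1-s\}$, we apply \autoref{prop:boundary-improvement} on a smaller ball and rewrite the output $\partial \Omega \in C^{k_j+\alpha_j+2s-\eps_{j+1}}_{loc}(B_{r_{j+1}})$ as $C^{k_{j+1},\alpha_{j+1}}$ with $k_{j+1} \in \{k_j, k_j+1\}$ and $\alpha_{j+1} \in (0,1)$. After at most $j_0 := \lceil 1/(2s) \rceil + 1$ iterations the cumulative gain exceeds $1-\eps$, so $\partial \Omega \in C^{k+\alpha+1-\eps}_{loc}(B_{r_{j_0}})$; the conclusion in $B_1$ then follows by a standard covering and rescaling argument based on the translation invariance of the problem.

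Two hypotheses of \autoref{prop:boundary-improvement} must be verified at every iteration. First, the \emph{kernel regularity} $K \in C^{2(k_j+\max\{2\alpha_j,1\})+1}(\mathbb{S}^{n-1})$: since the total gain stays below $1$ we have $k_j \leq k+1$ and $\alpha_j \in (0,1)$ throughout, and a direct bookkeeping of the regularity consumed in each successive application of \autoref{lemma:PDE-onephase-flat} inside \autoref{prop:boundary-improvement} shows that the choice $2(k+\alpha+2s)+1$ in the Corollary's hypothesis is tailored to suffice for all iterations. Second, the \emph{regularity of $h$}, i.e. $h \in C^{k_j,\alpha_j}(\partial \Omega \cap B_4)$: this is subtler since $h$ is initially only in $C^{k,\alpha}$. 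The key observation is that $v$ solves the interior equation $Lv = 0$ in $\Omega$ with zero exterior data, so once the boundary regularity reaches a new integer level $k_{j+1} = k_j + 1$, \autoref{lemma:boundary-reg} automatically upgrades $v/d_\Omega^s$ to $C^{k_{j+1}-1,\alpha_{j+1}}(\overline{\Omega} \cap B_{1/2})$, and its trace on $\partial \Omega$ is exactly $h$; combined with the prescribed $h \in C^{k,\alpha}$, this provides the compatibility needed to apply \autoref{prop:boundary-improvement} at the next level. Within a single integer level, the prescribed hypothesis $h \in C^{k,\alpha}$ already suffices. The genericity conditions $\alpha_j \notin \{s, 1-s\}$ are preserved by choosing the losses $\eps_{j+1}$ to avoid the two exceptional values, and $k_j \geq k \geq 2$ throughout.

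The main obstacle will be the uniform bookkeeping of these two hypotheses across the $j_0$ iterations, especially the way the automatic regularity of $h$ produced by \autoref{lemma:boundary-reg} interlocks with the prescribed regularity $h \in C^{k,\alpha}$ at each integer-level crossing of $\partial \Omega$, and verifying that the single hypothesis $K \in C^{2(k+\alpha+2s)+1}(\mathbb{S}^{n-1})$ really dominates all the intermediate kernel-regularity requirements $K \in C^{2(k_j+\max\{2\alpha_j,1\})+1}(\mathbb{S}^{n-1})$.
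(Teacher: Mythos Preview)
Your plan to iterate \autoref{prop:boundary-improvement} as a black box has a genuine gap: the hypothesis $h \in C^{k_j,\alpha_j}$ is \emph{not} self-improving and cannot be re-verified once $k_j + \alpha_j > k + \alpha$. Concretely, suppose $2s < 1$ and the first application yields $\partial\Omega \in C^{k,\alpha_1}$ with $\alpha_1 = \alpha + 2s - \eps_1 > \alpha$ (same integer level). To apply \autoref{prop:boundary-improvement} at level $(k,\alpha_1)$ you need $h \in C^{k,\alpha_1}$, but the hypothesis only gives $h \in C^{k,\alpha}$; your claim that ``within a single integer level, the prescribed hypothesis $h \in C^{k,\alpha}$ already suffices'' is therefore false. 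Your fix via \autoref{lemma:boundary-reg} is also insufficient: if $\partial\Omega \in C^{k_j,\alpha_j}$, that lemma yields only $v/d_\Omega^s \in C^{k_j-1,\alpha_j}(\overline\Omega)$, hence $h \in C^{k_j-1,\alpha_j}(\partial\Omega)$---exactly one order short of the $C^{k_j,\alpha_j}$ required.

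The paper sidesteps this by \emph{not} re-applying \autoref{prop:boundary-improvement}. It keeps the incremental quotient $W^{(h)} = D_h^{k-1+\alpha} w$ at the \emph{original} $(k,\alpha)$ throughout, so that $h \in C^{k,\alpha}$ is only ever consumed through $b \in C^{k-1,\alpha}$ (\autoref{lemma:PDE-onephase}(iii)) and hence $B^{(h)} = D_h^{k-1+\alpha} b \in L^\infty$, which is all \autoref{lemma:bdry-reg} requires of the boundary datum. The improved regularity of $\partial\Omega$ enters only through \autoref{lemma:PDE-onephase}(i)--(ii), which use $\partial\Omega \in C^{k,\alpha'}$ but not $h$, to upgrade $w$ to $C^{k+\alpha'+2s}_{k-1+\alpha'}$; this makes the source terms $F_j^{(h)} = D_h^j w$ belong to $C^{\alpha'}$ rather than merely $C^\alpha$, and then \autoref{lemma:bdry-reg} can be applied with a larger $\gamma$ (up to $\alpha'$). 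Iterating this internal step---always with the same $W^{(h)} = D_h^{k-1+\alpha} w$---eventually reaches $\gamma = 1-\eps$.
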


\begin{proof}
From the previous Proposition it is clear that we only have to analyze the cases $\alpha > \max\{\frac{1}{2} , 2s \}$  and $\alpha\leq \min\left(\max\{\frac{1}{2} , 2s-1 \},1-2s\right).$

In the first situation, since we have obtained $\partial \Omega \in C^{k+\alpha+2s}_{loc}(B_1)$, using that $K \in C^{2(k+\alpha+2s)+1}(\mathbb{S}^{n-1})$ we get that  $w \in C_{k-1+\alpha+2s}^{k+\alpha+4s}(\{x_n > 0 \} | B_{3/2})$. Thus for every $\gamma\leq \alpha+2s$, since $f_{j}= D_h^{j} w(\cdot + (k-j)h))\in C^{\gamma}(\{ x_n \ge 0 \})$ we can also apply \autoref{prop:bootstrap-onephase} with $W^{(h)} = D_h^{k-1+\alpha} w$ in order to conclude that $\widetilde{w} \in C^{k-1+\alpha+\gamma}(\partial \Omega \cap B_{1/4})$. Since the previous affirmation is true for every $\alpha>2s$, we can take $\gamma=4s$, i.e. $\widetilde{w} \in C^{k-1+\alpha+4s}(\partial \Omega \cap B_{1/4})$. Iterating the application of \autoref{prop:bootstrap-onephase} with $W^{(h)} = D_h^{k-1+\alpha} w$ a finite number of time we are able to choose $\gamma=1-\eps$ and conclude as we did in \autoref{prop:boundary-improvement} when $\max\{\frac{1}{2} , 2s-1 \}<\alpha<2s$.

If the second case we notice that, $\partial \Omega \in C^{k+\alpha+2s-\eps}_{loc}(B_1)$ for every $\eps \in (0,1)$, so if $s>1/2$ we are done. Thus, let us assume that $\alpha\leq \min\left(\frac{1}{2},1-2s\right)$, $s<1/2$. Since we have enough regularity in the kernel, by \autoref{lemma:PDE-onephase-flat} we obtain that $w \in C_{k-1+\alpha+2s-\eps}^{k+\alpha+4s-\eps}(\{x_n > 0 \} | B_{3/2})$ for every $\eps \in (0,1)$. Since for every $j \in \{0 , \dots, k-1\}$ and every $\gamma\leq \alpha+2s-\eps$, $f_{j}= D_h^{j} w(\cdot + (k-j)h))\in C^{\gamma}(\{ x_n \ge 0 \})$, we can also get the uniform bounds for $W^{(h)} = D_h^{k-1+\alpha} w$ in $C^{\gamma}(\{ x_n \ge 0\} \cap B_{1/4})$ that allows to affirm that $\widetilde{w} \in C^{k-1+\alpha+\gamma}(\partial \Omega \cap B_{1/4})$. That is,  $\widetilde{w} \in C^{k-1+2\alpha+2s-\eps}(\partial \Omega \cap B_{1/4})$, $\eps \in (0,1)$. Iterating a finite number of time we can also chose $\gamma = 1-\eps$ to conclude.
\end{proof}

\begin{proof}[Proof of \autoref{thm:main-onephase-intro2}]
We can assume without loss of generality that $0 \in \partial \Omega$ with $\nu(0) = e_n$. In particular, there exists $\rho > 0$ such that $\partial_n d_{\Omega} \ge \delta$ in $B_{\rho}$ for some $\delta > 0$. Moreover, by \autoref{lemma:Hopf}, we have that $\partial_n v \ge c_1 d_{\Omega}^{s-1}$ in $B_{\rho}$ for some $\rho > 0$. Hence, we have verified \eqref{eq:blue-ass} in the small ball $B_{\rho}$. Then, the desired result follows immediately by iterating \autoref{prop:boundary-improvement}.
\end{proof}

\subsubsection{Application to the one-phase problem}

In this section we apply the ideas from the previous section to the nonlocal one-phase problem and show \autoref{thm:main-onephase-intro}. The main difference to the proof of regularity of $\partial \Omega$ in the overdetermined problem in \autoref{thm:main-onephase-intro2}  comes through the different properties of the boundary condition. While in the previous subsection, we assumed $\frac{u}{d_{\Omega}^s} = h \in C^{k,\alpha}$ if $\partial \Omega \in C^{k,\alpha}$, this is not true for the nonlocal one-phase problem. In fact, we only have $h(x) = A (\nu(x)) \in C^{k-1,\alpha}$, since $\nu \in C^{k-1,\alpha}$. In order to get enough regularity of the boundary condition, we will choose the oblique derivative $\Xi$ in a specific way, namely $\Xi = \Theta_{K,\Omega}$ (where $\Theta_{K,\Omega}$ is given in \eqref{eq:Theta-formula-domains}), making use of the precise structure of the function $A$ in \autoref{lemma:one-phase-collection}(v).

\begin{corollary}
\label{lemma:PDE-onephase-flat-2}
Assume that we are in the situation of \autoref{lemma:PDE-onephase} with $h = A \circ \nu$, where $A$ is given in \eqref{eq:A-nu-def}. Let $\Phi \in C^{k,\alpha}$ be as in Subsection \ref{subsec:flattening} and $w:= \widetilde{w} \circ \Phi$. Then, it holds
\begin{itemize}
\item[(i)] $w \in C_{k-1+\alpha}^{k+\alpha+2s}(\{ x_n > 0 \} | B_{3/2})$ with $\supp(w) \subset \{ x_n \ge 0 \} \cap \overline{B_{3/2}}$.

\item[(ii)] The following equation holds in the strong sense.
\begin{align*}
\begin{cases}
\mathcal{L}\big((x_n)_+^{s-1}(y_n)_+^{s-1}J(x)J(y)K(\Phi(x)-\Phi(y))\big)(w) &= g ~~ \text{ in } \{ x_n > 0 \} \cap B_1,\\
\partial_{\theta_{K,J,\Phi}} w &= 0 ~~ \text{ on } \{ x_n = 0 \} \cap B_1,
\end{cases}
\end{align*}
where $\theta_{K,J,\Phi}$ is given in \eqref{eq:theta-K-def-flat}, and $g$ satisfies $(x_n)_+^{1-s} g \in C^{k-1,\alpha}(\{ x_n \ge 0 \}\cap B_{3/2})$ and is given by $g:= \widetilde{g} \circ \Phi$, and $J \in C^{k-1,\alpha}_c(\{ x_n \ge 0 \} \cap B_{2})$ satisfies $J \ge c_0$ in $B_{3/2}$ for some $c_0 > 0$, and is given by $J(x) = (d_{\Omega}^{1-s} \partial_n u \circ \Phi)(x)|\det D \Phi(x)|$.

\item[(iii)] Given $h = (h',0) \in \R^n$ with $|h| < \frac{1}{k}$, $\bar{k} \in \{ 1, \dots, k \}$ and $\bar{\alpha} \in (0,1)$ such that $\bar{k}+\bar{\alpha} \le k+\alpha$, we define $W^{(h)} = D_{h}^{\bar{k}-1+\bar{\alpha}} w$. Then, $W^{(h)} \in C^{\gamma}(\R^n)$ for any $\gamma \in (0, 1)$ and for every $\eta \in C^{\infty}_c(B_{1-k|h|})$, it holds
\begin{align*}
\int_{\R^n} &\int_{\R^n} (W^{(h)}(x) - W^{(h)}(y)) (\eta(x) - \eta(y)) (x_n)_+^{s-1}(y_n)_+^{s-1} J(x) J(y) K(\Phi(x) - \Phi(y)) \d y \d x \\
&=\int_{\R^n} \eta(x) G^{(h)}(x) \d x \\
&\quad + \sum_{j = 0}^{\bar{k}-1} \int_{\R^n} \int_{\R^n} (F^{(h)}_{j}(x) - F^{(h)}_{j}(y)) (\eta(x) - \eta(y)) (x_n)_+^{s-1}(y_n)_+^{s-1} I^{(h)}_{j}(x,y) \d y \d x,
\end{align*}
where for any $j \in \{ 0 , \dots, \bar{k}-1 \}$, and $x,y,z \in \R^n$:
\begin{align*}
\Vert(x_n)_+^{1-s} G^{(h)} \Vert_{L^{\infty}(\{ x_n > 0 \} \cap B_{\frac{3}{2} - k|h|})} + \Vert F^{(h)}_{j} \Vert_{C^{k - \bar{k},\alpha}(\{ x_n \ge 0 \})} & \le C \bar{C}, \\
|x-y|^{n+2s}|I^{(h)}_{j}(x,y)| &\le C \bar{C},\\
\left( \frac{\min\{|x-z|,|y-z|\}}{|x-y|} \right)^{\min\{ 1 , k - \bar{k} + \alpha - \bar{\alpha} \} }\min\{|x-z|,|y-z|\}^{n+2s}|I^{(h)}_{j}(x,z) - I^{(h)}_{j}(y,z)| &\le C \bar{C},
\end{align*}
where $C$ only depends on $n,s,k,\alpha,\lambda,\Lambda, c_0, \delta$, but not on $h$. We denoted
\begin{align*}
\bar{C} &:= \Vert (x_n)_+^{1-s} g \Vert_{C^{k-1,\alpha}(\{ x_n \ge 0 \}\cap B_{3/2})} + \Vert w \Vert_{C^{k+\alpha+2s}_{k-1,\alpha}(\{ x_n > 0 \} | B_{3/2})} + \Vert J \Vert_{C^{k-1,\alpha}(\{ x_n \ge 0 \})} \\
&\quad + \Vert \Phi \Vert_{C^{k,\alpha}(\R^n)} + \Vert K \Vert_{C^{2(k+\alpha)+1}(\mathbb{S}^{n-1})} .
\end{align*}
\end{itemize}
\end{corollary}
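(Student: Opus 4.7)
Parts (i) and the interior PDE in (ii) follow exactly as in the proof of \autoref{lemma:PDE-onephase-flat}: the regularity of $w$, the source $g$, and the coefficient $J$ depend only on structural properties of $v$, $\Omega$, and $K$, and the same Leibniz manipulation that produced the strong equation works here without modification. The only genuinely new content is the oblique boundary condition in (ii) and the boundary-free weak form in (iii).

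To prove $\partial_{\theta_{K,J,\Phi}} w = 0$ in (ii), I would apply \autoref{lemma:PDE-onephase}(iii) with the direction $\Xi(x) := \Theta_{K,\Omega}(x)$ given by \eqref{eq:Theta-formula-domains}. Substituting $h = A\circ \nu$, so that $\partial_{\tau} h = \nabla A(\nu)\cdot \partial_{\tau}\nu$, into \eqref{eq:tilde-b-def} shows that the numerator of $\partial_\Xi \widetilde w$ equals
\[ s A(\nu)\,\Theta_{K,\Omega}\cdot\partial_{\tau}\nu + (\Theta_{K,\Omega}\cdot\nu)\,(\nabla A(\nu)\cdot\partial_{\tau}\nu). \]
Two facts make this vanish. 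First, differentiating $A(\nu)^{-2} = c_{n,s}^{-2}\int_{\mathbb S^{n-1}} K(\omega)|\omega\cdot\nu|^{2s}\,d\omega$ in $\nu$ and using $K(\omega)=K(-\omega)$ identifies the integral in \eqref{eq:Theta-formula-domains} as a negative multiple of $\nabla A(\nu)$, so $\Theta_{K,\Omega}(x) = c(x)\nabla A(\nu(x))$ for some scalar $c(x)\neq 0$. Second, $A$ is $(-s)$-homogeneous in $\nu$, so Euler's identity gives $\nu\cdot\nabla A(\nu) = -s A(\nu)$. Inserting both into the display yields $c\,(\nabla A(\nu)\cdot\partial_{\tau}\nu)\big[s A(\nu) + \nu\cdot\nabla A(\nu)\big] = 0$. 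Hence $\partial_{\Theta_{K,\Omega}}\widetilde w = 0$ on $\partial \Omega\cap B_1$, which transforms under $\Phi$ to $\partial_{\theta_{K,J,\Phi}} w = 0$ on $\{x_n=0\}\cap B_1$ via the explicit correspondence between $\Theta_{K,\Omega}$ and $\theta_{K,J,\Phi}$ established in the final steps of the proof of \autoref{lemma:ibp-nonflat}.

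For (iii), I would first derive the strong PDE for $W^{(h)}$ and the bounds on $G^{(h)},F_j^{(h)},I_j^{(h)}$ by the same incremental-quotient Leibniz expansion as in the proof of \autoref{lemma:PDE-onephase-flat}(iii), with identical definitions of these quantities. To pass to the weak form one applies \autoref{lemma:ibp-nonflat-trafo} to the left-hand side and to each term $\mathcal L((x_n)_+^{s-1}(y_n)_+^{s-1}I_j^{(h)})(F_j^{(h)})$, producing a priori the boundary contributions
\[ -\int_{\{x_n=0\}}\eta\,\theta_{K,J,\Phi}\cdot\nabla W^{(h)}\,dx' + \sum_{j=0}^{\bar k-1}\int_{\{x_n=0\}}\eta\,\theta_{I_j^{(h)}}\cdot\nabla F_j^{(h)}\,dx', \]
with $\theta_{I_j^{(h)}} = D_h^{\bar k-j-1+\bar\alpha}\theta_{K,J,\Phi}$ as in \eqref{eq:theta-I-formula}. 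The crux is that this combination vanishes identically on $\{x_n=0\}\cap B_{1-k|h|}$: because $h=(h',0)$ is tangential, the boundary identity $\theta_{K,J,\Phi}\cdot\nabla w \equiv 0$ on $\{x_n=0\}$ propagates to $\Delta_h^{\bar k}[\theta_{K,J,\Phi}\cdot\nabla w] = 0$ there, and the discrete Leibniz rule expands this into exactly the combinatorial sum $\theta_{K,J,\Phi}\cdot\nabla W^{(h)} = \sum_{j=0}^{\bar k-1} c_j\,\theta_{I_j^{(h)}}\cdot\nabla F_j^{(h)}$ with the same coefficients built into the definitions of $F_j^{(h)}$ and $I_j^{(h)}$. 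The main obstacle will be the bookkeeping needed to match these combinatorial factors and shift parameters term by term; once verified, all boundary integrals cancel and the claimed boundary-free weak formulation follows.
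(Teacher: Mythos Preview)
Your argument for the boundary condition in (ii) is correct and is actually more conceptual than the paper's. You identify $\Theta_{K,\Omega}$ as a nonzero scalar multiple of $\nabla A(\nu)$ (by differentiating the formula for $A(\nu)^{-2}$) and then kill the numerator of $\widetilde b$ using Euler's relation $\nu\cdot\nabla A(\nu)=-sA(\nu)$ for the $(-s)$-homogeneous extension of $A$. The paper instead rewrites the vanishing condition as $\theta\cdot\partial_\tau(h^{1/s}\nu)=0$ and checks it by two explicit integral computations (the identities labeled \eqref{aux1}). Both reach the same conclusion; your route is shorter.

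For (iii), however, you take a detour that the paper avoids. The paper does \emph{not} pass through the strong equation for $W^{(h)}$ and then integrate by parts; instead it first writes the weak formulation for $w$ itself, observes that the boundary integral $\int_{\{x_n=0\}}\eta\,\theta_{K,J,\Phi}\cdot\nabla w$ already vanishes by (ii), and then takes tangential incremental quotients directly in this boundary-free weak identity. Since $h_n=0$, the weight $(x_n)_+^{s-1}(y_n)_+^{s-1}$ is shift-invariant, so one simply tests with $\eta(\cdot-h)$, changes variables, subtracts, and applies the discrete product rule to the factor $M(x,y)(w(x)-w(y))$; no boundary terms ever appear. Your plan---derive the strong PDE for $W^{(h)}$, integrate each side by parts, and then argue that the resulting boundary contributions cancel because the Leibniz expansion of $\Delta_h^{\bar k}[\theta_{K,J,\Phi}\cdot\nabla w]=0$ reproduces the same combinatorial pattern---is valid in principle (the map $M\mapsto\theta_M$ commutes with tangential shifts, so whatever Leibniz expansion produces $I_j^{(h)},F_j^{(h)}$ in the PDE produces the matching $\theta_{I_j^{(h)}}\cdot\nabla F_j^{(h)}$ on the boundary), but it forces exactly the ``bookkeeping'' you flag as the main obstacle. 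The paper's route makes that bookkeeping unnecessary.
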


\begin{proof}
The proof of (i) and (ii) is exactly the same as in \autoref{lemma:PDE-onephase-flat}. We only need to justify the boundary condition in the strong sense. To do so, let us recall $h(x) = A(\nu(x))$, where $A$ is given in \eqref{eq:A-nu-def}, and observe that it suffices to show
\begin{align}
\label{eq:bdry-condition-theta-1}
\partial_{\theta(x)} \widetilde{w}(x) = 0 ~~ \text{ on } \partial \Omega \cap B_1,
\end{align}
where 
\begin{align*}
\theta(x):= h(x)^{2 - \frac{1}{s}} \int_{\mathbb{S}^{n-1} \cap \{ \nu(x) \cdot \omega > 0 \}} \omega (\nu(x) \cdot \omega)^{2s-1} K(\omega) \d \omega.
\end{align*}
Indeed, once \eqref{eq:bdry-condition-theta-1} is satisfied, we immediately deduce that $\partial_{\Theta_{K,\Omega}(x)} \widetilde{w}(x) = 0$ on $\partial \Omega \cap B_1$ from \eqref{eq:Theta-formula-domains} and the fact that $J(x) > 0$, $h(x) > 0$, and $|\det D \Phi(x)|^{-1} > 0$ in $B_1$. From there, the boundary condition for $w$ follows immediately by a change of variables, and using that by construction,
\begin{align*}
\Theta_{K,\Omega}(x) = |\det D \Phi^{-1}(x)| (D \Phi)(\Phi^{-1}(x)) \cdot \theta_{K,J,\Phi}(\Phi^{-1}(x)).
\end{align*}

Note that by recalling \eqref{eq:tilde-b-def} namely
\begin{align*}
\partial_{\theta(x)}\widetilde{w}(x) = \frac{s^2 h^2(x) \theta(x) \cdot \partial_{\tau(x)} \nu(x) + sh(x)\theta(x) \cdot \nu(x)\partial_{\tau(x)} h(x)}{(s h(x) \nu^{(n)}(x))^2},
\end{align*}
using the positivity of $h$ and of $(\nu^{(n)})^2$, as well as the product rule, the claim in \eqref{eq:bdry-condition-theta-1} is equivalent to 
\begin{align}\label{equiv}
\theta(x) \cdot \partial_{\tau(x)} (h^{\frac{1}{s}} \nu)(x) = 0.
\end{align} 

To prove \eqref{equiv}, {we notice that by using the symmetry of $K$ we get}, 
\begin{align}
\label{aux1}
\begin{split}
\nu(x) \cdot \theta(x) &= h(x)^{2-\frac{1}{s}}  \int_{\mathbb{S}^{n-1} \cap \{ \nu(x) \cdot \omega > 0 \}} |\nu(x) \cdot \omega|^{2s} K(\omega) \d \omega = \frac{c_{n,s}^{2}}{2} h(x)^{-\frac{1}{s}},\\
\nu(x) \cdot \partial_{\tau(x)} (h^{\frac{1}{s}-2} \theta)(x) &= (2s-1) [\partial_{\tau(x)} \nu(x)] \int_{\mathbb{S}^{n-1} \cap \{ \nu(x) \cdot \omega > 0 \}} |\nu(x) \cdot \omega|^{2s-1} K(\omega) \d \omega \\
&= \frac{c^2_{n,s}(2s-1)}{4s} \partial_{\tau(x)} (h^{-2})(x),
\end{split}
\end{align}
where $c_{n,s}$ is given in \eqref{eq:A-nu-def}. Then we obtain
\begin{align*}
\theta(x) \cdot \partial_{\tau(x)} (h^{\frac{1}{s}} \nu)(x) &= \partial_{\tau(x)} (\theta \cdot \nu h^{\frac{1}{s}})(x) -  h^{\frac{1}{s}}(x)  \nu(x) \cdot \partial_{\tau(x)} \theta(x) \\
&= -c_{n,s}^{2}h^{\frac{1}{s}}(x)\left( \nu(x) \cdot \theta(x) h^{\frac{1}{s}-2} \partial_{\tau(x)} h^{2 - \frac{1}{s}}(x) + h^{2-\frac{1}{s}}(x) \nu(x) \cdot  \partial_{\tau(x)} (h^{\frac{1}{s}-2} \theta)(x) \right) \\
&=  -c_{n,s}^{2}h^{\frac{1}{s}}(x)\left(\frac{1}{2} h^{-2}(x) \partial_{\tau(x)} h^{2 - \frac{1}{s}}(x) + \frac{2s-1}{4s} h^{2 - \frac{1}{s}}(x) \partial_{\tau(x)} (h^{-2})(x)\right) \\
&= \Big(\frac{1}{2s}+ \frac{4s-2}{4s}-1 \Big) c_{n,s}^2 h^{-1}(x) \partial_{\tau(x)} h(x)  = 0,
\end{align*}
where we have used the two properties in \eqref{aux1} in the first and second step, respectively. This concludes the proof of (ii).

To prove (iii), we proceed exactly as in the proof of \autoref{lemma:PDE-onephase-flat}, however we take incremental quotients directly in the weak formulation. In fact, (ii) implies that for any $\eta \in C_c^{\infty}(B_1)$:
\begin{align}
\label{eq:w-weak-pde-one-phase}
\begin{split}
\int_{\R^n} &\int_{\R^n} (w(x) - w(y)) (\eta(x) - \eta(y)) (x_n)_+^{s-1}(y_n)_+^{s-1} J(x) J(y) K(\Phi(x) - \Phi(y)) \d y \d x \\
&= \int_{\{ x_n > 0\} \cap B_1} \eta(x) \mathcal{L}\big((x_n)_+^{s-1}(y_n)_+^{s-1} J(x) J(y) K(\Phi(x) - \Phi(y)) \big)(w)(x) \d x \\
&\quad - \int_{\{ x_n = 0 \} \cap B_1} \eta(x',0) (\theta_{K,J,\Phi}(x') \cdot \nabla w)(x',0) \d x' \\
&= \int_{\{ x_n > 0 \}} g \eta \d x.
\end{split}
\end{align}

Next, we derive the following weak solution property for the increments $\Delta^{\bar{k}} w$, using the product rule for increments that was already used in the proof of \autoref{lemma:PDE-onephase-flat}(iii), namely
\begin{align*}
& \int_{\R^n} \int_{\R^n} (\Delta^{\bar{k}}w(x) - \Delta^{\bar{k}}w(y)) (\eta(x) - \eta(y)) (x_n)_+^{s-1}(y_n)_+^{s-1} J(x) J(y) K(\Phi(x) - \Phi(y)) \d y \d x \\
&= \int_{\{ x_n > 0 \}} \Delta^{\bar{k}} g \eta \d x \\
&\quad + \sum_{m = 0}^{\bar{k}-1} \sum_{l=0}^j \binom{\bar{k}}{m} \binom{m}{l} \int_{\R^n} \int_{\R^n} \big(\Delta^{m-l}w(x + (\bar{k}-m+l)h) - \Delta^{m-l}w(y + (\bar{k}-m+l)h) \big) \cdot \\
&\qquad\qquad\qquad\qquad\qquad\qquad\qquad\qquad\qquad \cdot (\eta(x) - \eta(y)) (x_n)_+^{s-1}(y_n)_+^{s-1} \Delta^{\bar{k}-m+l}M(x,y) \d y \d x,
\end{align*}

where we set again $M(x,y) = J(x)J(y)K(\Phi(x) - \Phi(y))$. Setting now $W^{(h)}, G^{(h)}, I_{j}^{(h)}, F_{j}^{(h)}$ exactly as in the proof of \autoref{lemma:PDE-onephase-flat}, we derive
\begin{align*}
& \int_{\R^n} \int_{\R^n} (W^{(h)}(x) - W^{(h)}(y)) (\eta(x) - \eta(y)) (x_n)_+^{s-1}(y_n)_+^{s-1} J(x) J(y) K(\Phi(x) - \Phi(y)) \d y \d x \\
&= \int_{\{ x_n > 0 \}} G^{(h)}(x) \eta(x) \d x \\
&\quad + \sum_{j= 0}^{\bar{k}-1}\int_{\R^n} \int_{\R^n} (F_{j}^{(h)}(x) - F_{j}^{(h)}(y)) (\eta(x) - \eta(y)) (x_n)_+^{s-1}(y_n)_+^{s-1} I_{j}^{(h)}(x,y) \d y \d x,
\end{align*}
which yields the desired result and concludes the proof.
\end{proof}

In order to get $C^{\infty}$ regularity of the free boundary in the nonlocal one-phase problem, we need to assume that $K \in C^{\infty}(\mathbb{S}^{n-1})$ and iterate the following proposition.

\begin{proposition}
\label{prop:bootstrap-onephase-2} 
Let $K$ be as in \eqref{eq:K-comp} and assume that it satisfies \eqref{eq:K-reg-ass} 
for some $k \in \N$ with $k \ge 2$ and $\alpha \in (0,1)$ with $\alpha \not\in \{s,1-s\}$.\\
Let $v \in L^{1}_{2s}(\R^n)$ be a minimizer to the nonlocal one-phase problem in $B_4$. Assume that $0 \in \partial \{ v > 0 \}$ is a regular free boundary point with $\partial \{v > 0\} \in C^{k,\alpha}$ in $B_4$. 
Moreover, assume \eqref{eq:blue-ass} and let $\widetilde{w}_i$ for any $i \in \{1,\dots,n-1\}$ be defined as in \eqref{eq:w-def}.\\
Then, for any $\gamma \in (\max\{\frac{1}{2}, 2s-1\} , 1)$ with $\alpha + \gamma \not= 1$ and $i \in \{1,\dots,n-1\}$ it holds 
\begin{align*}
\begin{cases}
\widetilde{w}_i \in C^{k-1+\alpha+ \min\{\gamma , 2s \} }_{loc}(\partial{\Omega} \cap B_1) \qquad\qquad\qquad ~~ \text{ if } \alpha \ge \gamma,\\
\widetilde{w}_i \in C^{k-1+\min\{\gamma,\alpha+2s\} - \eps}_{loc}(\partial{\Omega} \cap B_1) ~~ \forall \eps \in (0,1) ~~ \text{ if } \alpha < \gamma.
\end{cases}
\end{align*}
In particular, it holds for any $\eps \in (0,1)$,
\begin{align*}
\begin{cases}
\partial \Omega \in C^{k+\alpha+\min\{\alpha,2s\}}_{loc}(B_1) ~~\qquad\qquad\qquad \text{ if } \alpha > \max\{\frac{1}{2} , 2s-1\},\\
\partial \Omega \in C^{k+ \min\{ 1, \alpha + 2s \} - \eps }_{loc}(B_1) ~~ \forall \eps \in (0,1) ~~ \text{ if } \alpha \le \max\{\frac{1}{2},2s-1\}.
\end{cases}
\end{align*}
\end{proposition}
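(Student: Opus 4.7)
The strategy mirrors that of \autoref{prop:bootstrap-onephase}, replacing at each step the application of \autoref{lemma:PDE-onephase-flat} (which yields the oblique condition $\partial_\nu \widetilde{w} = \widetilde{b} \in C^{k-1,\alpha}$) by the application of \autoref{lemma:PDE-onephase-flat-2}, which encodes the criticality condition $v/d^s_\Omega = A(\nu)$ of the one-phase problem into the \emph{homogeneous} oblique boundary condition $\partial_{\theta_{K,J,\Phi}} w = 0$. Concretely, since $v$ is a minimizer with regular free boundary point at $0$, \autoref{lemma:one-phase-collection}(v) gives $v/d^s_\Omega = h$ on $\partial \Omega \cap B_1$ with $h = A \circ \nu$, so that after fixing $i \in \{1,\dots,n-1\}$, the flattened quotient $w = \widetilde{w}_i \circ \Phi$ satisfies the hypotheses of \autoref{lemma:PDE-onephase-flat-2}.

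The plan is to run a bootstrap based on tangential incremental quotients. Let me focus first on the principal case $\max\{\tfrac12, 2s-1\} < \gamma \le \alpha$ with $\gamma < 2s$. For $h = (h',0)$ with $|h| < \tfrac{1}{2k}$, set $W^{(h)} = D_h^{k-1+\alpha} w$. By \autoref{lemma:PDE-onephase-flat-2}(i), $w \in C^{k+\alpha+2s}_{k-1+\alpha}(\{x_n>0\} | B_{3/2}) \cap C^{k-1,\alpha}(\R^n)$, so $W^{(h)} \in C^{0,\gamma}(\R^n) \cap L^\infty(\R^n)$ uniformly in $h$. By \autoref{lemma:PDE-onephase-flat-2}(iii), $W^{(h)}$ solves the weak equation of \autoref{lemma:bdry-reg} with $\beta = s$, coefficient $J$ as produced, kernel $K(\Phi(x)-\Phi(y))$, source $g := G^{(h)}$ together with the kernel-type source terms driven by $F_j^{(h)}$ and $I_j^{(h)}$, and \emph{crucially with all the boundary data $b$, $a$, $A$ equal to zero}, since the oblique condition from \autoref{lemma:PDE-onephase-flat-2}(ii) is homogeneous and therefore survives the process of taking tangential incremental quotients. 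The norm bounds in \autoref{lemma:PDE-onephase-flat-2}(iii) and the assumption $K \in C^{2(k+\alpha)+1}(\mathbb{S}^{n-1})$ supply exactly the regularity hypotheses needed by \autoref{lemma:bdry-reg} (including \eqref{eq:kernel-int-reg-ass-main} when $\gamma > 2s$, which is not the current case). Applying \autoref{lemma:bdry-reg} therefore yields the uniform bound
\[
[W^{(h)}]_{C^\gamma(\{x_n \ge 0\} \cap B_{1/4})} \le C \bar C,
\]
with $C, \bar C$ independent of $h$.

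Since $h$ ranges over all tangential directions with $|h|<\tfrac{1}{2k}$ and $\alpha + \gamma \ne 1$, the standard characterization of H\"older spaces by differences (as used in the proof of \autoref{prop:bootstrap-onephase}) promotes these uniform estimates to $[w]_{C^{k-1+\alpha+\gamma}(\{x_n=0\} \cap B_{1/4})} \le C\bar C$, and hence $\widetilde{w}_i \in C^{k-1+\alpha+\gamma}(\partial\Omega \cap B_{1/4})$ for every $i \in \{1,\dots,n-1\}$. Writing the normal vector as $\nu^{(i)} = (\partial_i v/\partial_n v)/\bigl(\sum_{j<n}(\partial_j v/\partial_n v)^2 + 1\bigr)^{1/2}$ then transfers this gain to $\nu \in C^{k-1+\alpha+\gamma}(\partial\Omega \cap B_{1/4})$, so that $\partial\Omega \in C^{k+\alpha+\gamma}$ in $B_{1/4}$.

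The remaining cases are handled by the same two tricks used in \autoref{prop:bootstrap-onephase}. When $\alpha \ge \gamma$ but $\gamma > 2s$, one cannot keep $\bar k = k, \bar \alpha = \alpha$ because \eqref{eq:kernel-int-reg-ass-main} would fail, so one instead chooses $\bar\alpha \in (0,1)$ with $\gamma = 2s + \alpha - \bar\alpha$ (still $\bar k = k$), for which \eqref{eq:kernel-int-reg-ass-main} holds for every $I_j^{(h)}$ by the last estimate in \autoref{lemma:PDE-onephase-flat-2}(iii); the same argument then gives $\partial\Omega \in C^{k+\alpha+2s}$. When $\alpha < \gamma$, the $F_j^{(h)}$ are only uniformly in $C^\alpha$, not in $C^\gamma$, so one instead takes $\bar k = k-1$ and $\bar\alpha \in (0,1)$ arbitrary (adjusting if $\gamma > 2s$ to preserve \eqref{eq:kernel-int-reg-ass-main}); the bound on $F_j^{(h)}$ then improves to $C^{1,\alpha} \subset C^\gamma$ and the argument delivers $\partial\Omega \in C^{k+\min\{1,\alpha+2s\} - \eps}$ for every $\eps>0$. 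The main conceptual obstacle was already resolved in \autoref{lemma:PDE-onephase-flat-2}, namely finding the oblique direction $\theta_{K,J,\Phi}$ for which the boundary data vanish identically: without this cancellation the boundary regularity of $\widetilde b = (s h \Xi\cdot\partial_\tau \nu + \Xi\cdot \nu\, \partial_\tau h)/(sh(\nu^{(n)})^2)$ with $h = A(\nu)$ would be only $C^{k-2,\alpha}$, insufficient to close the bootstrap.
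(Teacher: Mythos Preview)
Your proposal is correct and follows exactly the paper's approach: replace \autoref{lemma:PDE-onephase-flat} by \autoref{lemma:PDE-onephase-flat-2} so that the weak formulation for $W^{(h)}$ carries no boundary integrals, then run the argument of \autoref{prop:bootstrap-onephase} via \autoref{lemma:bdry-reg} with all boundary parameters set to zero. One small imprecision: you set $b=a=A=0$ but should also take $\theta_K=0$ (as the paper does), since the term $-\int_{\{x_n=0\}} u\,[(\theta_K'\cdot\nabla_{x'}\eta)+\dvg\theta_K'\,\eta]$ in \autoref{lemma:bdry-reg} involves the solution $u=W^{(h)}$ itself and does not vanish from $b=a=A=0$ alone.
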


\begin{proof}
The proof is exactly the same as the proof of \autoref{prop:bootstrap-onephase}, using \autoref{lemma:PDE-onephase-flat-2} instead of \autoref{lemma:PDE-onephase-flat} and setting $b = \theta_K = A_{j,l} = \vartheta_{j,l} = a_{j,l} = \nu_{j,l} = 0$.
\end{proof}

\begin{proof}[Proof of \autoref{thm:main-onephase-intro}]
We can assume without loss of generality that $\alpha \not\in \{s,1-s\}$, up to making it a little smaller.
Let us denote $\Omega = \{ v> 0 \}$. We can assume without loss of generality that $\nu(0) = e_n$. In particular, there exists $\rho > 0$ such that $\partial_n d_{\Omega} \ge \delta$ for some $\delta > 0$. Moreover, by \autoref{lemma:one-phase-collection}(iv), we have that $v \ge c_0 d_{\Omega}^s$ in $B_{2}$. Hence, by application of the second claim in \autoref{lemma:Hopf}, we deduce that $\partial_n v \ge c_1 d_{\Omega}^{s-1}$ in $B_{\rho}$, upon making $\rho$ even smaller, if necessary, where $c_1 = \frac {\delta s c_0}{2}$. Hence, we have verified \eqref{eq:blue-ass} in the small ball $B_{\rho}$. Therefore, using that by \autoref{lemma:one-phase-collection}, we have $u/d^s_{\Omega} = A(\nu) \in C^{2,\alpha}$, up to a rescaling, we are in the setting of \autoref{prop:bootstrap-onephase-2} with $k = 2$ and $\alpha \in (0,1)$. 

If $\alpha \le \max\{ \frac{1}{2} , 2s-1 \}$, an application of \autoref{prop:bootstrap-onephase-2} yields that $\partial \Omega \in C^{2+\min\{1,2s+\alpha\}-\eps}$ in $B_{\rho/2}$ for any $\eps \in (0,1)$. Similarly, if $\alpha \ge \max\{ \frac{1}{2} , 2s-1 \}$, \autoref{prop:bootstrap-onephase-2} yields that $\partial \Omega \in C^{2 + \alpha + \min\{\alpha,2s\}}$. In both cases, we have gained regularity of $\partial \Omega$ of degree at least $\min\{1-\alpha,\alpha,2s\}$. Then, by applying \autoref{prop:bootstrap-onephase-2} again, we can bootstrap the regularity of $\partial \Omega$ up to $C^{\infty}$. In particular, by repeating the aforementioned procedure, we deduce that there exists $\rho > 0$, such that $\partial \Omega \in C^{k,\gamma}$ in $B_{\rho}$, as desired. 
\end{proof}

\subsection{Obstacle problem}

In this subsection we show that free boundaries of the nonlocal obstacle problem are smooth near regular points, establishing \autoref{thm:main-obstacle-intro}. This result was already established in \cite{AbRo20} by an approach based on higher order boundary Harnack inequalities. Here we provide an independent proof that follows a strategy analogous to the one for the nonlocal one-phase problem.

\begin{lemma}
\label{lemma:PDE-obstacle}
Let $K$ be as in \eqref{eq:K-comp} and assume that \eqref{eq:K-reg-ass} is satisfied for some $k \in \N$ and $\alpha \in (0,1)$ with $\alpha \not\in \{s,1-s\}$. Let $\varphi \in C^{k+\alpha+1+2s}(B_4)$.

Let $v \in L^{\infty}(\R^n)$ be a solution to the nonlocal obstacle problem in $B_4$ and denote $\Omega = \{ v > 0 \} \cap B_4$. Let $0 \in \partial \Omega$ be a regular free boundary point with $\partial \Omega \in C^{k,\alpha}$ in $B_3$, and assume that for some $\delta > 0$,
\begin{align}
\label{eq:blue-ass-obstacle}
\partial_n v \ge \delta, \qquad \partial_n (d_{\Omega}) \ge \delta \qquad \text{ in } \Omega \cap B_2.
\end{align}

Let $\kappa_1, \kappa_2 \in  C^{\infty}(\R^n)$ be two cut-off functions such that $0 \le \kappa_i \le 1$ in $\R^n$ and $\kappa_1 \equiv 1$ in $B_{7/4}$, $\kappa_1 \equiv 0$ in $B_2^c$, and $\kappa_2 \equiv 1$ in $B_{4/3}$, $\kappa_2 \equiv 0$ in $B_{3/2}^c$. Then, for any $i \in \{ 1 , \dots , n-1\}$, the function
\begin{align}
\label{eq:w-def-obstacle}
\widetilde{w} := \widetilde{w}_i := \kappa_2 \frac{\partial_i u}{\partial_n u}, \qquad \text{ where } \qquad u = \kappa_1 v,
\end{align} 
satisfies
\begin{itemize}
\item[(i)] $d_{\Omega}^{-s} \nabla u \in C^{k-1,\alpha}_c(\overline{\Omega} \cap B_{2})$ with $\supp(u) \subset \overline{\Omega \cap B_2}$, and it holds $\partial_n u \ge c_0 d_{\Omega}^{s} > 0$ in $\Omega \cap B_{3/2}$ for some $c_0 > 0$, depending on $\delta$.
\item[(ii)] $\widetilde{w} \in C_{k-1+\alpha}^{k+\alpha+2s}(\Omega | B_{3/2})$ with $\supp(\widetilde{w}) \subset \overline{\Omega \cap B_{3/2}}$.

\item[(iii)] In the strong sense,
\begin{align*}
\mathcal{L}\big(\partial_n u(x)\partial_n u(y)K(x-y)\big)(\widetilde{w}) = \widetilde{g} ~~ \text{ in } \Omega \cap B_1,
\end{align*}
where $\widetilde{g}$ satisfies $d_{\Omega}^{-s} \widetilde{g} \in C^{k-1,\alpha}(\overline{\Omega} \cap B_{3/2})$.
\end{itemize}
\end{lemma}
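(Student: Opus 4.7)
My plan is to follow the structure of \autoref{lemma:PDE-onephase} but adapted to the obstacle problem's boundary behavior: at a regular free boundary point one has $v \sim d_\Omega^{1+s}$, so $\partial_n v \sim d_\Omega^s$ (rather than $d_\Omega^{s-1}$ as in the one-phase case), and the weight in (iii) is therefore $d_\Omega^s(x)\,d_\Omega^s(y)\,K(x-y)$, corresponding to $\beta = 1+s$ in the terminology of Section 3. By \autoref{lemma:ibp-nonflat-beta} this weight produces no boundary contribution upon integration by parts, which is consistent with the absence of an oblique-derivative condition in (iii). By construction, $u = \kappa_1 v$ satisfies $Lu = g_1 := -L\varphi - L((1-\kappa_1)v)$ in $\Omega \cap B_{7/4}$, and $g_1 \in C^{k+\alpha+1}(B_{3/2})$: the first piece inherits this regularity from $\varphi \in C^{k+\alpha+1+2s}$, and the second is smooth in $B_{7/4}$ since the cutoff $(1-\kappa_1)$ vanishes there.

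For (i), the finer boundary expansion $v/d_\Omega^{1+s} \in C^{k-1,\alpha}(\overline\Omega\cap B_2)$ at regular free boundary points---which is the obstacle analog of the boundary regularity in \autoref{lemma:boundary-reg} and is available from the higher-order boundary theory developed for obstacle problems in \cite{AbRo20} under the present smoothness hypotheses on $\partial\Omega$ and $\varphi$---yields, upon differentiating the expansion and using that $d_\Omega,\ \nabla d_\Omega \in C^{k-1,\alpha}$, the asserted $d_\Omega^{-s}\nabla v \in C^{k-1,\alpha}(\overline\Omega\cap B_2)$; multiplying by $\kappa_1$ gives the claim about $d_\Omega^{-s}\nabla u$. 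The lower bound $\partial_n u \ge c_0 d_\Omega^s$ in $\Omega \cap B_{3/2}$ is the obstacle counterpart of \autoref{lemma:Hopf}: decomposing
\[
\frac{\partial_n u}{d_\Omega^s} \;=\; d_\Omega\,\partial_n\!\left(\frac{u}{d_\Omega^{1+s}}\right) \;+\; (1+s)\frac{u}{d_\Omega^{1+s}}\,\partial_n d_\Omega,
\]
and using both $u/d_\Omega^{1+s} \ge c > 0$ near $\partial\Omega$ (regularity of the free boundary point) and $\partial_n d_\Omega \ge \delta$ in $\Omega \cap B_2$, one obtains $\partial_n u \ge c_1 d_\Omega^s$ in a thin layer near $\partial\Omega$; the hypothesis $\partial_n u \ge \delta$ then handles the region bounded away from $\partial\Omega$.

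Part (ii) is immediate from (i): writing $\widetilde{w} = \kappa_2\,(\partial_i u/d_\Omega^s)\,(\partial_n u/d_\Omega^s)^{-1}$, the denominator is uniformly bounded away from zero and both factors belong to $C^{k-1,\alpha}(\overline\Omega\cap B_{3/2})$, so the product does as well; interior Schauder estimates (exploiting the regularity of $K$) then upgrade this to the weighted norm $C^{k+\alpha+2s}_{k-1,\alpha}(\Omega \mid B_{3/2})$. For (iii), I would plug the algebraic identity \eqref{eq:algebra-quotient-PDE} into $\mathcal{L}\bigl(\partial_n u(x)\partial_n u(y)K(x-y)\bigr)(\widetilde w)$ and exploit the interior equations $L\partial_i u = \partial_i g_1$ and $L\partial_n u = \partial_n g_1$ obtained by differentiating $Lu = g_1$. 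Testing against $\eta\in C_c^\infty(\Omega\cap B_1)$---whose support is compact inside $\Omega$, so no boundary contribution arises---yields in the strong sense
\[
\widetilde g \;=\; (\partial_n u)\bigl(\partial_i g_1 - L((1-\kappa_2)\partial_i u)\bigr) \;-\; (\kappa_2 \partial_i u)(\partial_n g_1),
\]
and the regularity $d_\Omega^{-s}\widetilde g \in C^{k-1,\alpha}(\overline\Omega\cap B_{3/2})$ follows from (i), the $C^{k+\alpha+1}$ regularity of $g_1$, and the fact that $L((1-\kappa_2)\partial_i u)$ is smooth on $B_{4/3}$.

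The main obstacle is the refined boundary expansion needed in (i): \autoref{lemma:boundary-reg} applied directly to $Lu = g_1$ with $u = 0$ outside $\Omega$ only yields $d_\Omega^{1-s}\nabla u \in C^{k-1,\alpha}$, which is one full power of $d_\Omega$ weaker than (i) requires. Recovering this extra power exploits the fact that, at regular free boundary points of the obstacle problem, $v/d_\Omega^s$ \emph{vanishes} on $\partial\Omega$ (because $v\sim d_\Omega^{1+s}$ there), so an additional factor of $d_\Omega$ may be divided out without losing regularity---a maneuver that is obstacle-specific and has no counterpart in \autoref{lemma:PDE-onephase}. Once this refined asymptotic is established, the remaining steps are purely algebraic and parallel the one-phase proof almost verbatim.
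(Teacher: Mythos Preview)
Your handling of (ii) and (iii) matches the paper's argument closely: the algebraic identity \eqref{eq:algebra-quotient-PDE}, the formula
\[
\widetilde g = (\partial_n u)\bigl(\partial_i g_1 - L((1-\kappa_2)\partial_i u)\bigr) - (\kappa_2\partial_i u)(\partial_n g_1),
\]
and the deduction of $d_\Omega^{-s}\widetilde g \in C^{k-1,\alpha}$ from (i) are exactly what the paper does.

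The gap is in (i). Your proposed route---take $q := v/d_\Omega^{1+s}\in C^{k-1,\alpha}$ from \cite{AbRo20} and differentiate---loses a derivative: writing $d_\Omega^{-s}\nabla v = d_\Omega\,\nabla q + (1+s)\,q\,\nabla d_\Omega$, the first term is only $C^{k-2,\alpha}$ because $\nabla q\in C^{k-2,\alpha}$, and multiplying by $d_\Omega$ does not restore tangential H\"older regularity. Your remark that ``an additional factor of $d_\Omega$ may be divided out without losing regularity'' is precisely where the argument fails; vanishing on $\partial\Omega$ lets you divide, but the quotient is generically one derivative less smooth.

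The paper sidesteps this entirely with a trick you do not mention: apply \autoref{lemma:boundary-reg} not to $u$ but to $\nabla v$. By \autoref{lemma:obstacle-collection}(i) we have $v\in C^{1+s}_{loc}(B_4)$, so $\nabla v$ exists classically and vanishes on $B_2\setminus\Omega$. Hence $\nabla v$ itself solves a Dirichlet problem
\[
L(\nabla v)=\nabla g_1 \quad\text{in }\Omega\cap B_{3/2},\qquad \nabla v=0 \quad\text{in }\R^n\setminus(\Omega\cap B_2),
\]
with $\nabla g_1\in C^{k+\alpha}(B_{3/2})$ (from $\varphi\in C^{k+\alpha+1+2s}$). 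The first claim of \autoref{lemma:boundary-reg} applied to each $\partial_j v$ then gives $d_\Omega^{-s}\nabla v\in C^{k+\alpha+2s}_{k-1,\alpha}(\Omega\,|\,B_2)\subset C^{k-1,\alpha}(\overline\Omega\cap B_2)$ directly, with no loss. This also furnishes the weighted interior estimate needed for (ii). The lower bound $\partial_n u\ge c_0 d_\Omega^s$ then follows from the first claim of \autoref{lemma:Hopf} applied to $\partial_n u$, using $\partial_n u\ge\delta$ in $\Omega\cap B_{3/2}$.
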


\begin{proof}
Due to interior estimates and by the regularity of $K$, $v$ is a classical solution to \eqref{eq:obstacle}. Hence, by construction we have that $u\in L^{\infty}(\R^n)$ is a strong solution to \eqref{eq:u-PDE}, where $g_1 = -L((1-\kappa_1)v)+L\varphi \ge 0$. Since $v \in C^{1+s}_{loc}(B_4)$ by \autoref{lemma:obstacle-collection}(i), we have that $\nabla u$ solves in the strong sense
\begin{align*}
\begin{cases}
L (\nabla v) &= \nabla g_1 ~~ \text{ in } \Omega \cap B_{3/2},\\
\nabla v &= 0 ~~~~~ \text{ in } \R^n \setminus (B_2 \setminus \Omega).
\end{cases}
\end{align*}

Since $\nabla g_1 \in C^{k+\alpha}(B_{3/2})$ by the assumptions on $K$ and on $\varphi$, by \autoref{lemma:boundary-reg} we have $d_{\Omega}^{-s} \nabla v \in C^{k+\alpha+2s}_{k-1+\alpha}(\Omega  | B_{2})$. Hence, by the definition of $u$ it also holds $d_{\Omega}^{-s} \nabla u \in C^{k+\alpha+2s}_{k-1+\alpha}( \Omega | B_{2})$, which in particular implies $d_{\Omega}^{-s} \nabla u \in C^{k-1,\alpha}_c(\overline{\Omega} \cap B_{2})$. The second claim in (i) follows immediately from the first claim in \autoref{lemma:Hopf}, since by construction, $\partial_n u \ge \delta$ in $\Omega \cap B_{3/2}$.

To prove (ii), we observe first that since $d_{\Omega}^{-s} \nabla u \in C^{k+\alpha+2s}_{k-1,\alpha}(\Omega | B_{3/2})$ and $d_{\Omega}^{-s} \partial_n u \ge c_0 > 0$ in $\Omega \cap B_{3/2}$, by the chain rule, it holds in particular
\begin{align*}
(d_{\Omega}^{-s} \partial_n u)^{-1} \in C^{k+\alpha+2s}_{k-1,\alpha}(\Omega | B_{3/2}).
\end{align*}
Therefore, by the product rule, we have
\begin{align*}
\frac{\partial_i u}{\partial_n u} = \frac{\partial_i u}{d^{s}_{\Omega}} \frac{d^{s}_{\Omega}}{\partial_n u} \in C^{k+\alpha+2s}_{k-1,\alpha}(\Omega | B_{3/2}),
\end{align*}
which proves (ii). 

We are now in the same situation as in the proof of \autoref{lemma:PDE-onephase-flat}(iii). By the exact same proof, we can verify the equation for $\widetilde{w}$, namely
\begin{align*}
\int_{\R^n} \eta(x) \mathcal{L}\big(\partial_nu(x)\partial_nu(y)K(x-y)\big)(\widetilde{w}) \d x = \int_{\R^n} \widetilde{g}(x) \eta(x) \d x,
\end{align*}
with $\widetilde{g}$ defined as
\begin{align}
\label{eq:g-def-obstacle}
\widetilde{g} = (\partial_n u) (\partial_i g_1 - L((1-\kappa_2)\partial_i u)) - (\kappa_2\partial_i u) (\partial_n g_1) , \qquad \text{ where } \qquad g_1 = -L ((1-\kappa_1)v).
\end{align}
To prove that the integral on the left-hand side is finite we proceed as in \eqref{eq:L-beta-finiteBB} applying \autoref{lemma:Ldist-est} by using that $\widetilde{w} \in C^{2s+\alpha}_{0,\alpha}(\Omega | B_{3/2})$, $z := d^{-s}_{\Omega} \partial_n u \in C_{0,\alpha}^{1+\alpha+2s}(\Omega | B_2)$ and $\partial\Omega\in C^{1,\alpha}$.  

The property $d_{\Omega}^{-s} \widetilde{g} \in C^{k-1,\alpha}(\overline{\Omega} \cap B_{3/2})$ follows from the definition of $\widetilde{g}$ in \eqref{eq:g-def-obstacle} and (i).
\end{proof}

\begin{corollary}
\label{lemma:PDE-obstacle-flat}
Assume that we are in the situation of \autoref{lemma:PDE-obstacle}. Let $\Phi \in C^{k,\alpha}$ be as in Subsection \ref{subsec:flattening} and $w:= \widetilde{w} \circ \Phi$. Then, it holds
\begin{itemize}
\item[(i)] $w \in C_{k-1+\alpha}^{k+\alpha+2s}(\{ x_n > 0 \} | B_{3/2})$ with $\supp(w) \subset \{ x_n \ge 0 \} \cap \overline{B_{3/2}}$.

\item[(ii)] In the strong sense,
\begin{align*}
\mathcal{L}\big((x_n)_+^{s}(y_n)_+^{s}J(x)J(y)K(\Phi(x)-\Phi(y))\big)(w) = g ~~ \text{ in } \{ x_n > 0 \} \cap B_1,
\end{align*}
where $g$ satisfies $(x_n)_+^{-s} g \in C^{k-1,\alpha}(\{ x_n \ge 0 \}\cap B_{3/2})$ and is given by $g:= \widetilde{g} \circ \Phi$, and $J \in C^{k+\alpha+2s}_{k-1,\alpha}(\{ x_n > 0 \} | B_{2})$ satisfies $J \ge c_0$ in $B_{3/2}$ for some $c_0 > 0$, as well as $\supp(J) \subset \{ x_n \ge 0 \} \cap B_2$, and is given by $J(x) = (d_{\Omega}^{-s} \partial_n u \circ \Phi)(x)|\det D \Phi(x)|$.

\item[(iii)] Given $h = (h',0) \in \R^n$ with $|h| < \frac{1}{k}$, $\bar{k} \in \{ 1, \dots, k \}$ and $\bar{\alpha} \in (0,1)$ such that $\bar{k}+\bar{\alpha} \le k+\alpha$, we define $W^{(h)} = D_{h}^{\bar{k}-1+\bar{\alpha}} w$. The following equation holds in the strong sense
\begin{align*}
\begin{cases}
& \mathcal{L} \big((x_n)_+^{s}(y_n)_+^{s} J(x)J(y) K(\Phi(x)-\Phi(y))\big)(W^{(h)}) \\
&\qquad\qquad \qquad = G^{(h)} + \sum_{j = 0}^{\bar{k}-1} \mathcal{L}\big((x_n)_+^{s-1}(y_n)_+^{s-1} I^{(h)}_{j}(x,y)\big)(F^{(h)}_{j}) ~~ \text{ in } \{ x_n > 0 \} \cap B_{1- k|h|},
\end{cases}
\end{align*}
where for any $j \in \{ 0 , \dots, \bar{k}-1 \}$,
\begin{align*}
\Vert(x_n)_+^{-s} G^{(h)} \Vert_{L^{\infty}(\{ x_n > 0 \} \cap B_{\frac{3}{2} - k|h|})} + \Vert F^{(h)}_{j} \Vert_{C^{k - \bar{k},\alpha}(\{ x_n \ge 0 \})} & \le C \bar{C}\\
\forall x,y \in \R^n: \quad |x-y|^{n+2s}|I^{(h)}_{j}(x,y)| &\le C \bar{C},
\end{align*}
where $C$ only depends on $n,s,k,\alpha,\lambda,\Lambda, c_0, \delta$, but not on $h$, and we denoted
\begin{align*}
\bar{C} &:= \Vert (x_n)_+^{-s} g \Vert_{C^{k-1,\alpha}(\{ x_n \ge 0 \}\cap B_{3/2})} + \Vert w \Vert_{C^{k+\alpha+2s}_{k-1,\alpha}(\{ x_n > 0 \} | B_{3/2})} + \Vert J \Vert_{C^{k-1,\alpha}(\{ x_n \ge 0 \})} \\
&\quad + \Vert \Phi \Vert_{C^{k,\alpha}(\R^n)} + \Vert K \Vert_{C^{2(k+\alpha)+1}(\mathbb{S}^{n-1})}.
\end{align*}

\item[(iv)] Moreover, $W^{(h)} \in C^{\gamma}(\R^n)$ for any $\gamma \in (0, 1)$, satisfies the equation in (iii) in the weak sense, i.e. (see \autoref{def:weak-sol-trafo}) for every $\eta \in C^{\infty}_c(B_{1-k|h|})$, it holds
\begin{align*}
\int_{\R^n} &\int_{\R^n} (W^{(h)}(x) - W^{(h)}(y)) (\eta(x) - \eta(y)) (x_n)_+^{s}(y_n)_+^{s} J(x) J(y) K(\Phi(x) - \Phi(y)) \d y \d x \\
&=\int_{\R^n} \eta(x) G^{(h)}(x) \d x \\
&\quad + \sum_{j = 0}^{\bar{k}-1} \int_{\R^n} \int_{\R^n} (F^{(h)}_{j}(x) - F^{(h)}_{j}(y)) (\eta(x) - \eta(y)) (x_n)_+^{s}(y_n)_+^{s} I^{(h)}_{j}(x,y) \d y \d x.
\end{align*}
\end{itemize}
\end{corollary}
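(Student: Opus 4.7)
The strategy closely mirrors that of \autoref{lemma:PDE-onephase-flat}, with two simplifications specific to the obstacle setting $\beta=1+s$: the natural weight is $(x_n)_+^{s}$ rather than $(x_n)_+^{s-1}$, and the integration by parts formula of \autoref{lemma:ibp-nonflat-beta} applied with $\beta=1+s$ produces \emph{no} boundary integral. Consequently the statement contains neither a local Neumann condition nor the more delicate kernel-regularity bound \eqref{eq:kernel-int-reg-ass-main} that was needed in the one-phase case.

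\textbf{Parts (i) and (ii).} Both are obtained by pushing forward the conclusions of \autoref{lemma:PDE-obstacle} through the flattening diffeomorphism $\Phi$ of Subsection~\ref{subsec:flattening}. Since $d_\Omega\circ\Phi=(x_n)_+$ and $\Phi\in C^{k,\alpha}$, the weighted H\"older spaces $C^{k+\alpha+2s}_{k-1,\alpha}(\Omega|B_{3/2})$ are preserved, so \autoref{lemma:PDE-obstacle}(ii) gives (i). To pass the strong equation through the change of variables, set $J(x):=(d_\Omega^{-s}\partial_nu)(\Phi(x))\,|\det D\Phi(x)|$, so that $\partial_nu\circ\Phi=(x_n)_+^s\,J(x)\,|\det D\Phi(x)|^{-1}$; the product $\partial_nu(x)\partial_nu(y)$ then becomes $(x_n)_+^s(y_n)_+^s J(x)J(y)$ times a Jacobian factor that is absorbed by the change-of-variables in the integral defining $\mathcal L$. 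The required regularity and lower bound $J\ge c_0$ follow from \autoref{lemma:PDE-obstacle}(i), the smoothness of $\Phi$, and the positivity of $\det D\Phi$, while $(x_n)_+^{-s}g\in C^{k-1,\alpha}$ follows from $d_\Omega^{-s}\widetilde g\in C^{k-1,\alpha}$.

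\textbf{Part (iii).} Write $M(x,y):=J(x)J(y)K(\Phi(x)-\Phi(y))$. Since $h=(h',0)$ has $h_n=0$, the weight $(x_n)_+^s$ is invariant under the translation $x\mapsto x+h$. Applying the discrete product rule
\[
\Delta^j_h[\mathcal L(I(x,y))(w)](x)=\sum_{l=0}^{j}\binom{j}{l}\mathcal L(\Delta^l_h I(x,y))(\Delta^{j-l}_h w(\cdot+lh))(x),
\]
followed by the one for $\mathcal L(M)(w(\cdot+mh))$, exactly as in the proof of \autoref{lemma:PDE-onephase-flat}(iii), the equation in (ii) yields
\[
\mathcal L\bigl((x_n)_+^{s}(y_n)_+^{s}M(x,y)\bigr)(W^{(h)})=G^{(h)}+\sum_{j=0}^{\bar k-1}\mathcal L\bigl((x_n)_+^{s}(y_n)_+^{s}I^{(h)}_j(x,y)\bigr)(F^{(h)}_j),
\]
where $G^{(h)}=D_h^{\bar k-1+\bar\alpha}g$, $F^{(h)}_j=D_h^{j}w(\,\cdot+(\bar k-j)h)$, and $I^{(h)}_j$ is a combinatorial multiple of $D_h^{\bar k-j-1+\bar\alpha}M$. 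The bound on $G^{(h)}$ follows from $(x_n)_+^{-s}g\in C^{k-1,\alpha}$, the bound on $F^{(h)}_j$ from the $C^{k-\bar k,\alpha}$ regularity contained in (i) together with $\supp(w)\subset\overline{B_{3/2}}$, and the bound $|I^{(h)}_j(x,y)|\le C|x-y|^{-n-2s}$ from \autoref{lemma:K-estimate} combined with the product rule for incremental quotients of $J(x)J(y)$ (using $J\in C^{k-1,\alpha}$).

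\textbf{Part (iv).} We apply the integration by parts formula \autoref{lemma:ibp-nonflat-beta} with $\beta=1+s$ to both sides of the equation in (iii). Since $\beta\neq s$, the identity produces no boundary integral, so the weak formulation in (iv) holds term-by-term without any $\theta$-contributions, and after testing with $\eta\in C^\infty_c(B_{1-k|h|})$ one reads off precisely the displayed identity. The hypothesis $\eta\,\mathcal L\bigl(\partial_nu(x)\partial_nu(y)K(x-y)\bigr)(w)\in L^1$ required to invoke \autoref{lemma:ibp-nonflat-beta} on the left-hand side is automatic for $\beta=1+s$ by the remark following that lemma (together with the regularity of $W^{(h)}$ and $J$ from (i)--(ii)); for the right-hand side the analogous integrability of each $\mathcal L((x_n)_+^s(y_n)_+^s I^{(h)}_j)(F^{(h)}_j)$ follows from the bounds on $F^{(h)}_j$ and $I^{(h)}_j$ in (iii), combined with \autoref{lemma:bd-int} and \autoref{lemma:int-polar-coord}.

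\textbf{Main obstacle.} The only delicate point is the verification of the integrability hypothesis needed to legitimately apply \autoref{lemma:ibp-nonflat-beta} in (iv), but this reduces to the estimates already established in (iii) and is much easier than in the one-phase case because the range of admissible exponents for $\beta=1+s$ is wide and no pointwise boundary behavior of $W^{(h)}$ has to be tracked. All the combinatorial bookkeeping in (iii) is otherwise routine and parallels the corresponding argument in \autoref{lemma:PDE-onephase-flat}.
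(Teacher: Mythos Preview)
Your proposal is correct and follows essentially the same approach as the paper: the paper's proof simply states that the argument is identical to that of \autoref{lemma:PDE-onephase-flat} with $(x_n)_+^{s-1}$ replaced by $(x_n)_+^{s}$, and that the boundary terms disappear because \autoref{lemma:ibp-nonflat-beta} with $\beta=1+s$ produces no boundary integral. Your write-up spells out in more detail the change-of-variables for (i)--(ii), the discrete product-rule bookkeeping for (iii), and the integrability check needed to invoke \autoref{lemma:ibp-nonflat-beta} in (iv), but these are exactly the steps the paper leaves implicit.
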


\begin{proof}
The proof is exactly the same as the proof of \autoref{lemma:PDE-onephase-flat}, upon replacing $(x_n)_+^{s-1}$ by $(x_n)_+^s$. However, since the integration by parts formula in \autoref{lemma:ibp-nonflat-beta} does not involve any boundary integrals in case $\beta = 1+s$, we can set $B^{(h)} = 0$ and ignore all terms involving boundary data in the proof. 
\end{proof}

\begin{proposition}
\label{prop:bootstrap-obstacle} 
Let $K$ be as in \eqref{eq:K-comp} and assume that it satisfies \eqref{eq:K-reg-ass} 
for some $k \in \N$ and $\alpha \in (0,1)$ with $\alpha \not\in \{s,1-s\}$. Moreover, if $k = 1$, we assume that $\alpha >\max\{0, 2s-1\}$.\\
Let $v \in L^{\infty}(\R^n)$ be a solution to the nonlocal obstacle problem in $B_4$ with $\varphi \in C^{k+\alpha+1+2s}(B_4)$ and denote $\Omega = \{ v > 0 \} \cap B_4$. Assume that $0 \in \partial \Omega$ is a regular free boundary point with $\partial \Omega \in C^{k,\alpha}$ in $B_3$. Moreover, assume \eqref{eq:blue-ass-obstacle} and let $\widetilde{w}_i$ for any $i \in \{1,\dots,n-1\}$ be defined as in \eqref{eq:w-def}.\\
Then, for any $\gamma \in (\max\{0, 2s-1\} , s)$ with $\alpha + \gamma \not= 1$ {such that $\gamma\leq\alpha$ in the case $k=1$,}
 it holds 
\begin{align*}
\begin{cases}
\widetilde{w}_i \in C^{k-1+\alpha+\gamma}_{loc}(\partial{\Omega} \cap B_1) ~~ \text{ if } \alpha \ge \gamma,\\
\widetilde{w}_i \in C^{k-1+\gamma - \eps}_{loc}(\partial{\Omega} \cap B_1) ~~ \forall \eps \in (0,1) ~~ \text{ if } \alpha < \gamma.
\end{cases}
\end{align*}
where $i \in \{1,\dots,n-1\}$. In particular, the free boundary satisfies for any $\eps \in (0,1)$,
\begin{align*}
\begin{cases}
\partial \Omega \in C^{k+2\alpha}_{loc}(B_1) ~~ \text{ if } \alpha \in ( \max\{ 0 , 2s-1\} , s),\\
\partial \Omega \in C^{k+\alpha + s - \eps}_{loc}(B_1) ~~ \forall \eps \in (0,1) \text{ if } \alpha \in (s, 1),\\
\partial \Omega \in C^{k+s-\eps}_{loc}(B_1) ~~ \forall \eps \in (0,1) ~~ \text{ if } \alpha \le \max\{ 0 , 2s-1\}.
\end{cases}
\end{align*}
\end{proposition}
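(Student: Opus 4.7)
The plan is to mirror the proof of Proposition 6.4 (the bootstrap for the one-phase problem) but adapted to the obstacle setting with $\beta = 1+s$, exploiting the crucial simplification that no local boundary condition appears in the integration by parts formula of \autoref{lemma:ibp-nonflat-beta}. Fix $i \in \{1,\dots,n-1\}$ and let $w = \widetilde{w}_i \circ \Phi$, which by \autoref{lemma:PDE-obstacle-flat}(i) belongs to $C^{k+\alpha+2s}_{k-1+\alpha}(\{x_n > 0\} | B_{3/2})$ with compact support, and in particular $w \in C^{k-1+\alpha}(\R^n)$. For $h = (h',0)$ with $|h| < \frac{1}{2k}$, consider the incremental quotient $W^{(h)} = D_h^{k-1+\alpha} w$, which lies in $C^{0,\gamma}(\R^n)$ since $\gamma < s < 1 \le \alpha + (k-1)$. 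By \autoref{lemma:PDE-obstacle-flat}(iv), $W^{(h)}$ is a weak solution of an equation of the form
\begin{align*}
\mathcal{L}\big((x_n)_+^{s}(y_n)_+^{s} J(x)J(y)K(\Phi(x)-\Phi(y))\big)(W^{(h)}) = G^{(h)} + \sum_{j=0}^{k-1} \mathcal{L}\big((x_n)_+^{s}(y_n)_+^{s} I_j^{(h)}(x,y)\big)(F_j^{(h)})
\end{align*}
in $\{x_n > 0\} \cap B_{1-k|h|}$, with no boundary term since $\beta = 1+s$, and with uniform-in-$h$ bounds on $(x_n)_+^{-s}G^{(h)}$, $\Vert F_j^{(h)}\Vert_{C^{0,\alpha}}$, and the kernels $I_j^{(h)}$.

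Next, I would apply the a priori boundary Hölder estimate of \autoref{lemma:bdry-reg} with $\beta = 1+s$ to obtain the bound
\begin{align*}
[W^{(h)}]_{C^{0,\gamma}(\{x_n \ge 0\}\cap B_{1/4})} \le C\bar{C},
\end{align*}
uniformly in $h$. Observe that the range $\gamma \in (\max\{0,2s-1\},s)$ is exactly what is admissible in \autoref{lemma:bdry-reg} for $\beta = 1+s$ (since $s-\beta+\frac{1}{2} = -\frac{1}{2}$ and $2s-\beta+1 = s$), and since $\gamma < s \le 2s$, the kernel regularity assumption \eqref{eq:kernel-int-reg-ass-main} is \emph{not} needed — which is a substantial simplification compared to the one-phase case. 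All the $\1_{\{\beta=s\}}$ boundary terms in \autoref{lemma:bdry-reg} vanish identically, so we only need to control $F_j^{(h)}$ in $C^{0,\gamma}$, which requires $\gamma \le \alpha$ in the case $k=1$ (hence the hypothesis) and is automatic for $k \ge 2$. Since $C,\bar{C}$ are independent of $h$, the standard Marchaud-type characterization of Hölder spaces via iterated differences (see e.g.\ \cite{And97}) yields $w \in C^{k-1+\alpha+\gamma}(\{x_n = 0\}\cap B_{1/4})$ and hence $\widetilde{w}_i \in C^{k-1+\alpha+\gamma}(\partial\Omega \cap B_{1/4})$.

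For the case $\alpha < \gamma$, I would instead take $W^{(h)} = D_h^{k-2+\bar{\alpha}} w$ (i.e.\ $\bar{k} = k-1$ in \autoref{lemma:PDE-obstacle-flat}(iii)) for $\bar{\alpha} \in (0,1)$ arbitrary, so that the incremental quotients $F_j^{(h)}$ inherit the full $C^{1,\alpha}$ regularity of $w$, placing them in $C^{0,\gamma}$. Applying \autoref{lemma:bdry-reg} as before and letting $\bar{\alpha}\to 1$ recovers $\widetilde{w}_i \in C^{k-1+\gamma-\eps}(\partial\Omega\cap B_{1/4})$ for every $\eps > 0$. Finally, the passage from regularity of $\widetilde{w}_i = \partial_i u/\partial_n u$ on $\partial\Omega$ to regularity of $\partial\Omega$ itself proceeds exactly as in Proposition 6.4: rewriting $\nu^{(i)}(x) = (\partial_i v/\partial_n v)/\bigl(\sum_{j=1}^{n-1}(\partial_j v/\partial_n v)^2 + 1\bigr)^{1/2}$ shows that $\nu \in C^{k-1+\alpha+\gamma}$ (resp. $C^{k-1+\gamma-\eps}$) on $\partial\Omega\cap B_{1/4}$, giving the stated regularity of $\partial\Omega$.

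The main obstacle, as in the one-phase case, is the correct bookkeeping of the incremental quotient order $\bar{k}+\bar{\alpha}$ relative to $\gamma$: one must choose it large enough that $W^{(h)}$ still lies in $C^{0,\gamma}$, yet small enough that the forcing terms $F_j^{(h)}$ retain $C^{0,\gamma}$ regularity uniformly in $h$. This dictates the split into the cases $\alpha \ge \gamma$ versus $\alpha < \gamma$ and produces the three regimes in the final statement. However, thanks to the absence of boundary integrals in the $\beta = 1+s$ setting and the fact that $\gamma < s \le 2s$ always holds (so we never enter the regime $\gamma > 2s$ that complicated the one-phase proof), the argument is genuinely simpler here than in Proposition 6.4.
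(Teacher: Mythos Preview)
Your proposal is correct and follows essentially the same approach as the paper's proof: flatten via $\Phi$, take incremental quotients $W^{(h)} = D_h^{\bar{k}-1+\bar{\alpha}} w$, apply \autoref{lemma:bdry-reg} with $\beta = 1+s$ (where all $\1_{\{\beta=s\}}$ boundary terms vanish and $\gamma < s < 2s$ makes \eqref{eq:kernel-int-reg-ass-main} unnecessary), and split into $\alpha \ge \gamma$ versus $\alpha < \gamma$ with $\bar{k} = k$ or $\bar{k} = k-1$ respectively. One small remark: your justification ``$\gamma < s < 1 \le \alpha + (k-1)$'' for $W^{(h)} \in C^{0,\gamma}(\R^n)$ fails when $k=1$, but the conclusion still holds there because the hypothesis forces $\gamma \le \alpha$ in that case (which is precisely what the paper invokes).
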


\begin{proof}
The proof is the same as in \autoref{prop:bootstrap-onephase}. Indeed, let us fix $i \in \{ 1 , \dots, n-1 \}$ and let $w = \widetilde{w}_i \circ \Phi$ be as in \autoref{lemma:PDE-obstacle-flat}. By \autoref{lemma:PDE-obstacle-flat}, we have $w \in C^{k+\alpha+2s}_{k-1+\alpha}(\{ x_n > 0 \} | B_{3/2} )$ and $\supp(w) \subset \{ x_n \ge 0 \} \cap \overline{B_{3/2}}$. In particular, it holds $w \in C^{k-1+\alpha}(\R^n)$. 

Let us now assume that $\alpha \ge \gamma$. We will provide the proof in the other case at the end.
We fix $h = (h',0) \in \R^n$ with $|h| < \frac{1}{2k}$, and define $W^{(h)} = D_{h}^{k-1+\alpha} w$. Then, since $\alpha \ge \gamma$, we have that $W^{(h)} \in C^{0,\gamma}(\R^n)$. 
By \autoref{lemma:PDE-obstacle-flat} it follows that $W^{(h)}$ satisfies all the assumptions of \autoref{lemma:bdry-reg} with $\beta = 1+s$, $\alpha := \alpha$, and $u := W^{(h)}$, where we set and deduce from \autoref{lemma:PDE-obstacle-flat}
\begin{align*}
g &:= G^{(h)}, \qquad (x_n)_+^{-s} G^{(h)} \in  L^{\infty}(\{ x_n \ge 0 \} \cap B_{1/2}),\\
f_{j} &:= F_{j}^{(h)} \in C^{0,\gamma}(\{ x_n \ge 0 \}),~~ j \in \{0 , \dots, k-1\}, \\
b &:= \theta_K := A := \vartheta := a := \nu := 0.
\end{align*}
The norms of all of these quantities are bounded from above by a constant of the form $C\bar{C}$, where $\bar{C}$ is as in \autoref{lemma:PDE-obstacle-flat} and $C$ depends only on $n,s,k,\alpha,\lambda,\Lambda,c_0,\delta$. Then, by \autoref{lemma:bdry-reg}, we deduce
\begin{align}
\label{eq:Wh-estimate-obstacle}
[W^{(h)}]_{C^{0,\gamma}(\{ x_n \ge 0 \} \cap B_{1/4})} \le C \bar{C},
\end{align}
and therefore, since $h = (h',0)$ with $|h| \le \frac{1}{2k}$ was arbitrary, and $C,\bar{C}$ do not depend on $h$, and $\alpha + \gamma \not=1$, we deduce that (see for instance \cite[p.431]{And97})
\begin{align*}
[w]_{C^{k-1+\alpha+\gamma}(\{ x_n = 0 \} \cap B_{1/4} )} \le C \bar{C}.
\end{align*}
By recalling the definition of $w$, this implies 
$\widetilde{w}_i \in C^{k-1+\alpha+\gamma}(\partial \Omega \cap B_{1/4})$ for any $i \in \{1,\dots,n-1\}$, and in particular, proceeding as in the proof of \autoref{prop:bootstrap-onephase}, it is sufficient to deduce that the normal vector $\nu$ of the free boundary satisfies $\nu \in C^{k+\alpha+\gamma}(\partial \Omega \cap B_{1/4})$. Therefore, we conclude that $\partial \Omega \in C^{k+\alpha+\gamma}$ in $B_{1/4}$. Thus, since $\gamma < s$ and $\gamma \le \alpha$, this implies $\partial \Omega \in C^{k+2\alpha}$ if $\alpha < s$ and $\partial \Omega \in C^{k+\alpha+s-\eps}$ if $\alpha \in (s,1)$, as claimed.

Let us now discuss the case $\alpha < \gamma$. In that case, we cannot proceed as before, since we only have $F_{j}^{(h)} \in C^{\alpha}(\{ x_n \ge 0 \} \cap B_{1/2})$, but need this function to be $C^{\gamma}$.
In case $k = 1$, we have ruled out by assumption that $\alpha < \gamma$, hence we can assume that $k \ge 2$.
We proceed as in \autoref{prop:bootstrap-onephase} and define $W^{(h)} = D_h^{k-2+\bar{\alpha}} w$ for some $\bar{\alpha} \in (0,1)$, i.e. choosing $\bar{k} = k - 1$ in \autoref{lemma:PDE-obstacle-flat} in case $k \ge 2$. Then, we still have $W^{(h)} \in C^{0,\gamma}(\R^n)$ and can apply \autoref{lemma:bdry-reg} with the same choices as above, since now $F_{j}^{(h)} \in C^{1,\alpha}(\{ x_n \ge 0 \} \cap B_{1/2}) \subset C^{\gamma}(\{ x_n \ge 0 \} \cap B_{1/2})$.  Hence, we obtain \eqref{eq:Wh-estimate}, which implies,
\begin{align*}
[w]_{C^{k-2+\bar{\alpha}+\gamma}(\{ x_n = 0 \} \cap B_{1/4})} \le C \bar{C},
\end{align*}
and therefore, $\partial \Omega \in C^{k-1+\bar{\alpha}+\gamma}$ in $B_{1/4}$. Since $\bar{\alpha} \in (0,1)$ and $\gamma \in (\max\{0,2s-1\},s)$ was arbitrary, we conclude the proof.
\end{proof}

\begin{proof}[Proof of \autoref{thm:main-obstacle-intro}]
We can assume without loss of generality that $\alpha \not\in \{s,1-s\}$, up to making it a little smaller. {Moreover, by \autoref{lemma:obstacle-collection} we can also assume that $\alpha>\max\{0,2s-1\}$}. Let us denote $\Omega = \{ v> 0 \}$. We can assume without loss of generality that $\nu(0) = e_n$. In particular, there exists $\rho > 0$ such that $\partial_n d_{\Omega} \ge \delta$ for some $\delta > 0$, depending only on the $C^{1,\alpha}$ radius of $\Omega$. Moreover, by the Hopf lemma \autoref{lemma:Hopf}, we have that $v \ge c_0 d_{\Omega}^s$ in $B_{2}$. Hence, by application of the second claim in \autoref{lemma:Hopf}, we deduce that $\partial_n v \ge c_1 d_{\Omega}^{s-1}$ in $B_{\rho}$, upon making $\rho$ even smaller, if necessary. Hence, we have verified \eqref{eq:blue-ass-obstacle} in the small ball $B_{\rho}$. Therefore, up to a rescaling, we are in the setting of \autoref{prop:bootstrap-obstacle} with $k = 1$ and $\alpha \in (\max\{0,2s-1\},1)$. 

We can now bootstrap the regularity of the free boundary up to $C^{\infty}$ using \autoref{prop:bootstrap-obstacle}, using the same arguments as in the proof of \autoref{thm:main-onephase-intro}.
\end{proof}

\section{Appendix}

In this appendix we collect several integral estimates that are used frequently in the paper.

\begin{lemma}
\label{lemma:int-polar-coord}
Let $a,b \in \R$ with $a > 0$ and $a-1 < b$. Then, for any $R > 0$
\begin{align*}
\int_{\R^n \setminus B_R} (y_n)_+^{a - 1} |y|^{-n-b} \d y \le c_1 R^{a-1-b},
\end{align*}
where $c_1 > 0$ depends only on $n,a,b$. Moreover, if $d \in \R$ with $a - 1  > -d$, then for any $R > 0$
\begin{align*}
\int_{B_R} (y_n)_+^{a - 1} |y|^{-n+d} \d y \le c_2 R^{a-1 + d},
\end{align*}
where $c_2 > 0$ depends only on $n,a,d$. 
\end{lemma}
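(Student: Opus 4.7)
The plan is to reduce both estimates to scale-invariant integrals on the unit ball / its complement via a simple rescaling, and then verify convergence in spherical coordinates. Since the left-hand sides and right-hand sides both have the same homogeneity in $R$, the constants $c_1, c_2$ are forced by the scaling and equal to the corresponding unit-scale integrals.

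For the first estimate, I would substitute $y = Rz$, so that $\d y = R^n \d z$, $(y_n)_+ = R(z_n)_+$, and $|y| = R|z|$. This gives
\begin{equation*}
\int_{\R^n \setminus B_R} (y_n)_+^{a-1} |y|^{-n-b} \d y = R^{a-1-b} \int_{\R^n \setminus B_1} (z_n)_+^{a-1} |z|^{-n-b} \d z,
\end{equation*}
so it suffices to show the integral on the right is finite. Writing $z = r\omega$ with $r \ge 1$ and $\omega \in \mathbb{S}^{n-1}$, Fubini yields the product
\begin{equation*}
\left( \int_1^\infty r^{a-2-b} \d r \right) \left( \int_{\mathbb{S}^{n-1}} (\omega_n)_+^{a-1} \d \sigma(\omega) \right).
\end{equation*}
The radial factor is finite precisely when $a - 2 - b < -1$, i.e.\ $a-1 < b$, which is the hypothesis. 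The angular factor is finite as soon as $a - 1 > -1$, i.e.\ $a > 0$: parametrising $\omega_n = \cos\theta$ with surface measure factor $\sin^{n-2}\theta \d\theta$, the only potential singularity is at $\theta = \pi/2$ where $(\omega_n)_+^{a-1} \asymp (\pi/2 - \theta)^{a-1}$ is integrable for $a > 0$.

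The second estimate goes analogously: rescaling $y = Rz$ gives
\begin{equation*}
\int_{B_R} (y_n)_+^{a-1} |y|^{-n+d} \d y = R^{a-1+d} \int_{B_1} (z_n)_+^{a-1} |z|^{-n+d} \d z,
\end{equation*}
and in spherical coordinates the right-hand factor becomes
\begin{equation*}
\left( \int_0^1 r^{a+d-2} \d r \right) \left( \int_{\mathbb{S}^{n-1}} (\omega_n)_+^{a-1} \d \sigma(\omega) \right).
\end{equation*}
The radial integral converges iff $a+d-2 > -1$, which is exactly $a - 1 > -d$, and the angular integral is the same as before, again finite since $a > 0$ (which is implicit in the second hypothesis together with $d$ being such that $a-1 > -d$; in fact one only needs $a > 0$ for the angular integrability, and this must hold for the statement to be meaningful).

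There is no real obstacle; the whole proof is a scaling plus one application of Fubini in spherical coordinates. The only subtlety worth flagging is that the constants $c_1, c_2$ depend on the angular integral, which is finite uniformly in $R$ but blows up as $a \searrow 0$, so the implicit bound on $c_1, c_2$ deteriorates at the endpoint — consistent with the strict inequalities in the hypotheses.
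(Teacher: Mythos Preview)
Your proof is correct and takes essentially the same approach as the paper: both pass to spherical (polar) coordinates and check separately that the radial and angular integrals converge under the stated hypotheses. Your version additionally factors out the $R$-dependence by a preliminary rescaling $y = Rz$, which is a cosmetic difference; the paper instead integrates directly in $r$ from $R$ (resp.\ to $R$) and reads off the power of $R$.
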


\begin{proof}
We use polar coordinates with $y_n = r \cos(\theta)$ for some $r > 0$ and $\theta \in [0,2\pi)$, and compute
\begin{align*}
\int_{\R^n \setminus B_R} (y_n)_+^{a - 1} |y|^{-n-b} \d y & \le C \int_{0}^{2\pi} \cos(\theta)_+^{a-1} \int_{R}^{\infty} r^{a-1} r^{-1-b} \d r \d \theta \\
&\le C R^{a-1-b} \int_{0}^{2\pi} \cos(\theta)_+^{a-1}  \d \theta \le C R^{a-1-b}.
\end{align*}
Moreover, analogously
\begin{align*}
\int_{B_R} (y_n)_+^{a - 1} |y|^{-n+d} \d y &\le C \int_{0}^{2\pi} \cos(\theta)_+^{a-1} \int_{0}^{R} r^{a-1} r^{-1+d} \d r \d \theta \\
&\le C R^{a-1+d} \int_{0}^{2\pi} \cos(\theta)_+^{a-1}  \d \theta \le C R^{a-1+d}.
\end{align*}
\end{proof}

\begin{lemma}
\label{lemma:bd-int}
Let $\Omega \subset \R^n$ be a bounded Lipschitz domain and $x \in \Omega$. Let $a,d > 0$, $-d\neq a-1$. Then, 
\begin{align*}
\int_{\Omega} d_{\Omega}^{a-1}(y) |x-y|^{-n+d} \d y \le C \left( 1 + d_{\Omega}^{a-1+d}(x) \right),
\end{align*} 
where $C > 0$ only depends on $n,a,d$, and $\Omega$.
\end{lemma}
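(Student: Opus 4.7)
Set $r := d_\Omega(x)$ and $D := \diam(\Omega)$. The plan is to split the integral as $\Omega = B_{r/2}(x) \cup (\Omega \setminus B_{r/2}(x))$, using that $B_r(x) \subset \Omega$, and to handle each piece separately. On the inner piece, the triangle inequality gives $d_\Omega(y) \asymp r$ for $y \in B_{r/2}(x)$, so passing to polar coordinates around $x$ (and using $d > 0$) yields
$$\int_{B_{r/2}(x)} d_\Omega^{a-1}(y) |x-y|^{-n+d} \d y \leq C r^{a-1} \int_0^{r/2} \rho^{d-1} \d \rho = C r^{a-1+d}.$$

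For the outer piece I would use a dyadic decomposition into annuli $E_k := \{y \in \Omega : 2^{k-1} r < |x-y| \leq 2^k r\}$ for $k = 1, \dots, K$ with $K$ chosen so that $2^K r \asymp D$, so that $\Omega \setminus B_{r/2}(x) \subset \bigcup_k E_k$. On each annulus one has $|x-y|^{-n+d} \asymp (2^k r)^{-n+d}$, and the key ingredient is the boundary integrability estimate
$$\int_{B_\rho(z) \cap \Omega} d_\Omega^{a-1}(y) \d y \leq C \rho^{n+a-1}, \qquad z \in \overline{\Omega}, \ \rho > 0,$$
valid for $a > 0$ with constant depending on $n, a$ and the Lipschitz character of $\Omega$. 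Applying this to $B_{2^k r}(x) \supset E_k$ (note that $2^k r \geq 2 d_\Omega(x)$, so this ball is comparable to one centered at a boundary point) gives
$$\int_{E_k} d_\Omega^{a-1}(y) |x-y|^{-n+d} \d y \leq C (2^k r)^{a-1+d}.$$
Since $a-1+d \neq 0$ by hypothesis, the geometric series $\sum_{k=1}^K (2^k r)^{a-1+d}$ is dominated either by its top term, yielding $\leq C D^{a-1+d} \leq C$ when $a-1+d > 0$, or by its bottom term, yielding $\leq C r^{a-1+d}$ when $a-1+d < 0$. Combined with the inner estimate this gives the claim.

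The main obstacle is proving the boundary integrability estimate above. I would obtain it from the Lipschitz structure of $\Omega$: flattening $\partial\Omega$ locally by a bi-Lipschitz chart $(y', y_n) \mapsto (y', y_n - \eta(y'))$ with $\eta$ Lipschitz, one has $d_\Omega(y) \asymp y_n - \eta(y')$, and the integral reduces to the half-space cylinder computation $\int_0^{C\rho} t^{a-1} \d t \cdot \rho^{n-1} = C \rho^{n+a-1}$; a finite partition of unity on $\partial\Omega$ gives the global statement. Equivalently one may decompose dyadically in the level sets of $d_\Omega$ and use the Lipschitz measure bound $|\{y \in B_\rho(z) : d_\Omega(y) \leq t\}| \leq C t \rho^{n-1}$: the series $\sum_{j \geq 0} (2^{-j}\rho)^{a-1} \cdot 2^{-j}\rho \cdot \rho^{n-1}$ converges precisely because $a > 0$, producing the sharp exponent $n+a-1$. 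The exclusion $-d \neq a-1$ enters exactly to avoid a logarithmic blow-up when summing the outer geometric series.
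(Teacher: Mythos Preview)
Your proof is correct and follows essentially the same strategy as the paper: split into the inner ball $B_{r/2}(x)$ and an outer dyadic annulus decomposition, then exploit the Lipschitz structure of $\partial\Omega$ to bound $\int d_\Omega^{a-1}$ on each annulus and sum the geometric series. The only cosmetic difference is that the paper invokes the co-area formula with the bound $\mathcal{H}^{n-1}(\{d_\Omega = t\} \cap (B_{2A}(x)\setminus B_A(x))) \le C A^{n-1}$, whereas you package this as the volume estimate $\int_{B_\rho(z)\cap\Omega} d_\Omega^{a-1} \le C\rho^{n+a-1}$; note that this estimate, as you stated it for all $z\in\overline\Omega$ and $\rho>0$, fails for $a>1$ when $\rho \ll d_\Omega(z)$, but you correctly apply it only for $\rho = 2^k r \ge 2d_\Omega(x)$, where it holds.
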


Note that in case $-d=a-1$, by the same proof, we can bound the integral by $C(1+\log d_{\Omega}(x))$.

\begin{proof}
Let us denote $\rho := d_{\Omega}(x)$. We split the integration domains into several parts. First, we observe that for $y \in B_{\rho/2}(x)$ it holds $\rho/2 \le d_{\Omega}(y) \le 2 \rho$ and thus, since $d > 0$,
\begin{align*}
\int_{\Omega \cap B_{\rho/2}(x)} d_{\Omega}^{a-1}(y) |x-y|^{-n+d} \d y \le C \rho^{a-1} \int_{B_{\rho/2}(x)} |x-y|^{-n+d} \d y \le C \rho^{a-1+d}.
\end{align*}
Moreover, since $\partial \Omega$ is a Lipschitz graph, there exists $\kappa \in (0,\frac{1}{2})$ such that for any $t \in (0,\kappa)$, $A \in (0,1)$, and $x \in \Omega$ with $d_{\Omega}(x) \le \kappa$ it holds
\begin{align*}
\mathcal{H}^{n-1} \big( \{ d_{\Omega} = t \} \cap (B_{2A}(x) \setminus B_{A}(x)) \big) \le C A^{n-1}.
\end{align*}
Let us now assume $\rho \le \kappa$. Hence, we can estimate for $M = -\log_2(\rho)$ by the co-area formula
\begin{align*}
 &\int_{( \Omega \setminus B_{\rho/2}(x) ) \cap \{ d_{\Omega} \le \kappa/2 \} }  d_{\Omega}^{a-1}(y) |x-y|^{-n+d} \d y \\
 & \le C \sum_{k = 1}^{M} (2^k \rho)^{-n+d} \int_{\big( B_{2^{k+1} \rho}(x) \setminus B_{2^{k} \rho}(x) \big) \cap \{ d_{\Omega} \le \kappa/2 \} } d_{\Omega}^{a-1}(y) \d y \\
 &\quad + C \int_{\Omega \setminus B_1}  d_{\Omega}^{a-1}(y) |x-y|^{-n+d} \d y \\
&\le C \sum_{k = 1}^{M} (2^k \rho)^{-n+d} \int_0^{\kappa/2} t^{a-1} \mathcal{H}^{n-1} \left( \{ d_{\Omega} = t \} \cap (B_{2^{k+1} \rho}(x) \setminus B_{2^{k} \rho}(x)) \right) \d t + C\\
&\le C \sum_{k = 1}^{M} (2^k \rho)^{-n+d} (2^k \rho)^{n-1} + C \le C \rho^{d-1} 2^{M(d-1)}  + C \le C.
\end{align*}
On the other hand, if $\rho \ge \kappa$, then we have
\begin{align*}
\int_{( \Omega \setminus B_{\rho/2}(x) ) \cap \{ d_{\Omega} \le \kappa/2 \} }  d_{\Omega}^{a-1}(y) |x-y|^{-n+d} \d y \le C \int_{( \Omega \setminus B_{\rho/2}(x) ) \cap \{ d_{\Omega} \le \kappa/2 \} }  d_{\Omega}^{a-1}(y) \d y \le C,
\end{align*}
since $c_1 \le |x-y| \le c_2$ for some $c_1,c_2 > 0$, depending only on $\kappa,\Omega$, and since $a > 0$. Finally, we have
\begin{align*}
\int_{( \Omega \setminus B_{\rho/2}(x) ) \cap \{ d_{\Omega} \ge \kappa/2 \} }  d_{\Omega}^{a-1}(y) |x-y|^{-n+d} \d y \le C,
\end{align*}
since on this set, it holds $c_3 \le d_{\Omega}(y) \le c_4$ and $c_5 \le |x-y| \le c_6$ for some $c_3,c_4,c_5,c_6 > 0$, depending only on $\kappa,\Omega$. The proof is complete.
\end{proof}

\begin{lemma}
\label{lemma:n-1-dim-integral}
Let $\delta < 1+2s$. Then, for any $x \in \R^n$ and $y_n \in \R$,
\begin{align*}
\int_{\R^{n-1}} |x-y|^{-n-2s+\delta} \d y' \le C |x_n - y_n|^{-1-2s+\delta},
\end{align*}
where $C$ depends only on $n,s,\delta$.
\end{lemma}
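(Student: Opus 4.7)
The plan is to reduce the integral to a dimensionless one by translation and scaling. First I would translate in the tangential variables, setting $z' = y' - x'$, which leaves the integrand unchanged and gives
\[
\int_{\R^{n-1}} |x-y|^{-n-2s+\delta} \d y' = \int_{\R^{n-1}} \bigl(|z'|^2 + |x_n - y_n|^2\bigr)^{(\delta - n - 2s)/2} \d z'.
\]
If $x_n = y_n$ the right-hand side is $|x_n-y_n|^{-1-2s+\delta}\cdot\infty$ in the extreme case, but this is not an issue since $|x_n-y_n|^{-1-2s+\delta}=\infty$ as well when $\delta<1+2s$; the estimate is trivial in that degenerate situation. Hence we may assume $x_n\neq y_n$.

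Next, I would scale by $z' = |x_n - y_n| w'$, which introduces a Jacobian $|x_n-y_n|^{n-1}$ and transforms the integrand via homogeneity:
\[
\bigl(|z'|^2 + |x_n-y_n|^2\bigr)^{(\delta - n - 2s)/2} = |x_n-y_n|^{\delta-n-2s}\bigl(|w'|^2 + 1\bigr)^{(\delta - n - 2s)/2}.
\]
Combining the two factors of $|x_n-y_n|$ yields $|x_n-y_n|^{-1-2s+\delta}$ in front of the dimensionless integral
\[
C := \int_{\R^{n-1}} (1+|w'|^2)^{(\delta-n-2s)/2} \d w'.
\]

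The main (and only) point to check is that $C$ is finite. Switching to polar coordinates in $\R^{n-1}$, convergence at infinity is equivalent to $(\delta-n-2s)/2 < -(n-1)/2$, that is, $\delta < 1+2s$, which is precisely the hypothesis. Convergence at the origin is automatic because the integrand is bounded there. This gives the claimed constant $C=C(n,s,\delta)$ and concludes the proof; no substantial obstacle is expected.
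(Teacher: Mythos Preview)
Your proof is correct and follows essentially the same approach as the paper: translate in the tangential variables, scale by $|x_n-y_n|$, and reduce to a dimensionless integral whose convergence at infinity is guaranteed precisely by $\delta<1+2s$. The only cosmetic difference is that you explicitly address the degenerate case $x_n=y_n$, which the paper omits.
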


\begin{proof}
We compute, setting $h_n = x_n-y_n$,
\begin{align*}
\int_{\R^{n-1}} |x-y|^{-n-2s+\delta} \d y' &= \int_{\R^{n-1}} [(|h'|^2 + |h_n|^2)^{1/2}]^{-n-2s+\delta} \d h' \\
&= |h_n|^{-n-2s+\delta} \int_{\R^{n-1}} [(|h'/|h_n||^2 + 1)^{1/2}]^{-n-2s+\delta} \d h' \\
&= |h_n|^{-1-2s+\delta} \int_{\R^{n-1}} [(|h'|^2 + 1)^{1/2}]^{-(n-1)-2s-1+\delta} \d h' = C|x_n - y_n|^{-1-2s+\delta}.
\end{align*}
Note that the condition $\delta < 1 + 2s$ was used in order to guarantee the convergence of the integral.
\end{proof}

The following lemma gives an estimate in case $\delta > 1 + 2s$.
\begin{lemma}
\label{lemma:n-1-dim-integral-2}
Let $\delta > 1+2s$. Then, for any $x \in \R^n$ and $y_n \in \R$,
\begin{align*}
\int_{\R^{n-1} \cap B_r(x)} |x-y|^{-n-2s+\delta} \d y' \le C r^{-1-2s+\delta},
\end{align*}
where $C$ depends only on $n,s,\delta$.
\end{lemma}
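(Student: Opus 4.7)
\textbf{Proof plan for \autoref{lemma:n-1-dim-integral-2}.} After the change of variables $h' = y'-x'$, $h_n = y_n - x_n$, the integrand becomes $(|h'|^2 + h_n^2)^{(-n-2s+\delta)/2}$ and the integration domain is $\{h' \in \R^{n-1} : |h'|^2 + h_n^2 \le r^2\}$, which is non-empty only if $|h_n|\le r$, in which case it is contained in $\{|h'| \le r\}$. The idea is to reduce everything to a one-dimensional radial integral and then split into two cases according to the sign of the exponent $-n-2s+\delta$.

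The main step is to bound the integrand pointwise by something independent of $h_n$, using the constraint $|h'|^2 + h_n^2 \le r^2$. If $-n-2s+\delta \ge 0$, the integrand is increasing in its arguments, so $(|h'|^2+h_n^2)^{(-n-2s+\delta)/2} \le r^{-n-2s+\delta}$; integrating over a ball of radius $\le r$ in $\R^{n-1}$ contributes a factor $C r^{n-1}$, producing
\[
\int_{\R^{n-1}\cap B_r(x)} |x-y|^{-n-2s+\delta}\d y' \le C r^{-n-2s+\delta}\cdot r^{n-1} = C r^{\delta-1-2s}.
\]
If $-n-2s+\delta <0$, the integrand is decreasing in $h_n$, so bounding by its value at $h_n=0$ gives $(|h'|^2+h_n^2)^{(-n-2s+\delta)/2} \le |h'|^{-n-2s+\delta}$, and then passing to polar coordinates in $\R^{n-1}$,
\[
\int_{|h'|\le r} |h'|^{-n-2s+\delta}\d h' = C_n \int_0^{r} \rho^{n-2}\,\rho^{-n-2s+\delta}\d\rho = C_n\int_0^r \rho^{\delta-2-2s}\d\rho,
\]
which, thanks to the assumption $\delta>1+2s$ (equivalently $\delta-2-2s>-1$), converges and equals $C r^{\delta-1-2s}$.

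Combining the two cases yields the desired bound with $C$ depending only on $n,s,\delta$. The only delicate point --- and the reason why the statement requires $\delta>1+2s$ rather than $\delta<1+2s$ as in \autoref{lemma:n-1-dim-integral} --- is precisely the convergence of $\int_0^r \rho^{\delta-2-2s}\d\rho$ at the origin in the second case; no single substitution as in \autoref{lemma:n-1-dim-integral} works here since the unrestricted $h'$-integral diverges at infinity, which is why the truncation to $B_r(x)$ is indispensable and why the answer scales as $r^{\delta-1-2s}$ rather than $|h_n|^{\delta-1-2s}$.
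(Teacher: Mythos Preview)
Your proof is correct. The paper takes a slightly different route: it first rescales $h' \mapsto h'/|h_n|$ (mirroring the substitution used in \autoref{lemma:n-1-dim-integral}) to write the integral as $|h_n|^{\delta-1-2s}$ times $\int_{B'_{r/|h_n|}} (|h'|^2+1)^{(\delta-n-2s)/2}\,\d h'$, then bounds $(|h'|^2+1)^{1/2}\ge |h'|$ and evaluates the resulting radial integral. That step, as written, tacitly uses $\delta \le n+2s$; your explicit case split handles both signs of the exponent cleanly and also avoids dividing by $|h_n|$, so it covers $h_n=0$ without a separate remark. Conversely, the paper's scaling makes the parallel with \autoref{lemma:n-1-dim-integral} transparent. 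Both arguments hinge on the same fact, namely that $\int_0^R \rho^{\delta-2-2s}\,\d\rho$ converges at the origin precisely when $\delta>1+2s$.
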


\begin{proof}
We proceed similarly to \autoref{lemma:n-1-dim-integral}, denote $h_n = |x_n - y_n|$ and compute
\begin{align*}
\int_{\R^{n-1} \cap B_r(x)} |x-y|^{-n-2s+\delta} \d y' &= |h_n|^{-1-2s+\delta} \int_{\R^{n-1} \cap B_{\frac{r}{|h_n|}}} [(|h'|^2 + 1)^{1/2}]^{-(n-1)-2s-1+\delta} \d h' \\
& \le C |h_n|^{-1-2s+\delta} \int_{B'_{\frac{r}{|h_n|}}} |h'|^{-(n-1)-2s-1+\delta} \d h' \le C r^{-1-2s+\delta}.
\end{align*}
\end{proof}

Moreover, we have the following one-dimensional estimate:

\begin{lemma}
\label{lemma:int-finite}
Let $\beta \in [s,1+s]$ and $\delta \in (2s-2 , 2s - \beta)$. Then, there is $C > 0$ such that
\begin{align*}
I := \int_0^1 r^{\beta-1} \int_1^{\infty} t^{\beta - 1} |t-r|^{-2s+\delta} \d t \d r \le C. 
\end{align*}
\end{lemma}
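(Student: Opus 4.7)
The plan is to split the outer integration into the two regimes $r \in (0,1/2)$ and $r \in (1/2,1)$, exploiting the hypothesis $\delta \in (2s-2,\,2s-\beta)$ at two different points: the upper bound gives decay at $t=\infty$, the lower bound controls the diagonal singularity at $r=t=1$.

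First, when $r \le 1/2$ and $t \ge 1$, one has $|t-r| \ge t - r \ge t/2$, and since $-2s+\delta<0$ (because $\delta<2s-\beta\le 2s$) this yields
\[
\int_0^{1/2} r^{\beta-1} \int_1^\infty t^{\beta-1} |t-r|^{-2s+\delta} \d t \d r \le C \Big(\int_0^{1/2} r^{\beta-1} \d r\Big) \Big(\int_1^\infty t^{\beta-1-2s+\delta} \d t\Big).
\]
Both factors are finite: the first because $\beta>0$, the second because the upper bound $\delta<2s-\beta$ forces the exponent $\beta-1-2s+\delta$ to be strictly less than $-1$.

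For the remaining part $r\in(1/2,1)$ I would further split the $t$-integral as $\int_1^2 + \int_2^\infty$. On $\{t\ge 2\}$ we again have $|t-r|\ge t/2$ since $r\le 1$, and the same exponent condition $\delta<2s-\beta$ controls that piece. On $\{t\in(1,2)\}$, after the substitution $u=t-r$, the factor $t^{\beta-1}$ is bounded by a constant and we are left with estimating $\int_{1-r}^{2-r} u^{-2s+\delta} \d u$. Depending on whether $-2s+\delta$ is greater than, equal to, or less than $-1$, this is bounded respectively by a constant, by $C(1+|\log(1-r)|)$, or by $C(1-r)^{1-2s+\delta}$. Multiplying by $r^{\beta-1}\le C$ and integrating in $r\in(1/2,1)$, the lower bound $\delta>2s-2$ guarantees $(1-r)^{1-2s+\delta}$ is integrable near $r=1$, so the contribution is finite in all three subcases.

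The only delicate point is the treatment of the diagonal $t=r=1$, where the singularity $|t-r|^{-2s+\delta}$ may be non-integrable in $t$; this is precisely where the hypothesis $\delta>2s-2$ is essential, since it is exactly the threshold ensuring the residual power $(1-r)^{1-2s+\delta}$ is integrable against $dr$ near $r=1$. Every other estimate is a routine bound using $\delta<2s-\beta$ for decay at infinity, and the two bounds combine to give $I\le C$.
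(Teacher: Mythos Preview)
Your proof is correct and follows essentially the same strategy as the paper: split the $t$-integral at $t=2$, use $\delta<2s-\beta$ for the tail, and use $\delta>2s-2$ to integrate the diagonal singularity $(1-r)^{1-2s+\delta}$ in $r$. Your additional split at $r=1/2$ and your explicit case analysis on the sign of $1-2s+\delta$ are slightly more careful than the paper's presentation, but the ideas are identical.
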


\begin{proof}
We compute for any $r \in (0,1)$:
\begin{align*}
\int_1^2 t^{\beta - 1} |t-r|^{-2s+\delta} \d t &\le C \int_1^2 |t-r|^{-2s+\delta} \d t \le C |1-r|^{1-2s+\delta}, \\
\int_2^{\infty} t^{\beta - 1} |t-r|^{-2s+\delta} \d t &\le C \int_2^{\infty} t^{\beta - 1 - 2s + \delta} \d t \le C,
\end{align*}
where we used that $\delta < 2s - \beta$ in order to have convergence of the second integral. Hence, we have
\begin{align*}
I \le C + C \int_0^1 r^{\beta - 1} |1-r|^{1-2s+\delta} \d r \le C ,
\end{align*}
where we used that $\delta > 2s-2$, as desired.
\end{proof}

We are now in a position to give a proof of \autoref{lemma:cutoff-estimate}.

\begin{proof}[Proof of \autoref{lemma:cutoff-estimate}]
We start by estimating the integral over $B_r(x)$. Let us first give a very short estimate in case $\beta \in (1,1+s]$. We compute
\begin{align*}
\int_{B_r(x)} (y_n)_+^{\beta-1} \min\{1 , r^{-2} |x-y|^2 \}  |x-y|^{-n-2s} \d y 
\le C  r^{\beta-1} \int_{\R^n} r^{-2} |x-y|^{-n-2s+2} \d y \le C r^{\beta-1-2s}.
\end{align*}

Let us now give a proof that works for all $\beta \in [s,1+s]$, i.e. in particular for $\beta \in [s,1]$. We compute, using that $x_n \le r$,
\begin{align*}
(x_n)_+^{\beta-1} r^{-2} & \int_{B_r(x)} (y_n)_+^{\beta-1} |x-y|^{-n-2s+2} \d y \\
&\le C  r^{-2} \int_0^{2r} (y_n)_+^{\beta-1} \left( \int_{\R^{n-1} \cap B_r(x)} |x-y|^{-n-2s+2} \d y' \right) \d y_n.
\end{align*}
On one hand, in case $s > \frac{1}{2}$, we apply \autoref{lemma:n-1-dim-integral} with $\delta:= 2 < 1 + 2s$ to deduce
\begin{align*}
r^{-2}  \int_{B_r(x)} (y_n)_+^{\beta-1} |x-y|^{-n-2s+2} \d y 
\le C r^{-2} \int_0^{2r} (y_n)_+^{\beta-1} |x_n - y_n|^{-2s+1} \d y_n 
\le Cr^{\beta-1-2s}.
\end{align*}
In the last estimate we have used that
\begin{align*}
\int_0^{2r} t^{\beta-1} |\eta - t|^{-2s+1} \d t \le C r^{\beta+1-2s}, ~~ \forall \eta \le r,
\end{align*}
To prove the previous affirmation, we notice that, firstly, it is easy to see that, 
\begin{align*}
\int_0^{\eta} t^{\beta-1} |\eta - t|^{-2s+1} \d t = \eta^{\beta-1-2s} \int_0^1 t^{\beta-1} |1-t|^{-2s+1} \d t =B(\beta, 2-2s) \eta^{\beta + 1-2s} \le C r^{\beta + 1-2s},
\end{align*}
where in the last estimate we have used that $\beta>2s-1 > 0$. Secondly,
\begin{align*}
\int_\eta^{2r} t^{\beta-1} |\eta - t|^{-2s+1} \d t &= \eta^{\beta + 1-2s} \int_0^{\frac{2r-\eta}{\eta}} (1+t)^{\beta-1} t^{-2s+1} \d t \\
&{\le} C \eta^{\beta + 1-2s} \Big( \left(1 + \frac{r}{\eta}\right)^{\beta}  \left( \frac{r}{\eta} \right)^{1-2s} + \left(1 + \frac{r}{\eta}\right)^{\beta-1} \left(\frac{r}{\eta} \right)^{2-2s} \Big) \\
&\le C(\eta+r)^{\beta} r^{1-2s} + C (\eta+r)^{\beta-1} r^{2-2s} \le C r^{\beta + 1-2s},\quad \eta\le r.
\end{align*}

On the other hand, in case $s < \frac{1}{2}$, we apply \autoref{lemma:n-1-dim-integral-2} to deduce
\begin{align*}
 r^{-2} \int_{B_r(x)} (y_n)_+^{\beta-1} |x-y|^{-n-2s+2} \d y 
\le  C  r^{-2} \int_0^{2r} (y_n)_+^{\beta-1} r^{-2s+1} \d y_n \le C  r^{\beta-1-2s},
\end{align*}
as desired.
In case $s=\frac{1}{2}$, we estimate $r^{-2}|x-y|^2 \le r^{-1+\alpha} |x-y|^{1+\alpha}$ for some $\alpha \in (0,1)$ and proceed along the same lines. This concludes the estimate for the integral over $B_r(x)$.

Let us now turn to the estimate for the integral over $\R^n \setminus B_r(x)$. In case $\beta \in [s,1]$, we immediately have by \autoref{lemma:int-polar-coord}
\begin{align*}
\int_{\R^n \setminus B_r(x)} (y_n)_+^{\beta-1} |x-y|^{-n-2s} \d y &=  \int_{\R^n \setminus B_r} (y_n + x_n)_+^{\beta-1} |y|^{-n-2s} \d y \\
& \le \int_{\R^n \setminus B_r} (y_n)_+^{\beta-1} |y|^{-n-2s} \d y \le C  r^{\beta-1-2s}.
\end{align*}
In case $\beta \in (1,1+s]$, we compute for $y_n \le 4r$,
\begin{align*}
 \int_{ \{ y_n \le 4r \} \setminus B_r(x)} (y_n)_+^{\beta-1} |x-y|^{-n-2s} \d y \le C r^{\beta-1} \int_{\R^n \setminus B_r(x)}|x-y|^{-n-2s} \d y \le C r^{\beta-1-2s}.
\end{align*}
Moreover, for $y_n \ge 4r$, we have since $x_n \le r$ that $y_n \le 2|x_n - y_n| \le 2|x-y|$ and since $\beta < 1 + 2s$,
\begin{align*}
 \int_{ \{ y_n \ge 4r \} \setminus B_r(x)} (y_n)_+^{\beta-1} |x-y|^{-n-2s} \d y \le C  \int_{\R^n \setminus B_r(x)}  |x-y|^{-n-2s+\beta-1} \d y \le C r^{\beta-1-2s}.
\end{align*}
\end{proof}

We conclude this section by proving the following lemma, complementing \cite[Proposition B.2.1]{FeRo24} and \cite[Corollary 2.3]{AbRo20}.

\begin{lemma}
\label{lemma:Ldist-est}
Let $\partial \Omega \in C^{1,\alpha}$ in $B_1$ for some $\alpha \in (0,1)$ and let $\eps > 0$. Then, for any $\phi \in C^{2s+\eps}_{0,\alpha}(\Omega | B_1)$,
\begin{align*}
L(d_{\Omega}^s \phi) \le C \Vert \phi \Vert_{C^{2s+\eps}_{0,\alpha}(\Omega | B_1)} (d_{\Omega}^{\alpha - s} + 1) ~~ \text{ in } B_{1/2},
\end{align*}
where $C$ depends only on $n,s,\lambda,\Lambda, \alpha, \eps$, and $\Omega$.
\end{lemma}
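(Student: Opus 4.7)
The plan is as follows. Fix $x_0 \in \Omega \cap B_{1/2}$, set $d := d_\Omega(x_0)$, and note that the case $d \ge c > 0$ is covered by interior Schauder estimates applied on $B_{d/2}(x_0) \subset \Omega$, so I focus on small $d$. Starting from the algebraic identity
\[d_\Omega^s(x_0)\phi(x_0) - d_\Omega^s(y)\phi(y) = \phi(x_0)\bigl(d_\Omega^s(x_0) - d_\Omega^s(y)\bigr) + d_\Omega^s(y)\bigl(\phi(x_0) - \phi(y)\bigr),\]
where $\phi$ and $d_\Omega^s$ are both taken to vanish outside $\Omega$, I would decompose
\[L(d_\Omega^s\phi)(x_0) = \phi(x_0) L(d_\Omega^s)(x_0) + 2\int_\Omega d_\Omega^s(y)\bigl(\phi(x_0)-\phi(y)\bigr) K(x_0-y) \d y =: T_1 + T_2.\]

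For $T_1$, I would invoke \cite[Corollary 2.3]{AbRo20} (or \cite[Corollary 3.9]{Kuk21}), which yields $|L(d_\Omega^s)(x_0)| \le C(1 + d^{\alpha-s})$ for $\partial \Omega \in C^{1,\alpha}$, combined with $|\phi(x_0)| \le \Vert\phi\Vert_{C^{0,\alpha}(\overline{\Omega}\cap B_1)} \le \Vert\phi\Vert_{C^{2s+\eps}_{0,\alpha}(\Omega | B_1)}$. This immediately gives the required bound for $T_1$.

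For $T_2$, I would split the integration domain into the near ball $B_{d/2}(x_0) \subset \Omega$ and its complement. On the near ball, the built-in estimate $[\phi]_{C^{2s+\eps}(B_{d/2}(x_0))} \le C d^{\alpha-2s-\eps}\Vert\phi\Vert$ from the space $C^{2s+\eps}_{0,\alpha}(\Omega | B_1)$, together with the interior smoothness of $d_\Omega^s$ (with $|D^k d_\Omega^s| \le C d^{s-k}$), allows a symmetrization argument: writing the integrand in the form $\tfrac12[F(y)+F(2x_0-y)]K(x_0-y)$ with $F(y) = d_\Omega^s(y)(\phi(x_0)-\phi(y))$ and exploiting evenness of $K$, the odd-order Taylor terms around $x_0$ cancel, and the surviving even contributions (of orders $|y-x_0|^2$ with coefficient $d^{s+\alpha-2}$, and $d^s|y-x_0|^{2s+\eps}$ with coefficient $d^{\alpha-2s-\eps}$, together with the mixed remainders) each integrate against $|x_0-y|^{-n-2s}$ to produce $C d^{\alpha-s}\Vert\phi\Vert$, using $s<1$ to ensure integrability of $|y-x_0|^{2-n-2s}$. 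For the complement, on $(\Omega \cap B_1) \setminus B_{d/2}(x_0)$ I would use the Hölder estimate $|\phi(x_0)-\phi(y)| \le C\Vert\phi\Vert|x_0-y|^{\alpha}$ and bound
\[\int_{\Omega \cap B_1} d_\Omega^s(y) |x_0-y|^{\alpha-n-2s}\,\d y \le C(1 + d^{\alpha-s})\]
by \autoref{lemma:bd-int} (with $a = s+1$, $d = \alpha - 2s$) when $\alpha > 2s$, and otherwise by splitting according to $|x_0-y| \lessgtr d$ and using the elementary bound $d_\Omega(y) \le d + |x_0-y|$. The tail contribution from $\Omega \setminus B_1$ is $\le C\Vert\phi\Vert$ by integrability of $K(x_0-\cdot)$ at infinity.

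The main obstacle is the near-field estimate on $B_{d/2}(x_0)$: a crude product-rule approach would give $[d_\Omega^s\phi]_{C^{2s+\eps}(B_{d/2}(x_0))} \le C d^{-s-\eps}\Vert\phi\Vert$, because $\phi$ is not required to vanish on $\partial \Omega$ and hence only the $L^\infty$ bound on $\phi$ can be used in one of the product-rule terms. This leads, after the standard PV integration, to the too-weak estimate $|L u(x_0)| \le C d^{-s}\Vert\phi\Vert$. The key point is therefore the preliminary algebraic decomposition, which isolates the "mean value" of $\phi$ at $x_0$ inside $T_1$ (absorbing it into the known estimate for $L(d_\Omega^s)$), and leaves in $T_2$ an integrand $d_\Omega^s(y)(\phi(x_0)-\phi(y))$ that vanishes at $y = x_0$. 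This vanishing is precisely what allows the full $C^{2s+\eps}$ modulus of continuity of $\phi$ to enter, replacing the unfavourable exponent $-s$ by $\alpha - s$.
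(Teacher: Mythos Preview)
Your proposal is correct and follows essentially the same route as the paper. The paper also subtracts the constant $\phi(x_0)$, writing $L(d_\Omega^s\phi)=\phi(x_0)L(d_\Omega^s)+L(d_\Omega^s\bar\phi)$ with $\bar\phi=\phi-\phi(x_0)$ (which is exactly your $T_1+T_2$), then splits the remaining integral into $B_{\rho_0/2}(x_0)$ and its complement, using on the near ball the second-difference bound $[d_\Omega^s\bar\phi]_{C^{2s+\eps}(B_{\rho_0/2}(x_0))}\le C\rho_0^{\alpha-s-\eps}$ obtained via the product rule and the crucial fact $\|\bar\phi\|_{L^\infty(B_{\rho_0/2}(x_0))}\le C\rho_0^{\alpha}$. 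The only cosmetic differences are that the paper handles the far field through \cite[Lemma~B.2.4]{FeRo24} rather than your elementary bound $d_\Omega(y)\le 3|x_0-y|$, and that your ``odd-order Taylor terms cancel'' description should, for $2s+\eps<1$, be read simply as the second-difference characterisation of $C^{2s+\eps}$.
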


\begin{proof}
Up to a normalization, we can assume that $\Vert \phi \Vert_{C^{2s+\eps}_{0,\alpha}(\Omega | B_1)} \le 1$.
Let $x_0 \in B_{1/2}$ and set $\rho_0 := d_{\Omega}(x_0)$, as well as $\bar{\phi} = \phi - \phi(x_0)$. Then, it holds for $y \in B_{\rho_0/2}$,
\begin{align*}
|d^s_{\Omega}(x_0 + y) \bar{\phi}(x_0+y) & + d^s_{\Omega}(x_0 - y) \bar{\phi}(x_0-y) - 2 d^s_{\Omega}(x_0) \bar{\phi}(x_0)| \\
&\le C|y|^{2s+\eps} [d^s_{\Omega} \bar{\phi}]_{C^{2s+\eps}(B_{\rho_0/2}(x_0))}  \le C |y|^{2s+\eps} \rho_0^{-s - \eps + \alpha}.
\end{align*}
In the last inequality we have used the product rule, the fact that $[d_{\Omega}^s]_{C^{\gamma}(B_{\rho_0/2}(x_0))} \le C \rho_0^{s-\gamma}$ for any $ \gamma > 0$ (see \cite[Lemma A.2]{AbRo20}) and that $[\bar{\phi}]_{C^{\gamma}(B_{\rho_0/2}(x_0))} \le C \rho_0^{\alpha-\gamma}$ for all $\gamma \in (0,2s+\eps]$. The latter property follows by construction of $\bar{\phi}$. Indeed, since on the one hand it holds $\Vert \bar{\phi} \Vert_{L^{\infty}(B_{\rho_0/2}(x_0))} \le C \rho_0^{\alpha}$ since $\phi \in C^{\alpha}(\overline{\Omega} \cap B_1)$, and on the other hand $[\bar{\phi}]_{C^{\gamma}(B_{\rho_0/2}(x_0))} \le C \rho_0^{\alpha-\gamma}$ for any $\gamma \in [\alpha,2s+\eps]$ since $\bar\phi\in C^{2s+\eps}_{0,\alpha}(B_{\rho_0/2}(x_0))$, the desired estimate follows by H\"older interpolation.

Moreover, for $y \in B_1 \setminus B_{\rho_0/2}$,
\begin{align*}
|d^s_{\Omega}(x_0 + y) \bar{\phi}(x_0+y) - d^s_{\Omega}(x_0) \bar{\phi}(x_0) | &= d^s_{\Omega}(x_0 + y) |\phi(x_0+y) - \phi(x_0)| \\
&\le C d^s_{\Omega}(x_0 + y) |y|^{\min\{2s+\eps , 1 \}} d_{\Omega}^{\alpha - \min\{2s + \eps , 1\}}(x_0+y),
\end{align*}
and for $y \in \R^n \setminus B_1$, by the boundedness of $\phi$,
\begin{align*}
|d^s_{\Omega}(x_0 + y) \bar{\phi}(x_0+y)| \le C |y|^s.
\end{align*}
Therefore, using \cite[Lemma B.2.4]{FeRo24}, we deduce
\begin{align*}
|L(d_{\Omega}^s \bar{\phi})(x_0)| &\le C \rho_0^{-s-\eps+\alpha} \int_{B_{\rho_0/2}} |y|^{-n+\eps} \d y \\
&\quad + C \int_{B_1 \setminus B_{\rho_0/2}} d^{s + \alpha - \min\{2s + \eps , 1\}}_{\Omega}(x_0 + y) |y|^{-n-2s + \min\{2s+\eps , 1\}} \d y +  C \int_{\R^n \setminus B_1} |y|^{-n-s} \d y \\
&\le  C \rho_0^{\alpha-s} + C \le C (d_{\Omega}^{\alpha-s}(x_0) + 1)
\end{align*}
Altogether, since $|L(d^s_{\Omega})| \le C d_{\Omega}^{\alpha-s}$ by \cite[Proposition B.2.1]{FeRo24}, we obtain 
\begin{align*}
|L(d^s_{\Omega} \phi)(x_0)| \le |\phi(x_0)| |L(d^s_{\Omega})(x_0)| + |L(d^s_{\Omega} \bar{\phi})(x_0)| \le C (d_{\Omega}^{\alpha-s}(x_0) + 1),
\end{align*}
as desired.
\end{proof}


\begin{thebibliography}{alpha}

\bibitem[Aba15]{Aba15} N. Abatangelo, \emph{Large {$s$}-harmonic functions and boundary blow-up solutions for the fractional Laplacian}, Discrete Contin. Dyn. Syst. 35 (2015), 5555--5607.

\bibitem[AbRo20]{AbRo20} N. Abatangelo and X. Ros-Oton, \emph{Obstacle problems for integro-differential operators: higher regularity of free boundaries}, Adv. Math. 360 (2020), 106931, 61.

\bibitem[AlCa81]{AC81}
H. Alt and L. Caffarelli.
\newblock Existence and regularity for a minimum problem with free boundary.
\newblock {\em J. Reine Angew. Math.}, 325:105--144, 1981.

\bibitem[And97]{And97} P. Andersson, \emph{Characterization of pointwise {H}\"older regularity}, Appl. Comput. Harmon. Anal. 4 (1997), 429--443.

\bibitem[BFR18]{BFR18} B. Barrios, A. Figalli, X. Ros-Oton, \emph{Free boundary regularity in the parabolic fractional obstacle problem}, Comm. Pure Appl. Math. 71 (2018), 2129-2159.

\bibitem[BFR18b]{BFR18b} B. Barrios, A. Figalli, and X. Ros-Oton, \emph{Global regularity for the free boundary in the obstacle problem for the fractional Laplacian}, Amer. J. Math. 140 (2018), 415-447.

\bibitem[BDOR24]{BDOR24} L. Behn, L. Diening, J. Ok, and J. Rolfes, \emph{Nonlocal equations with degenerate weights}, arXiv:2409.11829 (2024).
	
\bibitem[Caf77]{Caf77} L. Caffarelli, \emph{The regularity of free boundaries in higher dimensions}, Acta Math. 139 (1977), 155--184.

\bibitem[Caf89]{Caf89} L. Caffarelli, \emph{A Harnack inequality approach to the regularity of free boundaries. II. Flat free boundaries are Lipschitz}, Comm. Pure Appl. Math. 42 (1989), 55-78.

\bibitem[CaFi13]{CF13} L. Caffarelli and A. Figalli, \emph{Regularity of solutions to the parabolic fractional obstacle problem}, J. Reine Angew. Math. 680 (2013), 191-233.

\bibitem[CJK04]{CJK04}
L.~Caffarelli, D.~Jerison, and C.~Kenig.
\newblock Global energy minimizers for free boundary problems and full
  regularity in three dimensions.
\newblock In {\em Noncompact problems at the intersection of geometry,
  analysis, and topology}, volume 350 of {\em Contemp. Math.}, pages 83--97.
  Amer. Math. Soc., Providence, RI, 2004.

\bibitem[CKS00]{CKS00} L. Caffarelli, L. Karp, and H. Shahgholian, \emph{Regularity of a free boundary with applications to the Pompeiu problem}, Ann. Math., 151 (2000), 269-292.
  
\bibitem[CRS10]{CRS10} L. Caffarelli, J.-M. Roquejoffre, and Y. Sire, \emph{Variational problems for free boundaries for the fractional Laplacian}, J. Eur. Math. Soc. 12 (2010), 1103--1148.

\bibitem[CRS17]{CRS17} L. Caffarelli, X. Ros-Oton, and J. Serra, \emph{Obstacle problems for integro-differential operators: regularity of solutions and free boundaries}, Invent. Math. 208 (2017), 1155--1211.

\bibitem[CaSa05]{CS05} L. Caffarelli and S. Salsa, \emph{A Geometric Approach to Free Boundary Problems}, American Mathematical Society (2005).

\bibitem[CSS08]{CSS08}
L.~Caffarelli, S.~Salsa, and L.~Silvestre.
\newblock Regularity estimates for the solution and the free boundary of the
  obstacle problem for the fractional {L}aplacian.
\newblock {\em Invent. Math.}, 171(2):425--461, 2008.

\bibitem[ChKi25]{ChKi25} S. Cho and P. Kim, \emph{Stability of {H}\"older regularity and weighted functional inequalities}, arXiv:2503.00318 (2025).
	
\bibitem[Coz17]{Coz17} M. Cozzi, \emph{Regularity results and {H}arnack inequalities for minimizers and solutions of nonlocal problems: a unified approach via fractional {D}e {G}iorgi classes}, J. Funct. Anal. 272 (2017), 4762--4837.

\bibitem[CIL92]{CIL92} M. Crandall, H. Ishii, and P.-L. Lions, \emph{User's guide to viscosity solutions of second order partial differential equations}, Bull. Amer. Math. Soc. (N.S.) 27 (1992), 1--67.

\bibitem[DGPT17]{DGPT17} D. Danielli, N. Garofalo, A. Petrosyan, and T. To,
\emph{Optimal regularity and the free boundary in the parabolic {S}ignorini problem},
Mem. Amer. Math. Soc. 249 (2017), No. 1181.

\bibitem[DSV21]{DSV21} G. De Philippis, L. Spolaor, and B. Velichkov.  \emph{Regularity of the free boundary for the two-phase Bernoulli problem}, Invent. Math. 225 (2021), 347-394.

\bibitem[DFS19]{DFS19} D. De Silva, F. Ferrari, and S. Salsa. \emph{Regularity of higher order in two-phase free boundary problems}, Trans. Amer. Math. Soc. 37 (2019), 3691-3720.

\bibitem[DeRo12]{DeRo12}
D.~De~Silva and J.~Roquejoffre.
\newblock Regularity in a one-phase free boundary problem for the fractional
  {L}aplacian.
\newblock {\em Ann. Inst. H. Poincar\'{e} C Anal. Non Lin\'{e}aire},
  29(3):335--367, 2012.

\bibitem[DeSa12]{DeSa12}
D.~De~Silva and O.~Savin.
\newblock {$C^{2,\alpha}$} regularity of flat free boundaries for the thin
  one-phase problem.
\newblock {\em J. Differential Equations}, 253(8):2420--2459, 2012.

\bibitem[DeSa15]{DeSa15}
D.~De~Silva and O.~Savin.
\newblock Regularity of {L}ipschitz free boundaries for the thin one-phase
  problem.
\newblock {\em J. Eur. Math. Soc.}, 17(6):1293--1326, 2015.

\bibitem[DeSa15b]{DeSa15b}
D.~De~Silva and O.~Savin.
\newblock {$C^\infty$} regularity of certain thin free boundaries.
\newblock {\em Indiana Univ. Math. J.}, 64(5):1575--1608, 2015.

\bibitem[DSS14]{DSS14}
D.~De~Silva, O.~Savin, and Y.~Sire.
\newblock A one-phase problem for the fractional {L}aplacian: regularity of
  flat free boundaries.
\newblock {\em Bull. Inst. Math. Acad. Sin. (N.S.)}, 9(1):111--145, 2014.

\bibitem[DKP16]{DKP16} A. Di Castro, T. Kuusi, and G. Palatucci, \emph{Local behavior of fractional {$p$}-minimizers}, Ann. Inst. H. Poincar\'e{} C Anal. Non Lin\'eaire 33 (2016), 1279--1299.

\bibitem[DuLi76]{DL76} G. Duvaut and J.-L. Lions, \emph{Inequalities in Mechanics and Physics}, Springer-Verlag (1976).

\bibitem[EKPSS21]{EKPSS21} M. Engelstein, A. Kauranen, M. Prats, G. Sakellaris, and Y. Sire, \emph{Minimizers for the Thin One-Phase Free Boundary Problem}, Comm. Pure Appl. Math. 74 (2021), 1971-2022.

\bibitem[ESV20]{ESV20} M. Engelstein, L. Spolaor, and B. Velichkov, \emph{Uniqueness of the blowup at isolated singularities for the Alt-Caffarelli functional}, Duke Math. J. 169 (2020), 1541-1601.

\bibitem[FaRo22]{FaRo22} M. Fall and X. Ros-Oton, \emph{Global {S}chauder theory for minimizers of the {$H^s(\Omega)$} energy}, J. Funct. Anal. 283 (2022), No. 109523, 50.

\bibitem[FeRo24]{FeRo24}  {X. Fern\'andez-Real and X. Ros-Oton}, {Integro-Differential Elliptic Equations}, {Progress in Mathematics} 350, {Birkh\"auser/Springer, Cham}, 2024.

\bibitem[FeRo24b]{FeRo24b}  {X. Fern\'andez-Real and X. Ros-Oton}, \emph{Schauder and {C}ordes-{N}irenberg estimates for nonlocal elliptic equations with singular kernels},  {Proc. Lond. Math. Soc.} 129 (2024),  {Paper No. e12629, 47}.

\bibitem[FeRo24c]{FeRo24c} {X. Fern\'andez-Real and X. Ros-Oton}, \emph{Stable cones in the thin one-phase problem}, Amer. J. Math. 146 (2024), 631-685.

\bibitem[FRS20]{FRS20} A. Figalli, X. Ros-Oton, and J. Serra, \emph{Generic regularity of free boundaries for the obstacle problem}, Publ. Math. Inst. Hautes \'Etudes Sci. 132 (2020), 181-292.

\bibitem[Gru15]{Gru15} G. Grubb, \emph{Fractional Laplacians on domains, a development of H\"ormander's theory of $\mu$-transmission pseudodifferential operators}, Adv. Math. 268 (2015), 478-528.

\bibitem[Gru20]{Gru20} G. Grubb, \emph{Exact {G}reen's formula for the fractional {L}aplacian and perturbations}, Math. Scand. 126 (2020), 568--592.

\bibitem[JaNe17]{JN17} Y. Jhaveri and R. Neumayer, \emph{Higher regularity of the free boundary in the obstacle problem for the fractional Laplacian}, Adv. Math. 311 (2017), 748-795.

\bibitem[Jer90]{Jer90}
D.~Jerison.
\newblock Regularity of the {P}oisson kernel and free boundary problems.
\newblock {\em Colloq. Math.}, 60/61(2):547--568, 1990.

\bibitem[JeSa15]{JeSa15}
D.~Jerison and O.~Savin.
\newblock Some remarks on stability of cones for the one-phase free boundary
  problem.
\newblock {\em Geom. Funct. Anal.}, 25(4):1240--1257, 2015.

\bibitem[KaWe24]{KaWe24} M. Kassmann and M. Weidner, \emph{Nonlocal operators related to nonsymmetric forms, {II}: {H}arnack inequalities}, Anal. PDE 17 (2024), 3189--3249.

\bibitem[KiNe77]{KN77} D. Kinderlehrer and L. Nirenberg, \emph{Regularity in free boundary problems}, Ann. Sc. Norm. Super. Pisa Cl. Sci. 4 (1977), 373--391.

\bibitem[KRS19]{KRS19} H. Koch, A. R\"uland, and W. Shi, \emph{Higher regularity for the fractional thin obstacle problem}, New York J. Math. 25 (2019) 745-838.

\bibitem[Kuk21]{Kuk21} T. Kukuljan, \emph{The fractional obstacle problem with drift: higher regularity of free boundaries}, J. Funct. Anal. 281 (2021), 109114.

\bibitem[LiZh23]{LZ23} D. Li and K. Zhang. \emph{Boundary pointwise regularity and applications to the regularity of free boundaries
}. Calc. Var. Partial Differential Equations 62 (2023), 230.


\bibitem[PSU12]{PSU12} A. Petrosyan, H. Shahgholian, and N. Uraltseva, \emph{Regularity of Free Boundaries in Obstacle-Type Problems}, American Mathematical Society (2012).

\bibitem[RoSe16]{RoSe16}  {X. Ros-Oton and J. Serra}, \emph{Boundary regularity for fully nonlinear integro-differential equations},  {Duke Math. J.} 165 (2016),  {2079--2154}.

\bibitem[RoSe17]{RoSe17}  {X. Ros-Oton and J. Serra}, \emph{Boundary regularity estimates for nonlocal elliptic equations in {$C^1$} and {$C^{1,\alpha}$} domains}, {Ann. Mat. Pura Appl.}  196 (2017),  {1637--1668}. 

\bibitem[RTW25]{RTW25} X. Ros-Oton, C. Torres-Latorre, and M. Weidner, \emph{Semiconvexity estimates for nonlinear integro-differential equations}, Comm. Pure Appl. Math. 78 (2025), 592--647.

\bibitem[RoWe24a]{RoWe24a} {X. Ros-Oton and M. Weidner}, \emph{Regularity for nonlocal equations with local {N}eumann boundary conditions}, Anal. PDE, to appear (2025).

\bibitem[RoWe24c]{RoWe24c}  {X. Ros-Oton and M. Weidner}, \emph{Optimal regularity for nonlocal elliptic equations and free boundary problems}, arXiv:2403.07793 (2024).

\bibitem[RoWe24b]{RoWe24b} {X. Ros-Oton and M. Weidner}, \emph{Improvement of flatness for nonlocal free boundary problems},  J.~Eur. Math. Soc., to appear (2025).


\bibitem[RoWe25]{RoWe25}  {X. Ros-Oton and M. Weidner}, \emph{Optimal regularity for kinetic Fokker-Planck equations in domains}, arXiv:2505.11943 (2025).

\bibitem[Sak91]{Sak91} M. Sakai, \emph{Regularity of a boundary having a Schwarz function}, Acta Math. 166 (1991), 263-297.

\bibitem[ScDu24]{ScDu24} J. Scott and Q. Du, \emph{Nonlocal problems with local boundary conditions {I}: {F}unction spaces and variational principles}, SIAM J. Math. Anal. 56 (2024), 4185--4222.

\bibitem[ScDu25]{ScDu25} J. Scott and Q. Du, \emph{Nonlocal problems with local boundary conditions {II}: {G}reen's identities and regularity of solutions}, SIAM J. Math. Anal. 57 (2025), 404--451.

\bibitem[Vel23]{Vel23} B. Velichkov, \emph{Regularity of the One-phase Free Boundaries},  Lecture Notes of the Unione Matematica Italiana 28, Springer 2023.

\bibitem[Wei99]{Wei99} G. Weiss, \emph{Partial regularity for weak solutions of an elliptic free boundary problem}, Comm. Partial Differential Equations 23 (1999), 439-455.


\end{thebibliography}
\end{document}